\documentclass[11pt,reqno]{amsart}
\usepackage{amsmath,amsfonts,amssymb,amscd,amsthm,bbm}
\usepackage{braket}
\usepackage{xcolor}
\allowdisplaybreaks[4]
\numberwithin{equation}{section}
\topmargin-0.1in \textwidth6.in \textheight8.0in \oddsidemargin0in
\evensidemargin0in
\usepackage{appendix}

\usepackage{newtxtext,newtxmath}
\usepackage{microtype}

\usepackage[numbers,sort&compress]{natbib}
\usepackage[colorlinks=true, linkcolor=blue, urlcolor=blue, citecolor=blue]{hyperref}

\newcommand\R{\mathbb R}

\newcommand\mbb\mathbb
\newcommand\mbf\mathbf
\newcommand\mcal\mathcal
\newcommand\mfrak\mathfrak

\newcommand\msf\mathsf
\renewcommand\a\alpha
\renewcommand\b\beta
\newcommand\g\gamma
\newcommand\G\Gamma
\renewcommand\d\delta
\newcommand\D\Delta
\newcommand\e\varepsilon
\newcommand\z\zeta
\renewcommand\t\theta
\newcommand\Th\Theta
\renewcommand\o\omega
\renewcommand\O\Omega
\newcommand\mr\mathring
\newcommand\ub\underbrace
\newcommand\pa\partial
\newcommand\n\nabla
\newcommand\fa\forall
\newcommand\ex\exists
\newcommand\wk\rightharpoonup
\newcommand\os\overset
\newcommand\us\underset
\newcommand\sr\stackrel
\newcommand\Ot\Leftarrow
\newcommand\To\Rightarrow
\newcommand\map\mapsto
\newcommand\ot\leftarrow
\newcommand\lot\longleftarrow
\newcommand\lto\longrightarrow
\newcommand\tot\leftrightarrow
\newcommand\ltot\longleftrightarrow
\newcommand\sm\backslash
\renewcommand\Cup\bigcup
\renewcommand\Cap\bigcap
\newcommand\sub\subset
\newcommand\Sub\Subset
\newcommand\sne\subsetneq
\newcommand\bus\supset
\newcommand\Bus\Supset

\newcommand\eq\equiv
\newcommand\ox\otimes
\newcommand\Ox\bigotimes
\newcommand\pl\oplus
\newcommand\Pl\bigoplus
\newcommand\x\times
\renewcommand\c\circ
\newcommand\q\quad
\renewcommand\l\left
\renewcommand\r\right
\newcommand\fr\frac

\newtheorem{Thm}{Theorem}[section]
\newtheorem{Lem}[Thm]{Lemma}
\newtheorem{Cor}[Thm]{Corollary}
\newtheorem{Prop}[Thm]{Proposition}

\newtheorem{Def}[Thm]{Definition}
\newtheorem{Rem}[Thm]{Remark}

\title[Segregated Solutions to Nonlinear Schr\"{o}dinger Systems]
{Segregated Solutions to Critical Elliptic Systems in High Dimensions ($N \geq 5$)}
\author{Zijuan Gao, Qing Guo, Chengxiang Zhang}
\address[]{School of Mathematical Sciences, 
	Beijing Normal University, Beijing 100875, P. R. China}
\email{gaozijuan3012@163.com}	
\address[]{College of Science, Minzu University of China, Beijing 100081, China}
\email{guoqing0117@163.com}
\address[]{Laboratory of Mathematics and Complex Systems (Ministry of Education), 
	School of Mathematical Sciences, 
	Beijing Normal University, Beijing 100875, P. R. China}
\email{zcx@bnu.edu.cn}
\date{\today}
\begin{document}
	\maketitle
	
	\begin{abstract}
		We study the existence of multiple segregated solutions to the critical
		coupled Schr\"odinger system
		\begin{equation*}
			\begin{cases}
				-\Delta u_{1} = K_1(\lvert y\rvert) \lvert u_{1}\rvert^{2^{*}-2}u_{1}+\beta \lvert u_{2}\rvert^{\frac{2^{*}}{2}}\lvert u_{1}\rvert^{\frac{2^{*}}{2}-2}u_{1},  &~y\in \mathbb R^N,\\[3mm]
				-\Delta u_{2} = K_2(\lvert y\rvert) \lvert u_{2}\rvert^{2^{*}-2}u_{2}+\beta \lvert u_{1}\rvert^{\frac{2^{*}}{2}}\lvert u_{2}\rvert^{\frac{2^{*}}{2}-2}u_{2},  &~y\in\mathbb R^N,\\[3mm]
				u_{1},u_{2}\geq0,~u_{1},u_{2}\in C_{0}(\mathbb R^{N})\cap D^{1,2}(\R^N),
			\end{cases}
		\end{equation*}
		with $N \geq 5$, $2^* = \frac{2N}{N-2}$, radial potentials $K_1, K_2 > 0$,
		and repulsive coupling $\beta < 0$.
		Under the assumption that $K_1$ and $K_2$
		attain local maxima at distinct radii $r_0 \ne \rho_0$ with
		precise asymptotic expansions near these points, we prove the existence of
		infinitely many non-radial segregated solutions $(u_{1,k}, u_{2,k})$ for all
		sufficiently large integers $k$. These solutions exhibit multiple bumps
		concentrating on two separate circles of radius $r_0$
		and $\rho_0$ respectively. Moreover, each component develops a
		``dead core'' near the concentration points of the other. The proof
		overcomes the sublinear and non-smooth nature of the coupling term
		($2^*/2 -1 < 1$) by constructing a tailored complete metric space and
		combining a finite-dimensional reduction with a novel tail minimization
		argument.
	\end{abstract}
	
	\small{
		\keywords {\noindent {\bf Keywords:} {Critical systems; Sublinear coupling; Segregated solutions;
				Reduction method; Infinitely many solutions.}}}
	\smallskip
	

\section{Introduction}

We study multiple segregated solutions to the following  nonlinear
critical Schr\"{o}dinger system
\begin{equation}\label{eq0}
	\begin{cases}
		-\Delta u_{1} = K_1(\lvert y\rvert) \lvert u_{1}\rvert^{2^{*}-2}u_{1}+\beta \lvert u_{2}\rvert^{\frac{2^{*}}{2}}\lvert u_{1}\rvert^{\frac{2^{*}}{2}-2}u_{1},  &~y\in \mathbb R^N,\\[3mm]
		-\Delta u_{2} = K_2(\lvert y\rvert) \lvert u_{2}\rvert^{2^{*}-2}u_{2}+\beta \lvert u_{1}\rvert^{\frac{2^{*}}{2}}\lvert u_{2}\rvert^{\frac{2^{*}}{2}-2}u_{2},  &~y\in\mathbb R^N,\\[3mm]
		u_{1},u_{2}\geq0,~u_{1},u_{2}\in C_{0}(\mathbb R^{N})\cap D^{1,2}(\R^N),
	\end{cases}
\end{equation}
where $N\geq 5$, $2^{*}=\frac{2N}{N-2}$, $K_i(r)>0$ ($i=1,2$) are radial
potentials, and $\beta\in\mathbb R$ is a coupling constant. System \eqref{eq0} arises when studying the solitary wave solutions of the following time-dependent coupled nonlinear Schr\"odinger equations
\begin{equation}\label{eq1}
	\begin{cases}
		-\mathrm{i} \frac{\partial\Phi_{j}}{\partial t}= \Delta \Phi_{j}+ \mu_j  \lvert\Phi_{j}\rvert^{p-2}\Phi_j + \sum_{i \neq j}  \beta_{ij} |\Phi_{i}|^{\frac{p}{2}}  \lvert\Phi_{j}\rvert^{\frac{p}{2}-2} \Phi_j,  &~y\in\mathbb R^N,~ t>0, \\[3mm]
		\Phi_{j}(y,0)= \phi_{j}(y), &~\ j=1,\dots,n,
	\end{cases}
\end{equation}
where $\mu_j (y) > 0$, $\beta_{ij}$ are coupling constants and the exponent $p \in (2, 2^*]$. The system \eqref{eq1} arises
in many physical problems, particularly in nonlinear optics. Here, $\Phi_{j}$ represents the $j$th component of a beam in Kerr-like
photorefractive media. The condition  $\mu_j>0$ shows that there is an attractive intraspecies interaction between the atoms inside each component. The interaction between two states is determined by the sign of the scattering length $\beta=\beta_{ij}$, see \cite{AA61} and references and therein. A positive $\beta$ implies an attractive interaction, causing the components of a vector solution to come together. Conversely, a negative $\beta$ indicates the repulsive interaction, causing the components to repel each other and forming the phase separations.

To obtain the solitary wave solutions of system \eqref{eq0}, one sets $\Phi_{j}(y,t) = e^{\mathrm{i}\lambda_{j}t}u_j (y)$, reducing the system to the  following $n$-coupled nonlinear Schr\"odinger equations
\begin{equation}\label{eq2}
	\begin{cases}
		\Delta u_j-\lambda_{j}u_{j} +\mu_j  \lvert u_{j}\rvert^{p-2}u + \sum_{i \neq j}  \beta_{ij} |u_{i}|^{\frac{p}{2}} \lvert u_{j}\rvert^{\frac{p}{2}-2}u_j =0,  &~y\in\mathbb R^N,  \\[3mm]
		u_{j}(y)\rightarrow 0,  ~\text{as}~ \lvert y\rvert\rightarrow\infty, & j=1,\dots,n.
	\end{cases}
\end{equation}

In recent years, significant mathematical studies have been conducted on nonlinear Schr\"odinger equations with critical exponents, for example, \cite{AC06,DWW10,LW05,LW005,LW08,S07,TV09,TW11,WY12}.
Del Pino, Musso, Pacard and Pistoia \cite{DMPP11,DMPP13} developed sequences of sign-changing solutions with high energy that concentrate along certain special sub-manifolds of $\mathbb R^{N}$. In \cite{GLW14}, Guo, Li and Wei demonstrated the existence of infinitely many positive non-radial solutions to the coupled elliptic system. In \cite{PW13}, an unbounded sequence of non-radial positive vector solutions of segregated type was constructed when $p<\frac{2N}{N-2}$.
In \cite{PV}, Pistoia and Vaira found positive non-radial solutions for $N$-coupled systems with weak interspecies forces in $N=2$ and $N=3$. 
For more information on the subcritical case of the system, we refer to \cite{BDW10,DWW10,LW05,LW08,PWW19,TW11}.
Recently, Chen, Medina and Pistoia \cite{CMP} studied the existence of segregated solutions for a critical elliptic system with a small interspecies repulsive force in $\mathbb R^{4}$.
When $n=1$ and $p=2^*$, system \eqref{eq2} reduces to the scalar equation
\begin{equation}\label{eq3}
	\begin{cases}
		-\Delta u = K(y) u^{\frac{N+2}{N-2}},~y\in \mathbb R^N,\\[3mm]
		u\in D^{1,2}(\mathbb R^{N}).
	\end{cases}
\end{equation}
This equation also arises via stereographic projection from the prescribed curvature
problem on the sphere $S^N$:
\begin{equation}\label{eq4}
	-\Delta_{S^{N}} u +\frac{N(N-2)}{2}u= \tilde{K} u^\frac{N+2}{N-2}~~\text{on}~S^N,
\end{equation}
where $\tilde{K}$ is positive and rotationally symmetric. Equations \eqref{eq3} and \eqref{eq4} are deeply
rooted in conformal geometry and has attracted extensive research; see \cite{AAJ99, AJ91, CNY02, CY91, CL01, DN85, GMPY, L1993, L96, L98, LW18, NY01}.  
For \eqref{eq3},
Wei and Yan \cite{WY10}  proved  the existence of infinitely many solutions to the prescribed scalar curvature problem \eqref{eq3} using the reduction argument. On the other hand, Y. Li proved in \cite{L93} that the system \eqref{eq3} has infinitely many solutions if $K(y)$ is periodic, while similar result was obtained in \cite{Y00} if $K(y)$ has a sequence of strict local maximum points approaching infinitely. 

\smallskip
The goal of this paper is to investigate the high-dimensional critical case with $p=\frac{2N}{N-2}$
and the coupling coefficient $\beta_{ij} <0$ (representing a repulsive interaction) in system \eqref{eq2}. 
To present our main results, we establish the following assumptions on $K_1, K_2$ are bounded:
\smallskip

\smallskip

\noindent{\bf ($\mathbb K$):} There exist constants \(r_{0}>0\), \(\rho_{0}>0\)
with \(\rho_{0}\neq r_{0}\), exponents \(m\in[2,N-2)\), and positive constants
\(c_{0,1},c_{0,2},\theta_{1},\theta_{2},\delta>0\) such that for
\(r\in(r_{0}-\delta,\,r_{0}+\delta)\) and
\(\rho\in(\rho_{0}-\delta,\,\rho_{0}+\delta)\),
\[
\begin{aligned}
	K_1(r) &= 1 - c_{0,1}\lvert r-r_{0}\rvert^{m}
	+ O\big(\lvert r-r_{0}\rvert^{m+\theta_{1}}\big),\\
	K_2(\rho) &= 1 - c_{0,2}\lvert \rho-\rho_{0}\rvert^{m}
	+ O\big(\lvert \rho-\rho_{0}\rvert^{m+\theta_{2}}\big).
\end{aligned}
\]

The primary results are as follows.
\begin{Thm}\label{th1}
	Suppose $N\geq5$, assumption $(\mathbb K)$ holds, and $\beta<0$. Then there
	exists an integer $k_0>0$ such that for every integer $k\geq k_0$, the system
	\eqref{eq0} admits a non-radial segregated solution $(u_{1,k},u_{2,k})$
	with 
	\[
	\lim_{k\to\infty}\max_{\mathbb R^N} u_{1,k} =+\infty,
	\qquad
	\lim_{k\to\infty}\max_{\mathbb R^N} u_{2,k}=+\infty.
	\]
	
\end{Thm}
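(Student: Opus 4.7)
The plan is to implement a Lyapunov--Schmidt finite-dimensional reduction based on a two-family bubble ansatz, adapted to the sublinear repulsive coupling by working in a tailored complete metric space with built-in dead-core constraints. For parameters $r\approx r_{0}$, $\rho\approx\rho_{0}$ and concentration rates $\lambda_1,\lambda_2\to\infty$, take as approximate solution
\[
W_1=\sum_{j=1}^{k}U_{x_j,\lambda_1},\qquad W_2=\sum_{j=1}^{k}U_{z_j,\lambda_2},
\]
where $U_{y,\lambda}$ denotes the standard Aubin--Talenti bubble and the centres $x_j,z_j$ are the vertices of two regular $k$-gons in the $(y_1,y_2)$-plane of radii $r$ and $\rho$. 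Restricting to the closed subspace of functions invariant under the dihedral symmetry in $(y_1,y_2)$ and under rotations in the remaining $N-2$ variables kills all directions of the approximate kernel of the linearised operator except those generated by the parameters $(r,\rho,\lambda_1,\lambda_2)$, reducing the finite-dimensional problem to these four scalars.

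\medskip
The essential new difficulty is that the coupling nonlinearity $|u_2|^{2^{*}/2}|u_1|^{2^{*}/2-2}u_1$ is only H\"older of exponent $2/(N-2)<1$ in $u_1$ when $N\ge 5$, so the associated Nemytskii operator is not $C^{1}$ and the standard Banach-contraction step in the reduction fails. To circumvent this I would construct a tailored complete metric space $\mcal M_k$ of pairs $(\phi_1,\phi_2)$ lying in the symmetry subspace, orthogonal to the approximate kernel, and satisfying pointwise envelope bounds
\[
|\phi_i(y)|\leq C_k\sum_{j=1}^{k}\lambda_i^{\fr{N-2}{2}}\bigl(1+\lambda_i|y-y_{i,j}|\bigr)^{-\a_i},
\]
together with a dead-core constraint $\phi_1\equiv 0$ on a neighbourhood of the $z_j$'s and $\phi_2\equiv 0$ on a neighbourhood of the $x_j$'s. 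The key observation is that because $\b<0$ the coupling acts as a one-sided penalty driving each component to zero where the opposite bubble is large, so the dead-core constraint is compatible with the projected equation; the sublinear term is then active only on thin annular overlap regions, where the envelope bounds make it H\"older-Lipschitz with small constant and the contraction closes. The natural weighted $L^{\infty}$-metric renders $\mcal M_k$ complete.

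\medskip
With the projected problem solved via a tail minimisation iteration, sending $(\phi_1,\phi_2)$ to the unique minimiser of a penalised auxiliary energy whose Euler--Lagrange equation is the projected system and whose penalty encodes both positivity and the dead-core support constraint, the reduced functional $I_k(r,\rho,\lambda_1,\lambda_2)$ inherits parametric differentiability away from the dead-core interface. Its asymptotic expansion splits into two essentially decoupled blocks, because cross-interactions are exponentially small in $\lambda_i$ thanks to the separation $|r_0-\rho_0|>0$; per bump the leading correction from the potentials is of the form $-c_{0,1}|r-r_0|^{m}\lambda_1^{-m}-c_{0,2}|\rho-\rho_0|^{m}\lambda_2^{-m}$, competing with the positive inter-bubble self-interaction $B_i(\lambda_i k)^{-(N-2)}$. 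Choosing $\lambda_i\sim k^{(N-2)/(N-2-m)}$ balances the two contributions and produces an interior critical point of $I_k$ in $(r,\rho,\lambda_1,\lambda_2)$ for every sufficiently large $k$, yielding the desired segregated pair. The hard part is the second paragraph: designing $\mcal M_k$ with a dead-core constraint that is simultaneously preserved by the projection and strong enough to recover a contraction from the sublinear coupling; granting this, the remaining steps follow the Wei--Yan reduction framework.
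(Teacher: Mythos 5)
Your proposal correctly identifies the overall architecture the paper uses (two bubble families on concentric circles, dihedral symmetry, a tailored complete metric space, tail minimization, a four-parameter reduction), but there are three points where the sketch either goes wrong or papers over a step whose resolution is precisely the content of the paper.

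First, the claim that ``cross-interactions are exponentially small in $\lambda_i$ thanks to the separation $|r_0-\rho_0|>0$'' is false. The Aubin--Talenti bubbles decay only polynomially, $U_{x,\lambda}(y)\sim \lambda^{-(N-2)/2}|y-x|^{-(N-2)}$, so with $k$ bumps on a circle of radius $\mu r_0$ facing $k$ bumps on a circle of radius $\mu\rho_0$ the interaction term $\lvert u_2\rvert^{2^*/2}\lvert u_1\rvert^{2^*/2-2}u_1$ is only of size $\mu^{-(2-\frac{2\tau_1}{N-2})}$ along $\partial B_{(r+\rho)/2}(0)$ and must be compared against the main term $\mu^{-m}$ coming from the potential. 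The paper spends real effort (the estimates leading to \eqref{3.9}, \eqref{3.9.}, and the proof that $D_1,D_2$ do not perturb $\widetilde c_i,\widetilde d_i$ at order $\mu^{-m}$ via \eqref{3lambda}) establishing that this polynomial interaction is subdominant; it is not a free lunch from separation.

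Second, you propose to \emph{impose} the dead-core constraint ($\phi_1\equiv 0$ near the $z_j$, $\phi_2\equiv 0$ near the $x_j$) as part of the definition of the metric space, and then you flag as ``the hard part'' the consistency of this constraint under the iteration. The paper resolves this differently and, I would argue, more cleanly: the metric space is $S(\Lambda_k)$, the image of the map that solves the \emph{outer} problem \eqref{eq15} with prescribed boundary data on $P_1\cup Q_1$, and the dead core is then \emph{derived} as an a priori property of any such outer solution (Lemma~\ref{deadcore}), via a comparison argument with the explicit radial barrier of Lemma~\ref{lem5.11}, after the pointwise decay estimates of Lemma~\ref{lem7}. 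This order of operations matters: the dead core is what makes the coupling terms $D_1,D_2$ vanish identically on $P_2,Q_2$, which is what allows the linear operators $L_1,L_2$ (defined only with right-hand sides supported in $P_2,Q_2$) to absorb everything needed in the contraction step and gives the Lipschitz estimate \eqref{w1} for $R_1(\varphi_0)-R_1(\bar\varphi_0)$. If you impose the dead core on the iterates instead of deriving it, you still need a mechanism showing that the iteration lands back in the constrained set, and that mechanism would again be the comparison principle applied to the outer solution; so you end up having to prove what the paper proves anyway, with no gain.

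Third, ``the reduced functional $I_k$ inherits parametric differentiability away from the dead-core interface'' is exactly the kind of statement the sublinear coupling is designed to sabotage, and the paper deliberately avoids making it. The reduction does not work with a reduced energy and its critical points; instead it computes the quantities $\widetilde c_1,\widetilde c_2,\widetilde d_1,\widetilde d_2$ as functions of $(r,\rho,\lambda,\nu)$, shows continuity via Lemma~\ref{L2.2} and the continuity Proposition at the end of Section~\ref{sec:4}, establishes sign changes at the endpoints of the box $\mathbb M$, and concludes by Miranda's theorem. This is a topological degree argument that requires only continuity, not $C^1$. Replacing it with a variational critical-point argument for $I_k$ would require differentiability of the reduced functional with respect to the parameters across the region where $u_1,u_2$ vanish, which fails for $2^*/2-1<1$. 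So the final step of your sketch also needs to be replaced, not just patched.

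In short, the bones of your outline match the paper, but the two load-bearing novelties of the paper --- deriving (not imposing) the dead core from the outer problem and substituting a Miranda-type sign argument for a $C^1$ reduced-energy argument --- are exactly the places where your sketch either errs (exponential decay) or waves a hand (consistency of the constraint, differentiability of $I_k$).
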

Consequently, the system has infinitely many segregated   solutions.
Let us outline the main idea in the proof of Theorem \ref{th1}. We additionally establish an alternative form of segregated solutions. In other words, segregated solutions display multiple bumps near infinity, with the bumps of $u_{1}$ and $u_{2}$ occurring on different circles. To illustrate this phenomenon, we first introduce some notations.
\smallskip

Fix a positive integer $k \geq k_{0}$, where $k_{0}$ is a large integer to be determined. Set $$\mu=k^{\frac{1}{\tau_1}}$$ to be the scaling parameter, where $\tau_1= \frac{N-2-m}{N-2}$. Using the transformation $u(y)\mapsto \mu^{-\frac{N-2}{2}}u(\frac{y}{\mu})$, the system \eqref{eq0} becomes

\begin{equation}\label{eq5}
	\begin{cases}
		-\Delta u_{1} = K_1\big(\frac{\lvert y \rvert}{\mu}\big) \lvert u_{1}\rvert^{2^{*}-2}u_{1}+\beta \lvert u_{2}\rvert^{\frac{2^{*}}{2}}\lvert u_{1}\rvert^{\frac{2^{*}}{2}-2}u_{1},  &~y\in \mathbb R^N,\\[3mm]
		-\Delta u_{2} = K_2\big(\frac{\lvert y \rvert}{\mu}\big) \lvert u_{2}\rvert^{2^{*}-2}u_{2}+\beta \lvert u_{1}\rvert^{\frac{2^{*}}{2}}\lvert u_{2}\rvert^{\frac{2^{*}}{2}-2}u_{2},  &~y\in\mathbb R^N,\\[3mm]
		u_{1},u_{2}\geq0,~u_{1},u_{2}\in C_{0}(\mathbb R^{N})\cap D^{1,2}(\R^N).
	\end{cases}
\end{equation}

It is well known that all positive solutions of
\begin{equation*} 
	-\Delta u=u^{\frac{N+2}{N-2}},~u>0~\text{in}~ \mathbb R^{N} 
\end{equation*}
are given by the family
\begin{equation*} 
	\Set{(N(N-2))^{\frac{N-2}{4}}\Big(\frac{\lambda}{1+\lambda^{2}\lvert y-x \rvert^{2}}\Big)^{\frac{N-2}{2}} |~\lambda>0,~x\in \mathbb R^{N}}.
\end{equation*}

Write $y=(y',y'')$ with $y'\in\mathbb R^2$, $y''\in\mathbb R^{N-2}$. Define

\begin{equation*}
	\begin{aligned}
		H_{s} =  
		& \Big\{u~\text{is a measurable function on}~\R^N  \mid~u~
		\text{is even in} \; y_{d},~d=2,\dots,N,\\
		& u(r\cos\theta,r\sin\theta,y^{\prime\prime})= \displaystyle u\Big(r\cos\Big(\theta+\frac{2\pi j}{k}\Big),r\sin\Big(\theta+\frac{2\pi j}{k}\Big),y^{\prime\prime}\Big)\Big\}
	\end{aligned}
\end{equation*}
and set
\begin{equation*}
	 H_{s_{0}} =   H_{s} \cap  C_{0}(\mathbb R^{N}).
\end{equation*}
Define
\begin{equation*}	
	\mathbb H = H_{s_{0}}\times H_{s_{0}},
\end{equation*}
equipped with the norm
$\lVert (u_1,u_2)\rVert_\infty
= \lVert u_1\rVert_{L^\infty(\mathbb R^N)} + \lVert u_2\rVert_{L^\infty(\mathbb R^N)}$.

For $i=1,\dots,k$, denote
\begin{equation}\label{1.7}
		x^{i} = r \Big(\cos\frac{2(i-1)\pi}{k}, ~\sin\frac{2(i-1)\pi}{k},0\Big), \quad 
		y^{i} = \rho \Big(\cos\frac{2(i-1)\pi}{k}, ~\sin\frac{2(i-1)\pi}{k},0\Big)
\end{equation}
where $r\in \Big[r_{0} \mu-1,r_{0} \mu +1\Big],~\rho \in \Big[\rho_{0} \mu-1,\rho_{0} \mu+1\Big]$ and $0$ is the zero in $\mathbb R^{N-2}$.

\smallskip

We set
\begin{equation*} 
	U_{r,\lambda}(y)=\sum_{i=1}^{k}U_{x^{i},\lambda}(y),\quad V_{\rho,\nu}(y)=\sum_{i=1}^{k}V_{y^{i},\nu}(y),
\end{equation*}
where 
\begin{equation*} 
	U_{x^{i},\lambda}(y) =(N(N-2))^{\frac{N-2}{4}}\Big(\frac{\lambda}{1+\lambda^{2}\lvert y-x^{i} \rvert^{2}}\Big)^{\frac{N-2}{2}},
\end{equation*}

\begin{equation*} 
	V_{y^{i},\nu}(y) =(N(N-2))^{\frac{N-2}{4}}\Big(\frac{\nu}{1+\nu^{2}\lvert y-y^{i} \rvert^{2}}\Big)^{\frac{N-2}{2}},
\end{equation*}
for $ x^{i}$ and $ y^{i}$ are defined in \eqref{1.7}.

\smallskip
We will prove Theorem \ref{th1} by verifying the following result.

\begin{Thm}\label{th2}
	Suppose $N\geq5$, assumption $(\mathbb K)$ holds, and $\beta<0$. Then there exist some small $\bar{\theta}>0$, some constants $\lambda_{0}>0,\nu_{0}>0$ and an integer
	$k_{0}>0$ such that for any integer $k\geq k_{0}$, the system \eqref{eq5} 
	has a solution $(u_{1,k},u_{2,k})$ of the form
	\begin{equation*}
		\big(u_{1,k}(y),u_{2,k}(y)\big)=\big( U_{r_{k},\lambda_{k}}(y)+\varphi_{k}, V_{\rho_{k},\nu_{k}}(y)+\psi_{k}\big),
	\end{equation*}
	where $(\varphi_{k},\psi_{k}) \in \mathbb H $,~$\lambda_{k}\in\Big[\lambda_{0}-\frac{1}{\mu^{\frac{3}{2}\bar\theta}},\lambda_{0}+\frac{1}{\mu^{\frac{3}{2}\bar\theta}}\Big]
,~\nu_{k}\in\Big[\nu_{0}-\frac{1}{\mu^{\frac{3}{2}\bar\theta}},\nu_{0}+\frac{1}{\mu^{\frac{3}{2}\bar\theta}}\Big]$,
$r_{k} \in \Big[r_{0} \mu-\displaystyle\frac{1}{\mu^{\bar{\theta}}},r_{0} \mu+\displaystyle\frac{1}{\mu^{\bar{\theta}}}\Big],~\rho_{k} \in \Big[\rho_{0} \mu-\displaystyle\frac{1}{\mu^{\bar{\theta}}},\rho_{0} \mu+\displaystyle\frac{1}{\mu^{\bar{\theta}}}\Big]$
and $\lVert (\varphi_{k},\psi_{k}) \rVert_{\infty}\rightarrow 0 ~~\text{as}~~ k\rightarrow\infty. $
	
\end{Thm}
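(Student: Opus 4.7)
The plan is to carry out a Lyapunov--Schmidt reduction in the uniform norm on $\mathbb H$, adapted to the H\"older-continuous sublinear coupling term. Writing $u_1=U_{r,\lambda}+\varphi$ and $u_2=V_{\rho,\nu}+\psi$, I would impose four orthogonality conditions on $(\varphi,\psi)$ with respect to $\partial_r U_{r,\lambda}$, $\partial_\lambda U_{r,\lambda}$, $\partial_\rho V_{\rho,\nu}$ and $\partial_\nu V_{\rho,\nu}$, and split \eqref{eq5} accordingly into an infinite-dimensional PDE for the remainder and a four-parameter reduced system in $(r,\lambda,\rho,\nu)$. The symmetries built into $H_{s_0}$ --- the $k$-fold rotational equivariance in the first two coordinates and evenness in $y_2,\ldots,y_N$ --- eliminate all translation/rotation kernel modes, so the four natural parameters exhaust the relevant kernel.

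First I would develop the linear theory. Working in a weighted $L^\infty$ norm with weight $\sum_{i=1}^k(1+|y-x^i|)^{-\tau}+\sum_{i=1}^k(1+|y-y^i|)^{-\tau}$ for some $\tau\in(0,N-2)$ chosen so that $(-\Delta)^{-1}$ acts boundedly between the corresponding spaces, I would prove coercivity of the projected linearised operator. Crucially, the off-diagonal block coming from the coupling is of lower order because the two bubble families concentrate on circles of distinct radii $r_0\mu\neq \rho_0\mu$. Next I would estimate the residual when the pure ansatz is inserted into \eqref{eq5}. Three contributions appear: the slow variation $K_i(|y|/\mu)-1=O(\mu^{-m})$ near each bump, provided by $(\mathbb K)$; the bubble--bubble interaction terms arising from treating a sum of $k$ bubbles as a single solution of the critical equation; and the coupling term evaluated on well-separated families, which is of lower order thanks to the separation by $|\rho_0-r_0|\mu$. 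Each is $o(1)$ in the weighted norm for $k$ large.

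The core nonlinear step is the crucial departure from the classical reduction scheme. Since $2^*/2-1=\frac{2}{N-2}<1$ for $N\geq5$, the map $s\mapsto|s|^{2^*/2-2}s$ is not locally Lipschitz at the origin, and a genuine solution must carry a dead core: $u_1\equiv 0$ on a neighbourhood of each $y^i$ and $u_2\equiv 0$ on a neighbourhood of each $x^i$. Accordingly, no contraction-mapping argument in a standard Banach norm can solve the projected equation, and I plan to follow the ``tailored complete metric space'' idea flagged in the abstract: restrict the remainder to the positivity cone $\{U_{r,\lambda}+\varphi\geq 0,\ V_{\rho,\nu}+\psi\geq 0\}$, which is a complete metric subset of $\mathbb H$ under the uniform distance, and solve the projected equation on it by a Perron/Schauder-type monotone iteration, supplemented by a ``tail minimization'' step that sets each component identically equal to zero on the opposite annulus. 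This would produce a continuous family $(\varphi,\psi)(r,\lambda,\rho,\nu)$ with $\|(\varphi,\psi)\|_\infty\to 0$ as $k\to\infty$.

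Substituting back into the four reduction equations yields an algebraic system in $(r,\lambda,\rho,\nu)$ whose leading order, by $(\mathbb K)$, separates into two independent Wei--Yan type functionals --- one in $(r,\lambda)$ with an interior extremum at $(r_0\mu,\lambda_0)$, one in $(\rho,\nu)$ with an extremum at $(\rho_0\mu,\nu_0)$ --- plus a cross interaction of strictly smaller order because the two circles are separated. A local min/max argument inside the box $|r/\mu-r_0|,|\rho/\mu-\rho_0|\leq \mu^{-\bar\theta}$, $|\lambda-\lambda_0|,|\nu-\nu_0|\leq \mu^{-3\bar\theta/2}$ will then produce the desired critical point for every $k\geq k_0$. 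The main obstacle throughout is the dead-core/sublinearity dichotomy: the remainders must be small enough in $L^\infty$ that positivity of each component is preserved, yet controlled in a weighted norm strong enough for the Green's function to act boundedly on the residual of the non-Lipschitz coupling. Reconciling these two demands is precisely what the tailored metric space and the tail minimization step are designed to accomplish.
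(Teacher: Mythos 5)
Your proposal correctly identifies the two central obstructions---the sublinear coupling $|s|^{2^*/2-2}s$ is not Lipschitz at zero, and the solution must develop a dead core---and correctly senses that a nonstandard complete metric space and a ``tail'' device are needed. But the concrete mechanisms you propose diverge from the paper's at almost every step, and at least one of them runs into the very difficulty the paper is designed to avoid.

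The main issue is your plan to build the linear theory in a weighted $L^\infty$ space with weight $\sum_i(1+|y-x^i|)^{-\tau}+\sum_i(1+|y-y^i|)^{-\tau}$. The paper explicitly discards this route: in the sublinear regime the coupling term cannot be controlled pointwise by such a weight, because near a zero of $u_1$ the factor $|u_1|^{2^*/2-2}$ blows up and the residual is not even a function of the expected size in the weighted norm. Instead, the paper's linear result (Proposition~\ref{Prop2.2}) is a plain $L^\infty$ estimate $\|L_i(h_i)\|_{L^\infty}\le C\sigma_k^4\|h_i\|_{L^\infty}$ for data supported \emph{only} on the inner balls $P_2,Q_2$, and all the decay information is instead carried by a priori pointwise estimates (Lemmas~\ref{lem7}--\ref{lem9}) derived from comparison principles and barriers $\varpi_{r,\alpha},\varpi_{\rho,\alpha}$, not from the norm of a function space. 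Relatedly, ``tail minimization'' in the paper does not mean prescribing zeros on annuli: the dead core is \emph{deduced}, not imposed. What is minimized is the energy $\Gamma$ of the full system on the exterior $\R^N\setminus(P_1\cup Q_1)$ with Dirichlet data equal to $U_{r,\lambda}+\varphi_0$ on $P_1$ and $V_{\rho,\nu}+\psi_0$ on $Q_1$ (problem \eqref{eq16}); convexity in the relevant regime (Lemma~\ref{lem3}) gives a unique minimizer, which is the exterior solution. The dead cores then follow from Lemma~\ref{deadcore} via the Pucci--Serrin barrier (Lemma~\ref{lem5.11}). Your idea of restricting to the positivity cone $\{U_{r,\lambda}+\varphi\ge 0,\,V_{\rho,\nu}+\psi\ge 0\}$ and running a Perron/monotone iteration is not what the paper does, and it is unclear it could work: the system is not cooperative (the coupling is repulsive, and each nonlinearity is not monotone in the other variable near zero), so the standard monotone iteration framework does not obviously apply.

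The paper's complete metric space is also different from yours: it is $S(\Lambda_k)$, the image of the bounded ball $\Lambda_k$ (a ball in the $\|\cdot\|_\infty$-norm of size $\mu^{-m/2}\sigma_k^{-(N-2)}$) under the exterior solution operator $S$, and the distance is $\|\varphi_0-\bar\varphi_0\|_{L^\infty(P_1)}+\|\psi_0-\bar\psi_0\|_{L^\infty(Q_1)}$, measured only on the inner regions. The contraction $T$ is then defined by gluing the interior operators $L_1,L_2$ (applied to the cut-off residuals) with the exterior operator $S$ (Lemma~\ref{lem13}). This is a genuine fixed-point construction, not a Perron iteration. Finally, the reduced four-parameter problem is not solved by a local min/max: the paper computes the sign of each $\widetilde c_i,\widetilde d_i$ on the faces of the box $\mathbb M$ and invokes Miranda's theorem. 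There is no reduced variational functional to extremize, because the finite-dimensional equations come from Lagrange-multiplier identities, not from an expansion of an energy. If you wish to salvage your outline, you would need to (a) replace the weighted-space linear theory with the $L^\infty$ theory plus barrier-based a priori estimates, (b) replace the positivity-cone iteration with the convex exterior minimization, and (c) replace the min/max with Miranda or a degree argument; as written, step (a) in particular is a genuine gap.
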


Moreover, for $N\geq5$, these segregated solutions exhibit a ``dead core''
phenomenon \cite{PP07}: each component vanishes in neighborhoods of the other's
concentration points.

\begin{Thm}\label{th3}
	For any $\vartheta \in(0, 1-\tau_1]\cap \big(0,
	\frac{(N-4)(N-2-\tau_1)}{(N-2)^2}\big)$, there exists $k_\vartheta$ such that
	for all $k\geq k_\vartheta$, the solution $(u_{1,k},u_{2,k})$ from Theorem
	\ref{th2} satisfies
	\[
	u_{1,k}=0\ \text{in}\ \bigcup_{j=1}^k B_{\mu^\vartheta}(y^j),\quad
	u_{2,k}=0\ \text{in}\ \bigcup_{j=1}^k B_{\mu^\vartheta}(x^j).
	\]
\end{Thm}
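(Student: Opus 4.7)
By the $\mathbb Z_{k}$--rotational invariance encoded in the definition of $\mathbb H$, it suffices to show $u_{1,k}\equiv 0$ on $B_{\mu^{\vartheta}}(y^{1})$; the corresponding statement for $u_{2,k}$ at $x^{1}$ follows by swapping the roles of the two circles. The core idea is a barrier/comparison argument that exploits the sublinearity $p:=2^{*}/2-1=2/(N-2)<1$ of the coupling term. For the ODE exponent $\alpha:=\tfrac{2(N-2)}{N-4}$ (chosen so that $\alpha(1-p)=2$), the function
\[
\bar w(y)=A\bigl(|y-y^{1}|-\mu^{\vartheta}\bigr)_{+}^{\alpha}
\]
will be engineered to be a weak supersolution of a sublinear differential inequality satisfied by $u_{1,k}$ on the ball $B_{R}(y^{1})$ with $R:=2\mu^{\vartheta}$. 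Since $\bar w\equiv 0$ on $B_{\mu^{\vartheta}}(y^{1})$, comparison will transfer the dead core from $\bar w$ to $u_{1,k}$.

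First, I would obtain pointwise control of $u_{1,k}$ on $B_{R}(y^{1})$. Every centre $x^{i}$ lies at distance $\ge|r_{k}-\rho_{k}|\sim|r_{0}-\rho_{0}|\mu$ from $y^{1}$, so each tail satisfies $U_{x^{i},\lambda_{k}}\lesssim\mu^{-(N-2)}$ on $B_{R}(y^{1})$; summing over $i$ gives $U_{r_{k},\lambda_{k}}\lesssim k\mu^{-(N-2)}=\mu^{\tau_{1}-(N-2)}$, and adding the pointwise decay of $\varphi_{k}$ established during the reduction of Theorem \ref{th2} yields $\sup_{B_{R}(y^{1})}u_{1,k}\le M_{k}\le C\mu^{\tau_{1}-(N-2)}$. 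Second, on the same ball the bubble $V_{y^{1},\nu_{k}}$ dominates all other $V_{y^{i},\nu_{k}}$ (which live at distance $\sim\mu^{1-\tau_{1}}\gg R$) as well as the error $\psi_{k}$, so
\[
u_{2,k}(y)\ \ge\ \tfrac{1}{2}V_{y^{1},\nu_{k}}(y)\ \ge\ \frac{c_{*}}{|y-y^{1}|^{N-2}}\qquad\text{for }1/\nu_{k}\le|y-y^{1}|\le R.
\]

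Third, Steps 1--2 together give $u_{1,k}/u_{2,k}\lesssim\mu^{\tau_{1}-(1-\vartheta)(N-2)}\to 0$ (using $\vartheta\le 1-\tau_{1}\le 1-\tau_{1}/(N-2)$), so the critical source $K_{1}u_{1,k}^{2^{*}-1}$ can be absorbed into $\tfrac12|\beta|u_{2,k}^{2^{*}/2}u_{1,k}^{p}$ and the first equation of \eqref{eq5} reduces to
\[
-\Delta u_{1,k}+\tfrac{1}{2}|\beta|\,u_{2,k}^{2^{*}/2}\,u_{1,k}^{p}\ \le\ 0\qquad\text{in }B_{R}(y^{1}).
\]
A direct computation using $\alpha(1-p)=2$ and the lower bound on $u_{2,k}^{2^{*}/2}$ from Step 2 shows that $\bar w$ is a weak supersolution of the same sublinear PDE on $B_{R}(y^{1})$ (trivially on the inner disc) as soon as $A^{1-p}\lesssim R^{-N}$. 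Matching $\bar w\ge u_{1,k}$ on $\partial B_{R}(y^{1})$ forces $A\sim M_{k}/R^{\alpha}$; combining the two constraints with $M_{k}\sim\mu^{\tau_{1}-(N-2)}$, $R\sim\mu^{\vartheta}$ and $1-p=(N-4)/(N-2)$ reduces, after a short arithmetic, to exactly
\[
\vartheta\ \le\ \frac{(N-4)(N-2-\tau_{1})}{(N-2)^{2}}.
\]
The companion restriction $\vartheta\le 1-\tau_{1}$ enters separately, as the geometric condition that the balls $B_{\mu^{\vartheta}}(y^{j})$ be disjoint (their mutual separation is $\sim\mu^{1-\tau_{1}}$).

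Fourth and last, the sublinear comparison principle applied to the pair $(u_{1,k},\bar w)$ on $B_{R}(y^{1})$ gives $u_{1,k}\le\bar w$ throughout $B_{R}(y^{1})$: testing the difference $(u_{1,k}-\bar w)_{+}$ against itself in the weak formulation, one finds $\int|\nabla(u_{1,k}-\bar w)_{+}|^{2}\le -\int\tfrac{|\beta|}{2}u_{2,k}^{2^{*}/2}(u_{1,k}^{p}-\bar w^{p})(u_{1,k}-\bar w)_{+}\le 0$, because $t\mapsto t^{p}$ is nondecreasing on $[0,\infty)$. Since $\bar w\equiv 0$ on $B_{\mu^{\vartheta}}(y^{1})$ and $u_{1,k}\ge 0$, this forces $u_{1,k}\equiv 0$ on $B_{\mu^{\vartheta}}(y^{1})$. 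Applying the same argument with $(u_{2,k},V_{\rho_{k},\nu_{k}},x^{1})$ in place of $(u_{1,k},U_{r_{k},\lambda_{k}},y^{1})$ gives the second inclusion. The principal obstacle is the quantitative calibration in Step 3: one must simultaneously pin down the exact powers of $\mu$ in the bound on $M_{k}$ (which requires the weighted decay of $\varphi_{k}$, not just $\|\varphi_{k}\|_{\infty}\to 0$), in the pointwise lower bound on $u_{2,k}^{2^{*}/2}$, and in the supersolution inequality for $\bar w$, and it is precisely this matching that yields the sharp exponent appearing in the statement.
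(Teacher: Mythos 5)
Your argument is correct and, at its core, implements the same barrier-plus-comparison mechanism as the paper's Lemma \ref{deadcore}. The differences are worth noting. The paper invokes the canonical dead-core solution $\omega$ of $-\Delta\omega+M\omega^{2/(N-2)}=0$ from Lemma \ref{lem5.11} (a citation to \cite{PP07}) and obtains the barrier $\bar\omega(y)=\mu^{-\vartheta(N-2)^2/(N-4)}\omega\bigl((y-x^1)/\mu^\vartheta\bigr)$ by parabolic rescaling; the balance $-\Delta\bar\omega \sim \mu^{-\vartheta N}\bar\omega^{2/(N-2)}$ forces the scaling exponent $(N-2)^2/(N-4)$, and matching the boundary value of $\bar\omega$ against $\|u_2\|_\infty=O(\mu^{-(N-2-\tau_1)})$ is precisely what produces the threshold $\vartheta<\frac{(N-4)(N-2-\tau_1)}{(N-2)^2}$. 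You instead build the barrier explicitly as $A\bigl(|y-y^1|-\mu^\vartheta\bigr)_+^{2(N-2)/(N-4)}$; the exponent $\alpha=2(N-2)/(N-4)$ is exactly the near-interface profile of the canonical $\omega$, so the two barriers are the same at leading order, and your matching of the two constraints on $A$ reproduces the same threshold. The one place where you do genuinely more explicit work is the comparison step: the paper applies the comparison somewhat implicitly through the properties of $\omega$, while you test the differential inequality against $(u_{1,k}-\bar w)_+$ and use the monotonicity of $t\mapsto t^{p}$ together with the lower bound $u_{2,k}^{2^*/2}\gtrsim R^{-N}$ on the annulus — a clean energy argument that closes cleanly because $\alpha>2$ makes $\bar w$ a genuine $C^{1,1}$ weak supersolution across the interface. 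One minor remark: the domination $u_{2,k}\ge\tfrac12 V_{y^1,\nu_k}$ on $B_R(y^1)$ requires the \emph{weighted} decay of $\psi_k$ (Corollary \ref{cor3.3}), not merely $\|\psi_k\|_\infty\to0$ — you flag this correctly; and your pointwise bound $M_k\lesssim\mu^{\tau_1-(N-2)}$ on $u_{1,k}$ near $y^1$ is exactly what Lemma \ref{lem7} supplies. Both routes also rely on the constraint $\vartheta\le 1-\tau_1$ to keep $B_R(y^1)$ inside the region where the single-bubble domination $u_{2,k}\gtrsim V_{y^1,\nu_k}$ holds, which is a real analytic ingredient, not merely a disjointness condition on the balls; that small mischaracterization aside, the proposal is sound.
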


When $N\geq5$, the coupling exponent $\frac{2^{*}}{2}-1=\frac{2}{N-2}<1$ introduces significant challenges for classical reduction methods. These difficulties arise from the non-smooth and sublinear nature of the coupling nonlinearities. A critical obstacle emerges when applying the fixed-point theorem in the complement of the kernel space to demonstrate the existence of solutions, as this requires reducing the problem to a finite-dimensional framework. To address this, we adopt a  strategy which generalizes and simplifies the approach in \cite{WZZ23}.

The sublinearity-induced singularities occur where the solution vanishes. This motivates analyzing the behavior of functions away from concentration points first. We partition the space into inner regions (near concentration points) and an outer region (far from concentration points), based on the distribution of these points (see Section \ref{sec:2} for details).

To apply the fixed-point theorem, we introduce a set of functions with favorable decay properties. In prior works \cite{DMPP03,WY10}, weighted spaces were employed to ensure sufficient regularity for contraction mapping. However, in the sublinear regime, constructing such spaces becomes problematic due to the inherent non-smoothness. Our key innovation lies in designing a tailored functional space specifically suited for the contraction mapping theorem.

We construct a complete metric space using a tail minimization technique, inspired by that used in variational gluing methods (e.g., \cite{Coti92,BT14}), to derive sharp decay estimates for approximate solutions. In this space, each function is defined through solutions to boundary value problems in the outer region, with boundary conditions determined by prescribed inner functions. Unlike the weighted spaces or tailoring techniques in \cite{Coti92,BT14}, we rigorously establish a priori decay estimates not only for the functions themselves but also for their deviations from the limiting profiles $U_{r, \lambda}, V_{\rho,\nu}$, and  the differences between distinct functions within the space. We emphasize that the ``dead core" estimates are crucial for establishing the decay of differences between distinct functions, as they provide quantitative separation bounds in the interaction regions.

By restricting the fixed-point problem to this metric space and leveraging the derived decay estimates, we successfully reduce the analysis to a finite-dimensional setting.

\medskip

The structure of this paper is organized as follows. In Section \ref{sec:2}, we introduce some basic estimates necessary for proving Theorem \ref{th2}. In Section \ref{sec:3}, we formulate a minimization problem for the outer region and establish various a priori estimates for the solution to the exterior problem. In Section \ref{sec:4}, we demonstrate the existence of solutions using the fixed-point theorem in the orthogonal complement of the kernel space, reducing the problem to a finite-dimensional one. In Section \ref{sec:5} concludes the proof of the theorems by solving the finite-dimensional problem. In the Appendix, we present some basic estimates used in the article.

\section{Preliminaries}\label{sec:2}

Let
\begin{equation*}
	\sigma_{k}:= k^{\frac{1}{N-2}}.
\end{equation*}

Set
\begin{equation*}
	P_{l}= \bigcup_{i=1}^{k} B_{\sigma^{l}_{k}}(x^{i}),~Q_{l}=\bigcup_{i=1}^{k} B_{\sigma^{l}_{k}}(y^{i}),~l=1,2.
\end{equation*}

Let $\chi_{0}\in C^{1}_{0}(\mathbb R^{N})$ be a fixed truncation function such that $\chi_{0}=1$ in $B_{1}(0)$ and $\chi_{0}=0$ in $\mathbb R^{N}\setminus B_{2}(0)$.

Denote
\begin{equation*}
	Y_{1}=\sum_{i=1}^k \chi_{0}(\cdot-x^{i})\frac{\partial U_{x^{i},\lambda}}{\partial r},\quad
	Y_{2}=\sum_{i=1}^k \chi_{0}(\cdot-x^{i})\frac{\partial U_{x^{i},\lambda}}{\partial \lambda},~i=1,\dots,k,
\end{equation*}
\begin{equation*}
	Z_{1}=\sum_{i=1}^k\chi_{0}(\cdot-y^{i})\frac{\partial V_{y^{i}, \nu}}{\partial \rho},\quad Z_{2}=\sum_{i=1}^k\chi_{0}(\cdot-y^{i})\frac{\partial V_{y^{i}, \nu}}{\partial\nu},~i=1,\dots,k,
\end{equation*}
where $ x^{i}$ and $ y^{i}$ are defined in \eqref{1.7}.

Define
\begin{equation*}
	\mathbb E=\mathbb E_{r,\rho,\lambda,\nu}=E_{r,\lambda}\times E_{\rho,\nu},
\end{equation*}
where
\begin{equation*}
	\begin{aligned}
		E_{r,\lambda}
		=&\Big\{u\in H_{s_{0}} \mid \int_{\mathbb R^{N}} Y_{1}u=0~\text{and}~\int_{\mathbb R^{N}} Y_{2}u=0\Big\},\\
		E_{\rho,\nu}
		=&\Big\{v\in H_{s_{0}} \mid \int_{\mathbb R^{N}} Z_{1} v=0~\text{and}~\int_{\mathbb R^{N}}  Z_{2}v=0\Big\}.
	\end{aligned}
\end{equation*}

Let
$$\lambda\in [\gamma_{1},\gamma_{2}]~~\text{and}~~\nu\in [\gamma_{3},\gamma_{4}],$$
where $\gamma_{2}>\gamma_{1}>0$ and $\gamma_{4}>\gamma_{3}>0$ are some constants.

For $i=1,\dots,k$, denote
\begin{equation*}
	\Omega_{i}
	= \Set{z=(z^{\prime},z^{\prime\prime}) \in \mathbb R^{2}\times \mathbb R^{N-2} | \Big \langle \frac{z^{\prime}}{\lvert z^{\prime} \rvert}, \frac{(x^i)^{\prime}}{\lvert (x^i)^{\prime} \rvert} \Big \rangle\geq \cos\frac{\pi}{k} }
	=\Set{z    |~\Big \langle \frac{z^{\prime}}{\lvert z^{\prime} \rvert}, \frac{(y^i)^{\prime}}{\lvert (y^i)^{\prime} \rvert} \Big \rangle\geq \cos\frac{\pi}{k}}.
\end{equation*}

Let $h_1, h_2 \in L^\infty(\mathbb{R}^N) \cap H_s$, and denote by ${1}_{P_2}$ and ${1}_{Q_2}$ the characteristic functions of the sets $P_2$ and $Q_2$, respectively.  
Consider the following linear problems:
\begin{equation} \label{eq7}
	\begin{aligned}
		\begin{cases}
			-\Delta \varphi-(2^{*}-1)K_1\big(\frac{\lvert y \rvert}{\mu}\big)U^{2^{*}-2}_{r,\lambda}\varphi=1_{P_2}h_{1}+\sum_{j=1}^{2}c_{j} Y_{j}, \text{ in }\R^N,
			\\
			\varphi\in E_{r,\lambda},
		\end{cases}
	\end{aligned}
\end{equation}
and 
\begin{equation} \label{eq7.}
	\begin{aligned}
		\begin{cases}
			-\Delta \psi-(2^{*}-1)K_2\big(\frac{\lvert y \rvert}{\mu}\big)V^{2^{*}-2}_{\rho,\nu}\psi=1_{Q_2}h_{2}+\sum_{j=1}^{2}d_{j} Z_{j}, \text{ in }\R^N,\\
			\psi \in E_{\rho,\nu},
		\end{cases}
	\end{aligned}
\end{equation}
for some real numbers  $c_{j}$ and $d_{j}$. 
In what follows, we use notation $\langle u_{1},u_{2} \rangle=\int_{\mathbb R^{N}} u_{1}u_{2}$.
\begin{Lem}\label{lem1}
	
	Let $h_{1,k}, h_{2,k} \in L^\infty(\R^N)\cap H_s$.  
	Suppose $(\varphi_k, c_{1,k}, c_{2,k})$ and $(\psi_k, d_{1,k}, d_{2,k})$ are the solutions to \eqref{eq7} and \eqref{eq7.} with $h_1 = h_{1,k}$ and $h_2 = h_{2,k}$, respectively. Then the following hold:
	\begin{enumerate} 
		\item[(i)] If $\sigma_k^4 \|h_{1,k}\|_{L^\infty(P_2)} \to 0$ as $k \to \infty$, then
		\[
		\|\varphi_k\|_{L^\infty(\mathbb{R}^N)} + \sum_{j=1}^2 |c_{j,k}| \;\to\; 0 \quad \text{as } k \to \infty.
		\]
		
		\item[(ii)] If $\sigma_k^4 \|h_{2,k}\|_{L^\infty(Q_2)} \to 0$ as $k \to \infty$, then
		\[
		\|\psi_k\|_{L^\infty(\mathbb{R}^N)} + \sum_{j=1}^2 |d_{j,k}| \;\to\; 0 \quad \text{as } k \to \infty.
		\]
	\end{enumerate}
\end{Lem}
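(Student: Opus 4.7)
The two parts are structurally symmetric, so I outline only (i); part (ii) is the same argument with $(U_{r,\lambda},Y_j,K_1,P_2,c_j)$ replaced by $(V_{\rho,\nu},Z_j,K_2,Q_2,d_j)$. The plan is a standard contradiction/blow-up scheme exploiting the nondegeneracy of the Aubin--Talenti bubble and the orthogonality that defines $E_{r,\lambda}$. Suppose (i) fails: there is a sequence $h_{1,k}$ with $\sigma_k^{4}\|h_{1,k}\|_{L^\infty(P_2)}\to 0$ yet $a_k:=\|\varphi_k\|_{L^\infty}+\sum_j|c_{j,k}|\not\to 0$. By linearity of \eqref{eq7}, renormalize $(\varphi_k,c_{j,k},h_{1,k})\mapsto(\varphi_k,c_{j,k},h_{1,k})/a_k$ so that $a_k\equiv 1$ while $\sigma_k^{4}\|h_{1,k}\|_{L^\infty(P_2)}\to 0$ still holds.

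I would first eliminate the Lagrange multipliers by testing \eqref{eq7} against each $Y_i$, $i=1,2$. Since $\partial_r U_{x^j,\lambda}$ and $\partial_\lambda U_{x^j,\lambda}$ are kernel elements of the unperturbed critical linearization and the cutoff $\chi_0(\cdot-x^j)$ equals $1$ on a ball containing the bulk of the bubble, the action $\mcal L Y_i:=-\Delta Y_i-(2^*-1)K_1(|\cdot|/\mu)U_{r,\lambda}^{2^*-2}Y_i$ is small in $L^{1}$ (combining the cutoff error, the Taylor expansion $K_1=1+O(|r/\mu-r_0|^m)$ near each $x^j$, and the separation $|x^i-x^j|\to\infty$ for $i\neq j$). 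The pairing matrix $A=(\langle Y_i,Y_j\rangle)$ is uniformly invertible by an explicit Talenti-type integral (the cross term $\langle Y_1,Y_2\rangle$ vanishes by the parity of $\partial_r U_{x^j,\lambda}\cdot\partial_\lambda U_{x^j,\lambda}$), so solving the resulting $2\times 2$ system gives
\[
|c_{j,k}|\leq o(1)\|\varphi_k\|_{L^\infty}+C\sigma_k^{4}\|h_{1,k}\|_{L^\infty(P_2)}\longrightarrow 0,
\]
and the normalization then forces $\|\varphi_k\|_{L^\infty}\to 1$.

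Next I would pick $y_k$ with $|\varphi_k(y_k)|\geq 1/2$; by the cyclic symmetry inside $H_s$, assume $y_k\in\overline{\Omega_1}$. In the easy case $\lambda|y_k-x^1|\leq R$, rescale $\tilde\varphi_k(z):=\varphi_k(x^1+z/\lambda)$. Elliptic regularity yields, along a subsequence, $\tilde\varphi_k\to\phi^\ast$ in $C^1_{\mrm{loc}}(\R^N)$, with $\phi^\ast$ bounded, nontrivial, and satisfying $-\Delta\phi^\ast=(2^*-1)U_{0,\lambda_\infty}^{2^*-2}\phi^\ast$ in $\R^N$. By nondegeneracy, $\phi^\ast\in\mrm{span}\{\partial_\lambda U_{0,\lambda_\infty},\partial_{z_1}U_{0,\lambda_\infty},\ldots,\partial_{z_N}U_{0,\lambda_\infty}\}$. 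The evenness in $y_d$ for $d\geq 2$ imposed by $H_s$ kills $\partial_{z_d}U$ for $d=2,\ldots,N$; the orthogonality $\langle\varphi_k,Y_1\rangle=\langle\varphi_k,Y_2\rangle=0$ passes to the limit---only the $i=1$ summand contributes because $\lambda|x^1-x^i|\to\infty$ for $i\neq 1$---and kills $\partial_{z_1}U$ and $\partial_\lambda U$. Hence $\phi^\ast\equiv 0$, contradicting $|\phi^\ast|\geq 1/2$ at the accumulation point.

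The genuine obstacle is the complementary case $\lambda|y_k-x^1|\to\infty$, where the maximum is attained far from every bubble. Here I would use the Newtonian representation
\[
\varphi_k(y)=\int_{\R^N}G_0(y-z)\Bigl[(2^*-1)K_1(|z|/\mu)U_{r,\lambda}^{2^*-2}\varphi_k+1_{P_2}h_{1,k}+\sum_{j}c_{j,k}Y_j\Bigr](z)\,dz,
\]
with $G_0(z)=c_N|z|^{-(N-2)}$. The $h_{1,k}$ contribution is bounded by $C\sigma_k^{4}\|h_{1,k}\|_{L^\infty(P_2)}=o(1)$---the factor $\sigma_k^{4}=k^{4/(N-2)}$ arising from integrating $|y-z|^{-(N-2)}$ against the characteristic function of balls of radius $\sigma_k^{2}$, which is exactly why the hypothesis carries $\sigma_k^{4}$---and the $c_{j,k}Y_j$ contribution is $o(1)$ by the previous step. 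The $U_{r,\lambda}^{2^*-2}\varphi_k$ term is delicate and is best handled by first upgrading the contradiction to a weighted $L^\infty$ norm
\[
\|\varphi\|_{\ast}:=\sup_{y}\frac{|\varphi(y)|}{\sum_{i}(1+\lambda|y-x^i|)^{-(N-2)/2-\tau}}
\]
with a small $\tau>0$, in the spirit of Del Pino--Felmer and Wei--Yan, and then upgrading from $\|\varphi_k\|_{\ast}\to 0$ back to $\|\varphi_k\|_{L^\infty}\to 0$ (the weight is uniformly bounded). This closes the contradiction and proves (i); part (ii) follows by the formal substitution above.
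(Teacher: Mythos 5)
Your proposal is correct in substance and follows the same architecture as the paper's proof: argue by contradiction, eliminate the multipliers via pairings, reduce to $\|\varphi_k\|_{L^\infty}$ bounded below, pass to the Newtonian (Green's function) representation, and close by blow-up plus nondegeneracy of the bubble together with the orthogonality defining $E_{r,\lambda}$. Where you genuinely diverge is the treatment of the point where $\|\varphi_k\|_{L^\infty}$ is nearly attained: you split into a near regime ($|y_k-x^1|\le R$) and a far regime ($|y_k-x^1|\to\infty$) and propose a weighted $L^\infty$ norm in the spirit of Del Pino--Felmer and Wei--Yan for the far regime. The paper avoids this bifurcation entirely: the very bound you derive for the $U_{r,\lambda}^{2^*-2}\varphi_k$ term in the representation (via Lemmas~\ref{lemB1}--\ref{lemB2}, \eqref{key3}, \eqref{tau1a}), fed back into the identity along with the $o(1)$ bounds on the $h_{1,k}$ and $c_{j,k}Y_j$ contributions, gives the pointwise decay estimate \eqref{eq12} for $|\varphi_k(y)|/\|\varphi_k\|_{L^\infty}$ on $\Omega_1$, and this forces the $L^\infty$ norm to be essentially attained in a fixed ball $B_R(x^1)$ (equation \eqref{eq13}). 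Your far regime therefore cannot occur, so no weighted-norm machinery is needed; the decay estimate you treat as the delicate leftover is exactly what collapses your case split. A secondary, smaller difference: you remove $c_j$ by pairing against the cutoff vector fields $Y_j$, which would require bounding the cutoff terms $2\nabla\chi_0\cdot\nabla\partial U+\partial U\,\Delta\chi_0$; the paper instead pairs against the uncut $\partial_{y_1}U_{x^1,\lambda}$ and $\partial_\lambda U_{x^1,\lambda}$, integrating by parts to shift the operator onto the test function so that only the small perturbations $(1-K_1)U_{r,\lambda}^{2^*-2}$ and $U_{r,\lambda}^{2^*-2}-U_{x^1,\lambda}^{2^*-2}$ remain. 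Both routes give $c_j=o(1+\|\varphi_k\|_{L^\infty})$, but the paper's variant is lighter on bookkeeping.
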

\begin{proof}
	We only prove (i).
	By contradiction, we assume that there exist a constant $c^{\prime}>0$ such that as
	$k\rightarrow\infty$, for $h_{1}=h_{1,k}$, $\lambda=\lambda_{k}\in [\gamma_{1},\gamma_{2}]$, $r=r_{k}\in \Big[r_{0} \mu-1,r_{0} \mu +1\Big]$, with $\sigma^{4}_{k}\lVert h_{1,k}\rVert_{L^{\infty}(P_{2})}\rightarrow 0$ 
	there holds 
	\begin{equation}\label{eq2.22222}
		\sum_{j=1}^2 |c_{j,k}|+\lVert \varphi_{k}\rVert_{L^{\infty}(\mathbb R^{N})}\geq 2c^{\prime}>0.
	\end{equation} 
	For simplicity, we drop the subscript $k$.
	
	We first estimate $c_{l},l=1,2$.
	Multiplying \eqref{eq7} by $\frac{\partial U_{x^{1},\lambda}}{\partial y_{1}}$ and integrating, we see that  
	\begin{equation}\label{eq11}
		 -c_{1}(\int\chi_0\left|\frac{\partial U_{0,\lambda}}{\partial y_{1}} \right|^2+o(1))=\Big\langle-\Delta \varphi-(2^{*}-1)K_1\Big(\frac{\lvert y \rvert}{\mu}\Big)U^{2^{*}-2}_{r,\lambda}\varphi,\frac{\partial U_{x^{1},\lambda}}{\partial y_{1}} \Big\rangle-\langle 1_{P_2} h_{1},\frac{\partial U_{x^{1},\lambda}}{\partial y_{1}}\rangle.
	\end{equation}
	
	From Lemma \ref{lemB1}, we have
	\begin{equation}\label{eq2.5}
		\begin{aligned}	
			&\lvert \langle 1_{P_2} h_{1},\frac{\partial U_{x^{1},\lambda}}{\partial y_{1}}\rangle\rvert 
			\leq C \lVert h_{1} \rVert_{L^{\infty}(P_{2})}\sum_{i=1}^{k} \int_{B_{\sigma^{2}_{k}}(x^{i})} \frac{1}{(1+\lvert y-x^{1}\rvert)^{N-2}}dy\\
			\leq& C \lVert h_{1} \rVert_{L^{\infty}(P_{2})} \int_{B_{\sigma^{2}_{k}}(x^{1})} \frac{1}{(1+\lvert y-x^{1}\rvert)^{N-2}}dy+C \lVert h_{1} \rVert_{L^{\infty}(P_{2})}\sum_{i=2}^{k} \int_{B_{\sigma^{2}_{k}}(x^{i})} \frac{1}{(1+\lvert y-x^{1}\rvert)^{N-2}}dy\\
			\leq& C \sigma^{4}_{k}\lVert h_{1} \rVert_{L^{\infty}(P_{2})}.
		\end{aligned}
	\end{equation}
	
	On the other hand, we also have
	\begin{equation*}
		\begin{aligned}
			&\Big\langle-\Delta \varphi-(2^{*}-1)K_1\Big(\frac{\lvert y \rvert}{\mu}\Big)U^{2^{*}-2}_{r,\lambda}\varphi,\frac{\partial U_{x^{1},\lambda}}{\partial y_{1}} \Big\rangle\\
			=&\Big\langle-\Delta \frac{\partial U_{x^{1},\lambda}}{\partial y_{1}}-(2^{*}-1)K_1\Big(\frac{\lvert y \rvert}{\mu}\Big)U^{2^{*}-2}_{r,\lambda}\frac{\partial U_{x^{1},\lambda}}{\partial y_{1}},\varphi \Big\rangle\\
			=&(2^{*}-1)\Big\langle \Big(1-K_1\Big(\frac{\lvert y \rvert}{\mu}\Big)\Big)U^{2^{*}-2}_{r,\lambda}\frac{\partial U_{x^{1},\lambda}}{\partial y_{1}},\varphi \Big\rangle-(2^{*}-1)\Big\langle (U^{2^{*}-2}_{r,\lambda}-U^{2^{*}-2}_{x^{1},\lambda})\frac{\partial U_{x^{1},\lambda}}{\partial y_{1}},\varphi \Big\rangle\\
			=&:G_{1}+G_{2}.
		\end{aligned}
	\end{equation*}

	When $\lvert |y|-\mu r_{0}\rvert\leq \sqrt{\mu}$, we get
	\begin{equation*}
		\begin{aligned}
			\int_{\lvert |y|-\mu r_{0}\rvert\leq \sqrt{\mu}} \Big\lvert K_1\Big(\frac{\lvert y \rvert}{\mu}\Big)-1\Big\rvert U^{2^{*}-2}_{r,\lambda} \frac{1}{(1+\lvert y-x^{1}\rvert)^{N-2}} dy
			\leq \frac{C}{\sqrt{\mu}} \int_{\mathbb R^{N}} U^{2^{*}-2}_{r,\lambda} \frac{1}{(1+\lvert y-x^{1}\rvert)^{N-2}} dy
			\leq \frac{C}{\sqrt{\mu}}.
		\end{aligned}
	\end{equation*}
	If $\lvert |y|-\mu r_{0}\rvert\geq \sqrt{\mu}$, then 
	\begin{equation*}
		\lvert y-  x^{j}\rvert \geq \lvert |y|-\mu r_{0}\rvert-\lvert \lvert x^{j}\rvert-\mu r_{0}\rvert\geq\sqrt{\mu}-1\geq \frac{1}{2}\sqrt{\mu},
	\end{equation*}
	which implies that 
	\begin{equation*}
		\begin{aligned}
			\int_{\lvert |y|-\mu r_{0}\rvert\geq \sqrt{\mu}} \Big\lvert K_1\Big(\frac{\lvert y \rvert}{\mu}\Big)-1\Big\rvert U^{2^{*}-2}_{r,\lambda} \frac{1}{(1+\lvert y-x^{1}\rvert)^{N-2}} dy
			=o(1).
		\end{aligned}
	\end{equation*}
 
	Then we obtain
	\begin{equation}\label{k}
		G_{1}=\lVert \varphi \rVert_{L^{\infty}(\mathbb R^{N})} O\Big(\int_{\mathbb R^{N}} \Big\lvert K_1\Big(\frac{\lvert y \rvert}{\mu}\Big)-1\Big\rvert U^{2^{*}-2}_{r,\lambda} \frac{1}{(1+\lvert y-x^{1}\rvert)^{N-2}} dy \Big)=o(\lVert \varphi \rVert_{L^{\infty}(\mathbb R^{N})}).
	\end{equation}

	For $G_{2}$, by Lemma \ref{lemB1},  \eqref{tau1a}, and \eqref{key3'}, we have
	\begin{equation}\label{bu}
		\begin{aligned}
			G_{2}=& \lVert \varphi \rVert_{L^{\infty}(\mathbb R^{N})}O\Big(\int_{\mathbb R^{N}} U^{2^*-3}_{x^{1},\lambda}\sum_{i=2}^{k}U_{x^{i},\lambda}\frac{\partial U_{x^{1},\lambda}}{\partial y_{1}}+\int_{\mathbb R^{N}}(\sum_{i=2}^{k}U_{x^{i},\lambda})^{2^*-2}\frac{\partial U_{x^{1},\lambda}}{\partial y_{1}}\Big)\\
			=&\lVert \varphi \rVert_{L^{\infty}(\mathbb R^{N})}O\Big(\sum_{i=2}^{k}\frac{1}{\lvert x^{i}-x^{1}\rvert^{1+\frac{2\tau_1}{3}}}\Big(\int_{\mathbb R^{N}}\frac{1}{(1+\lvert y-x^{1}\rvert)^{N+1-\tau_1}}+\int_{\R^N}\frac{1}{(1+\lvert y-x^{i}\rvert)^{N+1-\tau_1}}\Big)\Big)\\
			=&o(\lVert \varphi \rVert_{L^{\infty}(\mathbb R^{N})}).
		\end{aligned}
	\end{equation}
 
	Then, from \eqref{eq11}, \eqref{eq2.5}, \eqref{k}, and \eqref{bu}, we obtain	
	\begin{equation}\label{a14}
		c_{1}=o(1+\lVert \varphi \rVert_{L^{\infty}(\mathbb R^{N})}).
	\end{equation}
	Similarly, 
	multiplying \eqref{eq7} by $\frac{\partial U_{x^{1},\lambda}}{\partial\lambda}$ and integrating, we can obtain
	\begin{equation}\label{a15}
		c_{2}=o(1+\lVert \varphi \rVert_{L^{\infty}(\mathbb R^{N})}).
	\end{equation}
	From \eqref{eq2.22222}, \eqref{a14} and \eqref{a15}, we know $\lVert \varphi \rVert_{L^{\infty}(\mathbb R^{N})}\geq c'>0$.
	
	Next we write \eqref{eq7} as
	\begin{equation*}
		\begin{aligned}
			\varphi(y)=&(2^{*}-1)\int_{\mathbb R^{N}} \frac{1}{\lvert z-y\rvert^{N-2}}K_1\Big(\frac{\lvert z \rvert}{\mu}\Big)U^{2^{*}-2}_{r,\lambda}(z)\varphi(z) dz\\
			&+\int_{\mathbb R^{N}} \frac{1}{\lvert z-y\rvert^{N-2}}\Big(1_{P_2}(z)h_{1}(z)+\sum_{j=1}^{2}c_{j} Y_{j}(z)\Big) dz
			.
		\end{aligned}
	\end{equation*}
	
	For $ z\in \Omega_{1}$, we get $\lvert z-x^{i}\rvert\geq\lvert z-x^{1}\rvert,~i=2,\dots,k$. Using Lemma \ref{lemB1} and \eqref{key3}, we obtain
	\begin{equation*}
		\begin{aligned}
			\sum_{i=2}^{k}\frac{1}{(1+\lvert z-x^{i}\rvert)^{N-2}}
			\leq & \frac{1}{(1+\lvert z-x^{1}\rvert)^{N-2-\tau_{1}}}\sum_{i=2}^{k}\frac{1}{\lvert x^{i}-x^{1}\rvert^{\tau_{1}}}
			\leq C \frac{1}{(1+\lvert z-x^{1}\rvert)^{N-2-\tau_{1}}},
		\end{aligned}
	\end{equation*}
	where $\tau_{1}=\frac{N-2-m}{N-2}$.
	
	Using \eqref{tau1a} and Lemma \ref{lemB2}, there holds
	\begin{equation*}
		\begin{aligned}
			&\Big\lvert (2^{*}-1)\int_{\mathbb R^{N}} \frac{1}{\lvert z-y\rvert^{N-2}}K_1\Big(\frac{\lvert z \rvert}{\mu}\Big)U^{2^{*}-2}_{r,\lambda}(z)\varphi(z) dz\Big\rvert\\
			\leq &C \lVert \varphi\rVert_{L^{\infty}(\mathbb R^{N})}  \int_{\R^N} \frac{1}{\lvert z-y\rvert^{N-2}} \Big(\sum_{i=1}^{k}\frac{1}{(1+\lvert z-x^{i}\rvert)^{N-2}}\Big)^{\frac{4}{N-2}}dz\\
			\leq &C \lVert \varphi\rVert_{L^{\infty}(\mathbb R^{N})}  \int_{\R^N} \frac{1}{\lvert z-y\rvert^{N-2}} \sum_{i=1}^{k}\frac{1}{(1+\lvert z-x^{i}\rvert)^{4-\frac{(6-N)^+}{N-2} \tau_{1}}}dz\\
			\leq &C \lVert \varphi\rVert_{L^{\infty}(\mathbb R^{N})} \sum_{i=1}^{k}\frac{1}{(1+\lvert y-x^{i}\rvert)^{2-\frac{ \tau_{1} }{N-2}}},
		\end{aligned}
	\end{equation*}
	where $\tau_{1}=\frac{N-2-m}{N-2}$.
	
	For $ y\in \Omega_{1}\setminus B_{2\sigma^{2}_{k}}(x^{1})$, we get
	\begin{equation*}
		\begin{aligned}
			\Big\lvert \int_{B_{\sigma^{2}_{k}}(x^{1})} \frac{1}{\lvert z-y\rvert^{N-2}} h_{1}(z) dz\Big\rvert\leq C\lVert h_{1}\rVert_{L^{\infty}(P_{2})} \int_{B_{\sigma^{2}_{k}}(x^{1})} \frac{1}{\sigma^{2(N-2)}_{k}}dz\leq C \sigma^{4}_{k}\lVert h_{1}\rVert_{L^{\infty}(P_{2})}.	
		\end{aligned}
	\end{equation*}
	For $ y\in B_{2\sigma^{2}_{k}}(x^{1})$, we also obtain
	\begin{equation*}
		\begin{aligned}
			\Big\lvert \int_{B_{\sigma^{2}_{k}}(x^{1})} \frac{1}{\lvert z-y\rvert^{N-2}} h_{1}(z) dz\Big\rvert\leq \lVert h_{1}\rVert_{L^{\infty}(P_{2})}\Big\lvert \int_{B_{3\sigma^{2}_{k}}(y)} \frac{1}{\lvert z-y\rvert^{N-2}} dz\Big\rvert\leq C \sigma^{4}_{k}\lVert h_{1}\rVert_{L^{\infty}(P_{2})}.	
		\end{aligned}
	\end{equation*}
	For $ y\in \Omega_{1}$ and $z\in B_{\sigma^{2}_{k}}(x^{i})$, there holds $\lvert z-y\rvert\geq\frac{1}{2}\lvert x^{i}-x^{1}\rvert,i=2,\dots,k$.
	By \eqref{key3'}, we get
	\begin{equation*}
		\begin{aligned}
			\Big\lvert\sum_{i=2}^{k}\int_{B_{\sigma^{2}_{k}}(x^{i})} \frac{1}{\lvert z-y\rvert^{N-2}} h_{1}(z) dz\Big\rvert\leq C\sum_{i=2}^{k}\frac{1}{|x^i-x^1|^{N-2}}\sigma^{2N}_{k}\lVert h_{1}\rVert_{L^{\infty}(P_{2})}\leq C \frac{1}{\mu^{m}}\sigma^{2N}_{k}\lVert h_{1}\rVert_{L^{\infty}(P_{2})}.	
		\end{aligned}
	\end{equation*}

	Therefore,
	\begin{equation*}
		\begin{aligned}
			&\Big\lvert \int_{\mathbb R^{N}} \frac{1}{\lvert z-y\rvert^{N-2}} 1_{P_2}(z)h_{1}(z) dz\Big\rvert
			=\Big\lvert \sum_{i=1}^{k}\int_{B_{\sigma^{2}_{k}}(x^{i})} \frac{1}{\lvert z-y\rvert^{N-2}} h_{1}(z) dz\Big\rvert\\
			\leq&\Big\lvert \int_{B_{\sigma^{2}_{k}}(x^{1})} \frac{1}{\lvert z-y\rvert^{N-2}} h_{1}(z) dz\Big\rvert+\Big\lvert\sum_{i=2}^{k}\int_{B_{\sigma^{2}_{k}}(x^{i})} \frac{1}{\lvert z-y\rvert^{N-2}} h_{1}(z) dz\Big\rvert\\
			\leq& C\sigma^{4}_{k}\lVert h_{1}\rVert_{L^{\infty}(P_{2})}+C\frac{1}{\mu^{m}}\sigma^{2N}_{k}\lVert h_{1}\rVert_{L^{\infty}(P_{2})}=o(1).
		\end{aligned}
	\end{equation*}
	By Lemma \ref{lemB2}, we also obtain
	\begin{equation}\label{a13}
		\begin{aligned}
			\Big\lvert \int_{\mathbb R^{N}} \frac{1}{\lvert z-y\rvert^{N-2}} Y_{j}(z) dz\Big\rvert
			\leq& C \sum_{i=1}^{k} \int_{\mathbb R^{N}} \frac{1}{\lvert z-y\rvert^{N-2}}\frac{1}{(1+\lvert z-x^{i}\rvert)^{N-2}}dz\\
			\leq& C \sum_{i=1}^{k}\frac{1}{(1+\lvert y-x^{i}\rvert)^{N-4}}\leq C.
		\end{aligned}
	\end{equation}
	
	Combining  \eqref{a14}, \eqref{a15} and \eqref{a13}, we obtain
	\begin{equation*}
		\int_{\mathbb R^{N}} \frac{1}{\lvert z-y\rvert^{N-2}}\sum_{j=1}^{2}c_{j} Y_{j}(z) dz=o(1+\lVert \varphi \rVert_{L^{\infty}(\mathbb R^{N})}).
	\end{equation*}
	Therefore, for $y\in \Omega_1$,
	\begin{equation}\label{eq12}
		\frac{\lvert\varphi(y)\rvert}{\lVert \varphi \rVert_{L^{\infty}(\mathbb R^{N})}} \leq C
		\sum_{i=1}^{k}\frac{1}{(1+\lvert y-x^{i}\rvert)^{2-\frac{\tau_{1}}{3}}} +o(1)\leq C\frac{1}{(1+\lvert y-x^{i}\rvert)^{2-\frac{4}{3}\tau_1}} +o(1).
	\end{equation}
	We obtain from \eqref{eq12} that there is $R>0$ independent of $k$  such that
	\begin{equation}\label{eq13}
		\lVert \varphi(y) \rVert_{L^{\infty}(B_{R}(x^{1}))}/\lVert \varphi \rVert_{L^{\infty}(\mathbb R^{N})}=1.
	\end{equation}
	But $\bar \varphi(y)=\varphi( y+x^{1})/\lVert \varphi \rVert_{L^{\infty}(\mathbb R^{N})}$ converges uniformly in any compact set to a solution $u$ of
	\[ 
		-\Delta u-(2^{*}-1)U_{0,\lambda}^{2^{*}-2}u=0 ~\text{in}~\mathbb R^{N},
	\]
	for some $\lambda\in[\gamma_1,\gamma_2]$.
	Since $\varphi\in H_{s_0}$, $u$ is even in $y_j$,$j=2,\dots,N$. Therefore,
	we have 
	$u\in \text{span}\{\frac{\partial U_{0,\lambda}}{\partial y_1},\frac{\partial U_{0,\lambda}}{\partial \lambda}\}$.
	However, since $ \int_{\R^N}u\chi_0 \frac{\partial U_{0,\lambda}}{\partial y_1}= \int_{\R^N}u\chi_0 \frac{\partial U_{0,\lambda}}{\partial \lambda}= 0$,   we conclude $ u = 0 $. This contradicts the uniform lower bound in \eqref{eq13}, completing the proof.  
\end{proof}

From Lemma \ref{lem1}, using the same argument as in the proof of Proposition 4.1 in \cite{DMPP03}, we can prove the following result:

\begin{Prop}\label{Prop2.2}
	There exists $k_0 > 0$ such that for all $k \geq k_0$, the following hold for some constant $C > 0$ independent of $k$:
	\begin{enumerate} 
		\item[(i)] For every $h_1 \in L^\infty(\mathbb{R}^N)$, the linear problem \eqref{eq7} admits a unique solution 
		$\varphi = L_1(h_1) \in E_{r,\lambda}\cap D^{1,2}(\R^N)$ together with unique constants $c_j = c_j(h_1) \in \mathbb{R}$, $j=1,2$, such that
		\[
		\|L_1(h_1)\|_{L^\infty(\mathbb{R}^N)} \leq C\,\sigma_k^4\, \|h_1\|_{L^\infty(P_2)},
		\qquad
		|c_j| \leq C\,\sigma_k^4\, \|h_1\|_{L^\infty(P_2)}, \quad j=1,2.
		\]
		
		\item[(ii)] For every $h_2 \in L^\infty(\mathbb{R}^N)$, the linear problem \eqref{eq7.} admits a unique solution 
		$\psi = L_2(h_2) \in E_{\rho,\nu}\cap D^{1,2}(\R^N)$ together with unique constants $d_j = d_j(h_2) \in \mathbb{R}$, $j=1,2$, such that
		\[
		\|L_2(h_2)\|_{L^\infty(\mathbb{R}^N)} \leq C\,\sigma_k^4\, \|h_2\|_{L^\infty(Q_2)},
		\qquad
		|d_j| \leq C\,\sigma_k^4\, \|h_2\|_{L^\infty(Q_2)}, \quad j=1,2.
		\]
	\end{enumerate}

\end{Prop}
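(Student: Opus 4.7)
The plan is to combine a Fredholm-alternative framework in $D^{1,2}(\mathbb R^N)$ with the a priori estimate of Lemma~\ref{lem1} to simultaneously obtain existence, uniqueness, and the quantitative bounds; the heavy lifting has already been done in Lemma~\ref{lem1}, and what remains is essentially routine unpacking. I treat only part~(i); part~(ii) is identical. The first step is to upgrade Lemma~\ref{lem1}(i) from the qualitative ``$\to 0$'' statement to the linear bound
$$\|\varphi\|_{L^\infty(\mathbb R^N)}+\sum_{j=1}^2|c_j|\le C\sigma_k^4\|h_1\|_{L^\infty(P_2)}$$
valid for every solution of \eqref{eq7} and every $k\ge k_0$. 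This is the standard normalization-and-contradiction argument: if the estimate failed along some subsequence, dividing through by the left-hand side would yield solutions with $L^\infty+|c_j|$ norms equal to $1$ but with $\sigma_k^4\|h_{1,k}\|_{L^\infty(P_2)}\to 0$, directly contradicting Lemma~\ref{lem1}(i).

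For existence I would work variationally in the Hilbert space $X:=E_{r,\lambda}\cap D^{1,2}(\mathbb R^N)$, recasting \eqref{eq7} as $(I-K_k)\varphi=F(h_1)$, where $K_k\in\mathcal L(X)$ is the self-adjoint operator arising from the Riesz representation of the bilinear form $(w,v)\mapsto (2^{*}-1)\int K_1(|y|/\mu)U_{r,\lambda}^{2^{*}-2}wv\,dy$. Compactness of $K_k$ on $X$ follows from $U_{r,\lambda}^{2^{*}-2}\in L^{N/2}(\mathbb R^N)$ together with the bubble decay at infinity, so $I-K_k$ is Fredholm of index zero. By the Fredholm alternative it suffices to verify $\ker(I-K_k)=\{0\}$ for $k\ge k_0$: a nontrivial element, after Moser bootstrapping to $L^\infty$ and normalizing to $\|\varphi\|_{L^\infty}=1$, would contradict Lemma~\ref{lem1}(i) applied with $h_{1,k}\equiv 0$. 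Hence $I-K_k$ is invertible, giving both existence and uniqueness. The Lagrange multipliers $c_1,c_2$ are then uniquely recovered by pairing the strong form of \eqref{eq7} against $\frac{\partial U_{x^{1},\lambda}}{\partial y_{1}}$ and $\frac{\partial U_{x^{1},\lambda}}{\partial \lambda}$, whose $2\times 2$ Gram matrix is uniformly nondegenerate as $k\to\infty$; the quantitative bounds on $|c_j|$ and $\|\varphi\|_\infty$ are the ones supplied by the previous paragraph.

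The main (and really only) subtlety is the transcription of the qualitative Lemma~\ref{lem1} into the sharp linear estimate: one must ensure that the normalized, contradictory sequence remains within the admissible parameter ranges $\lambda\in[\gamma_1,\gamma_2]$ and $r\in[r_0\mu-1,r_0\mu+1]$, which is automatic since those ranges are hypotheses carried over verbatim from Lemma~\ref{lem1}. A second minor point is the $L^\infty$ regularity of the $D^{1,2}$ solution produced by the Fredholm step, which is handled by the representation formula and cutoff estimates already developed in the proof of Lemma~\ref{lem1}. With these pieces in place, the argument reduces verbatim to the scheme of Proposition~4.1 in \cite{DMPP03}.
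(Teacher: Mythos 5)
Your proposal is correct and follows the same route the paper points to: the paper does not spell out the proof but refers to Lemma~\ref{lem1} combined with the argument of Proposition~4.1 in \cite{DMPP03}, which is precisely the normalization-and-contradiction upgrade of the a priori estimate followed by a Fredholm alternative for existence (compactness of the potential operator from $U_{r,\lambda}^{2^*-2}\in L^{N/2}$, injectivity from the case $h_1\equiv 0$ of Lemma~\ref{lem1}). The only point worth stating more explicitly is that the normalization step relies on the linearity of \eqref{eq7} in $(h_1,\varphi,c_j)$ at each fixed $k$, which is what allows one to rescale the putative counterexample to $\|\varphi_k\|_{L^\infty}+\sum_j|c_{j,k}|=1$; you invoke this implicitly and it is correct.
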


\medskip
To express the continuity of the operators with respect to the parameters,  
we denote by $\widehat{P}_i$, $\widehat{Q}_i$, $\widehat{Y}_i$, and $\widehat{Z}_i$ ($i=1,2$) the objects obtained from $P_i$, $Q_i$, $Y_i$, and $Z_i$ by replacing the parameters $(r,\rho,\lambda,\nu)$ with $(\widehat{r},\widehat{\rho},\widehat{\lambda},\widehat{\nu})$.
For $h_1 \in C_{0}(P_{2}\cap\widehat{P}_{2})$,  
let $\varphi = L_1(h_1)$ and $\widehat{\varphi} = \widehat{L}_1(h_1)$ be the unique solutions to  
the linear problem \eqref{eq7} corresponding to the parameter pairs  
$(r, \lambda)$ and $(\widehat{r}, \widehat{\lambda})$, respectively.  
Similarly, for $h_2 \in C_{0}(Q_{2}\cap\widehat{Q}_{2})$,  
we denote by $\psi = L_2(h_2)$ and $\widehat{\psi} = \widehat{L}_2(h_2)$ the unique solutions to \eqref{eq7.} corresponding to the parameter pairs $(\rho, \nu)$ and $(\widehat{\rho}, \widehat{\nu})$, respectively.

Then we have the following result.
\begin{Lem}\label{L2.2}
	The operators  $L_{1}$ and $L_{2}$  defined in Proposition~\ref{Prop2.2} are continuous with respect to the parameters $(r,\lambda)$ and $(\rho, \nu)$ in the following sense:
	\begin{align}\label{4.26..}
		&\sup\set{\| (L_{1}-\widehat L_{1})(h_{1}) \|_{L^\infty(\mathbb{R}^N)} 
		 | h_1\in M_1}
		\to 0, \quad \text{as } (r,\lambda)\to(\widehat r, \widehat\lambda),\\
		&\sup\set{\| (L_{2}-\widehat L_{2})(h_{2}) \|_{L^\infty(\mathbb{R}^N)} 
	 | h_2\in M_2}
	\to 0, \quad \text{as }(\rho,\nu)\to(\widehat\rho, \widehat\nu),\notag
	\end{align}
	where \begin{align*}
	&M_1=\set{h_1\in L^\infty(\R^N)| h_{1}=0 \text{ a.e. in } \R^N\setminus(P_2\cap \widehat P_2),\ \widehat L_{1}(h_{1})\in C_0(P_2),\ 
		\| h_{1}\|_{L^{\infty}(\R^N)}=1},\\
	&M_2=\set{h_2\in L^\infty(\R^N)| h_{2}=0 \text{ a.e. in } \R^N\setminus(Q_2\cap \widehat Q_2),\ \widehat L_{2}(h_{2})\in C_0(Q_2),\ 
		\| h_{2}\|_{L^{\infty}(\R^N)}=1}.
	\end{align*}
\end{Lem}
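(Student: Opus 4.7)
I will argue by contradiction, reducing the claim to the uniqueness part of Proposition~\ref{Prop2.2} applied at the limit parameters; only the assertion for $L_1$ is proved, the $L_2$ case being symmetric. Assume the first convergence in \eqref{4.26..} fails. Then there exist $\varepsilon_0>0$, parameters $(r_n,\lambda_n)\to(\widehat r,\widehat\lambda)$, and $h_{1,n}\in M_1$ (associated with the pair $(r_n,\lambda_n),(\widehat r,\widehat\lambda)$) with
$$\|L_1^{(n)}(h_{1,n})-\widehat L_1(h_{1,n})\|_{L^\infty(\mathbb R^N)}\ge\varepsilon_0.$$
Set $\varphi_n=L_1^{(n)}(h_{1,n})$, $\widehat\varphi_n=\widehat L_1(h_{1,n})$, with associated multipliers $c_{j,n},\widehat c_{j,n}$. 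Proposition~\ref{Prop2.2} and $\|h_{1,n}\|_\infty=1$ give the uniform bounds $\|\varphi_n\|_\infty,\|\widehat\varphi_n\|_\infty\le C\sigma_k^4$ and $|c_{j,n}|,|\widehat c_{j,n}|\le C\sigma_k^4$; after extracting subsequences, $c_{j,n}\to c_j^\star$, $\widehat c_{j,n}\to\widehat c_j^\star$, and $h_{1,n}\rightharpoonup h_{1,\star}$ weakly-$*$ in $L^\infty(\mathbb R^N)$.

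The key step is upgrading local compactness to global $L^\infty$ convergence of $\varphi_n$ and $\widehat\varphi_n$. Using the Green representation exactly as in the proof of Lemma~\ref{lem1} (leading to \eqref{eq12}--\eqref{eq13}), the uniform bounds above yield a pointwise envelope of the form
$$|\varphi_n(y)|+|\widehat\varphi_n(y)|\le C\sum_{i=1}^k\frac{1}{(1+|y-x_n^i|)^{2-\tau_1/3}}$$
valid on each sector $\Omega_i$, with constant $C$ independent of $n$. This supplies uniform decay at infinity: for every $\eta>0$ there exists $R_\eta>0$, independent of $n$, such that $|\varphi_n|+|\widehat\varphi_n|<\eta$ outside $\bigcup_i B_{R_\eta}(x_n^i)$. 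Interior elliptic regularity on compacts provides equicontinuity, so Arzel\`a--Ascoli produces $\varphi_n\to\varphi_\star$ and $\widehat\varphi_n\to\widehat\varphi_\star$ locally uniformly, and the uniform decay upgrades this to $L^\infty(\mathbb R^N)$ convergence along a further subsequence.

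Since $U_{r_n,\lambda_n}\to U_{\widehat r,\widehat\lambda}$ in $C^0_{\text{loc}}$, $Y_{j,n}\to\widehat Y_j$ in $L^1\cap L^\infty(\mathbb R^N)$ (both depend smoothly on parameters with compactly supported perturbations), and $|P_2^{(n)}\triangle\widehat P_2|\to 0$, passing to the limit in both linear equations and in the orthogonality relations $\int Y_{j,n}\varphi_n=0$, $\int\widehat Y_j\widehat\varphi_n=0$ shows that $\varphi_\star,\widehat\varphi_\star\in\widehat E_{\widehat r,\widehat\lambda}$ each solve the single problem
$$-\Delta u-(2^*-1)K_1(|y|/\mu)U_{\widehat r,\widehat\lambda}^{2^*-2}u=1_{\widehat P_2}h_{1,\star}+\sum_{j=1}^2\gamma_j\widehat Y_j\quad\text{in }\mathbb R^N,$$
with the \emph{same} multipliers $\gamma_j$ (the equality $c_j^\star=\widehat c_j^\star$ is forced by testing with $\partial U_{\widehat x^1,\widehat\lambda}/\partial y_1$ and $\partial U_{\widehat x^1,\widehat\lambda}/\partial\lambda$ as in the proof of Lemma~\ref{lem1}). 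By the uniqueness statement of Proposition~\ref{Prop2.2} at parameters $(\widehat r,\widehat\lambda)$, $\varphi_\star=\widehat\varphi_\star$; hence $\|\varphi_n-\widehat\varphi_n\|_\infty\to 0$, contradicting the choice of $\varepsilon_0$.

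The main obstacle is the passage to $L^\infty$ convergence on all of $\mathbb R^N$, since weak-$*$ convergence of the data $h_{1,n}$ only controls the equation in a distributional sense, not the $L^\infty$ norm of the solutions. The decay envelope from the Green kernel representation (the mechanism already powering Lemma~\ref{lem1}) circumvents this by supplying a uniform tail bound. A secondary bookkeeping difficulty is that $\varphi_n\in E_{r_n,\lambda_n}$ while $\widehat\varphi_n\in\widehat E_{\widehat r,\widehat\lambda}$; this is handled by the strong $L^1\cap L^\infty$ convergence $Y_{j,n}\to\widehat Y_j$, which allows each orthogonality integral to pass to the limit.
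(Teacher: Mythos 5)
Your proposal is correct in spirit, but it takes a genuinely different route from the paper. The paper argues by a \emph{direct perturbation}: subtracting the two linear problems satisfied by $\varphi=L_1(h_1)$ and $\widehat\varphi=\widehat L_1(h_1)$, it rewrites the difference as $\varphi-\widehat\varphi = L_1\big[(2^*-1)K_1(U^{2^*-2}_{r,\lambda}-U^{2^*-2}_{\widehat r,\widehat\lambda})\widehat\varphi +\sum_j \widehat c_j(Y_j-\widehat Y_j)\big]$, and then invokes the $L^\infty$ bound on $L_1$ from Proposition~\ref{Prop2.2} together with the parameter-continuity of $U^{2^*-2}_{r,\lambda}$ and $Y_j$ in $L^\infty(P_2)$. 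This is where the constraint $\widehat L_1(h_1)\in C_0(P_2)$ in $M_1$ is essential: it guarantees the perturbation term $(U^{2^*-2}_{r,\lambda}-U^{2^*-2}_{\widehat r,\widehat\lambda})\widehat\varphi$ is supported in $P_2$, so it is a legitimate argument for $L_1$. The direct argument is short and yields a quantitative modulus of continuity. Your \emph{compactness-by-contradiction} scheme does not use this support constraint at all (you would in fact be proving a stronger assertion), and it has the usual advantages of a soft argument, but it leans on two nontrivial points that are glossed over. First, the envelope you cite from \eqref{eq12}--\eqref{eq13} is derived in Lemma~\ref{lem1} under the assumption $\sigma_k^4\|h_{1,k}\|_{L^\infty(P_2)}\to 0$, which packages the contribution of $h_1$ into an $o(1)$ term; in your setting $\|h_{1,n}\|_\infty=1$ with $k$ fixed, so you must re-extract the actual decay of the $h$-term and the $c_jY_j$-term from the Green representation rather than treat them as $o(1)$. (This does work --- the $h$-term decays like $\sigma_k^{2N}(1+|y-x^1|)^{-(N-2)}$ near each sector, and the $c_jY_j$-term like $(1+|y-x^i|)^{-(N-4)}$ --- but it is a genuine recomputation, not a citation.) Second, the uniqueness in Proposition~\ref{Prop2.2} is stated for solutions in $E_{\widehat r,\widehat\lambda}\cap D^{1,2}(\R^N)$, so you need to verify that the subsequential limits $\varphi_\star,\widehat\varphi_\star$ actually lie in $D^{1,2}$, which does not follow from the $L^\infty$ envelope alone for all $N\geq 5$ and requires either a uniform $D^{1,2}$ bound or an additional bootstrap on the Green representation. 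Neither issue is fatal, but both require work that the paper's one-line perturbation identity sidesteps.
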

\begin{Rem} The sets $M_i$, $i=1,2$ are   nonempty, ensuring that the suprema under consideration are well-defined.
	 To verify this   for $M_1$, let  $\varphi_0\in C_0^\infty(B) \setminus\{0\}$ be arbitrary, where $B$ is a closed domain contained
	in $\widehat P_1\setminus \cup_{i=1}^k B_{3}(\widehat x^j)$. Define 
	\[h=-\Delta \varphi_0-(2^{*}-1) K_1\big(\frac{\lvert y \rvert}{\mu}\big)U^{2^{*}-2}_{\widehat r,\widehat\lambda}\varphi_0.\]
	Then $h\in   L^\infty(\R^N) $
	satisfies $h=0$ a.e. in $\R^N\setminus (P_2\cap \widehat P_2)$ and $\widehat L_1(h)=\varphi_0\in C_0(P_2)$. By Proposition \ref{Prop2.2}, we have $\|h\|_{L^\infty(\R^N)}\neq 0$. Then $h/\|h\|_{L^\infty(\R^N)}\in M_1$.
	An analogous construction gives the nonemptiness of $M_2$.
\end{Rem}
\begin{proof}[Proof of Lemma \ref{L2.2}]
	We only prove \eqref{4.26..}.
	For $ h_{1}\in C_0(P_2\cap \widehat{P}_2)$  satisfying  $\widehat{\varphi}=\widehat L_{1}(h_{1})\in C_0(P_2)$,  
	$\| h_{1}\|_{L^{\infty}(\R^N)}=1$
	from system \eqref{eq7}, we get for some $c_j$ and $\hat c_j$, $j=1,2$,
	\[  
	-\Delta \varphi-(2^{*}-1)K_1\big(\frac{\lvert y \rvert}{\mu}\big)U^{2^{*}-2}_{r,\lambda}\varphi= h_{1}+\sum_{j=1}^{2}c_{j} Y_{j}
	\]
	and
	\[  
	-\Delta \widehat\varphi-(2^{*}-1) K_1\big(\frac{\lvert y \rvert}{\mu}\big)U^{2^{*}-2}_{\widehat r,\widehat\lambda}\widehat\varphi= h_{1}+\sum_{j=1}^{2}\widehat c_{j} \widehat Y_{j}.
	\] 
	
	Subtracting the two equations, we obtain
	\begin{equation*}
		\varphi-\widehat\varphi
		=  L_1[(2^*-1)K_1\big(\frac{\lvert y \rvert}{\mu}\big)(U^{2^{*}-2}_{r,\lambda}-U^{2^{*}-2}_{\widehat r,\widehat\lambda})\widehat\varphi
		+\sum_{j=1}^{2}\widehat c_{j}( Y_{j}-\widehat Y_{ j})].
	\end{equation*}	
	Then the desired continuity follows from the boundedness of $\widehat\varphi$, $\widehat c_j$
	and the   continuity of $U^{2^{*}-2}_{r,\lambda}$ and $Y_{j}$ with respect 
	to $(r,\rho)$
	in  the $ L^\infty(P)$ norm on any compact set $P$.
\end{proof}

To obtain   decay estimates, we need the following  comparison principle in $\Omega\subset \mathbb R^{N}$. 
For this purpose, we first recall the classical Sobolev embedding theorem.
For every $N\geq3 $, there exists an optimal constant $\mathcal S>0$ depending only on $N$ such that 
\[ 	
	\mathcal S\lVert u\rVert^{2}_{L^{2^{*}}(\mathbb R^{N})}\leq \lVert \nabla u\rVert^{2}_{L^{2}(\mathbb R^{N})}
	,~\text{for all}~u\in D^{1,2}(\mathbb R^{N}),
\]
where $D^{1,2}(\mathbb R^{N})$ is the completion of $C_{0}^{\infty}(\mathbb R^{N})$ with respect to the norm $\lVert u \rVert=\Big(\int_{\mathbb R^{N}} \lvert\nabla u\rvert^{2}\Big)^{\frac{1}{2}}$.
\begin{Lem}\label{lem0} 
	Let $\Omega \subset \mathbb{R}^N$ ($N \geq 3$) be an open set with $C^1$ boundary. Suppose $\phi \in L^{N/2}(\Omega)$ satisfies $\|\phi\|_{L^{N/2}(\Omega)} < \mathcal{S}$.
	Let $v_1 \in D_{\text{loc}}^{1,2}(\Omega) \cap C(\overline{\Omega})$ with $v_1 \geq 0$, and $v_2 \in D^{1,2}(\Omega) \cap C(\overline{\Omega})$. If  
	$$
	-\Delta (v_2 - v_1) - \phi(v_2 - v_1) \leq 0 \quad \text{in } \Omega
	$$  
	in the weak sense, and $v_2 \leq v_1$ on $\partial \Omega$, then $v_2 \leq v_1$ in $\Omega$.  
\end{Lem}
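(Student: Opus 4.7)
The plan is to run the standard Stampacchia-type energy argument: test the weak inequality against the positive part $w:=(v_2-v_1)^+$, reduce to the integral inequality $\int_\Omega|\nabla w|^2\le\int_\Omega\phi\, w^2$, and then combine H\"older with the Sobolev embedding and the strict smallness $\|\phi\|_{L^{N/2}(\Omega)}<\mathcal S$ to force $w\equiv 0$. The sign hypothesis $v_1\ge 0$ is used exactly once, but crucially: on $\{v_2>v_1\}$ one has $v_2>v_1\ge 0$, so
\[
0\le w\le v_2^+\quad\text{pointwise in }\Omega,
\]
which, since $v_2\in D^{1,2}(\Omega)\subset L^{2^*}(\Omega)$, gives $w\in L^{2^*}(\Omega)$ with $\|w\|_{L^{2^*}}\le\|v_2\|_{L^{2^*}}$. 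Continuity of $v_1,v_2$ up to $\partial\Omega$ together with $v_2\le v_1$ on $\partial\Omega$ further yield $w\in C(\overline\Omega)$ with $w|_{\partial\Omega}\equiv 0$; the gradient $\nabla w=\mathbf 1_{\{v_2>v_1\}}\nabla(v_2-v_1)$ is well-defined in $L^2_{\mathrm{loc}}(\Omega)$.

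The main technical step is to justify the test function: since $v_1$ is only locally in $D^{1,2}$, it is not immediate that $\nabla w\in L^2(\Omega)$, so I cannot test directly with $w$. Instead I test against $\chi_R^2 w$, where $\chi_R\in C_c^\infty(\R^N)$ is a radial cutoff with $\chi_R\equiv 1$ on $B_R$, $\chi_R\equiv 0$ outside $B_{2R}$, and $|\nabla\chi_R|\le C/R$. Then $\chi_R^2 w\in H_0^1(\Omega\cap B_{2R})\subset D_0^{1,2}(\Omega)$ is a legitimate non-negative test function. Expanding produces a cutoff-error term of the schematic form $\int 2\chi_R w\,\nabla\chi_R\cdot\nabla w$, which Cauchy--Schwarz plus $|\nabla\chi_R|\le C/R$ and H\"older on the annulus $B_{2R}\setminus B_R$ (where $|B_{2R}\setminus B_R|^{2/N}\sim R^2$ exactly cancels the $1/R^2$) control by a constant multiple of $\|w\|_{L^{2^*}(\Omega\setminus B_R)}\,\|\chi_R\nabla w\|_{L^2(\Omega)}$. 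A Young-inequality absorption yields $\|\chi_R\nabla w\|_{L^2(\Omega)}$ uniformly bounded in $R$, so that $\nabla w\in L^2(\Omega)$ and $w\in D_0^{1,2}(\Omega)$. Passing $R\to\infty$ (monotone convergence on the left, dominated convergence on the right with envelope $\|\phi\|_{L^{N/2}}\|w\|_{L^{2^*}}^2<\infty$) produces the clean energy estimate
\[
\int_\Omega|\nabla w|^2\;\le\;\int_\Omega \phi\, w^2.
\]

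Once $w\in D_0^{1,2}(\Omega)$, H\"older with conjugate exponents $N/2$ and $N/(N-2)$ together with the Sobolev inequality $\mathcal S\|w\|_{L^{2^*}(\Omega)}^2\le\|\nabla w\|_{L^2(\Omega)}^2$ recalled just above the statement give
\[
\int_\Omega\phi w^2\;\le\;\|\phi\|_{L^{N/2}(\Omega)}\|w\|_{L^{2^*}(\Omega)}^2\;\le\;\frac{\|\phi\|_{L^{N/2}(\Omega)}}{\mathcal S}\|\nabla w\|_{L^2(\Omega)}^2,
\]
so $\bigl(1-\|\phi\|_{L^{N/2}(\Omega)}/\mathcal S\bigr)\|\nabla w\|_{L^2}^2\le 0$; the strict hypothesis $\|\phi\|_{L^{N/2}(\Omega)}<\mathcal S$ then forces $\nabla w\equiv 0$. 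Consequently $w$ is locally constant on $\Omega$, and combining with $w\in C(\overline\Omega)$, the Dirichlet condition $w|_{\partial\Omega}=0$ (on any component whose boundary meets $\partial\Omega$), and $w\in L^{2^*}(\Omega)$ (which rules out a nonzero constant on any component where $\partial\Omega$ is empty) gives $w\equiv 0$ in $\Omega$, i.e.\ $v_2\le v_1$. The only substantive obstacle is the test-function justification in the middle step: the lack of global $D^{1,2}$-control on $v_1$ is exactly what forces the cutoff-plus-absorption argument, and exactly what makes the sign assumption $v_1\ge 0$ (via $w\le v_2^+$) indispensable for controlling $w$ at infinity.
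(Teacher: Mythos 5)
Your proof is correct and follows essentially the same cutoff-plus-Sobolev strategy as the paper, including the key observation that $v_1\ge 0$ gives $0\le w\le v_2^+$ and hence $w\in L^{2^*}(\Omega)$. The one place the paper is slicker: instead of expanding $|\nabla(\chi_R w)|^2$ and absorbing the cross term via Young's inequality to first establish $\nabla w\in L^2(\Omega)$, the paper uses the identity $|\nabla(\eta_R\omega)|^2=\nabla(v_2-v_1)\cdot\nabla(\eta_R^2\omega)+|\nabla\eta_R|^2\omega^2$ (valid a.e.\ since $\nabla(v_2-v_1)=\nabla\omega$ on $\{\omega>0\}$), applies the Sobolev inequality to the compactly supported $\eta_R\omega$ at each fixed $R$, and then sends $R\to\infty$ directly; this avoids ever having to show global $D^{1,2}$-membership of $w$. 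Both routes reach the same conclusion, and your version is a legitimate and only marginally longer alternative.
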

\begin{proof}
	By assumption, $ \omega=(v_{2}-v_{1})^+\in L^{2^*}(\Omega)$.
	For any $R>1$, consider cut off function $\eta_R\in C_0^\infty(\R^N;[0,1])$ with $\eta_R=1$ in $B_R(0)$, $\eta_R=0$ in $\R^N\setminus B_{2R}(0)$, $|\nabla \eta_R|\leq 2/R$.
	Since $v_{2} \leq v_{1}$ on $\partial \Omega$, by \cite[Theorem 9.17]{HB} we have $\eta_R^2\omega \in H^1_0 (\Omega\cap B_{2R}(0))$.  Using $\eta_R^2 \omega$ as a test function in the weak formulation of the inequality $-\Delta (v_2 - v_1) - \phi(v_2 - v_1) \leq 0$, we obtain 
	\begin{align*}
		\mathcal S\|\eta_R \omega\|_{L^{2^*}(\Omega)}^2\leq& \int_{\Omega} |\nabla (\eta_R \omega) |^2
		=\int_{\Omega}\nabla (v_2-v_1) \nabla(\eta_R^2 \omega) +\int_{\Omega} |\nabla\eta_R|^2 \omega^2
		\\
		\leq & \int_{\Omega} \phi  \eta_R^2 \omega^2+ \frac{4}{R^2}\int_{\Omega \cap B_{2R}(0)\setminus B_R(0)}   \omega^2\\
		\leq & \| \phi\|_{L^{\frac{N}{2}}(\Omega)}\| \eta_R \omega\|^{2}_{L^{2^{*}}(\Omega)}
		+C \|\omega\|_{L^{2^*}(\Omega  \setminus B_R(0))}^2,
	\end{align*}
	where $C>0$ depends only on $N$.
	Taking limits as $R\to\infty$, we obtain $\mathcal S\|  \omega\|_{L^{2^*}(\Omega)}^2\leq \| \phi\rVert_{L^{\frac{N}{2}}(\Omega)}\|  \omega\|^{2}_{L^{2^{*}}(\Omega)}$.
	Since $\|\phi\|_{L^{N/2}(\Omega)} < \mathcal{S}$, this implies $\|\omega\|_{L^{2^*}(\Omega)} = 0$, so $\omega = 0$ in $\Omega$. Thus, $v_2 \leq v_1$ in $\Omega$. 
\end{proof}

\section{Minimization Problem}\label{sec:3}

Recall
\begin{equation*}
	\sigma_{k}:= k^{\frac{1}{N-2}}
\end{equation*}
and
\begin{equation*}
	P_{l}= \bigcup_{i=1}^{k} B_{\sigma^{l}_{k}}(x^{i}),~~Q_{l}=\bigcup_{i=1}^{k} B_{\sigma^{l}_{k}}(y^{i}),~~l=1,2.
\end{equation*}

Define 
\begin{equation*}
	\Lambda_{k}= \Big\{(\varphi,\psi)\in \mathbb E \mid \lVert (\varphi,\psi)\rVert_\infty\leq\frac{1}{\mu^{\frac{m}{2}}}\frac{1}{\sigma_k^{\frac{N+1}{2}}}
	\Big\},
\end{equation*}
where $\lVert (\varphi,\psi)\rVert_{\infty}=\lVert\varphi\rVert_{L^{\infty}(\mathbb R^{N})}+\lVert \psi\rVert_{L^{\infty}(\mathbb R^{N})}$.

For any $(\varphi_{0},\psi_{0})\in\Lambda_{k}$, we will solve the following problem
\begin{equation}\label{eq15}
	\begin{cases}
		-\Delta u_{1} = K_1\big(\frac{\lvert y \rvert}{\mu}\big) \lvert u_{1}\rvert^{2^{*}-2}u_{1}+\beta \lvert u_{2}\rvert^{\frac{2^{*}}{2}}\lvert u_{1}\rvert^{\frac{2^{*}}{2}-2}u_{1},  &~\text{in}~\mathbb R^N \setminus P_{1},\\[3mm]
		-\Delta u_{2} = K_2\big(\frac{\lvert y \rvert}{\mu}\big) \lvert u_{2}\rvert^{2^{*}-2}u_{2}+\beta \lvert u_{1}\rvert^{\frac{2^{*}}{2}}\lvert u_{2}\rvert^{\frac{2^{*}}{2}-2}u_{2},  &~\text{in}~\mathbb R^N \setminus Q_{1}
	\end{cases}
\end{equation}
with  \begin{equation}\label{boundary}
	u_{1}=\varphi_{0}+U_{r,\lambda} ~\text{in}~ P_{1},~~u_{2}=\psi_{0}+V_{\rho,\nu}~\text{in}~ Q_{1},
\end{equation}
satisfying
\begin{equation}\label{u}
	\begin{aligned}
		(2^{*}-1) K_\infty \lVert u_{1} \rVert^{2^*-2}_{L^{2^*}(\mathbb R^{N}\setminus P_{1})}<\frac{\mathcal S}{2}~~\text{and}~~(2^{*}-1) K_\infty \lVert u_{2} \rVert^{2^*-2}_{L^{2^*}(\mathbb R^{N}\setminus Q_{1})}<\frac{\mathcal S}{2},
	\end{aligned}
\end{equation}
where $K_\infty=\|(K_1,K_2)\|_\infty$.

\subsection{Decay Estimates for Solutions and Deviations from the Limit Profile}
Since $\beta < 0$, by Kato's inequality, any solution $(u_1, u_2)$ satisfying \eqref{eq15}  also satisfies
\begin{equation}\label{subsolution}
	\begin{cases}
		-\Delta \lvert u_{1}\rvert \leq  K_\infty \lvert u_{1}\rvert^{2^{*}-1},  &~\text{in}~\mathbb R^N \setminus P_{1},\\[3mm]
		-\Delta \lvert u_{2} \rvert\leq  K_\infty \lvert u_{2}\rvert^{2^{*}-1},  &~\text{in}~\mathbb R^N \setminus Q_{1}.
	\end{cases}
\end{equation}
In the remainder of this section, let $(\varphi_{0}, \psi_{0}) \in \Lambda_{k}$. We will use the notation $\varpi_{r,\alpha}$, $\varpi_{\rho,\alpha}$, and $\varpi_\alpha$ as defined in \eqref{varpir}, \eqref{varpirho}, and \eqref{varpi1.}, respectively.
\begin{Lem}\label{lem7}  
	Suppose $(u_1, u_2)$ is a solution to the problem \eqref{subsolution} satisfying \eqref{boundary} and \eqref{u}. Then there exists a constant $A > 0$, independent of $k$ and the choice of $(\varphi_0, \psi_0)$, such that  
	\[
	|u_1(y)| \leq A \sum_{i=1}^k \frac{1}{(1 + |y - x^i|)^{N-2}}, \quad
	|u_2(y)| \leq A \sum_{i=1}^k \frac{1}{(1 + |y - y^i|)^{N-2}}, \quad \forall y \in \mathbb{R}^N.
	\]
\end{Lem}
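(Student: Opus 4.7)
The plan is to prove the bound for $u_1$; the argument for $u_2$ is completely symmetric, replacing $(P_1,U_{r,\lambda},\varphi_0,x^j)$ by $(Q_1,V_{\rho,\nu},\psi_0,y^j)$. I split $\mathbb R^N$ into the interior region $P_1$ (where the boundary condition \eqref{boundary} gives an explicit formula for $u_1$) and the exterior region $\Omega:=\mathbb R^N\setminus P_1$ (where I apply the comparison principle of Lemma \ref{lem0}). The natural candidate upper bound is $W(y):=A\sum_{j=1}^k (1+|y-x^j|)^{-(N-2)}$ for a constant $A>0$ to be chosen large, but independent of $k$ and of $(\varphi_0,\psi_0)\in\Lambda_k$.

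First I would bound $u_1$ on $P_1$ directly. The boundary condition \eqref{boundary} gives $u_1=\varphi_0+U_{r,\lambda}$ on $P_1$. The standard pointwise bubble bound $U_{x^j,\lambda}(y)\leq C_0(1+|y-x^j|)^{-(N-2)}$, with $C_0=C_0(\gamma_1,\gamma_2)$ independent of $k$, combined with $\|\varphi_0\|_\infty\leq \mu^{-m/2}\sigma_k^{-(N-2)}$ from $(\varphi_0,\psi_0)\in\Lambda_k$, yields $|u_1(y)|\leq A\sum_j (1+|y-x^j|)^{-(N-2)}$ on $P_1$ for any $A>C_0$ and all large $k$. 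In particular this gives the boundary inequality $|u_1|\leq W$ on $\partial P_1=\partial\Omega$.

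Next I would extend the inequality into $\Omega$ via Lemma \ref{lem0} applied to $v_2=|u_1|$ and $v_1=W$. Kato's inequality \eqref{subsolution} yields $-\Delta|u_1|\leq K_\infty|u_1|^{2^*-1}$ weakly in $\Omega$, so with $\phi:=(2^*-1)K_\infty|u_1|^{2^*-2}$ one has $-\Delta|u_1|-\phi|u_1|\leq 0$; by assumption \eqref{u}, $\|\phi\|_{L^{N/2}(\Omega)}<\mathcal S/2<\mathcal S$, matching the hypothesis of Lemma \ref{lem0}. The direct computation $-\Delta[(1+|y-x^j|)^{-(N-2)}]=\frac{(N-1)(N-2)}{|y-x^j|(1+|y-x^j|)^N}\geq 0$ shows $-\Delta W\geq 0$ pointwise in $\Omega$. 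Combined with the pointwise smallness of $\phi$ in $\Omega$, which follows from the $L^\infty$ control of $|u_1|$ produced by a Brezis--Kato/Moser iteration based on \eqref{u} together with the fact that $|y-x^j|\geq\sigma_k\to\infty$ throughout $\Omega$, one verifies the super-solution inequality $-\Delta W-\phi W\geq 0$ in $\Omega$ for $A$ chosen sufficiently large. Lemma \ref{lem0} then gives $|u_1|\leq W$ in $\Omega$, which together with the $P_1$ bound delivers the claimed global estimate.

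The main obstacle is reconciling the critical nature of the nonlinearity with the super-solution requirement. Because the Aubin--Talenti bubbles satisfy the critical equation with equality, $W$ is not a super-solution of any limiting equation with positive $L^\infty$ coefficient, and $-\Delta W$ decays three powers faster than $W^{2^*-1}$. What rescues the argument is that $\Omega$ excludes the balls $B_{\sigma_k}(x^j)$ with $\sigma_k\to\infty$ and that the $L^{2^*}$ smallness in \eqref{u}, upgraded by a Brezis--Kato/Moser iteration, forces $\phi$ to be pointwise small enough on $\Omega$ to close the comparison. This interplay between the exterior-region smallness of $\phi$ and the explicit pointwise form of $-\Delta W$ is the crux of the proof, and it is precisely the reason the assumption \eqref{u} is built into the definition of the minimization problem.
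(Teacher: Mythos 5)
Your overall strategy — comparison on the exterior region $\Omega=\mathbb R^N\setminus P_1$ via Lemma \ref{lem0}, with boundary data handled directly from \eqref{boundary} — matches the paper's, but the key comparison step has a genuine gap. You only establish $-\Delta W\geq0$ and then try to close $-\Delta W-\phi W\geq 0$ with $\phi=(2^*-1)K_\infty|u_1|^{2^*-2}$ by invoking pointwise smallness of $\phi$ obtained from a Brezis--Kato/Moser iteration, ``for $A$ chosen sufficiently large.'' The choice of $A$ cannot help: $-\Delta(AW)-\phi(AW)=A(-\Delta W-\phi W)$, so the sign is scale-invariant. More fundamentally, the bound you would need is $\phi\leq -\Delta W/W$, which forces a specific spatial decay on $\phi$ — roughly $\phi\lesssim d^{-3}$ near each $x^j$, where $d=|y-x^j|$ — and that in turn requires pointwise decay of $|u_1|$ of the very form you are trying to prove. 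The $L^{2^*}$-smallness in \eqref{u} together with a Moser iteration gives smallness of $L^\infty$ norms, not the required spatial decay profile, so the argument is circular as stated. You have also inverted the relevant scaling in your discussion of the obstacle: for $W(y)=\sum_j(1+|y-x^j|)^{-(N-2)}$ one has $-\Delta W\sim\sum_j |y-x^j|^{-1}(1+|y-x^j|)^{-N}\sim\varpi_{r,N+1}$ while $W^{2^*-1}\sim\varpi_{r,N+2-4\tau_1/(N-2)}$; on $\Omega$, where $|y-x^j|\geq\sigma_k$, the former dominates, so $-\Delta W$ decays \emph{slower} than $W^{2^*-1}$, not ``three powers faster.'' This is the opposite of what you wrote and is precisely what makes $W$ a strict supersolution of the critical equation there.

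The paper exploits exactly this: Lemma \ref{lemB1.} gives $-\Delta\varpi_{r,N-2}\geq C\sigma_k^{1-4\tau_1/(N-2)}\varpi_{r,N-2}^{2^*-1}$ in $\mathbb R^N\setminus P_1$, with a divergent prefactor coming from the geometric separation of the $\{x^j\}$ (via \eqref{key3'} and \eqref{tau1a}) and the exclusion of the balls $B_{\sigma_k}(x^j)$. With $W$ a supersolution of the full critical equation, one applies the mean value theorem to the \emph{difference} of nonlinearities,
\begin{equation*}
-\Delta\bigl(|u_1|-CW\bigr)\leq K_\infty\bigl(|u_1|^{2^*-1}-(CW)^{2^*-1}\bigr)=\phi\bigl(|u_1|-CW\bigr),
\qquad
\phi=K_\infty\frac{|u_1|^{2^*-1}-(CW)^{2^*-1}}{|u_1|-CW},
\end{equation*}
and this divided-difference $\phi$ is bounded by $(2^*-1)K_\infty\max(|u_1|,CW)^{2^*-2}$, whose $L^{N/2}(\Omega)$ norm is controlled directly by \eqref{u} and by $\|CW\|_{L^{2^*}(\Omega)}\to0$, with no need for any a priori pointwise decay of $|u_1|$. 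To repair your proof you should replace the linear $\phi$ by this divided difference, drop the appeal to Moser iteration, and insert the supersolution estimate of Lemma \ref{lemB1.}, which is the real crux of the argument.
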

\begin{proof}
	From \eqref{10'}, and $1-\frac{4\tau_1}{N-2}>0$, there holds
	\begin{equation*}	
		-\Delta \varpi_{r,N-2} \geq  K_\infty\varpi_{r,N-2}^{2^{*}-1},~\text{in}~\mathbb R^N \setminus P_{1}.	
	\end{equation*}
	For  $(\varphi_{0},\psi_{0})\in\Lambda_{k}$, there exists $C>0$ independent of $k$ such that	
	\begin{equation}	\label{11}
		\lvert u_{1}(y)\rvert=\lvert \varphi_{0}+U_{r,\lambda} \rvert\leq U_{r,\lambda}+\frac{1}{\mu^{\frac{m}{2}}}\frac{1}{\sigma_k^{\frac{N+1}{2}}}\leq C\varpi_{r,N-2}, \quad y\in  \partial P_{1}.
	\end{equation}
	
	By the mean value theorem, there holds
	\begin{equation*}
		\begin{aligned}	
			-\Delta (\lvert u_{1}\rvert-C\varpi_{r,N-2}) \leq &  K_\infty(\lvert u_{1}\rvert^{2^{*}-1}-(C\varpi_{r,N-2})^{2^{*}-1})
			=\phi(\lvert u_{1}\rvert-C\varpi_{r,N-2}),~\text{in}~\mathbb R^N \setminus P_{1},
		\end{aligned}
	\end{equation*}
	where $\phi=K_\infty(\lvert u_{1}\rvert^{2^{*}-1}-(C\varpi_{r,N-2})^{2^{*}-1})/(\lvert u_{1}\rvert-C\varpi_{r,N-2})$.
	By direct calculation, it follows that \[\lVert C\varpi_{r,N-2}\rVert^{2^*-2}_{L^{2^*}(\mathbb R^{N}\setminus P_{1})}\rightarrow 0 \quad \text{as }k\rightarrow\infty. \]
Therefore,	from \eqref{u}, there holds
	\begin{equation*}
		\lVert  \phi\rVert_{L^{\frac{N}{2}}(\mathbb R^{N}\setminus P_{1})}
		\leq(2^{*}-1)K_\infty(\|  u_{1} \|_{L^{2^{*}}(\mathbb R^{N}\setminus P_{1})}+  \|C\varpi_{r,N-2}\|_{L^{2^{*}}(\mathbb R^{N}\setminus P_{1})})^{2^{*}-2}
		<\mathcal S.
	\end{equation*}
		By Lemma \ref{lem0}, we get
	\begin{equation*}	
		\lvert u_{1}(y)	\rvert\leq C\varpi_{r,N-2}(y),~\text{in}~\mathbb R^{N}\setminus P_{1}.
	\end{equation*}
	The inequality on $P_1$ is already established by \eqref{11}.  
	Similarly, we can obtain the corresponding estimate for $u_2$.
\end{proof}

\medskip

We also get the following estimates.
\begin{Lem}\label{lem8}
	For any $(\varphi_{0},\psi_{0})\in \Lambda_{k}$, let $(u_{1},u_{2})$ be a solution to the problem $\eqref{eq15}$ satisfying \eqref{boundary} and \eqref{u}.
	There exist some large $C > 0$ and $\theta>0$ small enough independent of $k$  such that
	\begin{equation*}
		\begin{aligned}
			\lvert u_{1}(y)-U_{r,\lambda}(y)\rvert
			\leq C\frac{1}{\mu^{\frac{m}{2}+\theta}}
			\sum_{i=1}^{k}\frac{1}{(1+\lvert y-x^{i}\rvert)^{\frac{N}{2}}}
			+C\frac{1}{\mu^{2-\frac{2\tau_1}{N-2}}}\sum_{j=1}^{k}\frac{1}{(1+\lvert y-y^{j}\rvert)^{N-\frac{2\tau_1}{N-2}-2}}
		\end{aligned}
	\end{equation*}
	and
	\begin{equation*}
		\begin{aligned}
			\lvert u_{2}(y)-V_{\rho,\nu}(y)\rvert
			\leq C\frac{1}{\mu^{\frac{m}{2}+\theta}}
			\sum_{i=1}^{k}\frac{1}{(1+\lvert y-y^{i}\rvert)^{\frac{N}{2}}}
			+C\frac{1}{\mu^{2-\frac{2\tau_1}{N-2}}}\sum_{j=1}^{k}\frac{1}{(1+\lvert y-x^{j}\rvert)^{N-\frac{2\tau_1}{N-2}-2}}, 
		\end{aligned}
	\end{equation*}
	for $y\in\R^N$.
	Moreover,
	\begin{equation*}
		\begin{aligned}
			&\lvert \nabla(u_{1}(y)-U_{r,\lambda}(y))\rvert\\
			\leq& C\frac{1}{\mu^{\frac{m}{2}+\theta}}
			\sum_{i=1}^{k}\frac{1}{(1+\lvert y-x^{i}\rvert)^{\frac{N}{2}}}
			+C\frac{1}{\mu^{2-\frac{2\tau_1}{N-2}}}\sum_{j=1}^{k}\frac{1}{(1+\lvert y-y^{j}\rvert)^{N-\frac{2\tau_1}{N-2}-2}},~y\in\mathbb R^{N}\setminus \cup^{k}_{i=1}B_{\sigma_{k}+1}(x^{i})  
		\end{aligned}
	\end{equation*}
	and
	\begin{equation*}
		\begin{aligned}
			&\lvert\nabla( u_{2}(y)-V_{\rho,\nu}(y))\rvert\\
			\leq& C\frac{1}{\mu^{\frac{m}{2}+\theta}}
			\sum_{i=1}^{k}\frac{1}{(1+\lvert y-y^{i}\rvert)^{\frac{N}{2}}}
			+C\frac{1}{\mu^{2-\frac{2\tau_1}{N-2}}}\sum_{j=1}^{k}\frac{1}{(1+\lvert y-x^{j}\rvert)^{N-\frac{2\tau_1}{N-2}-2}},~y\in\mathbb R^{N}\setminus \cup^{k}_{i=1}B_{\sigma_{k}+1}(y^{i}). 
		\end{aligned}
	\end{equation*}
\end{Lem}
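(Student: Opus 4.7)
The plan is to set $w_{1}=u_{1}-U_{r,\lambda}$, apply the comparison principle of Lemma~\ref{lem0} on the exterior domain $\R^{N}\setminus P_{1}$ against a carefully constructed barrier, and derive the gradient bound afterwards by local elliptic regularity. The estimate for $w_{2}=u_{2}-V_{\rho,\nu}$ is then parallel by symmetry.

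First I would compute the equation for $w_{1}$. Using $-\Delta U_{r,\lambda}=\sum_{i} U_{x^{i},\lambda}^{2^{*}-1}$ and a mean value theorem expansion of $|u_{1}|^{2^{*}-2}u_{1}$ around $U_{r,\lambda}$, I obtain on $\R^{N}\setminus P_{1}$
\[
-\Delta w_{1}-(2^{*}-1)K_{1}\big(\tfrac{|y|}{\mu}\big)\xi^{2^{*}-2}w_{1}=E_{K}+E_{N}+E_{\beta},
\]
where $\xi(y)$ lies between $u_{1}(y)$ and $U_{r,\lambda}(y)$, and
\[
E_{K}=\Big(K_{1}\big(\tfrac{|y|}{\mu}\big)-1\Big)\sum_{i}U_{x^{i},\lambda}^{2^{*}-1},\quad
E_{N}=K_{1}\big(\tfrac{|y|}{\mu}\big)\Big(U_{r,\lambda}^{2^{*}-1}-\sum_{i}U_{x^{i},\lambda}^{2^{*}-1}\Big),\quad
E_{\beta}=\beta|u_{2}|^{2^{*}/2}|u_{1}|^{2^{*}/2-2}u_{1}.
\]
I would bound $E_{K}$ by the dichotomy used in the proof of Lemma~\ref{lem1} (splitting the region $||y|-\mu r_{0}|\le\sqrt\mu$ from its complement) together with assumption $(\mathbb K)$; $E_{N}$ is controlled by the standard inequality $(\sum a_{i})^{2^{*}-1}-\sum a_{i}^{2^{*}-1}\le C\sum_{i\ne j}U_{x^{i},\lambda}^{2^{*}-2}U_{x^{j},\lambda}$ combined with the interaction estimates of Lemma~\ref{lemB1}; and for $E_{\beta}$ I invoke Lemma~\ref{lem7} and the sublinear inequality $(\sum a_{\ell})^{p}\le\sum a_{\ell}^{p}$, valid for $p=2/(N-2)\in(0,1]$, to get
\[
|E_{\beta}|\le C\Big(\sum_{j}(1+|y-y^{j}|)^{-N}\Big)\Big(\sum_{i}(1+|y-x^{i}|)^{-2}\Big),
\]
noting that on the support of the first factor one has $|y-x^{i}|\gtrsim|r_{0}-\rho_{0}|\mu$, which is what produces the crucial $\mu^{-2}$ prefactor.

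Next I would propose the barrier
\[
\Psi(y)=A_{1}\mu^{-(m/2+\theta)}\sum_{i=1}^{k}(1+|y-x^{i}|)^{-N/2}+A_{2}\mu^{-(2-\frac{2\tau_{1}}{N-2})}\sum_{j=1}^{k}(1+|y-y^{j}|)^{-(N-2-\frac{2\tau_{1}}{N-2})}
\]
with $A_{1},A_{2}>0$ large and $\theta>0$ small. The key algebraic input is the pointwise bound $-\Delta(1+|y-z|)^{-\alpha}\ge c_{\alpha}(1+|y-z|)^{-\alpha-2}$ for $\alpha\in(0,N-2)$, which applied term-by-term yields supersolution inequalities with decay $(1+|y-x^{i}|)^{-N/2-2}$ and $(1+|y-y^{j}|)^{-N+\frac{2\tau_{1}}{N-2}}$ respectively; these match the decay of $|E_{K}|+|E_{N}|$ and $|E_{\beta}|$, so for $A_{1},A_{2}$ sufficiently large and $\theta\le\frac{(N-4)(N-2-m)}{2(N-2)^{2}}$ the function $\Psi$ is a supersolution and satisfies $\Psi\ge|w_{1}|$ on $\partial P_{1}$ (where $|w_{1}|=|\varphi_{0}|\le\mu^{-m/2}\sigma_{k}^{-(N-2)}$). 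The linear coefficient $(2^{*}-1)K_{1}\xi^{2^{*}-2}$ has $L^{N/2}$-norm on $\R^{N}\setminus P_{1}$ less than $\mathcal S$ by \eqref{u} together with Lemma~\ref{lem7}, so Lemma~\ref{lem0} applied to $\Psi\pm w_{1}$ yields $|w_{1}|\le\Psi$ in $\R^{N}\setminus P_{1}$; inside $P_{1}$ the same inequality is immediate from the boundary condition and the definition of $\Lambda_{k}$. The gradient bound on $\R^{N}\setminus\bigcup_{i}B_{\sigma_{k}+1}(x^{i})$ is then obtained by representing $w_{1}$ via its Newtonian potential on unit balls that avoid all concentration points and differentiating under the integral, using the already-established pointwise bounds on $w_{1}$ and on the right-hand side.

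\textbf{Main obstacle.} The technically most delicate step is matching the sublinear coupling term $E_{\beta}$ to the second piece of the barrier: because $2^{*}/2-1=2/(N-2)<1$, the factor $|u_{1}|^{2^{*}/2-1}$ must be estimated by the sublinear inequality rather than by a linearization, and it is precisely the geometric separation $|r_{0}-\rho_{0}|>0$ combined with $\tau_{1}=(N-2-m)/(N-2)$ that produces both the prefactor $\mu^{-(2-2\tau_{1}/(N-2))}$ and the slow decay $(1+|y-y^{j}|)^{-(N-2-2\tau_{1}/(N-2))}$. Keeping these exponents consistent across the supersolution inequality, the boundary comparison on $\partial P_{1}$ (which determines the admissible $\theta$), and the $L^{N/2}$-smallness required by Lemma~\ref{lem0} is where essentially all the bookkeeping lives.
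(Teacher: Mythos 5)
Your plan follows the same route as the paper: decompose $w_1 = u_1 - U_{r,\lambda}$, derive a linearized equation with error terms, build the same barrier $\Psi$ on the exterior domain, apply the comparison principle of Lemma~\ref{lem0}, and recover the gradient bound by local elliptic regularity. However, there is a real gap in the supersolution step.

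The ``key algebraic input'' you cite, $-\Delta(1+|y-z|)^{-\alpha}\ge c_\alpha(1+|y-z|)^{-\alpha-2}$, only gives $-\Delta\Psi \gtrsim |E_K|+|E_N|+|E_\beta|$. But to apply Lemma~\ref{lem0} to $\Psi\pm w_1$ with $\phi=(2^*-1)K_1\xi^{2^*-2}$, one must verify
\[
-\Delta(\pm w_1 - \Psi)-\phi(\pm w_1-\Psi)=\pm E + \Delta\Psi + \phi\Psi \le 0,
\]
i.e.\ $-\Delta\Psi \ge |E|+\phi\Psi$. The potential term $\phi\Psi$ is not controlled by your cited inequality and must be absorbed separately. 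The paper handles this precisely via Lemma~\ref{lemB1.}, whose first terms (carrying the factor $\sigma_k^{4-4\alpha/(N-2)}$) show that $-\Delta\varpi_{r,\alpha}$ dominates $\varpi_{r,N-2}^{2^*-2}\varpi_{r,\alpha}$ on $\R^N\setminus P_1$, with room to spare. In your framework one needs a corresponding check that, on the exterior domain, $\phi \lesssim \varpi_{r,N-2}^{2^*-2}\lesssim \sigma_k^{-2}\varpi_{r,2}$, so that $\phi\Psi$ is smaller than the Laplacian lower bound by a factor $\sigma_k^{-2}$; as written, the step ``$\Psi$ is a supersolution'' is not justified.

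There is also a smaller slip in the stated bound for $E_\beta$: since $2^*/2 = N/(N-2)>1$, the sublinear inequality $(\sum a_\ell)^p\le\sum a_\ell^p$ applies to $|u_1|^{2/(N-2)}$ (giving $\varpi_{r,2}$) but \emph{not} to $|u_2|^{N/(N-2)}$; for the latter one must use H\"older together with \eqref{key3.} (as in \eqref{tau1a}), which yields $\varpi_{\rho,\,N-\frac{2\tau_1}{N-2}}$, not $\varpi_{\rho,N}$. Correspondingly, the prefactor from $|y-x^i|\gtrsim\mu$ is $\mu^{-(2-\frac{2\tau_1}{N-2})}$, not $\mu^{-2}$, because the sum over $k\sim\mu^{\tau_1}$ centers contributes an extra $k^{2/(N-2)}$. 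Your barrier's second term is calibrated for the correct exponents, so the conclusion is unaffected, but the intermediate estimate as written is not achievable.
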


\begin{proof}
	Let $(\varphi,\psi)=(u_{1}, u_{2})-(U_{r,\lambda},V_{\rho,\nu})$. By \eqref{eq15} and \eqref{boundary}, we have
	\begin{equation}\label{eq17}
		\begin{cases}
			-\Delta \varphi = K_1\big(\frac{\lvert y \rvert}{\mu}\big) \lvert u_{1}\rvert^{2^{*}-2}u_{1}-\sum_{i=1}^{k}U^{2^*-1}_{x^{i},\lambda}+\beta \lvert u_{2}\rvert^{\frac{2^{*}}{2}}\lvert u_{1}\rvert^{\frac{2^{*}}{2}-2}u_{1},  &~\text{in}~\mathbb R^N \setminus P_{1},\\[3mm]
			-\Delta \psi = K_2\big(\frac{\lvert y \rvert}{\mu}\big) \lvert u_{2}\rvert^{2^{*}-2}u_{2}-\sum_{i=1}^{k}V^{2^*-1}_{y^{i},\nu}+\beta \lvert u_{1}\rvert^{\frac{2^{*}}{2}}\lvert u_{2}\rvert^{\frac{2^{*}}{2}-2}u_{2},  &~\text{in}~\mathbb R^N \setminus Q_{1},\\[3mm]
			\varphi=\varphi_{0} ~\text{in}~ P_{1},~\psi=\psi_{0} ~\text{in}~ Q_{1}.
		\end{cases}
	\end{equation}
	
	(\romannumeral1) From Lemmas \ref{lem2} and \ref{lem7}, there holds
	\begin{equation}\label{3.11}
		\begin{aligned}
			K_1(\frac{y}{\mu})\lvert u_{1}\rvert^{2^{*}-2}u_{1} -\sum_{i=1}^{k}U^{2^*-1}_{x^{i},\lambda}	
			&=O\Big(\varpi_{r,N-2}^{2^*-2}\Big)\varphi+K_1(\frac{y}{\mu})\lvert U_{r,\lambda} \rvert^{2^{*}-2}U_{r,\lambda}-\sum_{i=1}^{k}U^{2^*-1}_{x^{i},\lambda} \\
			&=O\Big(\varpi_{r,N-2}^{2^*-2}\Big)\varphi+O\Big(\frac{1}{\mu^{\frac{m}{2}+\theta}}\Big)\varpi_{r, \frac{N}{2}+2}.
		\end{aligned}
	\end{equation}
	
	For 
	$y\in \R^N\setminus B_{\frac{r+\rho}{2}}(0)$,
	we have 
	$|y-x^j|\geq \frac{\rho-r}{2}=\frac{|x^j-y^j|}{2}$.
	Hence, 
	\begin{equation}\label{out}
		|y-y^j|\leq |y-x^j|+|x^j-y^j|\leq 3|y-x^j|,\quad j=1,\dots,k,\quad  y\in \R^N\setminus B_{\frac{r+\rho}{2}}(0).
	\end{equation}
	Similarly, we have 
	
	\begin{equation}\label{in}
		|y-y^j|\geq \frac{\rho-r}{2},\quad |y-y^j|\geq  \frac{1}{3}|y-x^j|,\quad  j=1,\dots,k,
		\quad \text{for } y\in  B_{\frac{r+\rho}{2}}(0).
	\end{equation}
	Note that 
	\begin{equation*}
		\begin{aligned}
			\frac{N}{2}-\frac{m}{2}-\frac{2N-2}{N-2}\tau_{1}
			>\frac{N-4-m}{2}+\frac{2(m-1)}{N-2}=\begin{cases} \frac{m-1}{6}>0,&~N=5,\\[3mm]
				\frac{(N-4)(N-2)-m(N-6)-4}{2(N-2)}>\frac{2N-8}{2(N-2)}>0,&~N \geq 6.
			\end{cases}
		\end{aligned}
	\end{equation*}
	In $\Omega_{1}\cap B_{\frac{r+\rho}{2}}(0)$, 
	by Lemmas \ref{lem7}, \ref{lemB1} and \eqref{key3.}, \eqref{tau1a}, \eqref{in}, we obtain
	\begin{equation}\label{3.9}
		\begin{aligned}
			&\lvert u_{2}\rvert^{\frac{2^{*}}{2}}\lvert u_{1}\rvert^{\frac{2^{*}}{2}-2}u_{1} \\
			\leq& C\Big(\sum_{i=1}^{k}\frac{1}{(1+\lvert y-x^{i}\rvert)^{N-2}}\Big)^{\frac{2^*}{2}-1}\Big(\sum_{j=1}^{k}\frac{1}{(1+\lvert y-y^{j}\rvert)^{N-2}}\Big)^{\frac{2^*}{2}}\\
			\leq& C\sum_{i=1}^{k}\frac{1}{(1+\lvert y-x^{i}\rvert)^{2}}\sum_{j=1}^{k}\frac{1}{(1+\lvert y-y^{j}\rvert)^{N-\frac{2}{N-2}\tau_{1}}}\\
			\leq& C\frac{1}{(1+\lvert y-x^{1}\rvert)^{2-\tau_1}}\frac{1}{(1+\lvert y-y^{1}\rvert)^{N-\frac{N}{N-2}\tau_{1}}}\\
			\leq& C\frac{1}{\mu^{\frac{N}{2}-\frac{2N-2}{N-2}\tau_{1}}} \frac{1}{(1+\lvert y-x^{1}\rvert)^{\frac{N}{2}+2}}\leq C\frac{1}{\mu^{\frac{m}{2}+\theta}}\frac{1}{(1+\lvert y-x^{1}\rvert)^{\frac{N}{2}+2}},
		\end{aligned}	
	\end{equation}
	where $\theta \in \Big(0, \frac{N}{2}-\frac{m}{2}-\frac{2N-2}{N-2}\tau_{1}\Big)$ is chosen sufficiently small.
	
	For $y\in \R^N\setminus B_{\frac{r+\rho}{2}}(0)$,
	we know that $\lvert y-x^{j}\rvert\geq\frac{\rho-r}{2}\geq C\mu$, $j=1,\dots,k$.
	By Lemma \ref{lem7} and \eqref{tau1a}, it holds
	\begin{equation}\label{3.9.}
		\begin{aligned}
			\lvert u_{2}\rvert^{\frac{2^{*}}{2}}\lvert u_{1}\rvert^{\frac{2^{*}}{2}-2}u_{1}
			\leq& C\Big(\sum_{i=1}^{k}\frac{1}{(1+\lvert y-x^{i}\rvert)^{N-2}}\Big)^{\frac{2^*}{2}-1}\Big(\sum_{j=1}^{k}\frac{1}{(1+\lvert y-y^{j}\rvert)^{N-2}}\Big)^{\frac{2^*}{2}}\\
			\leq& C\Big(\frac{k}{\mu^{N-2}}\Big)^{\frac{2^*}{2}-1}\sum_{j=1}^{k} \frac{1}{(1+\lvert y-y^{j}\rvert)^{N-\frac{2}{N-2}\tau_{1}}}\\
			\leq& C\frac{1}{\mu^{2-\frac{2}{N-2}\tau_{1}}}\sum_{j=1}^{k}\frac{1}{(1+\lvert y-y^{j}\rvert)^{N-\frac{2}{N-2}\tau_{1}}}.
		\end{aligned}	
	\end{equation}

	Combining \eqref{3.11}-\eqref{3.9.}, for the first equation in \eqref{eq17}, we find that $\lvert\varphi\rvert$ satisfies
	\begin{equation}\label{eqss}
		\begin{aligned}
			-\Delta \lvert\varphi\rvert
			\leq &C \varpi_{r,N-2}^{2^*-2}
			\lvert\varphi\rvert +C\frac{1}{\mu^{\frac{m}{2}+\theta}}\varpi_{r,\frac{N}2+2}+C\frac{1}{\mu^{2-\frac{2\tau_1}{N-2}}}\varpi_{\rho,N-\frac{2\tau_1}{N-2}} 
			,~y\in\mathbb R^{N} \setminus (P_{1}\cup Q_1).
		\end{aligned}
	\end{equation} 
	
	Let
	\begin{equation*}
		\begin{aligned}
			\omega := \frac{1}{\mu^{\frac{m}{2}+\theta}}\varpi_{r,\frac{N}2}+\frac{1}{\mu^{2-\frac{2\tau_1}{N-2}}}\varpi_{\rho,N-\frac{2\tau_1}{N-2}-2}\quad  \text{in } \R^N.
		\end{aligned}
	\end{equation*}	
	From Lemma \ref{lemB1.}, in $\R^N\setminus(P_1\cup Q_1)$, there holds
	\begin{equation}\label{e2}
		\begin{aligned}
			- \Delta \omega  \geq& C \sigma_k^{4-\frac{4\alpha}{N-2}} \left(\frac{\varpi_{r,N-2}^{2^*-2}}{\mu^{\frac{m}{2}+\theta}}\varpi_{r,\frac{N}2}+\frac{\varpi_{\rho,N-2}^{2^*-2}}{\mu^{2-\frac{2\tau_1}{N-2}}}\varpi_{\rho,N-\frac{2\tau_1}{N-2}-2}\right)\\
			&+\frac{N(N-4)}8  \frac{1}{\mu^{\frac{m}{2}+\theta}}\varpi_{r,\frac{N}2+2}+ \frac{((N-2)^2-2\tau_1)\tau_1}{(N-2)^2}  \frac{1}{\mu^{2-\frac{2\tau_1}{N-2}}}\varpi_{\rho,N-\frac{2\tau_{1}}{N-2}}.
		\end{aligned}
	\end{equation} 
	In 
	$ \R^N\setminus B_{\frac{r+\rho}{2}}(0)$, by \eqref{out}, we have
	\[
	\varpi_{\rho,N-2}^{2^*-2}\geq \frac{1}{3^4} \varpi_{r,N-2}^{2^*-2}.
	\]
	In $B_{\frac{r+\rho}{2}}(0)$, by \eqref{in}, we have
	\[
	\frac{1}{\mu^{2-\frac{2\tau_{1}}{N-2} }}\varpi_{\rho,N-\frac{2\tau_{1}}{N-2}-2}\leq 
	\frac{C}{\mu^{\frac{N}{2}- \frac{4\tau_{1}}{N-2}}}\varpi_{\rho,\frac{N}{2}}\leq 
	\frac{1}{\mu^{\frac{m}{2}+\theta}}\varpi_{r,\frac{N}2} .
	\]
	In either case, we have 
	\[
	\frac{\varpi_{r,N-2}^{2^*-2}}{\mu^{\frac{m}{2}+\theta}}\varpi_{r,\frac{N}2}+\frac{\varpi_{\rho,N-2}^{2^*-2}}{\mu^{2-\frac{2}{N-2}\tau_{1}}}\varpi_{\rho,N-\frac{2}{N-2}\tau_{1}-2} \geq \frac{1}{3^4}\varpi_{r,N-2}^{2^*-2}\omega.
	\]
	Therefore, together with  \eqref{eqss} and \eqref{e2},
	there is $M_0>0$ large such that 
	\[
	-\Delta (|\varphi|-M_0\omega) \leq C  \varpi_{r,N-2}^{2^*-2} (|\varphi|-M_0\omega), \quad \text{in }\R^N\setminus (P_1\cup Q_1).
	\]

	In $  P_{1}$, if we assume further that $\theta<\frac{N-2-m}{2(N-2)^2}$, then there holds  
	\begin{equation*}
		\lvert\varphi\rvert\leq\frac{1}{\mu^{\frac{m}{2}}}\frac{1}{\sigma_k^{\frac{N+1}{2}}}
		=\frac{1}{\mu^{\frac{m}{2}+\frac{N-2-m}{2(N-2)^2}}}\frac{1}{\sigma_{k}^{\frac{N}{2}}}
		=o\left(
		\frac{1}{\mu^{\frac{m}{2}+\theta}\sigma_{k}^{\frac{N}{2}}} \right)
		<M_0\omega.
	\end{equation*}
	In $Q_1$, 
	\[
	|\varphi|\leq |u_1|+|U_{r,\lambda}|\leq C\frac{k}{\mu^{N-2}}
	=o\left(\frac{1}{\mu^{2-\frac{2\tau_1}{N-2}}\sigma_k^{N-2-\frac{2\tau_1}{N-2}}}\right)\leq M_0\omega.
	\]
	
	By Lemma \ref{lem0}, we conclude that
	$$
	|\varphi| \leq M_0\omega \quad \text{in } \mathbb{R}^N \setminus (P_1 \cup Q_1).
	$$
	Since the inequality already holds in $P_1 \cup Q_1$, it follows that
	$$
	|\varphi| \leq M_0\omega \quad \text{in } \mathbb{R}^N.
	$$
	A similar argument applies to obtain the corresponding estimate for $|\psi|$.
	
	(\romannumeral2) By  the gradient estimates in \cite[Theorem 3.9]{GT01},  it follows that for  $  y \in \mathbb R^{N}\setminus \cup^{k}_{i=1}B_{\sigma_{k}+1}(x^{i})$,
	\begin{equation*}
		\lvert \nabla\varphi(y)\rvert\leq  C\Big(\sup_{B_{1}(y)}\lvert\varphi\rvert+\sup_{B_{1}(y)} \Big\lvert K_1\big(\frac{\lvert \cdot \rvert}{\mu}\big) \lvert u_{1}\rvert^{2^{*}-2}u_{1}-\sum_{i=1}^{k}U^{2^*-1}_{x^{i},\lambda} 
		+\beta \lvert u_{2}\rvert^{\frac{2^{*}}{2}}\lvert u_{1}\rvert^{\frac{2^{*}}{2}-2}u_{1} \Big\rvert\Big).
	\end{equation*}
	
	By \eqref{3.11}, \eqref{3.9} and \eqref{3.9.}, we obtain
	\begin{equation*}
		\begin{aligned}
			&\sup_{B_{1}(y)} \Big\lvert K_1\big(\frac{\lvert \cdot \rvert}{\mu}\big) \lvert u_{1}\rvert^{2^{*}-2}u_{1}-\sum_{i=1}^{k}U^{2^*-1}_{x^{i},\lambda}+\beta \lvert u_{2}\rvert^{\frac{2^{*}}{2}}\lvert u_{1}\rvert^{\frac{2^{*}}{2}-2}u_{1} \Big\rvert\\
			\leq&\sup_{B_{1}(y)} \Big\lvert K_1\big(\frac{\lvert \cdot \rvert}{\mu}\big) \lvert u_{1}\rvert^{2^{*}-2}u_{1}-\sum_{i=1}^{k}U^{2^*-1}_{x^{i},\lambda}\Big\rvert+\sup_{B_{1}(y)} \Big\lvert\beta \lvert u_{2}\rvert^{\frac{2^{*}}{2}}\lvert u_{1}\rvert^{\frac{2^{*}}{2}-2}u_{1} \Big\rvert\\
			\leq &C \sup_{B_{1}(y)}\varpi_{r,N-2}^{2^*-2}\lvert \varphi\rvert+C\sup_{z\in B_{1}(y)}\frac{1}{\mu^{\frac{m}{2}+\theta}}\sum_{i=1}^{k}\frac{1}{(1+\lvert z-x^{i}\rvert)^{\frac{N}{2}+2}}\\
			&+C\sup_{z\in B_{1}(y)}\frac{1}{\mu^{2-\frac{2\tau_1}{N-2}}}\sum_{j=1}^{k}\frac{1}{(1+\lvert z -y^{j}\rvert)^{N-\frac{2\tau_1}{N-2}}}.
		\end{aligned}
	\end{equation*}
	Therefore, we get the estimate of $\lvert \nabla(u_{1}(y)-U_{r,\lambda}(y))\rvert$.
	A similar argument can be applied to obtain the estimate of $\lvert\nabla( u_{2}(y)-V_{\rho,\nu}(y))\rvert$.
\end{proof}
\begin{Cor}\label{cor3.3}
	Under the  
	assumption of Lemma \ref{lem8}, there hold
	
	\[
	\lvert u_{1}(y)-U_{r,\lambda}(y)\rvert
	\leq C\frac{1}{\mu^{\frac{m}{2}+\theta}}
	\sum_{i=1}^{k}\frac{1}{(1+\lvert y-x^{i}\rvert)^{\frac{N}{2}}}
	\quad \text{for } y\in \cup_{i=1}^k B_{\frac{\rho-r}{2}}(x^i)
	\]
	
	and
	
	\[
	\lvert u_{2}(y)-V_{\rho,\nu}(y)\rvert
	\leq C\frac{1}{\mu^{\frac{m}{2}+\theta}}
	\sum_{i=1}^{k}\frac{1}{(1+\lvert y-y^{i}\rvert)^{\frac{N}{2}}}
	\quad \text{for } y\in \cup_{i=1}^k B_{\frac{\rho-r}{2}}(y^i).
	\]
	
\end{Cor}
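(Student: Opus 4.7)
The plan is to refine the two-term bound of Lemma~\ref{lem8} on the region $\bigcup_i B_{(\rho-r)/2}(x^i)$ by showing that the ``cross term'' (the sum over the $y^j$) is absorbed into the ``self term'' (the sum over the $x^j$, dominated by $j=i$). The mechanism is that inside each such ball $y$ is at distance of order $\mu$ from every $y^j$, so the cross contribution becomes negligibly small relative to the self contribution.

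First I would establish a geometric lower bound. Writing
\[
|x^i - y^j|^2 = (r-\rho)^2 + 2r\rho\Bigl(1 - \cos\tfrac{2\pi(i-j)}{k}\Bigr),
\]
one reads off $|x^i - y^j| \geq \rho - r$ for every $j$, with equality at $j=i$. Combined with $|y - x^i| \leq (\rho-r)/2$, the reverse triangle inequality yields
\[
|y - y^j| \geq \tfrac{1}{2}|x^i - y^j| \geq \tfrac{\rho - r}{2} \geq c\mu
\]
uniformly in $j$.

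Next I would estimate the cross term quantitatively. Since $1 + |y-y^j| \geq c\mu$, each of the $k$ summands of $\sum_j (1+|y-y^j|)^{-(N-2\tau_1/(N-2)-2)}$ is at most $C\mu^{-(N-2\tau_1/(N-2)-2)}$; summing and multiplying by the prefactor $\mu^{-(2-2\tau_1/(N-2))}$, while invoking $k = \mu^{\tau_1}$, produces a cross-term bound of order $\mu^{-(N-\tau_1(N+2)/(N-2))}$. On the other hand, because $|y - x^i| \leq c\mu$, the $j = i$ contribution to the self term satisfies $\mu^{-(m/2+\theta)}(1+|y-x^i|)^{-N/2} \geq c\mu^{-(m/2+\theta+N/2)}$. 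It therefore suffices to verify
\[
N - \tau_1\cdot\tfrac{N+2}{N-2} > \tfrac{m}{2} + \tfrac{N}{2},
\]
which, after substituting $\tau_1 = (N-2-m)/(N-2)$ and clearing denominators, reduces to the elementary inequality $(N-m)(N-2)^2 > 2(N-2-m)(N+2)$; a direct expansion confirms this strictly for every $N \geq 5$ and $m \in [2, N-2)$, and since $N+2 < 2N-2$ the resulting constraint on $\theta$ is actually weaker than the one already imposed in Lemma~\ref{lem8}, so any admissible $\theta$ there carries over.

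The analogous argument with the roles of the two components reversed gives the estimate for $u_2$. I expect the bookkeeping in the quantitative step to be the main obstacle: one has to track exactly the three exponents ($N/2$ in the self term, $N-2\tau_1/(N-2)-2$ in the cross term, and $\tau_1$ from $k = \mu^{\tau_1}$) and confirm they balance with positive margin for $\theta$. Beyond that, the argument is a direct computation from Lemma~\ref{lem8}.
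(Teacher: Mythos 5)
Your argument is correct and rests on the same two facts the paper uses — the distance lower bound $|y-y^j|\geq C\mu$ on $\bigcup_i B_{(\rho-r)/2}(x^i)$ and elementary exponent arithmetic — so the structure is essentially the paper's. The one genuine difference is in how the cross term is discharged. The paper works index by index: for each fixed $j$ it lowers the exponent $N-\frac{2\tau_1}{N-2}-2$ to $\tfrac{N}{2}$ using $|y-y^j|\geq C\mu$ and then replaces $(1+|y-y^j|)^{-N/2}$ by $(1+|y-x^j|)^{-N/2}$ using $|y-y^j|\geq |y-x^j|$, arriving at the constraint $\theta<\frac{N-m}{2}-\frac{4\tau_1}{N-2}$. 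You instead majorize the entire cross sum by the crude bound $k\,\mu^{-(N-\frac{2\tau_1}{N-2}-2)}$ (wasting a factor $k=\mu^{\tau_1}$) and compare it to the worst-case value of the self term, which forces the slightly stronger requirement $\theta<\frac{N-m}{2}-\frac{(N+2)\tau_1}{N-2}$. Both conditions are implied by the constraint $\theta<\frac{N-m}{2}-\frac{(2N-2)\tau_1}{N-2}$ already imposed in Lemma~\ref{lem8}, as you observe, so your proof closes, but the term-by-term route is sharper and avoids the $k$ loss; it is the natural one to record if one wanted the estimate with a weaker hypothesis on $\theta$.
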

\begin{proof}
	For $y\in \cup_{i=1}^k B_{\frac{\rho-r}{2}}(x^i)$,
	we have 
	$|y-y^j|\geq |y-x^j|$ and $|y-y^j|\geq C\mu$, $j=1,\dots,k$.
	Therefore,
	\[
	\frac{1}{\mu^{2-\frac{2\tau_1}{N-2}}} \frac{1}{(1+\lvert y-y^{j}\rvert)^{N-\frac{2\tau_1}{N-2}-2}}\leq \frac{1}{\mu^{\frac{N}2-\frac{4\tau_1}{N-2}}} \frac{1}{(1+\lvert y-y^{j}\rvert)^{\frac{N}2}}=o(\frac1{\mu^{\frac{m}2+\theta}}) \frac{1}{(1+\lvert y-x^{j}\rvert)^{\frac{N}2}}.
	\]
	Then we conclude the proof.
\end{proof}

\subsection{Estimates for Dead Core Formation}
We estimate the size of dead cores for  a solution $(u_1, u_2)$   to the problem \eqref{subsolution} with \eqref{boundary} and \eqref{u}. To achieve this result, we rely on the following lemma, which can be derived from \cite[Theorems 8.4.2–8.4.7]{PP07}. A direct proof can also be found in our previous work \cite{G-Z}.

\begin{Lem}\label{lem5.11}
	Suppose $N\geq5$. For each $M>0$, there exists $\delta>0$   such that
	\begin{equation*}
		\begin{cases}
			-\Delta \omega+M\omega^{\frac{2}{N-2}}=0,&~\text{in}~ B_{3/2}(0),\\[3mm]
			\omega=\delta,&~\text{on}~ \partial B_{3/2}(0)
		\end{cases}
	\end{equation*}
	has a unique distribution non-negative radial solution $\omega(y) = \omega(\lvert y\rvert)$, satisfying $\omega\in C^{1}(B_{1}(0))$, $\omega=0$ in $B_{1}(0)$ and $\omega^{\prime}(|y|)>0$ if $\omega>0$.
\end{Lem}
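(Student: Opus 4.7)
The plan is to obtain $\omega$ as the unique minimizer of a strictly convex functional and then establish the dead core by comparison with a carefully constructed radial supersolution. First, I would consider the functional
\[
J(u) = \int_{B_{3/2}(0)} \Big( \tfrac{1}{2}|\nabla u|^2 + \tfrac{M(N-2)}{N}(u_+)^{N/(N-2)} \Big)\,dy
\]
on the affine space $\mathcal{A} = \delta + H^1_0(B_{3/2}(0))$. Since $N/(N-2) > 1$, $J$ is strictly convex, coercive and weakly lower semicontinuous, so the direct method yields a unique minimizer $\omega \in \mathcal A$, which weakly satisfies
\[
-\Delta \omega + M \omega_+^{2/(N-2)} = 0 \text{ in } B_{3/2}(0), \qquad \omega|_{\partial B_{3/2}} = \delta.
\]
Replacing $\omega$ by $\omega_+$ decreases $J$, so $\omega \geq 0$. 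Rotational invariance of $J$ combined with uniqueness forces $\omega$ to be radial, and standard elliptic regularity (bounded solution with H\"older right-hand side) yields $\omega \in C^{1,\alpha}_{\mathrm{loc}}(B_{3/2}(0))$.

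For the monotonicity, the radial form reads $(r^{N-1}\omega'(r))' = M r^{N-1} \omega(r)^{2/(N-2)} \geq 0$, and $C^1$ regularity gives $r^{N-1}\omega'(r) \to 0$ as $r \to 0^+$, hence $\omega'(r) \geq 0$. If $\omega(r_0) > 0$ but $\omega'(r_0) = 0$, then $r^{N-1}\omega'$ would vanish on $(0, r_0)$, making $\omega$ a positive constant on that interval; substituting back into the equation gives a contradiction. Thus $\omega'(|y|) > 0$ whenever $\omega(|y|) > 0$.

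The main task is the dead-core estimate. I plan to construct a radial supersolution $W \in C^1(\overline{B_{3/2}(0)})$ with $W \equiv 0$ on $B_1(0)$, $W|_{\partial B_{3/2}} = \delta$, and $-\Delta W + MW^{2/(N-2)} \geq 0$ weakly in $B_{3/2}(0)$, for a suitably chosen small $\delta > 0$. Such $W$ is obtained by solving the radial ODE
\[
W''(r) + \tfrac{N-1}{r}W'(r) = M W(r)^{2/(N-2)}, \quad r \in (1, 3/2),
\]
with Cauchy data $W(1) = W'(1) = 0$. Because the nonlinearity is only H\"older in $W$ (exponent $2/(N-2) < 1$), this IVP admits a nontrivial branch, guided by the scaling ansatz $W(r) \sim A_0 (r-1)^{2(N-2)/(N-4)}$, where $A_0$ is fixed by the leading-order balance $A_0\alpha(\alpha-1) = MA_0^{2/(N-2)}$ with $\alpha = 2(N-2)/(N-4)$; a standard fixed-point argument on the associated integral equation, as in \cite{PP07}, produces a positive increasing solution. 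Setting $\delta := W(3/2)$ and extending $W \equiv 0$ to $B_1(0)$ makes $W$ of class $C^1$ across $\partial B_1$, and since the equation holds classically on each side of $\partial B_1$ with no jump contribution, $W$ is a global weak supersolution.

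For the final comparison step, writing $\omega^{2/(N-2)} - W^{2/(N-2)} = \phi(\omega - W)$ with $\phi \geq 0$ gives $-\Delta(\omega - W) + M\phi(\omega - W) \leq 0$ with vanishing boundary data; testing with $(\omega - W)_+$ (the $\phi$-term has the favorable sign, exactly as in the proof of Lemma \ref{lem0}) yields $\omega \leq W$, so $\omega \equiv 0$ on $B_1(0)$. Uniqueness of $\omega$ is already built into the variational construction. The hardest step will be the ODE construction of $W$: the sublinear Cauchy problem is non-Lipschitz, so one must carefully single out the nontrivial branch and adjust the parameter so that its boundary value at $r=3/2$ matches a prescribed small $\delta$; the explicit power ansatz combined with the framework of \cite{PP07} is the cleanest route.
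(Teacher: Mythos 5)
Your proof is essentially correct and supplies a direct argument where the paper only cites \cite[Theorems~8.4.2--8.4.7]{PP07} and the companion work \cite{G-Z}. The variational existence and uniqueness of the minimizer, nonnegativity by truncation, radiality via rotational invariance of $J$, the elliptic regularity, and the ODE monotonicity argument (integrating $(r^{N-1}\omega')'\geq 0$ from the origin) are all sound, and your final comparison does go through because $(\omega^{2/(N-2)}-W^{2/(N-2)})(\omega-W)_+\geq 0$ pointwise, so no $L^{N/2}$ smallness of the potential is required. The one place you are doing more work than needed is the barrier: you do not have to solve the singular, non-Lipschitz Cauchy problem from $r=1$, which is exactly the step you defer to \cite{PP07}. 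It suffices to take the explicit power function $W(r):=A\,(r-1)_+^{\alpha}$ with $\alpha:=\frac{2(N-2)}{N-4}>2$, extended by zero on $B_1(0)$; since $\alpha>2$ this $W$ is $C^{1}$ (indeed $C^{2}$) across $|y|=1$, and on $1<r<3/2$ a one-line computation using $\alpha\cdot\frac{2}{N-2}=\alpha-2$ gives
\[
-\Delta W + M\,W^{\frac{2}{N-2}}
= (r-1)^{\alpha-2}\,A^{\frac{2}{N-2}}\Bigl[\,M - A^{1-\frac{2}{N-2}}\Bigl(\alpha(\alpha-1)+\frac{(N-1)\alpha(r-1)}{r}\Bigr)\Bigr],
\]
which, because $1-\frac{2}{N-2}=\frac{N-4}{N-2}>0$, is $\geq 0$ on the whole annulus once $A$ is chosen small. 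Setting $\delta:=A(1/2)^\alpha$ and running your comparison closes the dead-core step entirely elementarily. Conversely, if you do construct the exact nontrivial branch of the IVP with $W(1)=W'(1)=0$, then extending it by zero already produces a $C^1$ weak solution of the boundary-value problem with $\delta:=W(3/2)$, so your variational uniqueness gives $\omega\equiv W$ directly and the comparison step is redundant. Either route works; the explicit power barrier is the cleaner of the two and avoids relying on the sublinear IVP theory altogether.
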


By Corollary \ref{cor3.3}, we obtain in $ B_{ \mu^{1-\tau_1} }(x^{1})\cap \Omega_1$
\begin{equation*} 
	\begin{aligned}
		\frac{\lvert\varphi\rvert}{U_{r,\lambda}}=O(\frac{1}{\mu^{\frac{m}{2}+\theta}})(1+|y-x^1|)^{\frac{N}{2}-2+\tau_1}
		=O(\frac{1}{\mu^\theta}).
	\end{aligned}	
\end{equation*}
Hence, 
\[
u_1>\frac{1}{2} U_{x^1,\lambda}   \quad {in}\quad  B_{ \mu^{1-\tau_1} }(x^{1})\cap \Omega_1.
\]

\begin{Lem}\label{deadcore}
	For any $\vartheta \in (0, 1-\tau_1]\cap (0,   \frac{(N-4)(N-2-\tau_1)}{(N-2)^2})$, there exists a large $k_{\vartheta}$ such that  
	for any $k\geq k_{\vartheta}$, 
	\[
	u_1=0 \text{ in } \cup_{j=1}^k B_{\mu^\vartheta} (y^j),\quad u_2=0\text{ in } \cup_{j=1}^k B_{\mu^\vartheta} (x^j).
	\]
\end{Lem}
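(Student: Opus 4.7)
The plan is to construct, around each point $y^j$, a radial barrier derived from Lemma \ref{lem5.11} that vanishes on $B_{\mu^\vartheta}(y^j)$ and dominates $u_1$ on $\partial B_{3\mu^\vartheta/2}(y^j)$, then to compare $u_1$ with this barrier via the Sobolev/H\"older mechanism that underlies Lemma \ref{lem0}. By the $H_s$-invariance of $(u_1,u_2)$ it is enough to prove $u_1\equiv 0$ on $B_{\mu^\vartheta}(y^1)$; the statement for $u_2$ near $x^1$ is obtained symmetrically. Two pointwise inputs are needed on $B_{3\mu^\vartheta/2}(y^1)$, which for $k$ large is disjoint from $P_1$. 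First, Lemma \ref{lem7} together with the geometric fact $|y-x^i|\gtrsim|\rho_0-r_0|\mu$ for every index $i$ and every $y$ in this ball (using $\vartheta<1$) yields
\[
u_1(y)\ \le\ C_1\sum_{i=1}^{k}\frac{1}{(1+|y-x^i|)^{N-2}}\ \le\ \frac{C_2 k}{\mu^{N-2}}\ =\ \frac{C_2}{\mu^{N-2-\tau_1}}.
\]
Second, the analogue (with the two components swapped) of the inequality $u_1>\tfrac12 U_{x^1,\lambda}$ displayed just before the lemma gives $u_2\ge\tfrac12 V_{y^1,\nu}$ on $B_{\mu^{1-\tau_1}}(y^1)\cap\Omega_1\supset B_{3\mu^\vartheta/2}(y^1)$, so that $u_2(y)^{2^*/2}\gtrsim\mu^{-\vartheta N}$ throughout the same ball.

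Set $p:=\tfrac{2^*}{2}-1=\tfrac{2}{N-2}$. Fix $M>0$, let $\omega,\delta$ be the objects furnished by Lemma \ref{lem5.11}, and define
\[
\Omega(y)\ :=\ A\,\omega\!\Big(\frac{y-y^1}{R}\Big),\qquad R:=\mu^\vartheta,\quad A:=\frac{C_2}{\delta\,\mu^{N-2-\tau_1}},
\]
so that $\Omega\equiv 0$ on $B_R(y^1)$, $\Omega=A\delta\ge u_1$ on $\partial B_{3R/2}(y^1)$, and $-\Delta\Omega=-MA^{1-p}R^{-2}\Omega^p$. For $\Omega$ to be a supersolution of $-\Delta u_1+|\beta|u_2^{2^*/2}u_1^p=K_1(|y|/\mu)u_1^{2^*-1}$ it suffices that
\[
|\beta|\,u_2^{2^*/2}\ \ge\ \tfrac{M}{R^2}A^{1-p}\ +\ K_1(|y|/\mu)\,\Omega^{2^*/2}.
\]
The three terms have orders $\mu^{-\vartheta N}$, $\mu^{-(N-2-\tau_1)(1-p)-2\vartheta}$ and $\mu^{-(N-2-\tau_1)N/(N-2)}$ respectively. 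The hypothesis $\vartheta<(N-4)(N-2-\tau_1)/(N-2)^2=(1-p)(N-2-\tau_1)/(N-2)$ rearranges to $\vartheta N<(1-p)(N-2-\tau_1)+2\vartheta$, which makes the middle term negligible with respect to the left side; the last term is automatically smaller. So the supersolution inequality holds throughout $B_{3R/2}(y^1)$ once $k$ is large.

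For the comparison, set $W:=u_1-\Omega$, so that $W\le 0$ on $\partial B_{3R/2}(y^1)$ and $W^+\in H_0^1(B_{3R/2}(y^1))$. Subtracting the PDE for $u_1$ from the supersolution inequality for $\Omega$ and testing against $W^+$ gives
\[
\int_{\{W>0\}}|\nabla W^+|^2\ \le\ \int_{\{W>0\}}\Big[K_1(|y|/\mu)\bigl(u_1^{2^*-1}-\Omega^{2^*-1}\bigr)-|\beta|u_2^{2^*/2}\bigl(u_1^p-\Omega^p\bigr)\Big] W^+.
\]
Because $p<1$ and $u_1>\Omega\ge 0$ on $\{W>0\}$, the second bracketed difference is nonnegative, so the absorption contribution has favorable sign and can simply be dropped. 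A mean-value estimate on the first difference yields $u_1^{2^*-1}-\Omega^{2^*-1}\le(2^*-1)u_1^{2^*-2}W^+$, so $\int|\nabla W^+|^2\le\int\phi(W^+)^2$ with $\phi:=(2^*-1)K_\infty u_1^{2^*-2}$. Assumption \eqref{u} gives $\|\phi\|_{L^{N/2}(B_{3R/2}(y^1))}<\mathcal{S}/2$, so the Sobolev inequality (exactly as in the proof of Lemma \ref{lem0}) forces $W^+\equiv 0$. Hence $u_1\le\Omega$ on the ball, and in particular $u_1\equiv 0$ on $B_{\mu^\vartheta}(y^1)$.

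The main obstacle is the sublinearity $p<1$: the coupling is non-Lipschitz in $u_1$ at zero, so a naive linearization of $u_1^p-\Omega^p$ creates a singular lower order term. What rescues the argument is that this term has the favorable sign on $\{W>0\}$ and may be discarded in the comparison, while the required $L^{N/2}$ integrability of $\phi$ is furnished by the a priori smallness of $\|u_1\|_{L^{2^*}}$ built into the admissible class \eqref{u}. The same sublinearity dictates the rescaling factor $A^{1-p}$ that links the boundary data of $u_1$ to the absorption strength $M/R^2$ of $\omega$; this factor is what produces the exponent $(1-p)(N-2-\tau_1)$ and hence the sharp threshold $\vartheta<(N-4)(N-2-\tau_1)/(N-2)^2$ in the statement.
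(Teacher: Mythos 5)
Your proof is correct and rests on the same mechanism as the paper's: a rescaled copy of the dead-core solution from Lemma~\ref{lem5.11} is set up as a barrier on $B_{3\mu^\vartheta/2}(y^1)$, its vanishing on the inner ball forces $u_1\equiv 0$ there, and the sharp threshold $\vartheta<\frac{(N-4)(N-2-\tau_1)}{(N-2)^2}$ emerges from the scaling law linking the barrier's boundary value to its absorption strength. The difference is one of normalization and bookkeeping. The paper scales the barrier so that its absorption coefficient is exactly $C\mu^{-\vartheta N}$ (the coefficient already isolated in the subsolution inequality for $|u_2|$, obtained by absorbing $K_2|u_2|^{2^*-1}$ into the coupling term from the start), and the threshold then appears as the condition under which the barrier's \emph{boundary value} $\delta\mu^{-\vartheta(N-2)^2/(N-4)}$ dominates $|u_2|=O(\mu^{-(N-2-\tau_1)})$. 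You instead pin down the barrier by its boundary value $A\delta=C_2\mu^{-(N-2-\tau_1)}\ge u_1$, which means you must separately verify the supersolution inequality against the full equation (including $K_1\Omega^{2^*-1}$), and the threshold appears as the condition under which the \emph{absorption coefficient} $MA^{1-p}/R^2$ stays below $\mu^{-\vartheta N}$; you correctly check that the $K_1\Omega^{2^*/2}$ term is automatically subordinate. Both routes give the same inequality $\vartheta(N-2)^2<(N-4)(N-2-\tau_1)$. A second difference is that you spell out the comparison step: where the paper implicitly invokes a comparison principle for the sublinear absorption PDE, you run the energy/Sobolev argument from the proof of Lemma~\ref{lem0}, noting that the sublinear term $|\beta|u_2^{2^*/2}(u_1^p-\Omega^p)$ has favorable sign on $\{u_1>\Omega\}$ and may be dropped. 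This makes the role of the smallness bound \eqref{u} on $\|u_1\|_{L^{2^*}(\R^N\setminus P_1)}$ explicit and is a worthwhile clarification. One small imprecision: since the sign of $u_1$ near $y^1$ is not a priori known, the argument should run with $|u_1|$ via Kato's inequality (as the paper does for $|u_2|$ near $x^1$); everything else goes through unchanged.
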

\begin{proof} 
	Fixing any $\vartheta\in(0, 1-\tau_1]$,
	we know  
	that 
	\[
	u_1>\frac{1}{2} U_{x^1,\lambda} > C\mu^{-\vartheta (N-2)} \gg |u_2|  =O(\frac{k}{\mu^{N-2}})=O(\frac{1}{\mu^{N-2-\tau_1}}) \quad \text{in } B_{\frac{3}{2}\mu^\vartheta}(x^1).
	\]

	From system \eqref{eq5}, we have
	\begin{equation*} 
		-\Delta |u_{2}| +C\mu^{-\vartheta N}|u_2|^{\frac{2}{N-2}} \leq
		-\Delta |u_{2}| - \Big(K_2\big(\frac{\lvert y \rvert}{\mu}\big) |u_{2}|^{\frac{2^{*}}{2}} +\beta  |u_{1}|^{\frac{2^{*}}{2}}\Big)|u_{2}|^{\frac{2^{*}}{2}-1} \leq0,~\text{in}~ B_{\frac{3 }{2}\mu^\vartheta }(x^{1}).
	\end{equation*}
	
	Setting $M=C $ in Lemma \ref{lem5.11}, consider 
	\begin{equation*} 
		\bar \omega (y)=  \mu^{-\vartheta\frac{(N-2)^2}{N-4}}\omega\Big(\frac{ y-x^1 }{\mu^\vartheta}\Big).
	\end{equation*}
	Then we can get 
	\begin{equation*}
		\begin{cases}
			-\Delta \bar \omega +C \mu^{-\vartheta N} \bar \omega^{\frac{2}{N-2}}=0,&~\text{in}~ B_{\frac{3 }{2}\mu^\vartheta }(x^{1}),\\
			\bar\omega= \delta \mu^{-\vartheta\frac{(N-2)^2}{N-4}},&~\text{on}~ \partial B_{\frac{3 }{2}\mu^\vartheta }(x^{1}),\\ 
			\bar\omega=0,&~\text{in}~  B_{\frac{\pi r}{k} }(x^1).
		\end{cases}
	\end{equation*}
	If $\vartheta <\frac{(N-4)(N-2-\tau_1)}{(N-2)^2}$,
	then for large $k$ there holds, on $\partial B_{\frac{3 }{2}\mu^\vartheta }(x^{1})$, 
	\[
	\bar\omega \geq C \frac{1}{\mu^{\vartheta\frac{(N-2)^2}{N-4}}}> |u_2| = O(\frac{1}{\mu^{N-2-\tau_1}}).
	\]
	
	Therefore, we have
	\begin{equation*}
		u_{2}=0~\text{in}~\cup_{i=1}^{k} B_{\mu^\vartheta}(x^{i}).
	\end{equation*}
	The proof is concluded by applying similar arguments to $u_{1}$.
\end{proof}
\begin{Rem}\label{dcrem}
	1. If $1-\tau_1< \frac{(N-4)(N-2-\tau_1)}{(N-2)^2}$, we can prove 
	\[
	u_1=0 \text{ in } \cup_{j=1}^k B_{|y^1-y^2| } (y^j),\quad u_2=0\text{ in } \cup_{j=1}^k B_{|x^1-x^2|} (x^j).
	\]
	
	2. We can check that $\frac{2\tau_1}{N-2}<\min\{1-\tau_1, \frac{(N-4)(N-2-\tau_1)}{(N-2)^2}\}$. Therefore,
	we can choose a proper $\vartheta$ such that $\mu^\vartheta>\sigma^{2}_k=\mu^\frac{2\tau_1}{N-2}$. Hence,
	for large $k$, $u_1=0$ in $Q_2$ and $u_2=0$ in $P_2$.
\end{Rem}

\subsection{Decay of Solution Differences with Different Boundary Conditions}
For any $(\varphi_{0},\psi_{0})\in \Lambda_{k}$, we assume that $(u_{1},u_{2})$ is a solution to the problem $\eqref{eq15}$ satisfying \eqref{boundary} and \eqref{u}. Let $(\bar\varphi_{0},\bar\psi_{0})\in \Lambda_{k}$ be another point and $(\bar u_{1},\bar u_{2})$ be a solution to the corresponding problem $\eqref{eq15}$ satisfying the corresponding \eqref{boundary} and \eqref{u}. Denote $$(w,v) := (u_{1},u_{2})-(\bar u_{1},\bar u_{2}),~(w_{0},v_{0}) := (\varphi_{0},\psi_{0})-(\bar\varphi_{0},\bar\psi_{0}).$$ 

We give some estimates on $(w,v)$.
\begin{Lem}\label{lem9}
	There are some $C > 0$ independent of $k$ such that
	\begin{equation}\label{3.5.1}
		\begin{aligned}
			\lvert w(y)\rvert
			\leq&C \sigma^{N-2}_{k}(\| w_{0}\|_{L^{\infty}(P_{1})}+\|v_0\|_{L^{\infty}(Q_{1})})\sum_{i=1}^{k}\left(\frac{1}{(1+\lvert y-x^{i}\rvert)^{N-2}}+\frac{1}{(1+\lvert y-y^{i}\rvert)^{N-2}}\right),
		\end{aligned}
	\end{equation} 	
	and
	\begin{equation}\label{3.5.2}
		\begin{aligned}
			\lvert v(y)\rvert
			\leq&C \sigma^{N-2}_{k}(\| w_{0}\|_{L^{\infty}(P_{1})}+\|v_0\|_{L^{\infty}(Q_{1})})\sum_{i=1}^{k}\left(\frac{1}{(1+\lvert y-x^{i}\rvert)^{N-2}}+\frac{1}{(1+\lvert y-y^{i}\rvert)^{N-2}}\right).
		\end{aligned}
	\end{equation} 
	Moreover,
	\begin{equation*}
		\begin{aligned}
			\lvert \nabla w(y)\rvert 
			\leq & C\sigma^{N-2}_{k}(\| w_{0}\|_{L^{\infty}(P_{1})}+\|v_0\|_{L^{\infty}(Q_{1})})
			\sum_{i=1}^{k} \frac{1}{(1+\lvert y-x^{i}\rvert)^{N-2}} 
			,~y\in P_{2}\setminus \cup^{k}_{i=1}B_{\sigma_{k}+1}(x^{i}) 
		\end{aligned}
	\end{equation*}
	and
	\begin{equation*}
		\begin{aligned}
			\lvert\nabla v(y)\rvert 
			\leq& C\sigma^{N-2}_{k}(\| w_{0}\|_{L^{\infty}(P_{1})}+\|v_0\|_{L^{\infty}(Q_{1})}) \sum_{i=1}^{k} \frac{1}{(1+\lvert y-x^{i}\rvert)^{N-2}} 
			,~y\in Q_{2}\setminus \cup^{k}_{i=1}B_{\sigma_{k}+1}(y^{i}). 
		\end{aligned}
	\end{equation*}
\end{Lem}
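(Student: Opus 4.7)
The plan is to follow the strategy of the proof of Lemma~\ref{lem8}, adapted from the deviation-from-profile setting to the difference-of-solutions setting: subtract the two systems to derive a differential inequality for $(|w|,|v|)$, construct a barrier respecting both families of concentration points, and apply the comparison principle of Lemma~\ref{lem0}. The gradient bounds then follow from interior elliptic regularity. The decisive new ingredient is the dead core estimate of Remark~\ref{dcrem} (applied separately to $(u_1,u_2)$ and $(\bar u_1,\bar u_2)$), which neutralises the sublinear coupling exponent $2^*/2-1=2/(N-2)<1$ that would otherwise preclude any linear comparison argument.

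First I would subtract the two systems. In $\R^N\setminus P_1$ one obtains
\[
-\Delta w = K_1\!\bigl(\tfrac{|y|}{\mu}\bigr)\bigl(u_1^{2^*-1}-\bar u_1^{2^*-1}\bigr) + \beta\bigl(u_2^{2^*/2}u_1^{2^*/2-1}-\bar u_2^{2^*/2}\bar u_1^{2^*/2-1}\bigr),
\]
with an analogous equation for $v$ in $\R^N\setminus Q_1$. Kato's inequality gives a differential inequality for $|w|$; by the mean value theorem and the pointwise bound $|u_1|,|\bar u_1|\leq A\varpi_{r,N-2}$ of Lemma~\ref{lem7}, the first term is controlled by $C\varpi_{r,N-2}^{2^*-2}|w|$. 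The coupling term is the obstacle: splitting it as $u_2^{2^*/2}(u_1^{2^*/2-1}-\bar u_1^{2^*/2-1}) + \bar u_1^{2^*/2-1}(u_2^{2^*/2}-\bar u_2^{2^*/2})$, the first summand admits only a H\"older bound $|u_1^{2^*/2-1}-\bar u_1^{2^*/2-1}|\leq |w|^{2^*/2-1}$, which no linear barrier could absorb. The resolution is Remark~\ref{dcrem}: applied to both solution pairs, $u_1=\bar u_1\equiv 0$ on $Q_2$ and $u_2=\bar u_2\equiv 0$ on $P_2$, so $w\equiv 0$ on $Q_2$, $v\equiv 0$ on $P_2$, and the coupling source vanishes identically in $P_2\cup Q_2$. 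In the far field $\R^N\setminus(P_2\cup Q_2)$, all four functions are of order $O(1/k)$ by Lemma~\ref{lem7}, so the residual coupling is negligible relative to $\varpi^{2^*-2}$.

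Next I would introduce the joint barrier
\[
\Phi(y) := M\sigma_k^{N-2}\bigl(\|w_0\|_{L^\infty(P_1)}+\|v_0\|_{L^\infty(Q_1)}\bigr)\sum_{i=1}^k\Bigl[(1+|y-x^i|)^{-(N-2)}+(1+|y-y^i|)^{-(N-2)}\Bigr]
\]
for a sufficiently large constant $M$. On $\partial P_1$, $\Phi\geq M(\|w_0\|+\|v_0\|)\geq |w_0|=|w|$; on $\partial\bigcup_j B_{\mu^\vartheta}(y^j)$ we have $w\equiv 0\leq\Phi$. Using Lemma~\ref{lemB1.} exactly as in the proof of Lemma~\ref{lem8}, $\Phi$ satisfies a super-solution inequality $-\Delta\Phi\geq C_0\,\varpi^{2^*-2}\Phi$ once the smallness $\sigma_k^{N-2}(\|w_0\|+\|v_0\|)\leq 2\mu^{-m/2}$ built into $\Lambda_k$ is used. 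The $L^{N/2}$ smallness condition of Lemma~\ref{lem0} is ensured by hypothesis~\eqref{u}, so the comparison principle applied in $\R^N\setminus(P_1\cup\bigcup_j B_{\mu^\vartheta}(y^j))$ yields $|w|\leq\Phi$, and this bound extends to the whole space since $|w|\leq\Phi$ is already clear on $P_1$ and $w\equiv 0$ on the dead core. The inclusion of the $y^i$-terms in the barrier is essential: they carry the contribution of $\|v_0\|$ through the residual coupling in the far field, and they provide the consistency of the barrier with the vanishing boundary data on $\partial B_{\mu^\vartheta}(y^j)$. A symmetric argument yields \eqref{3.5.2}.

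Finally, the gradient estimates on $P_2\setminus\bigcup_i B_{\sigma_k+1}(x^i)$ and $Q_2\setminus\bigcup_i B_{\sigma_k+1}(y^i)$ follow by applying Theorem~3.9 of Gilbarg--Trudinger on unit balls, exactly as in Lemma~\ref{lem8}(ii): the local $L^\infty$ bounds on $w$, $v$, and on the right-hand sides of their equations are all controlled by the pointwise bounds just established. The hard part of the plan is the coupling step: verifying that the dead core estimates, which hold pointwise but only in $P_2\cup Q_2$, are strong enough to replace the sublinear coupling term by a globally linear one, and that the choice of $\Phi$ with both $x^i$ and $y^i$ contributions is compatible both with the prescribed boundary data on $\partial P_1\cup\partial Q_1$ and with the implicit zero boundary data on the dead core interfaces.
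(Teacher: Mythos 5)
Your overall architecture (subtract the systems, Kato's inequality, barrier of the combined form $\varpi_{N-2}$, comparison principle of Lemma~\ref{lem0}, interior gradient estimates) matches the paper's. But the step you flag as ``the hard part'' is where the proposal actually breaks down, and the paper's resolution is different from yours.

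You correctly identify that the first summand of the coupling split,
\[
\beta\,|u_2|^{2^*/2}\bigl(|u_1|^{2^*/2-2}u_1-|\bar u_1|^{2^*/2-2}\bar u_1\bigr),
\]
admits only the sublinear bound $O\bigl(|u_2|^{2^*/2}|w|^{2^*/2-1}\bigr)$, which no linear barrier can absorb. You then try to neutralise it with the dead core plus ``negligibility in the far field.'' The dead core indeed kills this term on $P_2\cup Q_2$, but outside $P_2\cup Q_2$ the claim of negligibility is not a rigorous linear bound: $|w|^{2/(N-2)}$ exceeds any linear multiple of $|w|$ wherever $|w|$ is small, and the smallness of $|u_2|^{N/(N-2)}\lesssim\sigma_k^{-N}$ does not compensate, since the mismatch factor $|w|^{2/(N-2)-1}=|w|^{-(N-4)/(N-2)}$ blows up as $|w|\to 0$ (recall $N\ge 5$). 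So the barrier inequality $-\Delta\Phi\ge C\varpi_{N-2}^{2^*-2}\Phi$ cannot dominate this piece, and the comparison step fails in $\mathbb R^N\setminus(P_2\cup Q_2)$.

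The paper's key observation, which you miss, is that this term has a definite \emph{sign} and therefore disappears upon applying Kato's inequality: since $\beta<0$, $|u_2|^{2^*/2}\ge 0$, and $t\mapsto|t|^{2^*/2-2}t$ is monotone increasing, one has
\[
\beta\,|u_2|^{2^*/2}\bigl(|u_1|^{2^*/2-2}u_1-|\bar u_1|^{2^*/2-2}\bar u_1\bigr)\,\mathrm{sgn}(w)\le 0.
\]
After Kato's inequality this contribution is simply dropped, the remaining terms are genuinely linear in $(|w|,|v|)$, and one obtains $-\Delta|w|\le C\varpi_{N-2}^{2^*-2}(|w|+|v|)$ (and symmetrically for $|v|$) in $\mathbb R^N\setminus(P_1\cup Q_1)$. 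The comparison is then applied directly in $\mathbb R^N\setminus(P_1\cup Q_1)$ with $\tilde\omega=\sigma_k^{N-2}(\|w\|_{L^\infty(P_1\cup Q_1)}+\|v\|_{L^\infty(P_1\cup Q_1)})\varpi_{N-2}$; the dead core (Remark~\ref{dcrem}) is used only afterwards, to identify $\|w\|_{L^\infty(P_1\cup Q_1)}=\|w_0\|_{L^\infty(P_1)}$ and $\|v\|_{L^\infty(P_1\cup Q_1)}=\|v_0\|_{L^\infty(Q_1)}$ since $w\equiv 0$ on $Q_1$ and $v\equiv 0$ on $P_1$. In short: the monotonicity/sign argument, not the dead core, is what removes the sublinear obstacle; the dead core plays a cleanup role at the end.
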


\begin{proof}    
	By \eqref{eq15} and \eqref{boundary}, we have
	\begin{equation*}
		\begin{cases}
			-\Delta w = K_1\big(\frac{\lvert y \rvert}{\mu}\big) \big(\lvert u_{1}\rvert^{2^{*}-2}u_{1}-\lvert \bar u_{1}\rvert^{2^{*}-2}\bar u_{1}\big)+\beta\big(\lvert u_{2}\rvert^{\frac{2^{*}}{2}}\lvert u_{1}\rvert^{\frac{2^{*}}{2}-2}u_{1}-\lvert \bar u_{2}\rvert^{\frac{2^{*}}{2}}\lvert \bar u_{1}\rvert^{\frac{2^{*}}{2}-2}\bar u_{1}\big),  &~\text{in}~\mathbb R^N \setminus P_{1},\\[3mm]
			-\Delta v=  K_2\big(\frac{\lvert y \rvert}{\mu}\big) \big(\lvert u_{2}\rvert^{2^{*}-2}u_{2}-\lvert \bar u_{2}\rvert^{2^{*}-2}\bar u_{2}\big)+\beta\big(\lvert u_{1}\rvert^{\frac{2^{*}}{2}}\lvert u_{2}\rvert^{\frac{2^{*}}{2}-2}u_{2}-\lvert \bar u_{1}\rvert^{\frac{2^{*}}{2}}\lvert \bar u_{2}\rvert^{\frac{2^{*}}{2}-2}\bar u_{2}\big),  &~\text{in}~\mathbb R^N \setminus Q_{1},\\[3mm]
			w=w_{0} ~\text{in}~ P_{1},~v=v_{0} ~\text{in}~ Q_{1}.
		\end{cases}
	\end{equation*}
	
	(\romannumeral1) For $y\in \mathbb R^{N} \setminus (P_{1}\cup Q_1)$, there holds
	\begin{equation}\label{3.16}
		\lvert u_{1}\rvert^{2^{*}-2}u_{1}-\lvert \bar u_{1}\rvert^{2^{*}-2}\bar u_{1}=O\big(\varpi_{N-2}^{2^{*}-2} \big)w
	\end{equation}
	and
	\begin{equation*}
		\begin{aligned}
			&\lvert u_{2}\rvert^{\frac{2^{*}}{2}}\lvert u_{1}\rvert^{\frac{2^{*}}{2}-2}u_{1}-\lvert \bar u_{2}\rvert^{\frac{2^{*}}{2}}\lvert \bar u_{1}\rvert^{\frac{2^{*}}{2}-2}\bar u_{1}\\
			=&\lvert u_{2}\rvert^{\frac{2^{*}}{2}}\big(\lvert u_{1}\rvert^{\frac{2^{*}}{2}-2}u_{1}-\lvert \bar u_{1}\rvert^{\frac{2^{*}}{2}-2}\bar u_{1}\big) 
			+(\lvert u_{2}\rvert^{\frac{2^{*}}{2}}-\lvert \bar u_{2}\rvert^{\frac{2^{*}}{2}})\lvert \bar u_{1}\rvert^{\frac{2^{*}}{2}-2}\bar u_{1}\\
			=&\lvert u_{2}\rvert^{\frac{2^{*}}{2}}\big(\lvert u_{1}\rvert^{\frac{2^{*}}{2}-2}u_{1}-\lvert \bar u_{1}\rvert^{\frac{2^{*}}{2}-2}\bar u_{1}\big)+O\Big(\varpi_{N-2}^{2^*-2}\Big) v,
		\end{aligned}
	\end{equation*}
	where $\varpi_{N-2}$ is defined in \eqref{varpi1.}.

	Then by Kato’s inequality, and noting that 
	\begin{equation*}
		\begin{aligned}
			\beta\lvert u_{2}\rvert^{\frac{2^{*}}{2}}\big(\lvert u_{1}\rvert^{\frac{2^{*}}{2}-2}u_{1}-\lvert \bar u_{1}\rvert^{\frac{2^{*}}{2}-2}\bar u_{1}\big) \text{sgn}(w)\leq0,
		\end{aligned}
	\end{equation*} 
	with sgn denoting the signum function,
	we find that $\lvert w\rvert$ satisfies
	\begin{equation*}
		\begin{aligned}
			-\Delta \lvert w\rvert \leq& C\varpi_{N-2}^{2^*-2} (|w|+|v|)
			, \quad \text{in }\mathbb R^{N} \setminus (P_{1}\cup Q_1).
		\end{aligned}
	\end{equation*} 
	Similarly,
	\begin{equation*}
		\begin{aligned}
			-\Delta \lvert v\rvert \leq& C\varpi_{N-2}^{2^*-2} (|w|+|v|)
			, \quad \text{in }\mathbb R^{N} \setminus (P_{1}\cup Q_1).
		\end{aligned}
	\end{equation*} 
	Set 
	\[
	\tilde \omega =\sigma_k^{N-2}(\|w\|_{L^\infty(P_1\cup Q_1)}+\|v\|_{L^\infty(P_1\cup Q_1)})\varpi_{N-2}.
	\]
	We have 
	\[
	\tilde  \omega > |w|+|v| \text{ in } P_{1}\cup Q_1,
	\]
	and by \eqref{10'},
	\[
	-\Delta \tilde \omega \geq   C\varpi_{N-2}^{2^*-2} \tilde \omega \quad \text{in }\mathbb R^{N} \setminus (P_{1}\cup Q_1).
	\]
	
	Therefore, 
	\[
	|w| + |v| \leq \sigma_k^{N-2}(\|w\|_{L^\infty(P_1\cup Q_1)}+\|v\|_{L^\infty(P_1\cup Q_1)})\varpi_{N-2}, \quad \text{in }\mathbb R^{N} \setminus (P_{1}\cup Q_1).
	\]
	Since $w=0$ in $Q_1$, $v=0$ in $P_1$, we get the conclusion.

	(\romannumeral2) By the gradient estimate in \cite[Theorem 3.9]{GT01}, for $ y\in P_{2}\setminus \cup^{k}_{i=1}B_{\sigma_{k}+1}(x^{i})$,  we know that
	\begin{equation*}
		\begin{aligned}
			\lvert \nabla w(y)\rvert\leq& C\Big(\sup_{B_{1}(y)}\lvert w(x)\rvert+\sup_{B_{1}(y)} \Big\lvert K_1\big(\frac{\lvert \cdot \rvert}{\mu}\big) \big(\lvert u_{1}\rvert^{2^{*}-2}u_{1}-\lvert \bar u_{1}\rvert^{2^{*}-2}\bar u_{1}\big)\\
			&+\beta\big(\lvert u_{2}\rvert^{\frac{2^{*}}{2}}\lvert u_{1}\rvert^{\frac{2^{*}}{2}-2}u_{1}-\lvert \bar u_{2}\rvert^{\frac{2^{*}}{2}}\lvert \bar u_{1}\rvert^{\frac{2^{*}}{2}-2}\bar u_{1}\big) \Big\rvert\Big).
		\end{aligned}
	\end{equation*}
	From Remark \ref{dcrem}, we know that $u_{2}=0$ and $\bar u_{2}=0$ in $P_{2}$.
	By \eqref{3.16}, we obtain
	\begin{equation*}
		\begin{aligned}
			&\sup_{B_{1}(y)} \Big\lvert K_1\big(\frac{\lvert \cdot \rvert}{\mu}\big) \big(\lvert u_{1}\rvert^{2^{*}-2}u_{1}-\lvert \bar u_{1}\rvert^{2^{*}-2}\bar u_{1}\big) \Big\rvert
			\leq &C\sup_{B_{1}(y)}\big(\lvert u_{1}\rvert^{2^{*}-2}+\lvert\bar u_{1}\rvert^{2^{*}-2}\big)\lvert w\rvert.
		\end{aligned}
	\end{equation*}
	Thus,
	\begin{equation*}
		\begin{aligned}
			\lvert \nabla w(y)\rvert\leq C\sup_{B_{1}(y)}\lvert w(x)\rvert.
		\end{aligned}
	\end{equation*}
	We can obtain the estimate for $\nabla v$ similarly.	
\end{proof}
\begin{Rem}
	From \eqref{3.5.1} and \eqref{3.5.2}, we know that there is at most one solution $(u_{1},u_{2})$ to the problem $\eqref{eq15}$ satisfying \eqref{boundary} and \eqref{u}.
\end{Rem}

\medskip

Next, we will proof the existence of a solution $(u_{1},u_{2})$ to problem $\eqref{eq15}$ satisfying \eqref{boundary} and \eqref{u}. 
\subsection{The Existence of Minimizer} 

Let $A>0$ be the constant given in Lemma \ref{lem7}. We consider solutions $(u_{1},u_{2})$ to problem \eqref{eq15} satisfying  \eqref{boundary} and 
\begin{equation}\label{2A}
	|u_1(y)| \leq2A \sum_{i=1}^k \frac{1}{(1 + |y - x^i|)^{N-2}}, \quad
	|u_2(y)| \leq 2A \sum_{i=1}^k \frac{1}{(1 + |y - y^i|)^{N-2}}, \quad \forall y \in \mathbb{R}^N.
\end{equation}
We note that condition \eqref{2A} implies \eqref{u}.

\medskip
For any  $(\varphi_{0},\psi_{0})\in \Lambda_{k}\cap (D^{1,2}(\R^N))^{2}$,
denote 
\begin{equation*}
	\mathbb S(\varphi_{0},\psi_{0}): =\Big\{ (u_{1},u_{2})
	\in (D^{1,2}(\mathbb R^{N})\cap H_s)^{2}  \mid   (u_{1}, u_{2}) \text{ satisfies }\eqref{boundary} \text{ and }\eqref{2A} 
	\Big\}.
\end{equation*}
This set is a closed subset of $(D^{1,2}(\mathbb{R}^N))^2$ under the induced product norm
\[
\|(u_1,u_2)\|^2_{(D^{1,2})^2}=\|\nabla u_1\|_2^2+\|\nabla u_2\|_2^2.
\]

Define a functional $\Gamma\colon\mathbb S(\varphi_{0},\psi_{0})\rightarrow \mathbb{R}$ as
\[ 
	\begin{aligned}
		\Gamma(u_{1},u_{2})=&\frac{1}{2}\int_{\mathbb R^{N}} \big(\lvert\nabla u_{1}\rvert^{2}+\lvert\nabla u_{2}\rvert^{2}\big)-\frac{1}{2^{*}}\int_{\mathbb R^{N}}\Big(K_1\Big(\frac{\lvert y\rvert}{\mu}\Big)\lvert u_1\rvert^{2^{*}}+K_2\Big(\frac{\lvert y\rvert}{\mu}\Big)\lvert u_2\rvert^{2^{*}}\Big)\\
		&-\frac{2\beta}{2^{*}}\int_{\mathbb R^{N}}\lvert u_1\rvert^{\frac{2^{*}}{2}}\lvert u_2\rvert^{\frac{2^{*}}{2}},~~(u_{1},u_{2})\in \mathbb S(\varphi_{0},\psi_{0}).
	\end{aligned}
\]
The solution to \eqref{eq15} can be obtained by solving the following minimizing problem
\begin{equation}\label{eq16}
	\min\big\{\Gamma(u_{1},u_{2}) \mid (u_{1},u_{2}) \in \mathbb S(\varphi_{0},\psi_{0})\big\}.
\end{equation}

\begin{Lem}\label{lem3}
	The minimization problem \eqref{eq16} is attained by a solution to \eqref{eq15} in   $\mathbb S(\varphi_{0},\psi_{0})$.
\end{Lem}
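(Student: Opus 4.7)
The plan is the direct method of the calculus of variations. The main obstacle will be upgrading the stationarity of the minimizer to the full PDE \eqref{eq15}, which requires ruling out activity of the pointwise constraint \eqref{2A} outside $P_1\cup Q_1$.

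First I would verify that $\mathbb{S}(\varphi_0,\psi_0)$ is non-empty (for example via $(U_{r,\lambda}+\varphi_0, V_{\rho,\nu}+\psi_0)$, which satisfies \eqref{boundary} by construction and \eqref{2A} when the constant $A$ in Lemma \ref{lem7} is chosen large enough, using $U_{x^i,\lambda}(y)\leq C(1+|y-x^i|)^{-(N-2)}$ and the smallness of $\|(\varphi_0,\psi_0)\|_\infty$ from $\Lambda_k$), and that $\Gamma$ is coercive on $\mathbb{S}(\varphi_0,\psi_0)$: since $\beta<0$ the interspecies coupling integral is non-negative, and \eqref{2A} provides a $k$-dependent pointwise (hence $L^{2^*}$) majorant, so
\[
\Gamma(u_1,u_2)\geq \tfrac12(\|\nabla u_1\|_2^2+\|\nabla u_2\|_2^2)-C_k.
\]
From a minimizing sequence I would extract a weakly convergent subsequence in $(D^{1,2}(\R^N))^2$ and, via local Rellich compactness and diagonalization, a further a.e.\ convergent subsequence with limit $(u_1^*,u_2^*)$. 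The linear identity \eqref{boundary} and the pointwise bound \eqref{2A} pass to the a.e.\ limit, so $(u_1^*,u_2^*)\in \mathbb{S}(\varphi_0,\psi_0)$. The Dirichlet term of $\Gamma$ is weakly lower semicontinuous, and the critical and coupling integrals are in fact continuous along the subsequence: the $L^{2^*}$ majorant from \eqref{2A} together with a.e.\ convergence allow Lebesgue's dominated convergence theorem to apply to $|u_{j,n}|^{2^*}$, $K_j|u_{j,n}|^{2^*}$ and $|u_{1,n}|^{2^*/2}|u_{2,n}|^{2^*/2}$. Hence $(u_1^*,u_2^*)$ achieves the infimum.

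To derive \eqref{eq15} from minimality, I would first obtain the weak PDE on the open sets where \eqref{2A} is strict: for $(\phi_1,\phi_2)\in H_s^2$ compactly supported in $\{|u_1^*|<2A\sum_i(1+|y-x^i|)^{-(N-2)}\}\setminus P_1$ and $\{|u_2^*|<2A\sum_i(1+|y-y^i|)^{-(N-2)}\}\setminus Q_1$ respectively, the perturbation $(u_1^*+t\phi_1, u_2^*+t\phi_2)$ stays in $\mathbb{S}(\varphi_0,\psi_0)$ for sufficiently small $|t|$ of either sign. Stationarity then gives \eqref{eq15} weakly on these sets, and a Moser iteration (applicable since $|u_i^*|$ is pointwise bounded) yields H\"older continuity of $u_i^*$ there, so these subsets are genuinely open. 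Kato's inequality combined with $\beta<0$ then produces the sub-solution inequality $-\Delta|u_1^*|\leq K_\infty|u_1^*|^{2^*-1}$, matching \eqref{subsolution}.

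The main obstacle is showing that the inactive open sets coincide with $\R^N\setminus P_1$ and $\R^N\setminus Q_1$, so that \eqref{eq15} extends globally. The plan is a comparison argument mirroring Lemma \ref{lem7}: by the initial choice of $A$, the boundary value $|\varphi_0+U_{r,\lambda}|$ on $\partial P_1$ is bounded by $C\sum_i(1+|y-x^i|)^{-(N-2)}$ with $C<A$, so $A\sum_i(1+|y-x^i|)^{-(N-2)}$ strictly dominates $|u_1^*|$ there. Furthermore, the minimizer automatically satisfies \eqref{u} (as noted in the excerpt, \eqref{2A} implies \eqref{u} for large $k$, since the pointwise majorant has its $L^{2^*}(\R^N\setminus P_1)$-mass vanishing in $k$). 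Lemma \ref{lem0} then propagates the bound $|u_1^*|\leq A\sum_i(1+|y-x^i|)^{-(N-2)}$ throughout the inactive open set. Any interior boundary point of this set would have to satisfy $|u_1^*|=2A\sum_i(1+|y-x^i|)^{-(N-2)}$, contradicting the strict bound just obtained. Hence the inactive set is all of $\R^N\setminus P_1$, and analogously for $u_2^*$. The full PDE \eqref{eq15} then holds weakly, and standard elliptic bootstrapping produces a classical solution. The subtle point is verifying Lemma \ref{lem0}'s $L^{N/2}$-smallness hypothesis on the possibly irregular inactive open set; this is encoded in \eqref{u}, which holds automatically for large $k$.
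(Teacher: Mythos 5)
Your overall plan (direct method, then upgrade stationarity to the PDE) is the same as the paper's, and your lower-semicontinuity argument via the pointwise majorant from \eqref{2A} and dominated convergence is a valid alternative to the paper's split into a convex part plus a Fatou part. However, the step where you show the constraint in \eqref{2A} is nowhere active has a genuine gap.

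You derive the PDE only on the ``inactive'' open set $\Omega=\{|u_1^*|<2A\sum_i(1+|y-x^i|)^{-(N-2)}\}\setminus P_1$ and obtain the sub-solution inequality there, then try to apply Lemma~\ref{lem0} on $\Omega$ with comparison function $A\sum_i(1+|y-x^i|)^{-(N-2)}$. But Lemma~\ref{lem0} needs $|u_1^*|\leq A\sum_i(1+|y-x^i|)^{-(N-2)}$ on \emph{all} of $\partial\Omega$, and $\partial\Omega$ contains precisely the contact points where $|u_1^*|=2A\sum_i(1+|y-x^i|)^{-(N-2)}$, which is strictly larger; so the comparison cannot be applied and your propagation step is circular (you assume the contact set is empty to control $\partial\Omega$, which is what you want to prove). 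There is also a secondary circularity: to know $\Omega$ is open you invoke Moser iteration, which uses the PDE on $\Omega$, which requires $\Omega$ open. The paper sidesteps all of this with one simple idea you are missing: first replace the minimizer by $(|u_1|,|u_2|)$ (which is also a minimizer since $\Gamma$ depends only on moduli and gradients of moduli), and then use \emph{one-sided} variations $(|u_1|-t\xi,|u_2|)$ with $\xi\geq 0$ and $t\downarrow 0$. Because \eqref{2A} is an upper bound on $|u_i|$, decreasing $|u_1|$ always stays in $\mathbb{S}(\varphi_0,\psi_0)$ for small $t$, regardless of whether the constraint is active; this yields the differential inequality $-\Delta|u_1|\leq K_1(|y|/\mu)|u_1|^{2^*-1}+\beta|u_2|^{2^*/2}|u_1|^{2^*/2-1}\leq K_\infty|u_1|^{2^*-1}$ weakly on all of $\mathbb{R}^N\setminus P_1$, not just on $\Omega$. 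Then Lemma~\ref{lem7} applies directly to give $|u_1|\leq A\sum_i(1+|y-x^i|)^{-(N-2)}<2A\sum_i(1+|y-x^i|)^{-(N-2)}$, so the constraint is strictly inactive everywhere and two-sided variations yield the full equation \eqref{eq15}.
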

\begin{proof}
	First note that 
	$\mathbb S(\varphi_{0},\psi_{0})$ is closed and convex in $\mathbb E$. Hence,
	it is weakly closed.
	To show the minimization problem \eqref{eq16} is attained, we show that 
	the functional $\Gamma$ is coercive and lower semicontinuous on $\mathbb S(\varphi_{0},\psi_{0})$.
	
	(\romannumeral1) Coercivity.
	By \eqref{2A}, there exists $C>0$  such that
	\begin{equation*}
		\begin{aligned}
			-\frac{1}{2^{*}}\int_{\mathbb R^{N}\setminus P_{1}}K_1\Big(\frac{\lvert y\rvert}{\mu}\Big)\lvert u_1\rvert^{2^{*}}
			-\frac{1}{2^{*}}\int_{\mathbb R^{N}\setminus Q_{1}}K_2\Big(\frac{\lvert y\rvert}{\mu}\Big)\lvert u_2\rvert^{2^{*}}
			>-C.	
		\end{aligned}
	\end{equation*}

	By $\beta<0$, there holds
	\begin{equation*}
		\begin{aligned}
			\Gamma(u_{1},u_{2}) 
			>&\frac{1}{2}\int_{\mathbb R^{N}} \big(\lvert\nabla u_{1}\rvert^{2}+\lvert\nabla u_{2}\rvert^{2}\big)-C.		
		\end{aligned}
	\end{equation*}

	Then, $\Gamma(u_{1},u_{2})\rightarrow +\infty$ as $\|(u_1,u_2)\|_{(D^{1,2})^2}\rightarrow\infty$.
	
	(\romannumeral2) $\Gamma$ is weakly lower semi-continuous on $\mathbb S(\varphi_{0},\psi_{0})$. 
	
	Since $\beta < 0$, Fatou’s Lemma implies that   
	$-\frac{2\beta}{2^{*}}\int_{\mathbb R^{N}}\lvert u_1\rvert^{\frac{2^{*}}{2}}\lvert u_2\rvert^{\frac{2^{*}}{2}}$ is weakly lower semi-continuous on $\mathbb S(\varphi_{0},\psi_{0})$.
	We now consider the functional   $I(u_1,u_2)=\Gamma(u_1,u_2)+\frac{2\beta}{2^{*}}\int_{\mathbb R^{N}}\lvert u_1\rvert^{\frac{2^{*}}{2}}\lvert u_2\rvert^{\frac{2^{*}}{2}}$, which is of class $C^2$.
	For $(\xi,\eta)\in D_0^{1,2}(\R^N\setminus P_1) \times D_0^{1,2}(\R^N\setminus Q_1) \cap (H_s)^2$,
	we have 
	\begin{align*}
		I''(u_1,u_2)[(\xi,\eta),(\xi,\eta) ] \geq& 
		\|\nabla \xi\|^2_{2}+\|\nabla \eta\|^2_{2}-C\int_{\R^N\setminus P_1} |u_1|^{2^*-2}\xi^2-C\int_{\R^N\setminus Q_1} |u_2|^{2^*-2}\eta^2\\
		\geq& (1-C\|u_1\|_{L^{2^*}(\R^N\setminus P_1)}^{2^*-2}-C \|u_2\|_{L^{2^*}(\R^N\setminus Q_1)}^{2^*-2}) \|(\xi,\eta)\|^2_{(D^{1,2})^2}
	\end{align*}
	By \eqref{2A}, we know that $I(u_1,u_2)$ is convex for large $k$. Therefore, it is weakly lower semicontinuous.
	
	(iii)
	Suppose the minimization problem \eqref{eq16} is achieved by some $(u_1, u_2) \in \mathbb{S}(\varphi_0, \psi_0)$.  
	We now show that this minimizer is in fact a solution to \eqref{eq15}.
	
	Since $(|u_1|, |u_2|) \in \mathbb{S}(\varphi_0, \psi_0)$ and $\Gamma(u_1, u_2) = \Gamma(|u_1|, |u_2|)$, it follows that $(|u_1|, |u_2|)$ is also a minimizer of $\Gamma$ over $\mathbb{S}(\varphi_0, \psi_0)$.
	
	Let $\xi \in C_0^\infty(\mathbb{R}^N \setminus P_1) \cap H_s$ with $\xi \geq 0$. Then for sufficiently small $t > 0$, we have $(|u_1| - t\xi, |u_2|) \in \mathbb{S}(\varphi_0, \psi_0)$. Therefore,
	$$
	\Gamma'(|u_1|, |u_2|)(\xi, 0)
	= \lim_{t \to 0^+} \frac{ \Gamma(|u_1|, |u_2|)-\Gamma(|u_1| - t\xi, |u_2|) }{t}
	\leq 0.
	$$
	
	This implies that
	$$
	-\Delta |u_1| \leq K_1\left(\frac{|y|}{\mu}\right) |u_1|^{2^* - 1} + \beta |u_2|^{\frac{2^*}{2}} |u_1|^{\frac{2^*}{2} - 1}, \quad \text{in } \mathbb{R}^N \setminus P_1.
	$$
	
	Since $\beta < 0$, the second term on the right-hand side is non-positive, so the inequality above implies that $|u_1|$ satisfies the first inequality in \eqref{subsolution}. Similarly, $|u_2|$ satisfies the second inequality.
	
	Now, applying Lemma \ref{lem7}, we deduce that the bounds in \eqref{2A} are strictly satisfied. 
	Therefore, $(u_1,u_2)$ is a solution to \eqref{eq15}.
\end{proof}

Since $\Lambda_k \cap (D^{1,2}(\R^N))^2$ is dense in $\Lambda_k$ with respect to the norm $\|\cdot\|_\infty$, and the a priori estimates from Lemma \ref{lem9} hold uniformly, we may extend the solvability of \eqref{eq15} from the dense subspace to all of  $\Lambda_k$.  
Consequently, for every $(\varphi_0, \psi_0)\in \Lambda_k$, there exists a unique solution $(u_1, u_2)$ to problem \eqref{eq15} satisfying   \eqref{boundary} and   \eqref{u}.

This allows us to introduce   the operator $S$ as follows:
\begin{Def}\label{def3.3}
	Let $(u_{1},u_{2})$  denote the unique solution to \eqref{eq15} corresponding to
	$(\varphi_{0},\psi_{0})\in \Lambda_{k}$, satisfying \eqref{boundary} and \eqref{u}. We define the operator
	$S:\Lambda_k\to \mathbb E$ by 
	\[ S(\varphi_{0},\psi_{0}):=(u_{1},u_{2})-(U_{r,\lambda},V_{\rho,\nu}).\]
\end{Def}
\begin{Rem}\label{rem3.11}
	When $(\varphi_{0},\psi_{0})\in \Lambda_{k}\cap (D^{1,2}(\R^N))^2$, 
	by Lemma \ref{lem3},  $(u_1, u_2)\in (D^{1,2}(\R^N))^2$,
	and hence,
	$S(\varphi_{0},\psi_{0})\in (D^{1,2}(\R^N))^2$.
\end{Rem}

\section{Reduction}\label{sec:4}

In this section, we are now ready to carry out the reduction for system \eqref{eq5}. We consider the following problem 
\begin{equation}\label{eq23}
	\begin{cases}
		-\Delta \varphi-(2^{*}-1)K_1\big(\frac{\lvert y \rvert}{\mu}\big)U^{2^{*}-2}_{r,\lambda}\varphi
		=l_{1}+R_{1}(\varphi)+ D_1(\varphi,\psi)
		+\sum_{j=1}^{2}c_{j}Y_j,\\[3mm]
		-\Delta \psi-(2^{*}-1)K_2\big(\frac{\lvert y \rvert}{\mu}\big)V^{2^{*}-2}_{\rho,\nu}\psi=l_{2}+R_{2}(\psi)+D_2(\varphi,\psi)+\sum_{j=1}^{2}d_{j}Z_j,\\[3mm]
		(\varphi,\psi)\in \mathbb E,
	\end{cases}
\end{equation}
where 
\begin{gather*} 
	l_1=	K_1\big(\frac{\lvert y \rvert}{\mu}\big)U^{2^{*}-1}_{r,\lambda}-\sum_{i=1}^{k}U^{2^{*}-1}_{x^{i},\lambda},\quad
	l_2=	K_2\big(\frac{\lvert y \rvert}{\mu}\big)V^{2^{*}-1}_{\rho,\nu}-\sum_{i=1}^{k}V^{2^{*}-1}_{y^{i},\nu}\\
	R_{1}(\varphi)=	K_1\big(\frac{\lvert y \rvert}{\mu}\big)\lvert U_{r,\lambda}+\varphi \rvert^{2^{*}-2}(U_{r,\lambda}+\varphi)-K_1\big(\frac{\lvert y \rvert}{\mu}\big)U^{2^{*}-1}_{r,\lambda}-(2^{*}-1)K_1\big(\frac{\lvert y \rvert}{\mu}\big)U^{2^{*}-2}_{r,\lambda}\varphi,\\
	R_{2}(\psi)=	K_2\big(\frac{\lvert y \rvert}{\mu}\big)\lvert V_{\rho,\nu}+\psi \rvert^{2^{*}-2}(V_{\rho,\nu}+\psi)-K_2\big(\frac{\lvert y \rvert}{\mu}\big)V^{2^{*}-1}_{\rho,\lambda}-(2^{*}-1)K_2\big(\frac{\lvert y \rvert}{\mu}\big)V^{2^{*}-2}_{\rho,\nu}\psi,\\
	D_{1}(\varphi,\psi) =\beta \lvert \psi+V_{\rho,\nu}\rvert^{\frac{2^{*}}{2}}\lvert \varphi+U_{r,\lambda}\rvert^{\frac{2^{*}}{2}-2}(\varphi+U_{r,\lambda}),\\
	D_2(\varphi,\psi)=\beta \lvert \varphi+U_{r,\lambda}\rvert^{\frac{2^{*}}{2}}\lvert \psi+V_{\rho,\nu}\rvert^{\frac{2^{*}}{2}-2}(\psi+V_{\rho,\nu}).
\end{gather*}
By the definition of $S$
(Definition \ref{def3.3}), if $(\varphi_{0},\psi_{0})$ in $\Lambda_{k}$ solves system \eqref{eq23}, then $(\varphi_{0},\psi_{0})\in S(\Lambda_{k})$. Next we are sufficed to solve system \eqref{eq23} in $S(\Lambda_{k})$.

Let $(\chi_{1},\chi_{2})\in C^{1}_{0}(\mathbb R^{N})\times C^{1}_{0}(\mathbb R^{N})$ be the fixed truncation functions such that 
\begin{equation}\label{jd}
	\begin{cases}
		\chi_{1}=1,~\text{in}~\mathbb R^N \setminus \cup^{k}_{i=1}B_{\frac{3}{4}\sigma^{2}_{k}}(x^{i}),\\[3mm]
		\chi_{1}=0,~\text{in}~\cup^{k}_{i=1}B_{\frac{1}{4}\sigma^{2}_{k}}(x^{i}),\\[3mm]
		\lvert\nabla\chi_{1}\rvert\leq \frac{4}{\sigma^{2}_{k}},~
		\lvert\Delta\chi_{1}\rvert\leq \frac{8N}{\sigma^{4}_{k}},
	\end{cases}
	~\text{and}~
	\begin{cases}
		\chi_{2}=1,~\text{in}~\mathbb R^N \setminus \cup^{k}_{i=1}B_{\frac{3}{4}\sigma^{2}_{k}}(y^{i}),\\[3mm]
		\chi_{2}=0,~\text{in}~\cup^{k}_{i=1}B_{\frac{1}{4}\sigma^{2}_{k}}(y^{i}),\\[3mm]
		\lvert\nabla\chi_{2}\rvert\leq \frac{4}{\sigma^{2}_{k}},~
		\lvert\Delta\chi_{2}\rvert\leq \frac{8N}{\sigma^{4}_{k}}.
	\end{cases}
\end{equation} 
For any
$(\varphi_0, \psi_0)\in S(\Lambda_k)$, 
$(u_{1},u_{2})=(\varphi_{0},\psi_{0})+(U_{r,\lambda},V_{\rho,\nu})$ is the solution of system \eqref{eq15}. Therefore,  
\begin{equation}\label{4.5}
	\begin{aligned}
		\begin{cases}
			-\Delta \varphi_{0}-(2^{*}-1)K_1\big(\frac{\lvert y \rvert}{\mu}\big)U^{2^{*}-2}_{r,\lambda}\varphi_{0}=
			l_{1}+R_{1}(\varphi_{0})+D_1(\varphi_0,\psi_0),~\text{in}~\mathbb R^N \setminus P_{1},\\[3mm]
			-\Delta \psi_{0}-(2^{*}-1)K_2\big(\frac{\lvert y \rvert}{\mu}\big)V^{2^{*}-2}_{\rho,\nu}\psi_{0}= l_{2}+R_{2}(\psi_{0})+D_2(\varphi_0,\psi_0),~\text{in}~\mathbb R^N \setminus Q_{1}.
		\end{cases}
	\end{aligned}
\end{equation}
Then we have, in $\R^N$,
\begin{equation}\label{xin3}
	\begin{aligned}
		-\Delta (\chi_{1}\varphi_{0})-(2^{*}-1)K_1\big(\frac{\lvert y \rvert}{\mu}\big)U^{2^{*}-2}_{r,\lambda} (\chi_{1}\varphi_{0})  
		= \chi_{1}\Big(l_{1}+R_{1}(\varphi_{0})+D_1(\varphi_0,\psi_0)\Big)
		-\varphi_{0}\Delta\chi_{1}-2\nabla\chi_{1}\nabla \varphi_{0}
	\end{aligned}
\end{equation}
and
\begin{equation}\label{xin4}
	\begin{aligned}
		-\Delta (\chi_{2}\psi_{0})-(2^{*}-1)K_2\big(\frac{\lvert y \rvert}{\mu}\big)V^{2^{*}-2}_{\rho,\nu}(\chi_{2}\psi_{0})
		=\chi_{2}\Big(l_{2}+R_{2}(\psi_{0})+D_2(\varphi_0,\psi_0)\Big)
		-\psi_{0}\Delta\chi_{2}-2\nabla\chi_{2}\nabla \psi_{0}
		.
	\end{aligned}
\end{equation}
If $(\varphi_0, \psi_0)$ further satisfies the systems \eqref{eq23}, by Remark \ref{dcrem}, \eqref{xin3} and \eqref{xin4}, we get
\begin{equation}\label{4.}
	\begin{aligned}
		&-\Delta (\varphi_{0}-\chi_{1}\varphi_{0})-(2^{*}-1)K_1\big(\frac{\lvert y \rvert}{\mu}\big)U^{2^{*}-2}_{r,\lambda} (\varphi_{0}-\chi_{1}\varphi_{0})\\
		=&(1-\chi_{1})\Big(l_{1}+R_{1}(\varphi_{0})+D_1(\varphi_0,\psi_0)\Big)
		+\varphi_{0}\Delta\chi_{1}+2\nabla\chi_{1}\nabla \varphi_{0} 
		+\sum_{j=1}^{2}c_{j} Y_{j}\\
		=&(1-\chi_{1})\big(l_{1}+R_{1}(\varphi_{0})\big)
		+\varphi_{0}\Delta\chi_{1}+2\nabla\chi_{1}\nabla \varphi_{0}
		+\sum_{j=1}^{2}c_{j} Y_{j}
	\end{aligned}
\end{equation}
and
\begin{equation}\label{4..}
	\begin{aligned}
		&-\Delta (\psi_{0}-\chi_{2}\psi_{0})-(2^{*}-1)K_2\big(\frac{\lvert y \rvert}{\mu}\big)V^{2^{*}-2}_{\rho,\nu}(\psi_{0}-\chi_{2}\psi_{0})\\
		=&(1-\chi_{2})\Big(l_{2}+R_{2}(\psi_{0})+D_2(\varphi_0,\psi_0)\Big)
		+\psi_{0}\Delta\chi_{2}+2\nabla\chi_{2}\nabla \psi_{0} 
		+\sum_{j=1}^{2}d_{j} Z_{j}\\
		=&(1-\chi_{2})\big(l_{2}+R_{2}(\psi_{0})\big)
		+\psi_{0}\Delta\chi_{2}+2\nabla\chi_{2}\nabla \psi_{0}+\sum_{j=1}^{2}d_{j} Z_{j}.
	\end{aligned}
\end{equation}
\smallskip 
This motivates us to solve the 
fixed point for the operator $T$ defined as follows:
\begin{Def}
	Let $S(\Lambda_{k})$ be the complete metric space   equipped with the distance
	\begin{equation*}
		d\big((\varphi_{0}, \psi_{0}),(\bar\varphi_{0}, \bar\psi_{0})\big)
		= \|\varphi_{0}-\bar\varphi_{0}\|_{L^{\infty}(P_{1})}
		+ \|\psi_{0}-\bar\psi_{0}\|_{L^{\infty}(Q_{1})}.
	\end{equation*}
	For any $(\varphi_{0},\psi_{0})\in S(\Lambda_{k})$, define the map $T: S(\Lambda_k) \to \mathbb E$ by
	\begin{equation*}
		T(\varphi_{0},\psi_{0}) :=S
		\begin{pmatrix}
			L_{1}\!\Big( \varphi_{0}\Delta\chi_{1} + 2\nabla\chi_{1}  \nabla \varphi_{0} + (1-\chi_{1})\big(l_{1}+R_{1}(\varphi_{0})\big) \Big) \\
			L_{2}\!\Big( \psi_{0}\Delta\chi_{2} + 2\nabla\chi_{2}  \nabla \psi_{0} + (1-\chi_{2})\big(l_{2}+R_{2}(\psi_{0})\big) \Big)
		\end{pmatrix}^{\top}.
	\end{equation*}
\end{Def}
The following Lemmas~\ref{lem10}, \ref{lem11} and Corollary \ref{cor4.4} establish the well-definedness of $T$.
\begin{Lem}\label{lem10}
	Assume that $\lvert\lvert x^{1}\rvert-\mu r_{0}\rvert\leq1$ and $\lvert\lvert y^{1}\rvert-\mu \rho_{0}\rvert\leq1$. If $N\geq5$, then 
	\begin{equation*}
		\begin{aligned}
			\lVert l_{1}\rVert_{L^{\infty}(P_{2})}\leq C\Big(\frac{1}{\mu}\Big)^{m},~\lVert l_{2}\rVert_{L^{\infty}(Q_{2})}\leq C\Big(\frac{1}{\mu}\Big)^{m}.
		\end{aligned}
	\end{equation*}	
	
\end{Lem}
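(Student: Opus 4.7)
The plan is to exploit the $k$-fold rotational symmetry of $l_{1}$: since both $K_{1}(|y|/\mu)$ and $U_{r,\lambda}$ are invariant under rotations by $2\pi/k$, it suffices to bound $l_{1}$ on $B_{\sigma_{k}^{2}}(x^{1})\cap\Omega_{1}$; the bound on $l_{2}$ follows by the same argument with $(K_{2},V_{\rho,\nu},y^{i})$ in place of $(K_{1},U_{r,\lambda},x^{i})$.

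On $B_{\sigma_{k}^{2}}(x^{1})$ I would write
\[
l_{1}=\underbrace{\bigl[K_{1}(|y|/\mu)-1\bigr]U_{r,\lambda}^{2^{*}-1}}_{=:A(y)}
+\underbrace{\Bigl[U_{r,\lambda}^{2^{*}-1}-\sum_{i=1}^{k}U_{x^{i},\lambda}^{2^{*}-1}\Bigr]}_{=:B(y)}.
\]
For the term $A(y)$, note that $\bigl||y|-\mu r_{0}\bigr|\le|y-x^{1}|+\bigl||x^{1}|-\mu r_{0}\bigr|\le\sigma_{k}^{2}+1$, so $\bigl||y|/\mu-r_{0}\bigr|\le(\sigma_{k}^{2}+1)/\mu\to 0$ as $k\to\infty$, placing us in the expansion regime of assumption $(\mathbb K)$. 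Hence $|K_{1}(|y|/\mu)-1|\le C\bigl((1+|y-x^{1}|)/\mu\bigr)^{m}$. Combined with the pointwise bound $U_{x^{1},\lambda}^{2^{*}-1}(y)\le C(1+|y-x^{1}|)^{-(N+2)}$, one piece of $A$ is controlled by $C(1+|y-x^{1}|)^{m-(N+2)}\mu^{-m}\le C\mu^{-m}$ since $m<N-2<N+2$. The contribution from the tail $W:=U_{r,\lambda}-U_{x^{1},\lambda}$ is handled using $(U_{x^{1},\lambda}+W)^{2^{*}-1}\le C(U_{x^{1},\lambda}^{2^{*}-1}+W^{2^{*}-1})$, together with the observation that in $\Omega_{1}\cap B_{\sigma_{k}^{2}}(x^{1})$ one has $|y-x^{j}|\ge c|x^{j}-x^{1}|$ for $j\ge 2$, so by the summation estimate \eqref{key3'} we get $W(y)\le C\sum_{j\ge 2}|x^{j}-x^{1}|^{-(N-2)}=O(\mu^{-m})$, whence $W^{2^{*}-1}=O(\mu^{-m(N+2)/(N-2)})=o(\mu^{-m})$.

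For $B(y)$, I would use a first-order Taylor-type expansion around $U_{x^{1},\lambda}$: $(U_{x^{1},\lambda}+W)^{2^{*}-1}=U_{x^{1},\lambda}^{2^{*}-1}+(2^{*}-1)U_{x^{1},\lambda}^{2^{*}-2}W+r(y)$, with remainder $r$ satisfying $|r|\le C(U_{x^{1},\lambda}^{(2^{*}-3)^{+}}W^{\min(2,2^{*}-1)}+W^{2^{*}-1})$ (the case distinction accommodates the sub-quadratic growth when $N\ge 7$). Subtracting $\sum_{i=1}^{k}U_{x^{i},\lambda}^{2^{*}-1}=U_{x^{1},\lambda}^{2^{*}-1}+\sum_{i\ge 2}U_{x^{i},\lambda}^{2^{*}-1}$ leaves the dominant cross-term $(2^{*}-1)U_{x^{1},\lambda}^{2^{*}-2}W$, which (with $U_{x^{1},\lambda}^{2^{*}-2}$ bounded and $W=O(\mu^{-m})$) is $O(\mu^{-m})$; the remainder $r$ and the leftover $\sum_{i\ge 2}U_{x^{i},\lambda}^{2^{*}-1}$ are both $o(\mu^{-m})$, again by \eqref{key3'} and the fact that $N+2>N-2$.

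The main technical subtlety I expect is the sub-linear behaviour of $p=2^{*}-1$ when $N\ge 7$ (i.e.\ $p<2$), which obstructs a clean quadratic-remainder bound; this is resolved by invoking the inequality $|(a+b)^{p}-a^{p}-pa^{p-1}b|\le C\min(a^{p-2}b^{2},b^{p})$ and choosing whichever branch is smaller in the various decay regimes. Everything else reduces to combining assumption $(\mathbb K)$, the decay $U_{x^{1},\lambda}\sim(1+|y-x^{1}|)^{-(N-2)}$, and the summation estimates from the Appendix.
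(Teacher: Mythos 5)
Your proposal is correct and uses essentially the same ingredients as the paper: the $k$-fold symmetry reduces to $B_{\sigma_k^2}(x^1)\cap\Omega_1$, assumption $(\mathbb K)$ handles the $(K_1-1)$ factor via $||y|/\mu-r_0|\leq (\sigma_k^2+1)/\mu$, and the interaction term is controlled by the separation estimates \eqref{key3'} and \eqref{tau1a}. The decomposition you choose, $l_1=(K_1-1)U_{r,\lambda}^{2^*-1}+(U_{r,\lambda}^{2^*-1}-\sum_i U_{x^i,\lambda}^{2^*-1})$, differs from the paper's only in where the $K_1$ factor is attached ($J_1=K_1\bigl(U_{r,\lambda}^{2^*-1}-\sum_i U_{x^i,\lambda}^{2^*-1}\bigr)$, $J_2=(K_1-1)\sum_i U_{x^i,\lambda}^{2^*-1}$); the two are equivalent up to lower-order pieces.

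Two small points. First, your first-order Taylor expansion of $(U_{x^1,\lambda}+W)^{2^*-1}$ with an explicit quadratic remainder is more work than needed: the paper simply applies $|(a+b)^p-a^p|\le C(a^{p-1}b+b^p)$, valid for all $p>1$, which yields the same dominant term $U_{x^1,\lambda}^{2^*-2}W\sim (1+|y-x^1|)^{-4}\sum_{i\ge 2}(1+|y-x^i|)^{-(N-2)}$ without any case distinction in $N$, so the sublinearity for $N\geq 7$ you flag as the main subtlety is in fact a non-issue for this particular lemma. Second, your inequality as stated in the final paragraph, $|(a+b)^p-a^p-pa^{p-1}b|\leq C\min(a^{p-2}b^2,b^p)$, is false for $p>2$ (take $b\ll a$: the left side is $\sim a^{p-2}b^2$, which dominates $b^p$); the correct form is the sum $C(a^{p-2}b^2+b^p)$ for $p\geq 2$, matching the $|r|\leq C(U_{x^1,\lambda}^{(2^*-3)^+}W^{\min(2,2^*-1)}+W^{2^*-1})$ you wrote earlier. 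This slip does not affect the conclusion since you used the correct form where it mattered.
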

\begin{proof}
	We are sufficed to deal with $\lVert l_{1}\rVert_{L^{\infty}(P_{2})}$, since for $\lVert l_{2}\rVert_{L^{\infty}(Q_{2})}$, one can obtain similar estimates just in the same way.
	Recalling the definition of $l_{1}$, we have
	\[
	l_{1}
	=K_1\big(\frac{\lvert y \rvert}{\mu}\big)U^{2^{*}-1}_{r,\lambda}-\sum_{i=1}^{k}U^{2^{*}-1}_{x^{i},\lambda}=J_{1}+J_{2}\]
	where 
	\[J_1:= K_1\big(\frac{\lvert y \rvert}{\mu}\big)\Big(U^{2^{*}-1}_{r,\lambda}-\sum_{i=1}^{k}U^{2^{*}-1}_{x^{i},\lambda}\Big),\quad  J_2:=\Big(K_1\big(\frac{\lvert y \rvert}{\mu}\big)-1\Big)\sum_{i=1}^{k}U^{2^{*}-1}_{x^{i},\lambda}.
	\]
	By symmetry, we might as well assume that 
	$y\in B_{\sigma^{2}_{k}}(x^{1})$, then $\lvert y-x^{i}\rvert\geq\lvert y-x^{1}\rvert,~~i=2,\dots,k$. Therefore,
	\begin{equation*}
		\lvert  J_{1} \rvert \leq C\frac{1}{(1+\lvert y-x^{1}\rvert)^{4}}\sum_{i=2}^{k}\frac{1}{(1+\lvert y-x^{i}\rvert)^{N-2}}+C\Big(\sum_{i=2}^{k}\frac{1}{(1+\lvert y-x^{i}\rvert)^{N-2}}\Big)^{2^*-1}.
	\end{equation*}
	By \eqref{tau1a}, we obtain
	\[	
	\Big(\sum_{i=2}^{k}\frac{1}{(1+\lvert y-x^{i}\rvert)^{N-2}}\Big)^{2^*-1}
	\leq C\sum_{i=2}^{k}\frac{1}{(1+\lvert y-x^{i}\rvert)^{N+2-\frac{4}{N-2}\tau_{1}}}.
	\]

	Thus, by \eqref{key3'},
	\begin{equation*}
		\lVert  J_{1} \rVert_{L^{\infty}(P_{2})} \leq C\sum_{i=2}^{k}\frac{1}{\lvert x^{i}-x^{1}\rvert^{N-2}}+C\sum_{i=2}^{k}\frac{1}{\lvert x^{i}-x^{1}\rvert^{N+2-\frac{4}{N-2}\tau_{1}}}\leq C\Big(\frac{1}{\mu}\Big)^{m}.
	\end{equation*}
	
	Next we estimate
	$J_2$.
	For $y\in B_{\sigma^{2}_{k}}(x^{1})$, we have
	\begin{equation}\label{eq27}
		\begin{aligned}
			\Big \lvert \Big(K_1\big(\frac{\lvert y \rvert}{\mu}\big)-1\Big)\sum_{i=2}^{k}U^{2^{*}-1}_{x^{i},\lambda} \Big \rvert\leq C\sum_{i=2}^{k}\frac{1}{(1+\lvert y-x^{i}\rvert)^{N+2}}\leq C\sum_{i=2}^{k}\frac{1}{\lvert x^{i}-x^{1}\rvert^{N+2}}\leq C\Big(\frac{1}{\mu}\Big)^{m}.
		\end{aligned}
	\end{equation}
	If $y\in B_{\sigma^{2}_{k}}(x^{1})$, there holds
	\begin{equation*}
		\begin{aligned}
			\Big \lvert K_1\big(\frac{\lvert y \rvert}{\mu}\big) -1 \Big \rvert
			\leq& C\Big \lvert \frac{\lvert y \rvert}{\mu} -r_{0}\Big \rvert^{m}
			\leq \frac{C}{\mu^{m}}\Big(\lvert \lvert y \rvert-\lvert x^{1} \rvert\rvert^{m}+\lvert \lvert x^{1} \rvert-r_{0}\mu\rvert^{m}\Big)
			\leq \frac{C}{\mu^{m}}\lvert  y - x^{1} \rvert^{m}.
		\end{aligned}
	\end{equation*}
	Thus,
	\begin{equation}\label{eq28}
		\begin{aligned}
			\Big \lvert K_1\big(\frac{\lvert y \rvert}{\mu}\big) -1 \Big \rvert U^{2^{*}-1}_{x^{1},\lambda} 
			\leq\frac{C}{\mu^{m}} \frac{\lvert  y - x^{1} \rvert^{m}}{(1+\lvert y-x^{1}\rvert)^{N+2}}\leq\frac{C}{\mu^{m}} \frac{1}{(1+\lvert y-x^{1}\rvert)^{N+2-m}}\leq C\Big(\frac{1}{\mu}\Big)^{m}.
		\end{aligned}
	\end{equation}  
	
	Hence, \eqref{eq27} and \eqref{eq28} imply that
	\begin{equation*}
		\lVert J_{2}\rVert_{L^{\infty}( P_{2})}\leq C\Big(\frac{1}{\mu}\Big)^{m}.
	\end{equation*}
\end{proof}

\begin{Lem}\label{lem11} 
	Suppose $N\geq5$.
	For $(\varphi_{0},\psi_{0})\in S(\Lambda_{k})$, we obtain
	\begin{equation*}
		\begin{aligned}
			\lVert R_{1}(\varphi_{0})\rVert_{L^{\infty}( P_{2})}\leq C\Big(\frac{1}{\mu}\Big)^{m},~~\text{and}~~\lVert R_{2}(\psi_{0})\rVert_{L^{\infty}(Q_{2})}\leq C\Big(\frac{1}{\mu}\Big)^{m}.
		\end{aligned}
	\end{equation*}	
\end{Lem}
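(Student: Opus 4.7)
The plan is to bound the nonlinear remainder $R_1(\varphi_0)$ pointwise via a second-order Taylor expansion of $f(u)=|u|^{2^*-2}u$ around $U_{r,\lambda}$, and to exploit the sharp pointwise decay of $\varphi_0$ that is available because $(\varphi_0,\psi_0)\in S(\Lambda_k)$ and not merely in $\Lambda_k$; the estimate for $R_2(\psi_0)$ is identical after swapping the roles of $(U_{r,\lambda},x^i)$ and $(V_{\rho,\nu},y^i)$. First, since $(\varphi_0,\psi_0)$ arises from a solution of \eqref{eq15} and $P_2\subset\bigcup_i B_{(\rho-r)/2}(x^i)$ for large $k$ (because $\sigma_k^2=\mu^{2\tau_1/(N-2)}\ll\mu$), Corollary \ref{cor3.3} yields
\[
|\varphi_0(y)|\leq C\mu^{-m/2-\theta}\sum_{i=1}^k(1+|y-x^i|)^{-N/2}\qquad\text{on }P_2,
\]
and by symmetry together with \eqref{key3'}, for $y\in B_{\sigma_k^2}(x^1)$ the nearest-center term dominates, giving $|\varphi_0(y)|\leq C\mu^{-m/2-\theta}(1+|y-x^1|)^{-N/2}$.

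Next I would verify that $|\varphi_0|\leq U_{r,\lambda}/2$ throughout $P_2$ for $k$ large. Since $U_{r,\lambda}(y)\geq cU_{x^1,\lambda}(y)\geq c(1+|y-x^1|)^{-(N-2)}$ on $B_{\sigma_k^2}(x^1)$, the ratio is bounded by $C\mu^{-m/2-\theta}(1+|y-x^1|)^{N/2-2}$, whose supremum on $P_2$ is $C\mu^{-m/2-\theta+(N-4)\tau_1/(N-2)}$; a short calculation shows the exponent is strictly negative for every admissible $m\in[2,N-2)$, so the ratio tends to $0$. Consequently $U_{r,\lambda}+t\varphi_0$ lies in the positive interval $[U_{r,\lambda}/2,3U_{r,\lambda}/2]$ for all $t\in[0,1]$, and applying the mean value theorem to $f'(u)=(2^*-1)|u|^{2^*-2}$ on this interval yields the pointwise Taylor bound
\[
|R_1(\varphi_0)(y)|\leq CK_\infty\, U_{r,\lambda}(y)^{2^*-3}\,|\varphi_0(y)|^2.
\]

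Plugging in the decay bounds, one has $U_{r,\lambda}^{2^*-3}\leq C(1+|y-x^1|)^{(N-6)^+}$ on $B_{\sigma_k^2}(x^1)$ (bounded for $N\leq 6$ since $U$ is bounded, and controlled via the lower bound on $U_{r,\lambda}$ for $N\geq 7$, using $(N-2)(3-2^*)=N-6$), while $|\varphi_0|^2\leq C\mu^{-m-2\theta}(1+|y-x^1|)^{-N}$. Multiplying,
\[
|R_1(\varphi_0)(y)|\leq C\mu^{-m-2\theta}(1+|y-x^1|)^{-\min(N,6)}\qquad\text{on }B_{\sigma_k^2}(x^1),
\]
which is maximized at $y=x^1$ with value $C\mu^{-m-2\theta}\leq C\mu^{-m}$, and the same bound at every $x^i$ follows by symmetry.

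The main obstacle is the case $N\geq 7$, where $2^*-1<2$ and $2^*-3<0$: the crude bound $|R_1(\varphi_0)|\leq C|\varphi_0|^{2^*-1}$ combined with $\|\varphi_0\|_{L^\infty(P_2)}\leq C\mu^{-m/2-\theta}$ would only yield $\|R_1(\varphi_0)\|_{L^\infty(P_2)}\leq C\mu^{-(m/2+\theta)(2^*-1)}$, which is strictly weaker than $\mu^{-m}$ when $\theta$ is small. The refinement that rescues the estimate is precisely the faster pointwise decay $(1+|y-x^i|)^{-N/2}$ of $\varphi_0$ supplied by Corollary \ref{cor3.3}, which exactly offsets the apparent singularity of $U_{r,\lambda}^{2^*-3}$ away from the concentration points when $N\geq 7$.
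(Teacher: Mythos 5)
Your proof is correct and takes essentially the same route as the paper: a mean-value estimate yielding $|R_1(\varphi_0)|\leq CU_{r,\lambda}^{2^*-3}|\varphi_0|^2$ (valid once $|\varphi_0|\lesssim U_{r,\lambda}$ on $P_2$), combined with the pointwise decay of $\varphi_0$ from Corollary~\ref{cor3.3}. The paper treats $N=5$ separately (using boundedness of $U^{1/3}_{r,\lambda}$ and simply $|R_1|\leq C|\varphi_0|^2$) and establishes the ratio bound $\varphi_0/U_{r,\lambda}\leq C$ on $P_2$ for $N\geq 6$ from the coarser $\Lambda_k$ bound $\|\varphi_0\|_\infty\leq \mu^{-m/2}\sigma_k^{-(N-2)}$ together with the lower bound $\min_{P_2}U_{r,\lambda}\gtrsim\sigma_k^{-(2N-4+\delta_2)}$; you instead get the ratio bound directly from the Corollary~\ref{cor3.3} decay, which lets you handle all $N\geq5$ uniformly through the exponent $(N-6)^+$. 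Both routes are valid and land on the same final inequality $|R_1(\varphi_0)|\leq C\mu^{-m-2\theta}(1+|y-x^1|)^{-\min(N,6)}$. One small inaccuracy in your concluding remark: the ``crude bound'' argument would actually give $\|R_1(\varphi_0)\|_{L^\infty(P_2)}\leq C\big(\mu^{-m/2}\sigma_k^{-(N-2)}\big)^{2^*-1}$ rather than $C\mu^{-(m/2+\theta)(2^*-1)}$, since $\Lambda_k$ already supplies the $\sigma_k^{-(N-2)}$ factor; this crude bound is still insufficient for $m$ close to $N-2$ when $N\geq 7$, so your motivating point stands, but the exponent you quote is not the right one.
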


\begin{proof} 
	If $N=5$, we get $\tau_{1}=\frac{N-2-m}{N-2}=\frac{3-m}{3}\leq\frac{1}{3}$. By Lemma \eqref{key3}, there holds
	\begin{equation*}
		U^{\frac{1}{3}}_{r,\lambda}\leq C\sum_{i=1}^{k}\frac{1}{(1+\lvert y-x^{i}\rvert)}<C\sum_{i=1}^{k}\frac{1}{(1+\lvert y-x^{i}\rvert)^{\tau_{1}}}\leq C.				
	\end{equation*}
	
	Then we have
	\begin{equation*}
		\lvert R_{1}(\varphi_{0})\rvert\leq
		C U^{\frac{1}{3}}_{r,\lambda}\lvert \varphi_{0}\rvert^{2}+C\lvert \varphi_{0}\rvert^{\frac{7}{3}}\leq C \lvert \varphi_{0}\rvert^{2},~N=5.
	\end{equation*}	
	
	For $N\geq 6$ and $\delta_2>0$ small enough, we obtain $\min_{P_{2}}\{ U_{r,\lambda}\}>\frac{1}{\sigma^{2N-4+\delta_2}_{k}}$. Then there exists $C>0$ independent of $k$ such that 
	$\frac{\varphi_{0}}{U_{r,\lambda}}\leq C$ in $P_{2}$. Thus,
	\begin{equation*}
		\lvert R_{1}(\varphi_{0})\rvert\leq C U^{2^{*}-3}_{r,\lambda}\lvert \varphi_{0}\rvert^{2},~N\geq 6.
	\end{equation*}	
	
	By Corollary \ref{cor3.3}, we obtain
	\begin{equation*} 
		\begin{aligned}
			\frac{\lvert \varphi_{0}\rvert^{2}}{U^{3-2^{*}}_{r,\lambda}}=O\Big(\frac{1}{\mu^{m+2\theta}}\Big)(1+|y-x^1|)^{-6+\tau_1},\quad  y\in B_{\sigma_k^2}(x^1).
		\end{aligned}	
	\end{equation*}
	Therefore,
	\begin{equation*}
		\lVert R_{1}(\varphi_{0})\rVert_{L^{\infty}(P_{2})}\leq C\Big(\frac{1}{\mu}\Big)^{m}.
	\end{equation*}
	Similarly, we can get the estimate of
	$\lVert R_{2}(\psi_{0})\rVert_{L^{\infty}(Q_{2})}$.
\end{proof}
We can show the map $T: S(\Lambda_k)\to S(\Lambda_k)$ is well-defined; that is,
\begin{Cor} \label{cor4.4}
	For  large $k$, there holds
	\begin{equation*}
		\begin{aligned}	
			\left(\begin{matrix}L_{1}\big(\varphi_{0}\Delta\chi_{1}+2\nabla\chi_{1} \nabla \varphi_{0}+(1-\chi_{1})\big(l_{1}+R_{1}(\varphi_{0})
				\big)\big)\\L_{2}\big(\psi_{0}\Delta\chi_{2}+2\nabla\chi_{2} \nabla \psi_{0}+(1-\chi_{2})\big(l_{2}+R_{2}(\psi_{0})
				\big)\big)\end{matrix}\right)^{\top} \in\Lambda_{k}.
		\end{aligned}	
	\end{equation*}
\end{Cor}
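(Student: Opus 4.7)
The task is to verify that the image vector lies in $\Lambda_{k}$. Writing $h_{1},h_{2}$ for the explicit arguments of $L_{1},L_{2}$ in the statement, membership in $\mathbb E$ is automatic since Proposition~\ref{Prop2.2} gives $L_{1}(h_{1})\in E_{r,\lambda}$ and $L_{2}(h_{2})\in E_{\rho,\nu}$ (the symmetries from $H_{s_0}$ are preserved by the linear solver). Hence the whole burden is to establish the $L^\infty$ bound $\|(\cdot,\cdot)\|_{\infty}\leq \mu^{-m/2}\sigma_{k}^{-(N-2)}$. A structural observation from \eqref{jd} is that $\nabla\chi_{1},\Delta\chi_{1}$ are supported in the annular region $\bigcup_{i}(B_{\frac{3}{4}\sigma_{k}^{2}}(x^{i})\setminus B_{\frac{1}{4}\sigma_{k}^{2}}(x^{i}))$ while $1-\chi_{1}$ is supported in $\bigcup_{i} B_{\frac{3}{4}\sigma_{k}^{2}}(x^{i})$, so $h_{1}$ lives in $P_{2}$; analogously $h_{2}$ lives in $Q_{2}$.

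The plan is then to estimate $\|h_{1}\|_{L^{\infty}(P_{2})}$ piece by piece (the argument for $h_{2}$ is mutatis mutandis). On the annular support of $\nabla\chi_{1}$, where $|y-x^{i}|\sim\sigma_{k}^{2}$, Corollary~\ref{cor3.3} gives $|\varphi_{0}(y)|\leq C\mu^{-(m/2+\theta)}\sigma_{k}^{-N}$, and since $\sigma_{k}^{2}\gg\sigma_{k}+1$ for large $k$ the gradient estimate from Lemma~\ref{lem8} yields the same bound for $|\nabla\varphi_{0}|$. Combined with $|\Delta\chi_{1}|\leq C\sigma_{k}^{-4}$ and $|\nabla\chi_{1}|\leq C\sigma_{k}^{-2}$, the first two pieces of $h_{1}$ contribute at most $C\mu^{-(m/2+\theta)}\sigma_{k}^{-(N+2)}$ pointwise. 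For $(1-\chi_{1})(l_{1}+R_{1}(\varphi_{0}))$, Lemmas~\ref{lem10} and~\ref{lem11} supply $\|l_{1}\|_{L^{\infty}(P_{2})}$, $\|R_{1}(\varphi_{0})\|_{L^{\infty}(P_{2})}\leq C\mu^{-m}$. Feeding into the continuity estimate $\|L_{1}(h_{1})\|_{\infty}\leq C\sigma_{k}^{4}\|h_{1}\|_{L^{\infty}(P_{2})}$ of Proposition~\ref{Prop2.2} produces $\|L_{1}(h_{1})\|_{\infty}\leq C\mu^{-(m/2+\theta)}\sigma_{k}^{-(N-2)}+C\sigma_{k}^{4}\mu^{-m}$; the first term is $o(\mu^{-m/2}\sigma_{k}^{-(N-2)})$ and safely inside the $\Lambda_{k}$ budget.

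The main obstacle is therefore the bulk term $C\sigma_{k}^{4}\mu^{-m}$: after substituting $\sigma_{k}=\mu^{\tau_{1}/(N-2)}$ with $\tau_{1}=(N-2-m)/(N-2)$, the desired inequality $\sigma_{k}^{4}\mu^{-m}\leq \mu^{-m/2}\sigma_{k}^{-(N-2)}$ reduces to $2(N+2)(N-2-m)\leq m(N-2)^{2}$, which is tight at $m=2$ in high dimensions and may not close directly from the crude $L^\infty\!\to\!L^\infty$ bound alone. When the direct comparison is delicate, I would refine the estimate of $L_{1}((1-\chi_{1})(l_{1}+R_{1}(\varphi_{0})))$ by returning to the Green-kernel representation used in the proof of Lemma~\ref{lem1}, exploiting the sharper pointwise decay $|l_{1}(y)|\leq C\mu^{-m}(1+|y-x^{i}|)^{-(N+2-m)}$ visible in the proof of Lemma~\ref{lem10} and the analogous decay of $R_{1}(\varphi_{0})$ inferred from Corollary~\ref{cor3.3}, and then invoking the convolution estimate of Lemma~\ref{lemB2} to replace the loss factor $\sigma_{k}^{4}$ by a bounded constant. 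This yields $\|L_{1}((1-\chi_{1})(l_{1}+R_{1}(\varphi_{0})))\|_{\infty}\leq C\mu^{-m}$, which sits comfortably within $\Lambda_{k}$. The parallel refinement handles $L_{2}(h_{2})$ and concludes.
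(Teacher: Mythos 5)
Your decomposition mirrors the paper's proof exactly: separate the cutoff piece $\varphi_0\Delta\chi_1+2\nabla\chi_1\cdot\nabla\varphi_0$ from the bulk piece $(1-\chi_1)\bigl(l_1+R_1(\varphi_0)\bigr)$, bound the former by Lemma~\ref{lem8} and the latter by Lemmas~\ref{lem10} and~\ref{lem11}, and feed both into Proposition~\ref{Prop2.2}. You correctly note that the cutoff piece is safely inside $\Lambda_k$ because of the extra $\mu^{-\theta}$ gain. Where you go further than the paper is in noticing that the bulk piece is not automatically controlled: the paper writes $\sigma_k^4\lVert l_1+R_1(\varphi_0)\rVert_{L^\infty(P_2)}\le C\sigma_k^4\mu^{-m}\le\tfrac{1}{2}\mu^{-m/2}\sigma_k^{-(N-2)}$, and after substituting $\sigma_k=\mu^{\tau_1/(N-2)}$ this requires $2(N+2)(N-2-m)<m(N-2)^2$, which at $m=2$ reads $(N+2)(N-4)<(N-2)^2$, i.e.\ $N<6$. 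So for $N\ge6$ and $m$ near $2$ (permitted by assumption $(\mathbb{K})$) the inequality fails, and the one-line justification of the Corollary in the paper is incomplete at precisely the point you flag. Catching this is a genuine contribution.

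Your proposed repair is, however, not yet a proof. You want to pass to the Green-kernel representation with the pointwise decay of $l_1$ (from the proof of Lemma~\ref{lem10}) and of $R_1(\varphi_0)$ (from Corollary~\ref{cor3.3}), invoke Lemma~\ref{lemB2} so the $\sigma_k^4$ loss becomes a bounded constant, and conclude $\lVert L_1((1-\chi_1)(l_1+R_1(\varphi_0)))\rVert_\infty\le C\mu^{-m}$; and indeed $\mu^{-m}=o\bigl(\mu^{-m/2}\sigma_k^{-(N-2)}\bigr)$ since $m/2>\tau_1$ for all $m\ge 2$, so that target would close the argument. But $L_1(h_1)$ is not the Newtonian potential of $h_1$. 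Its kernel representation, as written out in the proof of Lemma~\ref{lem1}, also contains the self-referential term $(2^*-1)K_1 U_{r,\lambda}^{2^*-2}L_1(h_1)$ and the Lagrange-multiplier terms $c_j Y_j$; near the concentration points the first of these is comparable in size to $L_1(h_1)$ itself, so the representation does not self-improve after a single convolution. The $\sigma_k^4$ loss in Proposition~\ref{Prop2.2} is the output of the contradiction/blow-up mechanism of Lemma~\ref{lem1} run with the $L^\infty(P_2)$ norm of the data; exploiting pointwise decay requires a weighted version of Proposition~\ref{Prop2.2}, something like $\lVert L_1(h_1)\rVert_\infty\le C\sup_z(1+\operatorname{dist}(z,\{x^i\}))^\gamma|h_1(z)|$, established by re-running the Lemma~\ref{lem1} machinery in that weighted norm. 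This is a substantive modification, all the more so since the authors explicitly avoid weighted spaces on account of the sublinear coupling. Your diagnosis is right; the prescription needs to be carried through before it counts as a proof.
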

\begin{proof}

	Note that supp$(1-\chi_{1})\subset P_{2}$ and supp$\nabla\chi_{1}\subset \big(\cup^{k}_{i=1}B_{\frac{3}{4}\sigma^{2}_{k}}(x^{i})\setminus\cup^{k}_{i=1}B_{\frac{1}{4}\sigma^{2}_{k}}(x^{i})\big)$. By Lemma \ref{lem8}, we get
	\begin{equation*}
		\begin{aligned}	
			\lVert \varphi_{0}\Delta\chi_{1}+2\nabla\chi_{1} \nabla \varphi_{0}\rVert_{L^{\infty}(P_{2})} 
			\leq C\frac{1}{\mu^{\frac{m}{2}+\theta}}\frac{1}{\sigma^{N+2}_{k}}.
		\end{aligned}	
	\end{equation*}
	From Lemmas \ref{lem10} and \ref{lem11}, using Proposition \ref{Prop2.2}, we obtain
	\begin{equation*}
		\begin{aligned}	
			&\lVert L_{1}\big(\varphi_{0}\Delta\chi_{1}+2\nabla\chi_{1} \nabla \varphi_{0}+(1-\chi_{1})\big(l_{1}+R_{1}(\varphi_{0})
			\big)\big)\rVert_{L^{\infty}(\mathbb R^N)}\\
			\leq&\sigma^{4}_{k}\lVert \varphi_{0}\Delta\chi_{1}+2\nabla\chi_{1} \nabla \varphi_{0}\rVert_{L^{\infty}(P_{2})}
			+\sigma^{4}_{k}\lVert l_{1}+R_{1}(\varphi_{0})\rVert_{L^{\infty}(P_{2})}
			\leq \frac{1}{2}\frac{1}{\mu^{\frac{m}{2}}}\frac{1}{\sigma_k^{\frac{N+1}{2}}}.
		\end{aligned}	
	\end{equation*}
	
	Similarly, 
	\begin{equation*}
		\begin{aligned}	
			\lVert L_{2}\big(\psi_{0}\Delta\chi_{2}+2\nabla\chi_{2} \nabla \psi_{0}+(1-\chi_{2})\big(l_{2}+R_{2}(\psi_{0})
			\big)\big)\rVert_{L^{\infty}(\mathbb R^N)}
			\leq \frac{1}{2}\frac{1}{\mu^{\frac{m}{2}}}\frac{1}{\sigma_k^{\frac{N+1}{2}}}.
		\end{aligned}	
	\end{equation*}
	
	The proof is complete.
\end{proof}

\begin{Lem}\label{lem13}
	The operator $T:S(\Lambda_{k})\rightarrow S(\Lambda_{k})$ is a contraction:
	\begin{equation*}
		d(T(\varphi_{0}, \psi_{0}),T(\bar\varphi_{0}, \bar\psi_{0}))\leq \frac{1}{2}d((\varphi_{0}, \psi_{0}),(\bar\varphi_{0}, \bar\psi_{0})).
	\end{equation*}
\end{Lem}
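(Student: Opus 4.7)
The plan is to reduce the contraction estimate to a clean $L^\infty$ comparison of the right-hand sides $h_1,h_2$ and then invoke the sharp pointwise decay estimates of Section \ref{sec:3}. By the definition of the solution operator $S$, on $P_1$ we have $T_1(\varphi_0,\psi_0)=L_1(h_1)$ and on $Q_1$ we have $T_2(\varphi_0,\psi_0)=L_2(h_2)$. Applying Proposition \ref{Prop2.2} to the differences therefore yields
\[
d\bigl(T(\varphi_0,\psi_0),T(\bar\varphi_0,\bar\psi_0)\bigr)\leq C\sigma_k^{4}\Bigl(\|h_1-\bar h_1\|_{L^\infty(P_2)}+\|h_2-\bar h_2\|_{L^\infty(Q_2)}\Bigr),
\]
so it suffices to show each $\|h_i-\bar h_i\|_{L^\infty}$ is $o(\sigma_k^{-4})\,d\bigl((\varphi_0,\psi_0),(\bar\varphi_0,\bar\psi_0)\bigr)$ as $k\to\infty$. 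I focus on $i=1$; the case $i=2$ is identical by symmetry.

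The $l_1$-piece cancels in the difference $h_1-\bar h_1$, leaving
\[
h_1-\bar h_1=(\varphi_0-\bar\varphi_0)\Delta\chi_1+2\nabla\chi_1\cdot\nabla(\varphi_0-\bar\varphi_0)+(1-\chi_1)\bigl(R_1(\varphi_0)-R_1(\bar\varphi_0)\bigr).
\]
The first two ``cutoff'' terms are supported in the annulus $A=\cup_i\bigl(B_{3\sigma_k^{2}/4}(x^i)\setminus B_{\sigma_k^{2}/4}(x^i)\bigr)\subset P_2\setminus\cup_i B_{\sigma_k+1}(x^i)$, so Lemma \ref{lem9} controls both $\varphi_0-\bar\varphi_0$ and $\nabla(\varphi_0-\bar\varphi_0)$ there. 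The assumption $m\geq 2$ gives $|x^i-x^j|\gtrsim \mu/k\gg\sigma_k^{2}$, so on $A$ the sum $\sum_j(1+|y-x^j|)^{-(N-2)}$ collapses to its single dominant bump of order $\sigma_k^{-2(N-2)}$. Combined with $|\Delta\chi_1|\leq C\sigma_k^{-4}$ and $|\nabla\chi_1|\leq C\sigma_k^{-2}$, this bounds the cutoff contribution to $\|h_1-\bar h_1\|_{L^\infty(P_2)}$ by $Cd\,\sigma_k^{-N}$, which is $o(d\,\sigma_k^{-4})$ for $N\geq 5$.

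The main difficulty is the nonlinear remainder, because of the sublinear exponent $2^{*}/2-1<1$. The heart of the matter, and the reason the metric space $S(\Lambda_k)$ and its sharp decay estimates were set up in Section \ref{sec:3}, is that on $P_2$ the solutions stay comfortably inside the smooth regime of the nonlinearity: Corollary \ref{cor3.3} gives $|\varphi_0|,|\bar\varphi_0|\leq C\mu^{-m/2-\theta}(1+|y-x^i|)^{-N/2}$, which compared with $U_{r,\lambda}\sim(1+|y-x^i|)^{-(N-2)}$ yields $|\varphi_0|,|\bar\varphi_0|\ll U_{r,\lambda}$ uniformly on $P_2$. The mean-value theorem then produces the Lipschitz-type bound $|R_1(\varphi_0)-R_1(\bar\varphi_0)|\leq CU_{r,\lambda}^{2^{*}-3}(|\varphi_0|+|\bar\varphi_0|)\,|\varphi_0-\bar\varphi_0|$ on $P_2$. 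Substituting the Corollary \ref{cor3.3} bound for $|\varphi_0|+|\bar\varphi_0|$ together with the Lemma \ref{lem9} bound $|\varphi_0-\bar\varphi_0|\leq C\sigma_k^{N-2}d\,(1+|y-x^i|)^{-(N-2)}$, the three factors combine (using $(N-2)(2^{*}-3)=6-N$) to a pointwise estimate $C\mu^{-m/2-\theta}\sigma_k^{N-2}d\,(1+|y-x^i|)^{-N/2-4}\leq C\mu^{-m/2-\theta}\sigma_k^{N-2}d$. Multiplying by the $\sigma_k^{4}$ from Proposition \ref{Prop2.2} produces the factor $\sigma_k^{N+2}\mu^{-m/2-\theta}$, and a direct algebraic check shows this tends to zero for every admissible $(m,N)$ with $m\geq 2, N\geq 5$ once $\theta$ is chosen in the admissible range already used in Lemma \ref{lem8}; for instance, when $m=2$ one needs $(2N-12)/(N-2)^{2}<\theta<(N-4)^{2}/(2(N-2)^{2})$, an interval that is nonempty for all $N\geq 5$. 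This delivers the contraction $d(T(\varphi_0,\psi_0),T(\bar\varphi_0,\bar\psi_0))\leq \tfrac12\,d((\varphi_0,\psi_0),(\bar\varphi_0,\bar\psi_0))$ for $k\geq k_0$.
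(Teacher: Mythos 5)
Your proposal is correct and follows the same overall architecture as the paper: cancel the $l_1,l_2$ pieces, bound the cutoff contributions via Lemma \ref{lem9} and \eqref{jd}, bound the nonlinear remainders $R_i(\cdot)-R_i(\bar{\cdot})$, and then convert all of these into a contraction on $d(\cdot,\cdot)$ through Proposition \ref{Prop2.2}. Where you genuinely deviate is in the treatment of $R_1(\varphi_0)-R_1(\bar\varphi_0)$. The paper splits into two cases: for $N=5$ it uses the quadratic structure of $R_1$ together with the coarse $L^\infty$ bound from $\Lambda_k$, and for $N\geq 6$ it uses the H\"older subadditivity $\bigl||a|^p-|b|^p\bigr|\leq|a-b|^p$ (valid only because $p=2^*-2\leq 1$ there), arriving at $|R_1(\varphi_0)-R_1(\bar\varphi_0)|\leq C(|\varphi_0|^{2^*-2}+|\bar\varphi_0|^{2^*-2})|w_0|$ and then the uniform smallness $\|\varphi_0\|_\infty\leq \mu^{-m/2}\sigma_k^{-(N-2)}$; this needs neither the pointwise decay from Corollary \ref{cor3.3} nor any lower bound on $\theta$. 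You instead apply the mean-value inequality $\bigl|(U+b)^{2^*-2}-U^{2^*-2}\bigr|\leq CU^{2^*-3}|b|$ for $|b|\ll U$, which gives a single unified bound for all $N\geq 5$ (no case split), but requires both the pointwise comparison $|\varphi_0|\ll U_{r,\lambda}$ on $P_2$ (which you correctly extract from Corollary \ref{cor3.3}) and the lower bound $\theta>\tau_1(N+2)/(N-2)-m/2$ on the parameter of Corollary \ref{cor3.3}. The latter point deserves care, because Lemma \ref{lem8} and Corollary \ref{cor3.3} only assert that some sufficiently small $\theta>0$ works: to be fully rigorous you must observe that $\theta$ may in fact be chosen anywhere in the admissible interval $\bigl(0,\min\{\tfrac{N}{2}-\tfrac{m}{2}-\tfrac{2N-2}{N-2}\tau_1,\ \tfrac{(N-4)\tau_1}{2(N-2)}\}\bigr)$, and that this interval (as you verify for $m=2$; the computation for general $m\in[2,N-2)$ is analogous and in fact less restrictive) intersects $\bigl(\tau_1(N+2)/(N-2)-m/2,\infty\bigr)$. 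Once that is stated, your argument is complete. The trade-off: your version is arithmetically more delicate, but it avoids the $N=5$ case split and reuses the same quadratic estimate that the paper already employs for $R_1(\varphi_0)$ in Lemma \ref{lem11}, which is conceptually tidy; the paper's version is simpler to run, at the cost of a (short) separate argument when $N=5$.
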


\begin{proof}
	
	Take $(\varphi_{0}, \psi_{0})\in S(\Lambda_{k})$, $(\bar\varphi_{0}, \bar\psi_{0})\in S(\Lambda_{k})$.
	Set
	\begin{equation*}
		(u_{1},u_{2})=(\varphi_{0},\psi_{0})+(U_{r,\lambda},V_{\rho,\nu}),~~ (\bar u_{1},\bar u_{2})=(\bar\varphi_{0},\bar\psi_{0})+(U_{r,\lambda},V_{\rho,\nu})
	\end{equation*}
	\begin{equation*}
		(w_{0},v_{0})=(\varphi_{0}, \psi_{0})-(\bar\varphi_{0}, \bar\psi_{0})=(u_{1},u_{2})-(\bar u_{1},\bar u_{2}),
	\end{equation*}
	\begin{equation*}
		\begin{aligned}
			W
			=
			L_{1}\Big((1-\chi_{1})\big(R_{1}(\varphi_{0})-R_{1}(\bar \varphi_{0})
			\big)
			+w_{0}\Delta\chi_{1}+2\nabla\chi_{1} \nabla w_{0}\Big)
		\end{aligned}
	\end{equation*}
	and
	\begin{equation*}
		\begin{aligned}
			V
			=
			L_{2}\Big((1-\chi_{2})\big(R_{2}(\psi_{0})-R_{2}(\bar \psi_{0})
			\big)
			+v_{0}\Delta\chi_{2}+2\nabla\chi_{2} \nabla v_{0}\Big).
		\end{aligned}
	\end{equation*}

	Recalling
	the definition of $S$, we only need to show
	\begin{equation*}
		\begin{aligned}	
			\lVert W \rVert_{L^{\infty}(P_{1})}+\lVert V \rVert_{L^{\infty}(Q_{1})}
			\leq\frac{1}{2}(\lVert w_{0}\rVert_{L^{\infty}(P_{1})}+\lVert v_{0}\rVert_{L^{\infty}(Q_{1})}).
		\end{aligned}	
	\end{equation*}
	
	By Proposition \ref{Prop2.2},
	we know 
	\begin{equation}\label{W}
		\|W\|_{L^\infty(P_1)}\leq C\sigma_k^4\left(
		\| R_{1}(\varphi_{0})-R_{1}(\bar \varphi_{0})
		\|_{L^\infty(P_2)}
		+\|w_{0}\Delta\chi_{1}+2\nabla\chi_{1} \nabla w_{0}\|_{L^\infty(P_2)}\right).
	\end{equation}	
	
	By Lemma \ref{lem9}, when $N=5$, it follows that
	\begin{equation*}
		\begin{aligned}
			\lVert R_{1}(\varphi_{0})-R_{1}(\bar \varphi_{0}) \rVert_{L^{\infty}(P_{2})}
			\leq & C \lVert(\lvert \varphi_{0}\rvert+\lvert\bar \varphi_{0}\rvert)w_{0}\rVert_{L^{\infty}(P_{2})}\\
			\leq & \frac{C}{\sigma_k^{\frac{N+1}{2}}\mu^{\frac{m}{2}}}
			(\lVert w_{0}\rVert_{L^{\infty}(P_{1})}+\|v_0\|_{L^\infty(Q_1)}).
		\end{aligned}
	\end{equation*}
	When $N\geq6$, we have 
	\[ 
		\begin{aligned}
			\lvert R_{1}(\varphi_{0})-R_{1}(\bar \varphi_{0})\rvert 
			=&\Big\lvert K_1\big(\frac{\lvert y \rvert}{\mu}\big)\Big(\lvert u_{1}\rvert^{2^*-2}u_{1}-\lvert\bar u_{1}\rvert^{2^*-2}\bar u_{1}-(2^*-1)U^{2^*-2}_{r,\lambda}w_{0}\Big)\Big\rvert\\
			\leq& C\Big\lvert  \int_0^1\Big(
			\lvert t \varphi_{0}+(1-t)\bar \varphi_{0}+U_{r,\lambda}\rvert^{2^*-2}-U^{2^*-2}_{r,\lambda}\Big)w_{0}\mathrm{d} t \Big\rvert\\
			\leq&  C |(\lvert \varphi_{0}\rvert^{2^*-2}+\lvert\bar \varphi_{0}\rvert^{2^*-2})w_{0}|.
		\end{aligned}
	\]
	Therefore,
	\begin{equation}\label{w1}
		\begin{aligned}
			\lVert R_{1}(\varphi_{0})-R_{1}(\bar \varphi_{0}) \rVert_{L^{\infty}(P_{2})}
			\leq C \frac{1}{\mu^\frac{2m}{N-2}\sigma^{\frac{2(N+1)}{N-2}}_{k}}(\lVert w_{0}\rVert_{L^{\infty}(P_{1})}+\|v_0\|_{L^\infty(Q_1)}).
		\end{aligned}
	\end{equation}

	Since $\text{supp}\,\Delta\chi_{1}\cup \text{supp}\,\nabla\chi_{1}\subset  \cup^{k}_{i=1}B_{\frac{3}{4}\sigma^{2}_{k}}(x^{i})\setminus\cup^{k}_{i=1}B_{\frac{1}{4}\sigma^{2}_{k}}(x^{i})$, by Lemma \ref{lem9}, and \eqref{jd}, we obtain   
	\begin{equation}\label{w3}
		\begin{aligned}	
			\lVert w_{0} \Delta\chi_{1}+2\nabla\chi_{1} \nabla w_{0}\rVert_{L^{\infty}(P_{2})}
			=O\Big(\frac{1}{\sigma^{N}_{k}}\Big)(\lVert w_{0}\rVert_{L^{\infty}(P_{1})}+\lVert v_{0}\rVert_{L^{\infty}(Q_{1})}).
		\end{aligned}	
	\end{equation}

	Combining \eqref{W}, \eqref{w1}, and \eqref{w3}, we obtain	
	\begin{equation}\label{W1}
		\begin{aligned}
			\lVert W \rVert_{L^{\infty}(Q_{1})}
			\leq \frac{1}{4}(\lVert w_{0}\rVert_{L^{\infty}(P_{1})}+\lVert v_{0}\rVert_{L^{\infty}(Q_{1})}).
		\end{aligned}
	\end{equation}
	A similar argument establishes the estimate for $V$, which completes the proof.
\end{proof}
\smallskip
Lemma \ref{lem13} implies that $T$ has a unique fixed point in $S(\Lambda_k)$. To see it is a solution, we show

\begin{Lem}\label{lem12}   $(\varphi_{0},\psi_{0})$ is a fixed point of $T$ in $S(\Lambda_{k})$ if and only if it is a solution of \eqref{eq23} in $\Lambda_k$ satisfying
	\begin{equation*}
		\begin{aligned}
			(2^{*}-1) K_\infty \lVert \varphi_{0}+U_{r,\lambda} \rVert^{2^*-2}_{L^{2^*}(\mathbb R^{N}\setminus P_{1})}<\frac{\mathcal S}{2}~~\text{and}~~(2^{*}-1) K_\infty \lVert \psi_{0}+V_{\rho,\nu} \rVert^{2^*-2}_{L^{2^*}(\mathbb R^{N}\setminus Q_{1})}<\frac{\mathcal S}{2}.
		\end{aligned}
	\end{equation*}
	
	Furthermore, whenever either of the two equivalent conditions is satisfied, it necessarily follows that  \[(\varphi_{0},\psi_{0})\in  S(\Lambda_k) \cap \Lambda_k\cap (D^{1,2}(\R^N))^2.\]
\end{Lem}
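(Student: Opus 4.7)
The plan is to use the dead-core property (Remark \ref{dcrem}) together with the cutoff identity \eqref{xin3}--\eqref{4.} to show that a fixed point of $T$ corresponds exactly to a global weak solution of \eqref{eq23}. The pivotal observation is that $u_1\equiv 0$ in $Q_2$ and $u_2\equiv 0$ in $P_2$ kill the coupling term $D_i$ both on $\mathrm{supp}(1-\chi_i)\subset P_2$ and on $P_1\subset P_2$ (and symmetrically); this is what makes the cutoff computation fully reversible. Throughout, write $(\tilde\varphi_0,\tilde\psi_0)$ for the pair $(L_1(\cdots),L_2(\cdots))$ appearing inside $S$ in the definition of $T$.

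For the direction ``fixed point $\Rightarrow$ solution of \eqref{eq23}'', membership $(\varphi_0,\psi_0)\in S(\Lambda_k)$ already secures \eqref{4.5} in the exterior region, which combined with $Y_j\equiv 0$ off $P_1$ (and $Z_j\equiv 0$ off $Q_1$) coincides with \eqref{eq23} there, and it also provides \eqref{u}, i.e.\ the $L^{2^*}$ bound. The fixed-point identity, together with the fact that $S$ matches its input on $P_1\times Q_1$, forces $\varphi_0=\tilde\varphi_0$ on $P_1$. On $P_1$ one has $\chi_1\equiv 0$ and $\nabla\chi_1\equiv 0$, so the global equation \eqref{eq7} satisfied by $\tilde\varphi_0$ simplifies there to $-\Delta\tilde\varphi_0-(2^*-1)K_1(|y|/\mu)U^{2^*-2}_{r,\lambda}\tilde\varphi_0=l_1+R_1(\varphi_0)+\sum c_jY_j$, while $D_1(\varphi_0,\psi_0)\equiv 0$ on $P_1$ by the dead core; thus \eqref{eq23} holds on $P_1$ as well. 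The two halves glue into a global weak equation because both $\varphi_0$ (from Lemma \ref{lem3}) and $\tilde\varphi_0$ (from Proposition \ref{Prop2.2}) lie in $D^{1,2}(\R^N)$ and agree on $P_1$, and elliptic regularity on each local equation shows $\varphi_0\in C^1$ across $\partial P_1$, so no distributional boundary term arises. The same argument handles $\psi_0$. Finally, $(\varphi_0,\psi_0)\in\Lambda_k$ follows from Corollary \ref{cor3.3} applied to $S(\tilde\varphi_0,\tilde\psi_0)$, provided $\theta$ is chosen larger than $\tau_1$ within its admissible range.

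For the converse, assume $(\varphi_0,\psi_0)\in\Lambda_k$ solves \eqref{eq23} together with the $L^{2^*}$ bounds. Off $P_1\cup Q_1$ one has $Y_j=Z_j=0$, so \eqref{eq23} reduces to \eqref{4.5}; the boundary condition \eqref{boundary} is tautological and \eqref{u} is given, so by the uniqueness of the exterior problem (Lemma \ref{lem9} and the remark immediately following it) one gets $(\varphi_0,\psi_0)=S(\varphi_0,\psi_0)\in S(\Lambda_k)$. To produce the fixed-point identity I set $\bar\varphi_0:=\varphi_0-\chi_1\varphi_0$; the orthogonality $\bar\varphi_0\in E_{r,\lambda}$ survives because $\mathrm{supp}\,Y_j\subset\cup_{i=1}^k B_2(x^i)\subset\{\chi_1=0\}$, so $\chi_1Y_j\equiv 0$. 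Substituting \eqref{eq23} into the product-rule identity \eqref{xin3} yields exactly \eqref{4.} globally on $\R^N$, the term $(1-\chi_1)D_1$ dropping by the dead core. Thus $\bar\varphi_0$ solves the same linear problem \eqref{eq7} as $\tilde\varphi_0$, with the same right-hand side and the same orthogonality constraints; the uniqueness in Proposition \ref{Prop2.2} forces $\bar\varphi_0=\tilde\varphi_0$ globally. Restricting to $P_1$ where $\chi_1\equiv 0$ gives $\varphi_0=\tilde\varphi_0$ there, and since $S$ depends on its argument only through its restriction to $P_1\times Q_1$, this delivers $T(\varphi_0,\psi_0)=S(\tilde\varphi_0,\tilde\psi_0)=S(\varphi_0,\psi_0)=(\varphi_0,\psi_0)$.

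The regularity claim $(\varphi_0,\psi_0)\in(D^{1,2}(\R^N))^2$ is then immediate: a fixed point equals $S(\tilde\varphi_0,\tilde\psi_0)$ with $(\tilde\varphi_0,\tilde\psi_0)\in\Lambda_k\cap(D^{1,2}(\R^N))^2$ by Proposition \ref{Prop2.2}, so Remark \ref{rem3.11} gives the conclusion. The main obstacle I anticipate is the gluing of the two pointwise equations on $P_1$ and on $\R^N\setminus P_1$ into a bona fide global weak solution of \eqref{eq23}; this depends on the $D^{1,2}$-regularity of the minimizer in Lemma \ref{lem3} and on matching the normal traces at $\partial P_1$. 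The whole scheme rests on Remark \ref{dcrem}: without $u_2\equiv 0$ on $P_2$ and its symmetric counterpart, the term $(1-\chi_1)D_1$ would not vanish, simultaneously breaking the derivation of \eqref{4.} in the converse direction and the cancellation of $D_1$ on $P_1$ in the forward direction.
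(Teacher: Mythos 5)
Your converse direction (solution of \eqref{eq23} in $\Lambda_k$ $\Rightarrow$ fixed point) essentially matches the paper: subtract \eqref{xin3} from \eqref{eq23} to obtain \eqref{4.}, note $(1-\chi_1)D_1\equiv 0$ by the dead core, observe $(1-\chi_1)\varphi_0\in E_{r,\lambda}$ since $\chi_1 Y_j\equiv 0$, and invoke the uniqueness in Proposition~\ref{Prop2.2} to identify $(1-\chi_1)\varphi_0$ with $L_1(\cdots)$. Your treatment of the concluding regularity claim via Remark~\ref{rem3.11} is also the paper's.

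The forward direction, however, is where your proof and the paper's part company, and your version has a genuine gap that you yourself flag at the end. The paper does not argue region by region and then glue across $\partial P_1$. It defines the global auxiliary function $\zeta_1:=\tilde\varphi_0+\chi_1\varphi_0$, observes that $\zeta_1$ satisfies \eqref{xin1} (i.e.\ the first line of \eqref{eq23}) on all of $\mathbb R^N$ simply by adding the linear equation \eqref{eq7} defining $\tilde\varphi_0=L_1(\cdots)$ to the product-rule identity \eqref{xin3} for $\chi_1\varphi_0$, and then proves $\zeta_1=\varphi_0$ \emph{globally} by subtracting \eqref{4.5}: the difference $\zeta_1-\varphi_0$ solves the homogeneous linearized equation on $\mathbb R^N\setminus P_1$, lies in $D^{1,2}_0(\mathbb R^N\setminus P_1)$ (both terms are in $D^{1,2}(\mathbb R^N)$ and agree on $P_1$), and the small $L^{N/2}$-norm of $U_{r,\lambda}^{2^*-2}$ off $P_1$ kills it via an energy estimate. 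The global equation for $\varphi_0$ is then inherited from $\zeta_1$ with no gluing ever performed. Your argument instead verifies \eqref{eq23} separately on the open sets $P_1$ and $\mathbb R^N\setminus P_1$ and asserts that ``elliptic regularity $\ldots$ shows $\varphi_0\in C^1$ across $\partial P_1$, so no distributional boundary term arises.'' This does not follow: outside $P_1$, $\varphi_0$ is the solution of the Dirichlet exterior problem \eqref{eq15}, whose normal derivative on $\partial P_1$ is determined by the PDE and boundary values, not prescribed to match the one computed from $\tilde\varphi_0$ on the inside. Elliptic regularity gives $C^{1,\alpha}$ up to the boundary in each region and continuity of the value, but not continuity of the normal derivative. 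You correctly identify this as ``the main obstacle,'' but the $\zeta_1$-comparison argument of the paper is exactly the device needed to close it, and it is absent from your write-up.

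One smaller point: when you claim $(\varphi_0,\psi_0)\in\Lambda_k$ ``follows from Corollary \ref{cor3.3} applied to $S(\tilde\varphi_0,\tilde\psi_0)$, provided $\theta$ is chosen larger than $\tau_1$,'' note that the $\theta$ appearing in Lemma~\ref{lem8} and Corollary~\ref{cor3.3} is constrained to be \emph{small} (in particular $\theta<\frac{(N-4)\tau_1}{2(N-2)}<\tau_1$ is used in the proof of Lemma~\ref{lem8}), so ``choose $\theta>\tau_1$'' is not within the admissible range; the $\Lambda_k$ membership should instead be read off from the fact that on $P_1$ the fixed point coincides with $L_1(\cdots)$, which Corollary~\ref{cor4.4} bounds by $\frac12\mu^{-m/2}\sigma_k^{-(N-2)}$.
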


\begin{proof}
	(i)	Assume $(\varphi_{0},\psi_{0})$ is a fixed point of $T$, then
	\begin{equation*}
		\begin{aligned}
			(\varphi_{0},\psi_{0})=
			S\left(\begin{matrix}
				L_{1}\big(\varphi_{0}\Delta\chi_{1}+2\nabla\chi_{1} \nabla \varphi_{0}+(1-\chi_{1})\big(l_{1}+R_{1}(\varphi_{0})
				\big)\big)\\
				L_{2}\big(\psi_{0}\Delta\chi_{2}+2\nabla\chi_{2} \nabla \psi_{0}+(1-\chi_{2})\big(l_{2}+R_{2}(\psi_{0})
				\big)\big)\end{matrix}\right)^{\top}.
		\end{aligned}
	\end{equation*}
	We denote
	\begin{gather*} 
		\zeta_{1}:=L_{1}\Big(\varphi_{0}\Delta\chi_{1}+2\nabla\chi_{1} \nabla \varphi_{0}+(1-\chi_{1})\big(l_{1}+R_{1}(\varphi_{0})
		\big)\Big)+\chi_{1}\varphi_{0}
		\\
		\zeta_{2}:=L_{2}\Big(\psi_{0}\Delta\chi_{2}+2\nabla\chi_{2} \nabla \psi_{0}+(1-\chi_{2})\big(l_{2}+R_{2}(\psi_{0})
		\big)\Big)+\chi_{2}\psi_{0}
		. 
	\end{gather*}
	By Proposition \ref{Prop2.2} and Remark \ref{rem3.11},
	we know $(\varphi_{0},\psi_{0}) \in (D^{1,2}(\R^N))^2$ and $(\zeta_{1}, \zeta_2)\in (D^{1,2}(\R^N))^2$.
	Recalling the cut-off functions $(\chi_{1},\chi_{2})$ defined in \eqref{jd}, and the definition of $S$, we obtain
	\begin{gather*}
		\zeta_{1}=L_{1}\Big(\varphi_{0}\Delta\chi_{1}+2\nabla\chi_{1} \nabla \varphi_{0}+(1-\chi_{1})\big(l_{1}+R_{1}(\varphi_{0})
		\big)\Big) =\varphi_0 ~\text{in}~P_{1},\\
		\zeta_{2}=L_{2}\Big(\psi_{0}\Delta\chi_{2}+2\nabla\chi_{2} \nabla \psi_{0}+(1-\chi_{2})\big(l_{2}+R_{2}(\psi_{0})
		\big)\Big) =\psi_0~\text{in}~Q_{1}
		.
	\end{gather*}
	By the definition of $L_1$ and   \eqref{xin3}, we know that
	\begin{equation}\label{xin1}
		-\Delta \zeta_{1}-(2^{*}-1)K_1\big(\frac{\lvert y \rvert}{\mu}\big)U^{2^{*}-2}_{r,\lambda} \zeta_{1}\\
		=l_{1}+R_{1}(\varphi_{0})+D_1(\varphi_0,\psi_0)
		+\sum_{j=1}^{2}c_{j} Y_{j}.
	\end{equation}

	Subtracting the first equation of system \eqref{4.5} and \eqref{xin1}, 
	we obtain
	\begin{equation*}
		-\Delta (\zeta_{1} -\varphi_{0})-(2^{*}-1)K_1\big(\frac{\lvert y \rvert}{\mu}\big)U^{2^{*}-2}_{r,\lambda}(\zeta_{1} -\varphi_{0})=0,~\text{in}~\mathbb R^N \setminus P_{1},
	\end{equation*}
	and $\zeta_{1} -\varphi_{0}\in D_0^{1,2}(\R^N\setminus P_1)$.
	Therefore,
	\[
	\|\nabla (\zeta_{1} -\varphi_{0})\|_{L^2}^2\leq C \lVert U_{r,\lambda}^{2^*-2} \rVert_{L^{N/2}(\mathbb R^{N}\setminus P_{1})} \| \zeta_{1} -\varphi_{0}\|_{L^{2^*}}^2
	=o(1)\|\nabla (\zeta_{1} -\varphi_{0})\|_{L^2}^2.
	\]
	Hence, $\zeta_{1} =\varphi_{0}$. Similarly, $\zeta_2=\psi_0$.
	
	(ii) If $(\varphi_0, \psi_0)\in \Lambda_k$ solves 
	\eqref{eq23}, then    it follows that  $(\varphi_0, \psi_0)\in S(\Lambda_k)$ and the conditions \eqref{4.} and \eqref{4..} hold.
	Therefore, \begin{gather*} 
		\varphi_0=L_{1}\Big(\varphi_{0}\Delta\chi_{1}+2\nabla\chi_{1} \nabla \varphi_{0}+(1-\chi_{1})\big(l_{1}+R_{1}(\varphi_{0})
		\big)\Big)+\chi_{1}\varphi_{0},
		\\
		\psi_0=L_{2}\Big(\psi_{0}\Delta\chi_{2}+2\nabla\chi_{2} \nabla \psi_{0}+(1-\chi_{2})\big(l_{2}+R_{2}(\psi_{0})
		\big)\Big)+\chi_{2}\psi_{0}
		. 
	\end{gather*}
	Then \[(\varphi_0,\psi_0)=S(\varphi_0, \psi_0)= S\left(\begin{matrix}
		L_{1}\big(\varphi_{0}\Delta\chi_{1}+2\nabla\chi_{1} \nabla \varphi_{0}+(1-\chi_{1})\big(l_{1}+R_{1}(\varphi_{0})
		\big)\big)\\
		L_{2}\big(\psi_{0}\Delta\chi_{2}+2\nabla\chi_{2} \nabla \psi_{0}+(1-\chi_{2})\big(l_{2}+R_{2}(\psi_{0})
		\big)\big)\end{matrix}\right)^{\top}.\]
\end{proof}

\smallskip

\begin{Prop}
	For every $(r,\rho,\lambda,\nu) \in 
	\big[r_{0} \mu - 1,\, r_{0} \mu + 1\big] 
	\times 
	\big[\rho_{0} \mu - 1,\, \rho_{0} \mu + 1\big] 
	\times 
	[\gamma_{1},\gamma_{2}] 
	\times 
	[\gamma_{3}, \gamma_{4}]$,
	there exists a unique pair 
	$(\varphi_{0}(r,\rho,\lambda,\nu),\, \psi_{0}(r,\rho,\lambda,\nu)) \in \Lambda_k$ 
	that solves \eqref{eq23}. Moreover, the mapping 
	\[
	(r,\rho,\lambda,\nu) \mapsto (\varphi_{0}(r,\rho,\lambda,\nu),\, \psi_{0}(r,\rho,\lambda,\nu))
	\]
	is continuous.
\end{Prop}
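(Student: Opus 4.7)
The plan is to apply the Banach contraction mapping theorem on the complete metric space $(S(\Lambda_k), d)$ for each fixed parameter tuple $\xi = (r, \rho, \lambda, \nu)$, and then upgrade the resulting existence/uniqueness to continuous dependence on $\xi$ by invoking the uniform contraction principle. Corollary~\ref{cor4.4} combined with the definition of $S$ ensures that $T$ maps $S(\Lambda_k)$ into itself, while Lemma~\ref{lem13} establishes that $T$ is a $1/2$-contraction with a constant independent of $\xi$ (this uniformity is visible from the proof of Lemma~\ref{lem13}, where every implicit constant depends only on $k$ and the compact parameter ranges). The Banach fixed point theorem then yields, for each $\xi$, a unique fixed point $(\varphi_0(\xi), \psi_0(\xi)) \in S(\Lambda_k)$, and Lemma~\ref{lem12} identifies this fixed point as the unique solution of \eqref{eq23} in $\Lambda_k$.

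For the continuity claim, I would write $T_\xi$ to emphasize parameter dependence and use the standard uniform-contraction estimate
\[
d\bigl((\varphi_0(\xi_n), \psi_0(\xi_n)),\, (\varphi_0(\xi), \psi_0(\xi))\bigr) \leq 2\, d\bigl(T_{\xi_n}(\varphi_0(\xi), \psi_0(\xi)),\, T_{\xi}(\varphi_0(\xi), \psi_0(\xi))\bigr),
\]
which reduces the problem to showing pointwise continuity of the map $\xi \mapsto T_\xi(\varphi_0, \psi_0)$ for each fixed $(\varphi_0, \psi_0) \in S(\Lambda_k)$.

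The pointwise continuity of $T_\xi$ is the main technical obstacle, since $T_\xi$ depends on $\xi$ through four distinct mechanisms: (a) the limiting profiles $U_{r,\lambda}$, $V_{\rho,\nu}$ and the associated source terms $l_i$, $R_i$; (b) the cutoff functions $\chi_1$, $\chi_2$ whose supports are centered at the parameter-dependent points $x^i$, $y^i$; (c) the linearized inverses $L_1$, $L_2$; and (d) the nonlinear outer-solution operator $S$. Continuity for (a) is straightforward from the explicit formulas; (b) reduces to the observation that, for $\xi_n \to \xi$, the symmetric-difference measure of the support sets tends to zero while the derivatives $\Delta \chi_i$, $\nabla \chi_i$ remain uniformly bounded; (c) is exactly Lemma~\ref{L2.2}.

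The subtlest point is continuity for (d). To handle this, I would fix boundary data $(\varphi_0, \psi_0) \in \Lambda_k \cap (D^{1,2}(\R^N))^2$ and compare the corresponding outer solutions $(u_1, u_2)$ at parameters $\xi$ and $\hat\xi$, modifying the argument of Lemma~\ref{lem9}: the difference $(u_1 - \hat u_1, u_2 - \hat u_2)$ now satisfies an equation whose right-hand side is controlled not only by $\varpi_{N-2}^{2^*-2}(|w| + |v|)$ but also by $|K_i(|\cdot|/\mu)(U_{r,\lambda}^{2^*-1} - U_{\hat r, \hat\lambda}^{2^*-1})|$ and analogous terms for $V_{\rho,\nu}$, which tend to zero in $L^\infty(\R^N)$ as $\hat\xi \to \xi$. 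Combining the comparison principle of Lemma~\ref{lem0} with the dead-core estimate of Lemma~\ref{deadcore} (to absorb the sublinear coupling on a region where the opposite component vanishes) and passing to a density argument via Lemma~\ref{lem9} to cover all of $\Lambda_k$, I would obtain the required continuity of $S$ and hence of $T_\xi$, completing the proof.
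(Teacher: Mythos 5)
Your existence/uniqueness portion (Banach fixed point on $(S(\Lambda_k),d)$ via Corollary~\ref{cor4.4}, Lemma~\ref{lem13}, and identification via Lemma~\ref{lem12}) agrees with the paper.

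For the continuity claim, however, you have taken the "straightforward" route — uniform contraction principle plus pointwise continuity of $\xi\mapsto T_\xi$ — and you have correctly identified that its hardest ingredient is continuity of the nonlinear outer-solution operator $S$ with respect to the parameters. What you seem not to notice is that the paper \emph{does not prove continuity of $S$ at all}, and instead sidesteps it entirely. The paper's device is the identity, valid for any solution of \eqref{eq23} via \eqref{4.} and Proposition~\ref{Prop2.2},
\[
(1-\chi_{1})\varphi_{0}=L_{1}\Big(\varphi_{0}\Delta\chi_{1}+2\nabla\chi_{1}\nabla\varphi_{0}+(1-\chi_{1})\big(l_{1}+R_{1}(\varphi_{0})\big)\Big),
\]
and the analogous identity at $\widehat\xi$. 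Subtracting the two, the parameter dependence splits into a piece $(L_1-\widehat L_1)(\cdots)$, which is $o(1)$ by Lemma~\ref{L2.2}, and a piece $L_1\big((\varphi_0-\widehat\varphi_0)\Delta\chi_1+\cdots+(1-\chi_1)(R_1(\varphi_0)-R_1(\widehat\varphi_0))\big)$, which is absorbed by the contraction estimate \eqref{W1}. This yields convergence in $L^\infty(P_1)\times L^\infty(Q_1)$ directly, and Lemma~\ref{lem9} then upgrades to $\|\cdot\|_\infty$. Thus the operator $S$ never needs to be compared across parameter values, only $L_1$ and $L_2$ do, and Lemma~\ref{L2.2} supplies exactly that.

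Beyond the fact that you are taking a harder road, there are two concrete difficulties with your route that you have not resolved. First, the uniform-contraction estimate you write requires $T_{\xi_n}$ to be applied to $(\varphi_0(\xi),\psi_0(\xi))$, but this point lies in $S_\xi(\Lambda_k)\subset\mathbb E_{r,\rho,\lambda,\nu}$, whose orthogonality constraints and domains $P_1,Q_1$ all shift with the parameters; it is not a priori an element of the domain of $T_{\xi_n}$, so the estimate needs justification that is absent from your sketch. Second, your plan for continuity of $S$ cites Lemma~\ref{lem9} as if it compared outer solutions at \emph{different} parameters, but Lemma~\ref{lem9} compares two solutions of the \emph{same} problem \eqref{eq15} (same $P_1,Q_1$, same outer equation) with different boundary data; extending it to $\xi\neq\widehat\xi$ requires handling the domain discrepancy $P_1\triangle\widehat P_1$ and the discrepancy of the subtracted profiles $U_{r,\lambda}$ versus $U_{\widehat r,\widehat\lambda}$, neither of which is addressed. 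The paper's trick avoids all of this; your route could in principle be pushed through but would require substantially more machinery than you have supplied.
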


\begin{proof}
	The existence and uniqueness has been verified in Lemmas \ref{lem13} and \ref{lem12}.
	We 
	consider 
	\[(u_{1},u_{2})=(\varphi_{0}+U_{r,\lambda},\psi_{0}+V_{\rho,\nu}), \quad 
	(\varphi_{0},\psi_{0})=(\varphi_{0}(r,\rho,\lambda,\nu ),\psi_{0}(r,\rho,\lambda,\nu ))\]
	and 
	\[ (\widehat u_{1},\widehat u_{2})=(\widehat\varphi_{0}+U_{\widehat r,\widehat\lambda},\widehat\psi_{0}+V_{\widehat\rho,\widehat\nu}), \quad (\widehat\varphi_{0},\widehat\psi_{0})=(\varphi_{0}(\widehat r,\widehat \rho,\widehat\lambda, \widehat \nu),\psi_{0}(\widehat r,\widehat\rho,\widehat\lambda, \widehat \nu)) \]
	corresponding to parameters $(r,\rho,\lambda,\nu )$, and  $(\widehat r,\widehat\rho,\widehat\lambda,\widehat\nu )$, respectively.
	
	From systems \eqref{4.} and \eqref{4..}, using Proposition \ref{Prop2.2}, we have
	\begin{equation*}
		\begin{aligned}
			(1-\chi_{1})\varphi_{0}=&L_{1}\Big(\varphi_{0}\Delta\chi_{1}+2\nabla\chi_{1} \nabla \varphi_{0}+(1-\chi_{1})\big(l_{1}+R_{1}(\varphi_{0})
			\big)\Big)
		\end{aligned}
	\end{equation*}
	and
	\begin{equation*}
		\begin{aligned}
			(1-\chi_{1})\widehat\varphi_{0}=&
			\widehat L_{1}\Big(\widehat\varphi_{0}\Delta\chi_{1}+2\nabla\chi_{1} \nabla \widehat\varphi_{0}+(1-\chi_{1})\big(\widehat l_{1}+\widehat R_{1}(\widehat\varphi_{0})
			\big)\Big)
			,
		\end{aligned}
	\end{equation*}
	where
	\begin{equation*} 
		\begin{aligned}
			\widehat l_{1}=
			K_1\big(\frac{\lvert y \rvert}{\mu}\big)U^{2^{*}-1}_{\widehat r,\widehat\lambda}-\sum_{i=1}^{k}U^{2^{*}-1}_{\widehat x^{i},\widehat \lambda}
		\end{aligned}
	\end{equation*}
	and
	\begin{equation*} 
		\begin{aligned}
			\widehat R_{1}(\widehat\varphi_{0})
			=
			K_1\big(\frac{\lvert y \rvert}{\mu}\big)\lvert U_{\widehat r,\widehat\lambda}+\widehat\varphi_{0} \rvert^{2^{*}-2}(U_{\widehat r,\widehat\lambda}+\widehat\varphi_{0})-K_1\big(\frac{\lvert y \rvert}{\mu}\big)U^{2^{*}-1}_{\widehat r,\widehat\lambda}-(2^{*}-1)K_1\big(\frac{\lvert y \rvert}{\mu}\big)U^{2^{*}-2}_{\widehat r,\widehat \lambda}\widehat\varphi_{0}
			.
		\end{aligned}
	\end{equation*}
	Note that 
	\[
	\|l_{1}-\widehat l_{1}\|_{L^\infty(P_2)}+
	\| R_{1}(\widehat\varphi_{0})-\widehat R_{1}(\widehat\varphi_{0})\|_{L^\infty(P_2)}\to 0\quad  \text{as }(r,\rho,\lambda,\nu ) \to (\widehat r,\widehat\rho,\widehat\lambda,\widehat\nu).
\]

	By Lemma \ref{L2.2},   as $(r,\rho,\lambda,\nu ) \to (\widehat r,\widehat\rho,\widehat\lambda,\widehat\nu)$, we obtain
	\begin{equation*}
		\begin{aligned}
			&(1-\chi_{1})(\varphi_{0}-\widehat\varphi_{0})\\
			=&(L_{1}-\widehat L_{1})\Big(\widehat\varphi_{0}\Delta\chi_{1}+2\nabla\chi_{1} \nabla \widehat\varphi_{0}+(1-\chi_{1})\big(\widehat l_{1}+\widehat R_{1}(\widehat\varphi_{0})
			\big)\Big)
			\\
			&+L_{1}\Big((\varphi_{0}-\widehat\varphi_{0})\Delta\chi_{1}+2\nabla\chi_{1} \nabla (\varphi_{0}-\widehat\varphi_{0})+(1-\chi_{1})\big(R_{1}(\varphi_{0})- \widehat R_{1}(\widehat\varphi_{0})
			\big)\Big)\\
			=&L_{1}\Big((\varphi_{0}-\widehat\varphi_{0})\Delta\chi_{1}+2\nabla\chi_{1} \nabla (\varphi_{0}-\widehat\varphi_{0})+(1-\chi_{1})\big(R_{1}(\varphi_{0})- R_{1}(\widehat\varphi_{0})
			\big)\Big)+o(1).
		\end{aligned}
	\end{equation*}

	Therefore, by \eqref{W1},   we get 
	\[ 
	\lVert(\varphi_{0}-\widehat\varphi_{0})\rVert_{L^{\infty}( P_{1})}\leq 
	\frac14 ( \lVert(\varphi_{0}-\widehat\varphi_{0})\rVert_{L^{\infty}( P_{1})}+\lVert \psi_{0}-\widehat\psi_{0}\rVert_{L^{\infty}( Q_{1})}) +o(1).
	\]
	Similarly, we have 
	\[ 
	\lVert \psi_{0}-\widehat\psi_{0}\rVert_{L^{\infty}( Q_{1})}\leq 
	\frac14 ( \lVert(\varphi_{0}-\widehat\varphi_{0})\rVert_{L^{\infty}( P_{1})}+\lVert \psi_{0}-\widehat\psi_{0}\rVert_{L^{\infty}( Q_{1})}) +o(1).
	\]
	Therefore, as $(r,\rho,\lambda,\nu ) \to (\widehat r,\widehat\rho,\widehat\lambda,\widehat\nu )$,
	\[ \lVert(\varphi_{0}-\widehat\varphi_{0})\rVert_{L^{\infty}( P_{1})}+\lVert \psi_{0}-\widehat\psi_{0}\rVert_{L^{\infty}( Q_{1})}\to 0.
	\]
	Using Lemma \ref{lem9} again we obtain
	\begin{equation*}
		\begin{aligned}
			&\|(\varphi_0,\psi_0)-(\widehat\varphi_0,\widehat\psi_0)\|_\infty\\
			\leq& C_k\Big(\lVert \varphi_{0}-\widehat\varphi_{0}\rVert_{L^{\infty}( P_{1})}+\lVert \psi_{0}-\widehat\psi_{0}\rVert_{L^{\infty}( Q_{1})}+\lVert (U_{r,\lambda}-U_{\widehat r,\widehat\lambda})\rVert_{L^{\infty}( P_{1})}+\lVert V_{\rho,\nu}-V_{\widehat\rho,\widehat \nu}\rVert_{L^{\infty}( Q_{1})}\Big)
			\\
			\to&0,\quad  \mbox{as $(r,\rho,\lambda,\nu ) \to (\widehat r,\widehat\rho,\widehat\lambda,\widehat\nu )$}.
		\end{aligned}
	\end{equation*} 	
\end{proof}

\section{Proof of Main Theorems}\label{sec:5}

Section \ref{sec:4} implies that 
\begin{equation}\label{5.1}
	\begin{cases}
		-\Delta u_{1,k} - K_1\big(\frac{\lvert y \rvert}{\mu}\big) \lvert u_{1,k}\rvert^{2^{*}-2}u_{1,k}-\beta \lvert u_{2,k}\rvert^{\frac{2^{*}}{2}}\lvert u_{1,k}\rvert^{\frac{2^{*}}{2}-2}u_{1,k}=\sum_{j=1}^{2}c_{j} Y_{j},\\[3mm]
		-\Delta u_{2,k} - K_2\big(\frac{\lvert y \rvert}{\mu}\big) \lvert u_{2,k}\rvert^{2^{*}-2}u_{2,k}-\beta \lvert u_{1,k}\rvert^{\frac{2^{*}}{2}}\lvert u_{2,k}\rvert^{\frac{2^{*}}{2}-2}u_{2,k}
		=\sum_{j=1}^{2}d_{j} Z_{j},
	\end{cases}
\end{equation}
for some constants $c_{j}$ and $d_{j}$.
It remains to find suitable $(r,\rho,\lambda,\nu )$ such that $c_{j},d_{j}= 0,~j=1,2$.

Therefore, if the left-hand side of \eqref{5.1} belongs to $\mathbb E$, then the function on the right-hand side of \eqref{5.1} must be zero.
Recall that
\begin{equation*}
	\big(u_{1,k}(y),u_{2,k}(y)\big)=\big( U_{r_{k},\lambda_{k}}(y)+\varphi_{k}, V_{\rho_{k},\nu_{k}}(y)+\psi_{k} \big),
\end{equation*}
where 
\begin{equation*} 
	U_{r_{k},\lambda_{k}}(y)=\sum_{i=1}^{k}U_{x^{i},\lambda_{k}}(y),~ V_{\rho_{k},\nu_{k}}(y)=\sum_{i=1}^{k}V_{y^{i},\nu_{k}}(y).
\end{equation*}
For simplicity, we drop the subscript $k$. Denote
\begin{equation*}
	\big(u_{1},u_{2}\big):=\big(u_{1,k}(y),u_{2,k}(y)\big).
\end{equation*}

Let 
$\zeta$ be a smooth cut-off function such that
$\zeta(y)=1$ if $|y-x^1|\leq \mu^\frac{m}{N-3/2}$, $\zeta(y)=0$ if $|y-x^1|\geq 2\mu^\frac{m}{N-3/2}$,
$0\leq \zeta(y) \leq 1$, $|\nabla \zeta(y)|\leq 2/\mu^\frac{m}{N-3/2}$ for $y\in\R^N$.
For $\bar\alpha >1$  and $ |y-x^1|\leq 2\mu^\frac{m}{N-3/2}$, there holds
\begin{equation*}
	\sum_{j=2}^k \frac{1}{(1+|y-x^j|)^{\bar\alpha}} =O(\mu^{\frac{m\bar\alpha}{N-3/2}-\frac{m\bar\alpha}{N-2}})\frac{1}{(1+|y-x^1|)^{\bar\alpha}}.
\end{equation*}

Let 
$\chi$ be a smooth cut-off function such that
$\chi(y)=1$ if $|y-y^1|\leq \mu^\frac{m}{N-3/2}$, $\chi(y)=0$ if $|y-y^1|\geq 2\mu^\frac{m}{N-3/2}$,
$0\leq \chi(y) \leq 1$, $|\nabla \chi(y)|\leq 2/\mu^\frac{m}{N-3/2}$ for $y\in\R^N$.

Define
\begin{align*}
	\widetilde c_{1}=	\int_{\mathbb R^{N}}\Big(-\Delta u_{1} - K_1\big(\frac{\lvert y \rvert}{\mu}\big) \lvert u_{1}\rvert^{2^{*}-2}u_{1}-\beta \lvert u_{2}\rvert^{\frac{2^{*}}{2}}\lvert u_{1}\rvert^{\frac{2^{*}}{2}-2}u_{1}\Big) \zeta\frac{\partial U_{x^{1},\lambda}}{\partial y_{1}},
	\\
	\widetilde c_{2} =	\int_{\mathbb R^{N}}\Big(-\Delta u_{1} - K_1\big(\frac{\lvert y \rvert}{\mu}\big) \lvert u_{1}\rvert^{2^{*}-2}u_{1}-\beta \lvert u_{2}\rvert^{\frac{2^{*}}{2}}\lvert u_{1}\rvert^{\frac{2^{*}}{2}-2}u_{1}\Big) \zeta\frac{\partial U_{x^{1},\lambda}}{\partial \lambda},
	\\
	\widetilde{d}_1=	\int_{\mathbb R^{N}}\Big(-\Delta u_{2} - K_2\big(\frac{\lvert y \rvert}{\mu}\big) \lvert u_{2}\rvert^{2^{*}-2}u_{2}-\beta \lvert u_{1}\rvert^{\frac{2^{*}}{2}}\lvert u_{2}\rvert^{\frac{2^{*}}{2}-2}u_{2}\Big) \chi\frac{\partial V_{y^{1},\nu}}{\partial y_{1}},
	\\
	\widetilde{d}_2=	\int_{\mathbb R^{N}}\Big(-\Delta u_{2} - K_2\big(\frac{\lvert y \rvert}{\mu}\big) \lvert u_{2}\rvert^{2^{*}-2}u_{2}-\beta \lvert u_{1}\rvert^{\frac{2^{*}}{2}}\lvert u_{2}\rvert^{\frac{2^{*}}{2}-2}u_{2}\Big) \chi\frac{\partial V_{y^{1},\nu}}{\partial \nu}.
\end{align*}

Since 
\[
\int_{\mathbb R^{N}} Y_{1}  \zeta\frac{\partial U_{x^{1},\lambda}}{\partial y_{1}} = -\int\chi_0\left|\frac{\partial U_{0,\lambda}}{\partial y_{1}} \right|^2, \quad 
\int_{\mathbb R^{N}}  Y_{2}\zeta\frac{\partial U_{x^{1},\lambda}}{\partial \lambda}=
\int \chi_0\left|\frac{\partial U_{0,\lambda}}{\partial \lambda} \right|^2,
\]
we know $c_i=0$ if and only if $\widetilde{c}_i=0$, $i=1,2$.
Similarly,
$d_i=0$ if and only if $\widetilde{d}_i=0$, $i=1,2$.

To find $(r,\rho,\lambda,\nu)$ such that $c_i = 0$ and $d_i = 0$ for $i=1,2$, we first estimate the quantities.

\begin{Lem}\label{lem5.3} There exists $\theta>0$ such that
	\begin{align*}
		\widetilde{c}_2&
		=-\frac{B_{1}}{\lambda^{m+1}\mu^{m}}
		+\sum_{i=2}^{k}\frac{B_{2}}{\lambda^{N-1}\lvert x^{i}-x^{1}\rvert^{N-2}}
		+O\Big(\frac{1}{\mu^{m+\theta}}+\frac{1}{\mu^{m}}\lvert\mu r_{0}-\lvert x^{1}\rvert\rvert^{2}\Big),
		\\
		\widetilde{d}_2&
		=-\frac{B_{3}}{\nu^{m+1}\mu^{m}}
		+\sum_{i=2}^{k}\frac{B_{4}}{\nu^{N-1}\lvert y^{i}-y^{1}\rvert^{N-2}}
		+O\Big(\frac{1}{\mu^{m+\theta}}+\frac{1}{\mu^{m}}\lvert\mu \rho_{0}-\lvert y^{1}\rvert\rvert^{2}\Big),
	\end{align*}
	where $B_{i}>0,i = 1, 2, 3, 4$ are some constants.
\end{Lem}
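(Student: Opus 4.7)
The plan is to compute $\widetilde{c}_2$ (the analysis of $\widetilde{d}_2$ is identical after exchanging the roles $(K_1,U,r,\lambda,x^i)\leftrightarrow(K_2,V,\rho,\nu,y^i)$) by expanding the integrand around the dominant bubble $U_{x^1,\lambda}$ on the support of $\zeta$. Writing $u_1=U_{r,\lambda}+\varphi$ and using $-\Delta U_{x^i,\lambda}=U_{x^i,\lambda}^{2^*-1}$ yields the identity
\[
-\Delta u_1 - K_1(\tfrac{|y|}{\mu})|u_1|^{2^*-2}u_1 \;=\; -l_1 - \Delta\varphi - K_1(\tfrac{|y|}{\mu})\bigl(|u_1|^{2^*-2}u_1 - U_{r,\lambda}^{2^*-1}\bigr),
\]
so $\widetilde{c}_2$ decomposes as $I_{l_1}+I_\varphi+I_R+I_\beta$, each integrated against $\zeta\,\partial_\lambda U_{x^1,\lambda}$.

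The two leading contributions come from $I_{l_1}$. Splitting $l_1=(K_1-1)\sum_i U_{x^i,\lambda}^{2^*-1}+K_1\bigl(U_{r,\lambda}^{2^*-1}-\sum_i U_{x^i,\lambda}^{2^*-1}\bigr)$, the dominant piece is $(K_1-1)U_{x^1,\lambda}^{2^*-1}$. Passing to the rescaled variable $z=\lambda(y-x^1)$ and expanding $|y|/\mu-r_0 = \xi + z\cdot x^1/(\lambda\mu|x^1|) + O(|z|^2/(\lambda^2\mu|x^1|))$ with $\xi := |x^1|/\mu-r_0$, the local form of $K_1$ in $(\mathbb{K})$ yields an integrand whose linear-in-$\xi$ correction vanishes by the radial symmetry of $U_{0,1}^{2^*-1}\partial_\lambda U_{0,1}$; what remains is the principal contribution $-B_1/(\lambda^{m+1}\mu^m)$, a quadratic correction $O(\mu^{-m}|\mu r_0-|x^1||^2)$ from the $\xi^2$ term, and an $O(\mu^{-m-\theta})$ remainder from the $\theta_1$-error in $(\mathbb{K})$ together with the tail outside $\mathrm{supp}\,\zeta$. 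The bubbles $(K_1-1)U_{x^i,\lambda}^{2^*-1}$ with $i\geq 2$ are absorbed in $O(\mu^{-m-\theta})$ via Lemma \ref{lemB1} and the spacing bound \eqref{key3'}. Taylor expansion of $U_{r,\lambda}^{2^*-1}-\sum_i U_{x^i,\lambda}^{2^*-1}$ produces the mixed term $(2^*-1)U_{x^1,\lambda}^{2^*-2}\sum_{i\geq 2}U_{x^i,\lambda}$; combined with $U_{x^i,\lambda}(y)\sim\lambda^{-(N-2)/2}|x^i-x^1|^{-(N-2)}$ on $\mathrm{supp}\,\zeta$ and the scaling identity $\int\zeta\,U_{x^1,\lambda}^{2^*-2}\partial_\lambda U_{x^1,\lambda}\sim\lambda^{-N/2}$, this yields exactly $\sum_{i\geq 2}B_2/(\lambda^{N-1}|x^i-x^1|^{N-2})$.

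For the remainders, I would integrate $I_\varphi$ by parts and combine it with the linearization of $I_R$ to obtain
\[
I_\varphi+I_R = (2^*-1)\int\zeta\bigl[U_{x^1,\lambda}^{2^*-2}-K_1 U_{r,\lambda}^{2^*-2}\bigr]\partial_\lambda U_{x^1,\lambda}\,\varphi + \text{(quadratic in }\varphi\text{)} + \text{(}\nabla\zeta\text{-boundary)}.
\]
On $\mathrm{supp}\,\zeta$ the bracket is controlled by $|K_1-1|U_{x^1,\lambda}^{2^*-2}+U_{x^1,\lambda}^{2^*-3}\sum_{i\geq 2}U_{x^i,\lambda}$, which, paired with the refined decay $|\varphi|\lesssim\mu^{-m/2-\theta}(1+|y-x^1|)^{-N/2}$ from Corollary \ref{cor3.3} and the quadratic remainder estimate of Lemma \ref{lem11}, delivers $I_\varphi+I_R=O(\mu^{-m-\theta})$. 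For $I_\beta$, the separation $|y-y^j|\geq c\mu$ on $\mathrm{supp}\,\zeta$ together with Lemma \ref{lem7} gives $|u_2|\lesssim k/\mu^{N-2}=\mu^{-(N-2-\tau_1)}$, so $|I_\beta|\lesssim \mu^{-N(N-2-\tau_1)/(N-2)}\log\mu$, which is strictly better than $\mu^{-m-\theta}$ because $N(N-2-\tau_1)/(N-2)>m$ for all admissible $N\geq5$.

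The anticipated obstacle is the linear-in-$\varphi$ piece of $I_\varphi+I_R$: the orthogonality $\varphi\in E_{r,\lambda}$ annihilates $\int\chi_0(\cdot-x^i)\partial_\lambda U_{x^i,\lambda}\,\varphi$, but the effective test function $\zeta\,\partial_\lambda U_{x^1,\lambda}$ is supported on a much larger ball than the $\chi_0$-truncation, so the cancellation is not automatic. It must be recovered by exploiting the decay of $\partial_\lambda U_{x^1,\lambda}$ (supplying the factor $(1+|y-x^1|)^{-(N-2)}$) against the $\mu^{-m/2-\theta}$ smallness of $\varphi$ in the annulus between the two supports, and by using the extra smallness $|K_1-1|=O(|y-x^1|^m/\mu^m)$ near $x^1$ to provide the missing $\mu^{-\theta}$ gain.
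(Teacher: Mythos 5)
Your proposal is correct in substance but takes a genuinely more elementary route than the paper. The paper does not expand $l_1$ directly against $\zeta\,\partial_\lambda U_{x^1,\lambda}$. Instead, after isolating the $\beta$-term (their \eqref{3lambda}, matching your $I_\beta$ up to the sharpness of the bound), it writes
\[
\int_{\R^N}\Big(-\Delta u_1-K_1\big(\tfrac{|y|}{\mu}\big)|u_1|^{2^*-2}u_1\Big)\zeta\,\partial_\lambda U_{x^1,\lambda}
= \tfrac{1}{k}\,\partial_\lambda F(U_{r,\lambda}) + M_1+M_2+M_3+M_4,
\]
where $F(u)=\tfrac12\int|\nabla u|^2-\tfrac{1}{2^*}\int K_1|u|^{2^*}$. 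The leading term is obtained by a symmetry reduction ($\partial_\lambda U_{r,\lambda}=\sum_i\partial_\lambda U_{x^i,\lambda}$ and $\Z_k$-equivariance force each of the $k$ summands to be equal), and its asymptotic expansion — producing both $-B_1/(\lambda^{m+1}\mu^m)$ and $\sum_{i\geq2}B_2/(\lambda^{N-1}|x^i-x^1|^{N-2})$ plus the stated errors — is quoted from \cite[Proposition~A.2]{WY10} rather than re-derived. Your direct approach (rescale, expand $|\,|y|/\mu-r_0|^m$, use oddness to kill the linear-in-$\xi$ piece, extract the leading coefficient; Taylor-expand $U_{r,\lambda}^{2^*-1}-\sum_iU_{x^i,\lambda}^{2^*-1}$ for the interaction term) is essentially what that cited proposition proves, so you are re-doing the Wei--Yan computation from scratch. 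That is a valid and self-contained alternative; the cost is a longer argument and more bookkeeping (you must separately account for the tail outside $\mathrm{supp}\,\zeta$, which the paper folds into $M_1$), while the paper's route buys a clean, symmetry-based reduction of the main term to a known expansion.

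On your anticipated obstacle: the worry about orthogonality is a red herring. Once you integrate $I_\varphi=-\int\Delta\varphi\,\zeta\,\partial_\lambda U_{x^1,\lambda}$ by parts and use $-\Delta\,\partial_\lambda U_{x^1,\lambda}=(2^*-1)U_{x^1,\lambda}^{2^*-2}\partial_\lambda U_{x^1,\lambda}$, the resulting term $(2^*-1)\int\zeta\,U_{x^1,\lambda}^{2^*-2}\varphi\,\partial_\lambda U_{x^1,\lambda}$ cancels \emph{algebraically} against the corresponding piece of the linearization in $I_R$ taken with $K_1\equiv1$ and $U_{r,\lambda}$ replaced by $U_{x^1,\lambda}$; no orthogonality of $\varphi$ to $Y_j$ is invoked. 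What remains is exactly the paper's $M_4=-(2^*-1)\int(K_1-1)U_{x^1,\lambda}^{2^*-2}\varphi\,\zeta\,\partial_\lambda U_{x^1,\lambda}$ plus the bubble-interaction piece of $M_2$ and the boundary term $M_3$, each controlled by the decay $|\varphi|\lesssim\mu^{-m/2-\theta}(1+|y-x^1|)^{-N/2}$ of Corollary~\ref{cor3.3} and the $(K_1-1)$ smallness — precisely the mechanism you describe in your resolution. So your proposed fix is the right one; you simply misattributed the source of the cancellation.
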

\begin{proof} 
	From Lemmas \ref{lem7} and \ref{lemB1}, there holds
	\begin{equation}\label{3lambda}
		\begin{aligned}
			&\Big\lvert\int_{\mathbb R^{N}}  \lvert u_{2}\rvert^{\frac{2^{*}}{2}}\lvert u_{1}\rvert^{\frac{2^{*}}{2}-2}u_{1} \zeta\frac{\partial U_{x^{1},\lambda}}{\partial \lambda}\Big\rvert\leq
			\int_{B_{2\mu^\frac{m}{N-3/2}}(x^1)}  \lvert u_{2}\rvert^{\frac{2^{*}}{2}}\lvert u_{1}\rvert^{\frac{2^{*}}{2}-1} \Big\lvert\frac{\partial U_{x^{1},\lambda}}{\partial \lambda}\Big\rvert
			\\
			\leq& \int_{|y-x^1|\leq 2\mu^\frac{m}{N-3/2}} \sum_{j=1}^k \frac{1}{(1+\lvert y-y^{j}\rvert)^{2^*(N-2)/2}} \frac{1}{(1+\lvert y-x^{1}\rvert)^{2^*(N-2)/2}}\\
			\leq&\sum_{j=1}^k \frac{1}{|y^j-x^1|^{N-1}}
			\int_{\R^N} \frac{1}{(1+\lvert y-y^{j}\rvert)^{N+1}} +\frac{1}{(1+\lvert y-x^{1}\rvert)^{N+1}}
			=O\Big(\frac{1}{\mu^{N-2}}\Big).
		\end{aligned}
	\end{equation}
	On the other hand, we have
	\begin{equation*}
		\begin{aligned}
			&\int_{\mathbb R^{N}}\Big(-\Delta u_{1}-K_1\big(\frac{\lvert y \rvert}{\mu}\big)\lvert u_{1}\rvert^{2^{*}-2}u_{1}\Big)\zeta \frac{\partial U_{x^{1},\lambda}}{\partial \lambda}\\
			=&\int_{\mathbb R^{N}}\Big(-\Delta U_{r,\lambda}-K_1\big(\frac{\lvert y \rvert}{\mu}\big) U_{r,\lambda}^{2^*-1}\Big)  \frac{\partial U_{x^{1},\lambda}}{\partial \lambda} +\int_{\mathbb R^{N}}\Big(\Delta U_{r,\lambda}+K_1\big(\frac{\lvert y \rvert}{\mu}\big) U_{r,\lambda}^{2^*-1}\Big)(1-\zeta) \frac{\partial U_{x^{1},\lambda}}{\partial \lambda}\\
			& -\int_{\mathbb R^{N}}  K_1\big(\frac{\lvert y \rvert}{\mu}\big)\left(\lvert u_{1}\rvert^{2^{*}-2}u_{1}-U_{r,\lambda}^{2^*-1}\right) \zeta \frac{\partial U_{x^{1},\lambda}}{\partial \lambda}-\int_{\mathbb R^N}\Delta\left(\zeta\frac{\partial U_{x^{1},\lambda}}{\partial \lambda}\right) \varphi \\
			=& \frac{1}{k}\frac{\partial}{\partial \lambda }  F(U_{r,\lambda}) 
			+ M_1
			+ M_2 +M_3+M_4,
		\end{aligned}
	\end{equation*}
	where 
	\[\begin{aligned}
		M_1 &= \int_{\mathbb{R}^{N}}\left(\Delta U_{r,\lambda} + K_1\left(\frac{\lvert y \rvert}{\mu}\right) U_{r,\lambda}^{2^*-1} \right)(1 - \zeta) \frac{\partial U_{x^{1},\lambda}}{\partial \lambda}, \\
		M_2 &= -\int_{\mathbb{R}^{N}} K_1\left(\frac{\lvert y \rvert}{\mu}\right)\left(\lvert u_{1}\rvert^{2^{*}-2}u_{1} - U_{r,\lambda}^{2^*-1} - (2^*-1) U_{x^1,\lambda}^{2^*-2}\varphi \right) \zeta \frac{\partial U_{x^{1},\lambda}}{\partial \lambda}, \\
		M_3 &= -\int_{\mathbb{R}^N} (\Delta\zeta) \frac{\partial U_{x^{1},\lambda}}{\partial \lambda} \varphi - 2\int_{\mathbb{R}^N} \nabla\zeta  \nabla\frac{\partial U_{x^{1},\lambda}}{\partial \lambda} \varphi, \\
		M_4 &= -(2^*-1)\int_{\mathbb{R}^{N}} \left(K_1\left(\frac{\lvert y \rvert}{\mu}\right) - 1\right) U_{x^1,\lambda}^{2^*-2} \varphi \zeta \frac{\partial U_{x^{1},\lambda}}{\partial \lambda},
	\end{aligned}\]
	and the functional $F: D^{1,2}(\mathbb R^N)\to \mathbb R$ is defined for any $ u\in D^{1,2}(\mathbb R^N)$
	\[
	F(u)=\frac{1}{2}\int_{\mathbb R^N}|\nabla u|^2-\frac{1}{2^*}\int_{\mathbb R^N} K_1\big(\frac{y}{\mu}\big) |u|^{2^*}.
	\]
	
	By Lemmas \ref{lemB1} and \ref{lem2}, we have 
	\begin{equation*}
		\begin{aligned}
			|M_1|  
			\leq & \int_{|y-x^1|\geq \mu^{\frac{m}{N-1}}} \left|\left( K_1\big(\frac{\lvert y \rvert}{\mu}\big) U_{r,\lambda}^{2^*-1}-\sum_{j=1}^k U_{x^j,\lambda}^{2^*-1}\right)   \frac{\partial U_{x^{1},\lambda}}{\partial \lambda}\right|  \\
			\leq &  
			\frac{C}{\mu^{\frac{m}{2}+\theta}}\int_{|y-x^1|\geq \mu^{\frac{m}{N-1}}} \sum_{j=1}^k\frac{1}{(1+|y-x^j|)^{\frac{N}{2}+2}}    \frac{1}{(1+|y-x^1|)^{N-2}} \\
			\leq &\frac{C}{\mu^{\frac{m}{2}+\theta}}\sum_{j=2}^k\frac{1}{|x^1-x^j|^\frac{N-2}{2}}\int_{\R^N}\left( \frac{1}{(1+|y-x^j|)^{N+1}} +   \frac{1}{(1+|y-x^1|)^{N+1}} \right)
			\\
			& + \frac{C}{\mu^{\frac{m}{2}+\theta}}\int_{|y-x^1|\geq \mu^{\frac{m}{N-1}}}\frac{1}{(1+|y-x^1|)^{\frac{3N}{2}}}
			=O  \left(\frac{1}{\mu^{m+\theta}}\right).
		\end{aligned}
	\end{equation*}

	To estimate $M_2$,
	we first note that for $y\in B_{2\mu^{\frac{m}{N-3/2}}}(x^1)$, 
	\[\frac{\varphi}{U_{x^1,\lambda}}=O(\frac{1}{\mu^\theta}), \quad
	\frac{\sum_{j=2}^k U_{x^j,\lambda}}{U_{x^1,\lambda}}=O(\frac1{\mu^{\frac{m}{2N-3}}}).
	\]
	Then for $|y-x^1|\leq 2\mu^{\frac{m}{N-3/2}}$,
	\[ \begin{aligned}
		&\lvert u_{1}\rvert^{2^{*}-2}u_{1}-U_{r,\lambda}^{2^*-1}  -(2^*-1) U_{x^1,\lambda}^{2^*-2}\varphi\\
		=& (2^*-1)\left((U_{r,\lambda}+t_y \varphi)^{2^*-2}- U_{x^1,\lambda}^{2^*-2}\right) \varphi 
		=O(U_{x^1,\lambda}^{2^*-3}\sum_{j=2}^k U_{x^j,\lambda}|\varphi| + U_{x^1,\lambda}^{2^*-3}\varphi^2),
	\end{aligned}
	\]
	where $t_y\in(0,1)$.
	
	From Corollary  \ref{cor3.3}, we have
	\begin{equation}\label{Q1100}
	\begin{aligned}
		&\int_{\R^N}U_{x^1,\lambda}^{2^*-3}\sum_{j=2}^k U_{x^j,\lambda}\zeta \left|\varphi \frac{\partial U_{x^1,\lambda}}{\partial\lambda}\right|= \frac{1}{2^*-2}\int_{\R^N}\sum_{j=2}^k U_{x^j,\lambda}\zeta \left|\varphi \frac{\partial U_{x^1,\lambda}^{2^*-2}}{\partial\lambda}\right|\\
		\leq&C\frac{1}{\mu^{\frac{m}2+\theta}}
		\int_{B_{2\mu^{\frac{m}{N-3/2}}}(x^1)}    \frac{1}{(1+\lvert y-x^{1}\rvert)^{\frac{N}{2}+4}} \sum_{j=2}^k\frac{1}{(1+|y-x^j|)^{N-2}}\\
		\leq&C\frac{1}{\mu^{\frac{m}2+\theta}}
		\sum_{j=2}^k \frac{1}{ |x^1-x^j|^{\frac{N}{2}}}\int_{B_{2\mu^{\frac{m}{N-3/2}}}(x^1)} \frac{1}{(1+\lvert y-x^{1}\rvert)^{N+2}} \\
		=&O\Big(\frac{1}{\mu^{m+\theta}}\Big)
	\end{aligned}
\end{equation}
	and
	\begin{equation}\label{Q2}
		\begin{aligned}	
			&\int_{\R^N} \zeta\Big\lvert U^{2^*-3}_{x^{1},\lambda}\frac{\partial U_{x^{1},\lambda}}{\partial \lambda}\varphi^{2}\Big\rvert=\frac{1}{2^*-2}\int_{\R^N} \zeta\Big\lvert \frac{\partial U^{2^*-2}_{x^{1},\lambda}}{\partial \lambda}\varphi^{2}\Big\rvert\\
			\leq&C\frac{1}{\mu^{m+2\theta}}
			\int_{B_{2\mu^{\frac{m}{N-3/2}}}(x^1)}    \frac{1}{(1+\lvert y-x^{1}\rvert)^{N+4}}
			=O\Big(\frac{1}{\mu^{m+\theta}}\Big).
		\end{aligned}
	\end{equation}
	From \eqref{Q1100} and \eqref{Q2}, we get
	\[
	M_2=O\Big(\frac{1}{\mu^{m+\theta}}\Big).
	\]
	
	We now estimate $M_3$. 
	By Corollary \ref{cor3.3}, there holds
	\begin{align*}
		&\int_{\mathbb R^N}|\left(\Delta\zeta\right)\frac{\partial U_{x^{1},\lambda}}{\partial \lambda} \varphi |
		\leq  
		\frac{C}{\mu^\frac{4m}{2N-3} } \int_{B_{2\mu^\frac{2m}{2N-3}}(x^1)\setminus B_{ \mu^\frac{2m}{2N-3}}(x^1) } |\frac{\partial U_{x^{1},\lambda}}{\partial \lambda} \varphi|\\
		\leq &\frac{C}{\mu^{\frac{4m}{2N-3}+\frac{m}{2}+\theta} } 
		\int_{B_{2\mu^\frac{2m}{2N-3}}(x^1)\setminus B_{ \mu^\frac{2m}{2N-3}}(x^1) } \frac{1}{(1+|y-x^1|)^{\frac{3N}{2}-2}}
		\leq \frac{C}{\mu^{\frac{Nm}{2N-3}+\frac{m}{2}+\theta}}
		=O\Big(\frac{1}{\mu^{m+\theta}}\Big),
	\end{align*}
	and \[
	\begin{aligned}
		&\int_{\mathbb R^N}|\nabla\zeta\nabla\frac{\partial U_{x^{1},\lambda}}{\partial \lambda} \varphi|
		\leq  
		\frac{C}{\mu^\frac{2m}{2N-3} } \int_{B_{2\mu^\frac{2m}{2N-3}}(x^1)\setminus B_{ \mu^\frac{2m}{2N-3}}(x^1) } |\nabla\frac{\partial U_{x^{1},\lambda}}{\partial \lambda} \varphi|\\
		\leq &\frac{C}{\mu^{\frac{2m}{2N-3}+\frac{m}{2}+\theta} } 
		\int_{B_{2\mu^\frac{2m}{2N-3}}(x^1)\setminus B_{ \mu^\frac{2m}{2N-3}}(x^1) } \frac{1}{(1+|y-x^1|)^{\frac{3N}{2}-1}} 
		=O\Big(\frac{1}{\mu^{m+\theta}}\Big).
	\end{aligned}
	\]
	Then
	\begin{align*}
		|M_3| =O\Big(\frac{1}{\mu^{m+\theta}}\Big).
	\end{align*}	 
	
	At last 
	\begin{align*}
		|M_4|  \leq& \frac{C}{\mu^{m+\frac{m}{2}+\theta}}   
		\int_{\mathbb{R}^{N}}  \zeta (|y-x^1|^{m}+1) \frac{1}{(1+|y-x^1|)^{\frac{3N}2+2}}  \\ 
		\leq& \frac{C}{\mu^{m+\frac{m}{2}+\theta -\frac{m(2m-N-4)^+}{2N-3}}}   +
		\frac{Ck}{\mu^{m+1}}=O(\frac{1}{\mu^{m+\theta}}).
	\end{align*}
	
	Therefore, by the estimates for $M_i$, $i=1,2,3,4$, and \cite[Proposition A.2]{WY10},  we obtain the first estimate.
	We can also prove the other   result by similar arguments.	
\end{proof}

\begin{Lem}\label{lem5.2} There exists $\theta>0$ such that
	\[ 
		\begin{aligned}
			\widetilde{c}_1
			=&-A_{1}\frac{\lvert x^{1}\rvert-\mu r_{0}}{\lambda^{m-2}\mu^{m}}
			+O\Big(\frac{1}{\mu^{m+\theta}}+\frac{(m-2)^+}{\mu^{m}}\left(\lvert\lvert x^{1}\rvert-\mu r_{0}\rvert\rvert^{m-1}+ \lvert\lvert x^{1}\rvert-\mu r_{0}\rvert\rvert^{2}\right)\Big),
			\\
			\widetilde{d}_1
			=&-A_{2}\frac{\lvert y^{1}\rvert-\mu \rho_{0}}{\nu^{m-1}\mu^{m}}
			+O\Big(\frac{1}{\mu^{m+\theta}}+\frac{(m-2)^+}{\mu^{m}}\left(\lvert\lvert y^{1}\rvert-\mu r_{0}\rvert\rvert^{m-1}+\lvert\lvert y^{1}\rvert-\mu \rho_{0}\rvert\rvert^{2}\right)\Big),
		\end{aligned}
	\]
	where $ A_{i}>0,i = 1,2$ are some constants.
\end{Lem}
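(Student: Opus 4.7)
The plan is to mirror the proof of Lemma \ref{lem5.3}, using the test function $\zeta\,\partial_{y_1}U_{x^1,\lambda}$ in place of $\zeta\,\partial_\lambda U_{x^1,\lambda}$, and similarly $\chi\,\partial_{y_1}V_{y^1,\nu}$ for the second identity. I will concentrate on $\widetilde c_1$, since $\widetilde d_1$ is entirely analogous.

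First, I will show the coupling contribution is negligible. Since $\zeta$ is supported in $B_{2\mu^{m/(N-3/2)}}(x^1)$, which for $k$ large lies outside $\cup_j B_{\mu^\vartheta}(y^j)$ by Remark \ref{dcrem}, the estimate used in \eqref{3lambda} goes through verbatim, yielding
\[
\Big|\int_{\R^N} |u_2|^{2^*/2}|u_1|^{2^*/2-2}u_1\,\zeta\,\frac{\partial U_{x^1,\lambda}}{\partial y_1}\Big|=O\!\left(\frac{1}{\mu^{N-2}}\right),
\]
which is absorbed into $O(\mu^{-m-\theta})$ since $m<N-2$.

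Next, I will split the remaining integral as in Lemma \ref{lem5.3}:
\[
\begin{aligned}
\int_{\R^N}\!\Big(-\Delta u_1-K_1(\tfrac{|y|}{\mu})|u_1|^{2^*-2}u_1\Big)\zeta\,\frac{\partial U_{x^1,\lambda}}{\partial y_1}
=\int_{\R^N}\!\Big(-\Delta U_{r,\lambda}-K_1(\tfrac{|y|}{\mu}) U_{r,\lambda}^{2^*-1}\Big)\frac{\partial U_{x^1,\lambda}}{\partial y_1}+\sum_{i=1}^{4}M_i,
\end{aligned}
\]
where $M_1,M_2,M_3,M_4$ are defined exactly as in Lemma \ref{lem5.3} but with $\partial_\lambda U_{x^1,\lambda}$ replaced by $\partial_{y_1}U_{x^1,\lambda}$. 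Since $|\partial_{y_1}U_{x^1,\lambda}|\leq C\lambda (1+|y-x^1|)^{-(N-1)}$ has the same (actually slightly better) decay than $|\partial_\lambda U_{x^1,\lambda}|$, every estimate of $M_1,M_2,M_3,M_4$ in Lemma \ref{lem5.3} transfers with no loss, and all four terms are $O(\mu^{-m-\theta})$ for the same small $\theta$. This uses Lemmas \ref{lem2}, \ref{lemB1}, Corollary \ref{cor3.3} for the tail of $\varphi$, and the cut-off bounds already deployed there.

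For the leading term, I will exploit the $\Z_k$-symmetry of $U_{r,\lambda}$ to identify
\[
\int_{\R^N}\!\Big(-\Delta U_{r,\lambda}-K_1(\tfrac{|y|}{\mu}) U_{r,\lambda}^{2^*-1}\Big)\frac{\partial U_{x^1,\lambda}}{\partial y_1}
=\frac{1}{k}\,\frac{\partial}{\partial r}F(U_{r,\lambda})\cdot\frac{\partial r}{\partial x^1_1}\big|_{x^1_1=r}
\]
(up to symmetric-position factors), and then apply the reduced-functional expansion of \cite[Proposition A.2]{WY10} adapted to assumption $(\mathbb K)$: the only surviving $r$-dependence at leading order comes from $\int_{\R^N}(1-K_1(|y|/\mu))U_{x^1,\lambda}^{2^*}$. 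Using the local asymptotic $K_1(r)=1-c_{0,1}|r-r_0|^m+O(|r-r_0|^{m+\theta_1})$, a change of variables $y=x^1+z/\lambda$ and Taylor expansion in the position $|x^1|$ about $\mu r_0$ produce
\[
\frac{\partial}{\partial r}\int_{\R^N}(1-K_1(\tfrac{|y|}{\mu}))U_{x^1,\lambda}^{2^*}=\frac{A_1(|x^1|-\mu r_0)}{\lambda^{m-2}\mu^m}+O\!\left(\frac{1}{\mu^{m+\theta}}+\frac{(m-2)^+}{\mu^m}||x^1|-\mu r_0|^{m-1}+\frac{1}{\mu^m}||x^1|-\mu r_0|^2\right),
\]
where $A_1=2c_{0,1}\int_{\R^N}|z|^{m-2}z_1^2 U_{0,1}^{2^*}\,dz>0$ after an elementary computation. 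The $(m-2)^+$ factor appears precisely because for $m>2$ the $|{\cdot}|^{m-1}$-term in $\partial_r K_1$ is genuinely present, while for $m=2$ the expansion collapses to a clean linear term.

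The main obstacle is Step 3: extracting the correct leading coefficient and its $\lambda$-dependence from $\partial_r F(U_{r,\lambda})$ while controlling all higher-order remainders uniformly in $r\in[\mu r_0-1,\mu r_0+1]$. This requires a careful splitting of the integration into $B_1(x^1)$, the annulus $B_{\sqrt\mu}(x^1)\setminus B_1(x^1)$, and the complement, together with the $K_1$-expansion; the two one-dimensional integrals in $z$ that emerge from the Taylor expansion must be identified as nonzero and combined into $A_1$. Once this is done, $\widetilde d_1$ follows by replacing $(K_1,r,r_0,\lambda,x^1)$ with $(K_2,\rho,\rho_0,\nu,y^1)$ throughout, giving the constant $A_2>0$.
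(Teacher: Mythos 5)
Your overall strategy—test against $\zeta\,\partial_{y_1}U_{x^1,\lambda}$, show the coupling term and the $\varphi$-remainders are $O(\mu^{-m-\theta})$ by the same manipulations as for $\widetilde c_2$, and isolate a linear term in $|x^1|-\mu r_0$ from the $(1-K_1)$ expansion—is essentially right, and your observation that $|\partial_{y_1}U_{x^1,\lambda}|$ decays even faster than $|\partial_\lambda U_{x^1,\lambda}|$ does make the analogues of $M_2,M_3,M_4$ transfer cleanly. The paper also notes that the dead-core/distance argument kills the coupling term exactly as in \eqref{3lambda}. However, there is a genuine gap in how you treat the leading term, and it is precisely the point where the paper's decomposition deliberately differs from Lemma \ref{lem5.3}. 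You propose to identify the main part as $\frac{1}{k}\partial_r F(U_{r,\lambda})$ and then invoke \cite[Proposition A.2]{WY10}, asserting that ``the only surviving $r$-dependence at leading order comes from'' the $(1-K_1)$-piece. But $\partial_r F$ also contains the bubble--bubble interaction $\sum_{j\ge 2}|x^1-x^j|^{-(N-2)}$, and the claim that its $r$-derivative is genuinely lower order (it gains one extra power of $\mu^{-1}$ because $|x^i-x^j|\propto r$) must be proven, not asserted; in the $\widetilde c_2$ case the same interaction sits at leading order, so nothing in Lemma \ref{lem5.3} tells you it drops out here. The paper avoids the $\partial_r F$ route altogether: it writes the main term as $\int(1-K_1)U_{r,\lambda}^{2^*-1}\zeta\,\partial_{y_1}U_{x^1,\lambda}$ and splits off the interaction into a separate term $N_1=-\int\big(U_{r,\lambda}^{2^*-1}-\sum_i U_{x^i,\lambda}^{2^*-1}\big)\zeta\,\partial_{y_1}U_{x^1,\lambda}$, which it then estimates by expanding, integrating by parts, and using \eqref{key3'} to get $|N_1|=O(\mu^{-m-\theta})$. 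This is the missing step in your write-up, and it is not negligible: it is the mechanism by which $\widetilde c_1$ ends up linear in $|x^1|-\mu r_0$ rather than carrying a constant-order balance like $\widetilde c_2$.

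A second, more minor discrepancy: the paper's integration by parts of $\int\big||y|-\mu r_0\big|^m\zeta\,\partial_{y_1}U_{x^1,\lambda}^{2^*}$, after changing variables $y=z+x^1$ and using the Taylor expansion (Lemma \ref{lema6}) of $\big||z+x^1|-\mu r_0\big|^{m-2}(|z+x^1|-\mu r_0)$, produces the coefficient $(m-1)\int|z_1|^{m-2}U_{0,1}^{2^*}$, not $\int|z|^{m-2}z_1^2 U_{0,1}^{2^*}$ as you wrote; the odd part $|z_1|^{m-2}z_1$ drops by symmetry. Both yield a positive $A_1$ so the lemma's statement is unaffected, but the derivation you sketch does not line up with the computation. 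Finally, the error budget requires explicitly tracking the remainders $O(|z|^m/r)$, $O(|r-\mu r_0|^{m-1})$, $O((m-3)^+\cdots)$ coming from Lemma \ref{lema6}; your remark that ``the $(m-2)^+$ factor appears because for $m>2$ the $|\cdot|^{m-1}$-term is genuinely present'' is the right intuition but you would need to make this precise to justify the stated error form.
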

\begin{proof}
	Similar to \eqref{3lambda}, we obtain
	\begin{equation}\label{3lambda.}
		\begin{aligned}
			\Big\lvert\int_{\mathbb R^{N}}  \lvert u_{2}\rvert^{\frac{2^{*}}{2}}\lvert u_{1}\rvert^{\frac{2^{*}}{2}-2}u_{1} \zeta\frac{\partial U_{x^{1},\lambda}}{\partial y_{1}}\Big\rvert
			=O\Big(\frac{1}{\mu^{N-2}}\Big).
		\end{aligned}
	\end{equation}
	It follows that
	\begin{equation}\label{1lambda.}
		\begin{aligned}
			&\int_{\mathbb R^{N}}\Big(-\Delta u_{1}-K_1\big(\frac{\lvert y \rvert}{\mu}\big)\lvert u_{1}\rvert^{2^{*}-2}u_{1}\Big)\zeta \frac{\partial U_{x^{1},\lambda}}{\partial y_{1}}\\
			=&-\int_{\mathbb R^{N}}K_1\big(\frac{\lvert y \rvert}{\mu}\big)\lvert u_{1}\rvert^{2^{*}-2}u_{1}\zeta \frac{\partial U_{x^{1},\lambda}}{\partial y_{1}}+(2^*-1)\int_{\mathbb R^{N}}\varphi U_{x^{1},\lambda}^{2^*-2}\zeta \frac{\partial U_{x^{1},\lambda}}{\partial y_{1}}+\sum_{i=1}^{k}U_{x^{i},\lambda}^{2^*-1}\zeta \frac{\partial U_{x^{1},\lambda}}{\partial y_{1}}\\
			&-\int_{\mathbb R^{N}} \varphi\left(\frac{\partial U_{x^{1},\lambda}}{\partial y_{1}}\Delta\zeta+2\nabla\zeta\nabla\frac{\partial U_{x^{1},\lambda}}{\partial y_{1}}\right)\\
			=&\int_{\mathbb R^{N}}\left(1-K_1\big(\frac{\lvert y \rvert}{\mu}\big) \right)U_{r,\lambda}^{2^*-1}\zeta \frac{\partial U_{x^{1},\lambda}}{\partial y_{1}}+N_1+ N_2+N_3+N_4,
		\end{aligned}
	\end{equation}
	where 
	\begin{equation*}
		\begin{gathered}
			N_1=-\int_{\mathbb R^{N}}\left(U_{r,\lambda}^{2^*-1}-\sum_{i=1}^{k}U_{x^{i},\lambda}^{2^*-1}\right)\zeta \frac{\partial U_{x^{1},\lambda}}{\partial y_{1}},
			\\
			N_2=-\int_{\mathbb R^{N}}  K_1\big(\frac{\lvert y \rvert}{\mu}\big)\left(\lvert u_{1}\rvert^{2^{*}-2}u_{1}-U_{r,\lambda}^{2^*-1} -(2^*-1) U_{x^1,\lambda}^{2^*-2}\varphi\right) \zeta \frac{\partial U_{x^{1},\lambda}}{\partial y_{1}},
			\\ 
			N_3=-\int_{\mathbb R^{N}} \varphi\left(\frac{\partial U_{x^{1},\lambda}}{\partial y_{1}}\Delta\zeta+2\nabla\zeta\nabla\frac{\partial U_{x^{1},\lambda}}{\partial y_{1}}\right),\\
			N_4=-(2^*-1)\int_{\mathbb{R}^{N}} \left(K_1\left(\frac{\lvert y \rvert}{\mu}\right) - 1\right) U_{x^1,\lambda}^{2^*-2} \varphi \zeta \frac{\partial U_{x^{1},\lambda}}{\partial y_1}.
		\end{gathered}
	\end{equation*}
	Similar to $M_{i}$, $i=2,3,4$ in the proof of Lemma \ref{lem5.3}, we also get $\lvert N_{i}\rvert=O\Big(\frac{1}{\mu^{m+\theta}}\Big)$, $i=2,3,4$.
	
	Next, we estimate $N_{1}$.
	For $y\in B_{2\mu^{\frac{m}{N-3/2}}}(x^1)$, there holds 
	\[ \quad
	\frac{\sum_{j=2}^k U_{x^j,\lambda}}{U_{x^1,\lambda}}=O\Big(\frac1{\mu^{\frac{m}{2N-3}}}\Big).
	\]
	Then
	\[U_{r,\lambda}^{2^*-1}- U_{x^{1},\lambda}^{2^*-1}=(2^*-1)U_{x^{1},\lambda}^{2^*-2}\sum_{j=2}^k U_{x^{j},\lambda}+O\left(U_{x^{1},\lambda}^{2^*-3}\left(\sum_{j=2}^k U_{x^{j},\lambda}\right)^2\right).\]
	By \eqref{tau1a}, we have 
	\begin{align*}
		\int_{\R^N}U_{x^{1},\lambda}^{2^*-3}\left(\sum_{j=2}^k U_{x^{j},\lambda}\right)^2\zeta\left|\frac{\partial U_{x^{1},\lambda}}{\partial y_1}\right|=  & \frac{1}{2^*-2}\int_{\R^N} \left(\sum_{j=2}^k U_{x^{j},\lambda}\right)^2\zeta\left|\frac{\partial U_{x^{1},\lambda}^{2^*-2}}{\partial y_1}\right|\\
		\leq &C\int_{\Omega_1}  \sum_{j=2}^k \frac{1}{(1+|y-x^j|)^{2N-4-\tau_1}} \frac{1}{(1+|y-x^1|)^{5}}\\
		\leq &C\sum_{j=2}^k \frac{1}{|x^1-x^j|^{N}}\int_{\Omega_1}    \frac{1}{(1+|y-x^1|)^{N+1-\tau_1}}
		=O(\frac{1}{\mu^{m+\theta}}).
	\end{align*}
	We also have 
	\[
	\int_{\R^N}\sum_{j=2}^k U_{x^j,\lambda}^{2^*-1}\zeta\left|\frac{\partial U_{x^{1},\lambda}}{\partial y_1}\right|
	\leq C \sum_{j=2}^k \int_{\Omega_1}  \frac{1}{(1+|y-x^j|)^{N+2}} \frac{1}{(1+|y-x^1|)^{N-1}}=O(\frac{1}{\mu^{m+\theta}}).
	\]
	Thus,
	\begin{equation*}
		\begin{aligned}
			N_{1}
			= -\sum_{j=2}^k \int_{\mathbb R^{N}}  U_{x^{j},\lambda}\zeta \frac{\partial U_{x^{1},\lambda}^{2^*-1}}{\partial y_{1}}+O(\frac{1}{\mu^{m+\theta}}).
		\end{aligned}
	\end{equation*}
	Integration by parts, there holds for $j\neq 1$,
	\begin{align*}
		\left|\int_{\mathbb R^{N}}U_{x^{j},\lambda}\zeta \frac{\partial U_{x^{1},\lambda}^{2^*-1}}{\partial y_{1}}\right|\leq
		\int_{\mathbb R^{N}}|\nabla U_{x^{j},\lambda}|\zeta  U_{x^{1},\lambda}^{2^*-1} 
		+\int_{\mathbb R^{N}}U_{x^{j},\lambda}|\nabla\zeta|  U_{x^{1},\lambda}^{2^*-1} .
	\end{align*}
	On one hand 
	we have 
	\[
	\int_{\mathbb R^{N}}|\nabla U_{x^{j},\lambda}|\zeta  U_{x^{1},\lambda}^{2^*-1} \leq C
	\int_{\Omega_1}\frac{1}{(1+|y-x^j|)^{N-1}} \frac{1}{(1+|y-x^1|)^{N+2}}\leq \frac{C}{|x^1-x^j|^{N-1}}.
	\]
	On the other hand we have 
	\[
	\int_{\mathbb R^{N}}U_{x^{j},\lambda}|\nabla\zeta|  U_{x^{1},\lambda}^{2^*-1} \leq 
	\frac{C}{\mu^{\frac{2m}{2N-3}}|x^1-x^j|^{N-2}}\int_{\R^N\setminus B_{\mu^{\frac{2m}{2N-3}}}(x^1)}  U_{x^{1},\lambda}^{2^*-1}
	\leq \frac{C}{\mu^{\frac{6m}{2N-3}}|x^1-x^j|^{N-2}}.
	\]

	Therefore,
	\begin{equation*}
		\begin{aligned}
			\lvert N_{1}\rvert=O(\sum_{j=2}^k\frac{1}{|x^1-x^j|^{N-1}}+\sum_{j=2}^k\frac{1}{\mu^{\frac{6m}{2N-3}}|x^1-x^j|^{N-2}}) +O(\frac{1}{\mu^{m+\theta}})=O(\frac{1}{\mu^{m+\theta}})
			.
		\end{aligned}
	\end{equation*}
	
	Then, we estimate  the main term.
	In  $ \text{supp}\,\zeta$, we have 
	\[
	U_{r,\lambda}^{2^*-1}= \left(1+O( \mu^{-\frac{m}{2N-3}})\right)U_{x^1,\lambda}^{2^*-1},
	\]
	Therefore,
	\begin{align*}
		&\int_{\mathbb R^{N}}\left(1-K_1\big(\frac{\lvert y \rvert}{\mu}\big) \right)U_{r,\lambda}^{2^*-1}\zeta \frac{\partial U_{x^{1},\lambda}}{\partial y_{1}} \\
		=&\frac{1}{2^*}\frac{c_{0,1}}{\mu^{m}}\int_{\R^N}\lvert\lvert y\rvert-\mu r_{0}\rvert^{m}\zeta\frac{\partial U^{2^{*}}_{x^{1},\lambda}}{\partial y_{1}}
		+O\left(\frac{1}{\mu^{m+\theta} } \int_{\R^N} \lvert\lvert y\rvert-\mu r_{0}\rvert^{m+\theta_{1}}\zeta\left|\frac{\partial U^{2^{*}}_{x^{1},\lambda}}{\partial y_{1}}\right| \right),
	\end{align*}
	for some $\theta\in(0,\min\{\theta_1, \frac{m}{2N-3}\})$.
	We can check that 
	\begin{equation}\label{eq5.18}
		\int_{\R^N} \lvert\lvert y\rvert-\mu r_{0}\rvert^{m+\theta_{1}}\zeta\left|\frac{\partial U^{2^{*}}_{x^{1},\lambda}}{\partial y_{1}}\right|\leq C
		\int_{\R^N}\frac{ 1+ \lvert y-x^1\rvert^{m+\theta_{1}}}{(1+|y-x^1|)^{2N+1}}\zeta \leq C.
	\end{equation}
	
	Integrating by parts, we have
	\begin{align*}
		\int_{\R^N}\lvert\lvert y\rvert-\mu r_{0}\rvert^{m}\zeta\frac{\partial U^{2^{*}}_{x^{1},\lambda}}{\partial y_{1}}
		=-m\int_{\R^N}\lvert\lvert y\rvert-\mu r_{0}\rvert^{m-2}(\lvert y\rvert-\mu r_{0})\frac{y_{1}}{\lvert y\rvert} \zeta U^{2^{*}}_{x^{1},\lambda} -\int_{\R^N}\lvert\lvert y\rvert-\mu r_{0}\rvert^{m} U^{2^{*}}_{x^{1},\lambda}\frac{\partial \zeta}{\partial y_{1}}.
	\end{align*}
	Similar to \eqref{eq5.18}, there holds
	\[
	\left|\int_{\R^N}\lvert\lvert y\rvert-\mu r_{0}\rvert^{m} U^{2^{*}}_{x^{1},\lambda}\frac{\partial \zeta}{\partial y_{1}}\right|
	\leq \frac{C}{\mu^{\frac{2m}{2m-3}}} \int_{\R^N}\frac{\lvert  y-x^1\rvert^{m}}{(1+|y-x^1|)^{2N}} =O(\frac{1}{\mu^\theta}).
	\]
	We then estimate
	\begin{align*}
		\mathcal I:=&\int_{\R^N}\lvert\lvert y\rvert-\mu r_{0}\rvert^{m-2}(\lvert y\rvert-\mu r_{0})\frac{y_{1}}{\lvert y\rvert} \zeta U^{2^{*}}_{x^{1},\lambda} \\
		= &\int_{\R^N}\lvert\lvert z+x^{1}\rvert-\mu r_{0}\rvert^{m-2}(\lvert z+x^{1}\rvert-\mu r_{0})\frac{z_{1}+r}{\lvert z+x^{1}\rvert} \tilde\zeta  U^{2^{*}}_{0,\lambda},
	\end{align*}
	where $z=(z_{1},z_{*})\in \R\times \R^{N-1}$ and $\tilde\zeta(z)=\zeta(z+x^{1}) $.
	For $z\in \text{supp}\, \tilde\zeta =B_{2\mu^{ \frac{2m}{2N-3}}}(0)$, we have 
	\[
	\lvert z+x^{1}\rvert=\sqrt{r^2+2z_1r+|z|^2}= r+z_1+O(\frac{|z|^2}{r}),
	\]
	where $r=\lvert x^1\rvert$.
		Therefore,
	\begin{align*}
		\mathcal I   
		=&\int_{\R^N}\lvert\lvert z+x^{1}\rvert-\mu r_{0}\rvert^{m-2}(\lvert z+x^{1}\rvert-\mu r_{0})(1+O(\frac{|z|^2}{r^2})) \tilde\zeta U^{2^{*}}_{0,\lambda}\\
		=& \int_{\R^N}\lvert\lvert z+x^{1}\rvert-\mu r_{0}\rvert^{m-2}(\lvert z+x^{1}\rvert-\mu r_{0})  \tilde\zeta U^{2^{*}}_{0,\lambda} +O(\frac{1}{r^2}\int_{\R^N}\frac{1}{(1+|z|)^{2N-m-1}})\\
		=& \int_{\R^N}\lvert\lvert z+x^{1}\rvert-\mu r_{0}\rvert^{m-2}(\lvert z+x^{1}\rvert-\mu r_{0})  \tilde\zeta U^{2^{*}}_{0,\lambda} +O(\frac{1}{\mu^\theta} ).
	\end{align*}
	Without loss of generality, we can assume that $\tilde \zeta$ is radially symmetric.
	Then 
	\[
	\int_{\R^N}|z_1|^{m-2}z_1\tilde\zeta U^{2^{*}}_{0,\lambda}=0.
	\]
	Note also that 
	\[
	\int_{\R^N}|z_1|^{m-2} (1-\tilde\zeta) U_{0,\lambda}^{2^*}=O(\int_{|z|\geq \mu^{\frac{2m}{2N-3}}} \frac{1}{(1+|z|)^{2N-m+2}})= O(\frac{1}{\mu^\theta}).
	\]
	
	If $m=2$, then 
	\begin{equation}\label{eq5.19}
		\mathcal I=\int_{\R^N} (r-\mu r_{0})  \tilde\zeta U^{2^{*}}_{0,\lambda} +O(\frac{1}{\mu^\theta} )=
		(r-\mu r_{0}) \int_{\R^N}   U^{2^{*}}_{0,1}  +O(\frac{1}{\mu^\theta} ).
	\end{equation}
	If $m>2$, by Lemma \ref{lema6}, we obtain 
	\begin{align*}
		&\lvert\lvert z+x^{1}\rvert-\mu r_{0}\rvert^{m-2}(\lvert z+x^{1}\rvert-\mu r_{0}) = 
		|z_1|^{m-2}z_1+(m-1) |z_1|^{m-2}(r-\mu r_0+O(\frac{|z|^2}{r})) \\
		&+O(|r-\mu r_0|^{m-1})+O(\frac{|z|^{2m-2}}{r^{m-1}})+O( (m-3)^+|z_1|^{m-3}(\frac{|z|^4}{r^2}+  |r-\mu r_0|^2)) \\
		=&|z_1|^{m-2}z_1+(m-1) |z_1|^{m-2}(r-\mu r_0)\\
		&+O(\frac{|z|^{m}}{r}) +O( (1+|z|)^{m-3}(|r-\mu r_0|^{m-1}+|r-\mu r_0|^{2})).
	\end{align*}

	We can also check that 
	\[
	\int_{\R^N} \frac{|z|^{m}}{r} \tilde\zeta U^{2^{*}}_{0,\lambda}=
	O(\frac{1}{\mu^\theta}),\quad 
	\int_{\R^N}(1+|z|)^{m-3} \tilde\zeta U^{2^{*}}_{0,\lambda}=O(1).
	\]
	
	Therefore, for the case $m>2$, there holds
	\begin{equation}\label{eq5.20}
		\begin{aligned}
			\mathcal I=& (m-1)(r-\mu r_0)\int_{\R^N} |z_1|^{m-2} U_{0,\lambda}^{2^*}
			+O(\frac{1}{\mu^\theta}+|r-\mu r_0|^{m-1}+|r-\mu r_0|^{2})\\
			=&\frac{(m-1)(r-\mu r_0)}{\lambda^{m-2}}\int_{\R^N} |z_1|^{m-2} U^{2^{*}}_{0,1}
			+O(\frac{1}{\mu^\theta}+|r-\mu r_0|^{m-1}+|r-\mu r_0|^{2}).
		\end{aligned}
	\end{equation}
	Combining \eqref{3lambda.}, \eqref{1lambda.}, \eqref{eq5.19} and \eqref{eq5.20}, the conclusion follows.
\end{proof}

\begin{proof}[\textbf{Proof of Theorem \ref{th2} and Theorem \ref{th1}}]
	From \cite{WY10}, 
	there is a constant $B_{5}>0$ such that
	\begin{equation*}
		\sum_{i=2}^{k}\frac{1}{\lvert x^{i}-x^{1}\rvert^{N-2}}=\frac{ B_{5}k^{N-2}}{\lvert x^{1}\rvert^{N-2}}+O\Big(\frac{k}{\lvert x^{1}\rvert^{N-2}}\Big).
	\end{equation*}
	Similarly,
	\begin{equation*}
		\sum_{i=2}^{k}\frac{1}{\lvert y^{i}-y^{1}\rvert^{N-2}}=\frac{ B_{6}k^{N-2}}{\lvert y^{1}\rvert^{N-2}}+O\Big(\frac{k}{\lvert y^{1}\rvert^{N-2}}\Big),
	\end{equation*}
	where $B_{6}>0$ is a constant.
	Therefore, by Lemmas \ref{lem5.3} and \ref{lem5.2},    
	\begin{align*}
		\widetilde{c}_2 
		=&
		-\frac{B_{1}}{\lambda^{m+1}\mu^{m}}
		+\frac{ B_{5}k^{N-2}}{\lambda^{N-1}r^{N-2}}
		+O\Big(\frac{1}{\mu^{m+\theta}}+\frac{1}{\mu^{m}}\lvert\mu r_{0}-\lvert x^{1}\rvert\rvert^{2}+\frac{k}{r^{N-2}}\Big),
		\\
		\widetilde{d}_2 
		=&
		-\frac{B_{3}}{\nu^{m+1}\mu^{m}}
		+\frac{ B_{6}k^{N-2}}{\nu^{N-1}\rho^{N-2}}
		+O\Big(\frac{1}{\mu^{m+\theta}}+\frac{1}{\mu^{m}}\lvert\mu \rho_{0}-\lvert y^{1}\rvert\rvert^{2}+\frac{k}{\rho^{N-2}}\Big),
		\\
		\widetilde{c}_1
		=&-A_{1}\frac{\lvert x^{1}\rvert-\mu r_{0}}{\lambda^{m-2}\mu^{m}}
		+O\Big(\frac{1}{\mu^{m+\theta}}+\frac{(m-2)^+}{\mu^{m}}\left(\lvert\lvert x^{1}\rvert-\mu r_{0}\rvert\rvert^{m-1}+ \lvert\lvert x^{1}\rvert-\mu r_{0}\rvert\rvert^{2}\right)\Big),
		\\
		\widetilde{d}_1
		=&-A_{2}\frac{\lvert y^{1}\rvert-\mu \rho_{0}}{\nu^{m-1}\mu^{m}}
		+O\Big(\frac{1}{\mu^{m+\theta}}+\frac{(m-2)^+}{\mu^{m}}\left(\lvert\lvert y^{1}\rvert-\mu r_{0}\rvert\rvert^{m-1}+\lvert\lvert y^{1}\rvert-\mu \rho_{0}\rvert\rvert^{2}\right)\Big).
	\end{align*}

	Let   
	\begin{equation*} 
		\lambda_{0}=\Big(\frac{B_{5}}{B_{1}r^{N-2}_{0}}\Big)^{\frac{1}{N-2-m}}, \quad \nu_{0}=\Big(\frac{B_{6}}{B_{3}\rho^{N-2}_{0}}\Big)^{\frac{1}{N-2-m}}
	\end{equation*}
	be   the solution of
	\begin{equation*} 
		-\frac{B_{1}}{\lambda^{m+1}}
		+\frac{B_{5}}{\lambda^{N-1}r^{N-2}_{0}}
		=0, \quad 
		-\frac{B_{3}}{\nu^{m+1}}
		+\frac{B_{6}}{\nu^{N-1}\rho^{N-2}_{0}}
		=0.
	\end{equation*}

	Denote  
	\begin{equation*}
		\mathbb M:=\Big[r_{0} \mu-\displaystyle\frac{1}{\mu^{\bar{\theta}}},r_{0} \mu+\displaystyle\frac{1}{\mu^{\bar{\theta}}}\Big]\times \Big[\rho_{0} \mu-\displaystyle\frac{1}{\mu^{\bar{\theta}}},\rho_{0} \mu+\displaystyle\frac{1}{\mu^{\bar{\theta}}}\Big]\times 
		\Big[\lambda_{0}-\frac{1}{\mu^{\frac{3}{2}\bar\theta}},\lambda_{0}+\frac{1}{\mu^{\frac{3}{2}\bar\theta}}\Big]
		\times\Big[\nu_{0}-\frac{1}{\mu^{\frac{3}{2}\bar\theta}},\nu_{0}+\frac{1}{\mu^{\frac{3}{2}\bar\theta}}\Big],
	\end{equation*}
	where $0<\bar\theta <\min\{\theta,N-2-m-\tau_1\}/\max\{4, 2m-2\}$.
	We find a zero of the map $ (r,\lambda,\rho,\nu)\mapsto (\widetilde{c}_1, \widetilde{c}_2, \widetilde{d}_1, \widetilde{d}_2)$ on $\mathbb M$.
	
	For $(r,\lambda,\rho,\nu)\in \mathbb M$, there holds
	\begin{equation*} 
		\begin{aligned}
			\widetilde	c_{2} 
			=&-\frac{B_{1}}{\lambda^{m+1}\mu^{m}}
			+\frac{B_{5}}{\lambda^{N-1}r^{N-2}_{0}}\frac{1}{\mu^{m}}
			+O\Big(\frac{1}{\mu^{m+2\bar\theta}}\Big),\\
			\widetilde c_{1} 
			=&-A_{1}\frac{r-\mu r_{0}}{\mu^{m}}
			+O\Big(\frac{1}{\mu^{m+2\bar\theta}}\Big).
		\end{aligned}
	\end{equation*}
	If $\lambda=\lambda_{0}-\frac{1}{\mu^{\frac{3}{2}\bar\theta}}$, then
	\begin{equation*} 
		\begin{aligned}
			\widetilde	c_{2} 	=&-\frac{B_{1}}{(\lambda_{0}-\frac{1}{\mu^{\frac{3}{2}\bar\theta}})^{m+1}\mu^{m}}
			+\frac{B_{5}}{(\lambda_{0}-\frac{1}{\mu^{\frac{3}{2}\bar\theta}})^{N-1}r^{N-2}_{0}}\frac{1}{\mu^{m}} +O\Big(\frac{1}{\mu^{m+2\bar\theta}}\Big)\\
			=&-\frac{B_{1}}{\lambda^{m+1}_{0}\mu^{m}}
			+\frac{B_{5}}{\lambda^{N-1}_{0}r^{N-2}_{0}}\frac{1}{\mu^{m}}+(N-m-2)\frac{B_{1}}{\lambda^{m+2}_{0}\mu^{m+\frac{3}{2}\bar\theta}}
			+O\Big(\frac{1}{\mu^{m+2\bar\theta}}\Big)\\
			=&(N-2-m)\frac{B_{1}}{\lambda^{m+2}_{0}\mu^{m+\frac{3}{2}\bar\theta}}
			+O\Big(\frac{1}{\mu^{m+2\bar\theta}}\Big)>0.
		\end{aligned}
	\end{equation*}	
	If $r=r_{0}\mu- \displaystyle\frac{1}{\mu^{\bar{\theta}}}$, then
	\begin{equation*} 
		\begin{aligned}
			\widetilde{c}_1=-A_{1}\frac{1}{\mu^{m}}\Big(r_{0}\mu- \displaystyle\frac{1}{\mu^{\bar{\theta}}}-\mu r_{0}\Big)+O\Big(\frac{1}{\mu^{m+2\bar\theta}}\Big)=A_{1}\frac{1}{\mu^{m+\bar{\theta}}}+O\Big(\frac{1}{\mu^{m+2\bar\theta}}\Big)>0,
		\end{aligned}
	\end{equation*}
	Similarly, we can get  $\widetilde	c_{2} <0$ if $\lambda=\lambda_{0}+\frac{1}{\mu^{\frac{3}{2}\bar\theta}}$;
	$\widetilde{d}_2>0$ if 
	$\nu=\nu_{0}-\frac{1}{\mu^{\frac{3}{2}\bar\theta}}$;  $\widetilde{d}_2<0$
	if $\nu=\nu_{0}+\frac{1}{\mu^{\frac{3}{2}\bar\theta}}$.
	$\widetilde{c}_1<0$
	if $r=r_{0}\mu+ \displaystyle\frac{1}{\mu^{\bar{\theta}}}$;
	$\widetilde d_{1}>0$ 
	if $\rho=\rho_{0}\mu- \displaystyle\frac{1}{\mu^{\bar{\theta}}}$ 
	$\widetilde d_{1}<0$; if $\rho=\rho_{0}\mu+ \displaystyle\frac{1}{\mu^{\bar{\theta}}}$.
	
	Then by Miranda's theorem \cite[Theorem 3.1]{MJ13},  the map $ (r,\lambda,\rho,\nu)\mapsto (\widetilde{c}_1, \widetilde{c}_2, \widetilde{d}_1, \widetilde{d}_2)$   has at least one zero point in $\mathbb M$.

	Finally, to see $u_{1,k},u_{2,k}$ are non-negative,
	by $(u_{1,k},u_{2,k})\in \Lambda_{k}$, we know $\mathrm{supp}\, u^{-}_{1,k} \subset \R^N\setminus P_2$ and $\mathrm{supp}\, u^{-}_{2,k} \subset \R^N\setminus Q_2$.
	Testing  the first equation in \eqref{subsolution} by $u^{-}_{1,k}$,   we have 
	\begin{equation*}
		\lVert \nabla u^{-}_{1,k}\rVert^{2}_{L^{2}}  \leq C\lVert u^{-}_{1,k}\rVert^{2^*}_{L^{2^*}}\leq\lVert u_{1,k}\rVert^{2^*-2}_{L^{2^*}(\R^N\setminus P_2)}\lVert \nabla u^{-}_{1,k}\rVert^{2}_{L^{2}}.
	\end{equation*}
	By Lemma \ref{lem7}, $\lVert u_{1,k}\rVert^{2^*-2}_{L^{2^*}(\R^N\setminus P_2)}\to 0$ as $k\to+\infty$.
	So $u^{-}_{1,k}=0$. Similarly, we also get $u^{-}_{2,k}=0$, concluding the proof. 
\end{proof}

\begin{proof}[\textbf{Proof of Theorem \ref{th3}}]
	In the outer region, 
	from Lemma \ref{deadcore}, we can obtain Theorem \ref{th3}. Using the operator $S$, 
	in the inner region, we also obtain the solution $(u_{1,k},u_{2,k})$
	defined in Theorem \ref{th2} through reduction method. Thus, we complete the proof.
\end{proof}

\appendix
\section{Basic Estimates}
In this section, we collect several fundamental and useful estimates that will be used repeatedly throughout the paper. The Lemmas \ref{lemB1}--\ref{lem2} stated below are adapted from \cite{WY10}.
\begin{Lem}\label{lemB1}
	For any constant $0<\tau\leq\min\{\alpha_{1},\alpha_{2}\}$, there is a constant $C>0$ such that
	\begin{equation*}	
		\frac{1}{(1+\lvert y-x^{i}\rvert)^{\alpha_{1}}}\frac{1}{(1+\lvert y-y^{i}\rvert)^{\alpha_{2}}}\leq \frac{C}{\lvert x^{i}-y^{i}\rvert^{\tau}}\Big(\frac{1}{(1+\lvert y-x^{i}\rvert)^{\alpha_{1}+\alpha_{2}-\tau}}+\frac{1}{(1+\lvert y-y^{i}\rvert)^{\alpha_{1}+\alpha_{2}-\tau}}\Big).
	\end{equation*}
\end{Lem}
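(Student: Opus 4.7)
The plan is to prove the inequality by a case split based on the triangle inequality $|x^{i}-y^{i}|\le |y-x^{i}|+|y-y^{i}|$, which forces at least one of the two distances to be $\ge |x^{i}-y^{i}|/2$. I will reduce to the case $|x^{i}-y^{i}|\ge 1$, since otherwise the right-hand side is bounded below by a positive constant depending only on $\alpha_1,\alpha_2,\tau$, and the inequality becomes trivial with $C$ chosen large.

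First I would treat the case $|y-x^{i}|\ge |y-y^{i}|$. Here the triangle inequality gives $|y-x^{i}|\ge |x^{i}-y^{i}|/2$, and since $|x^{i}-y^{i}|\ge 1$ this yields $1+|y-x^{i}|\ge c(1+|x^{i}-y^{i}|)\ge c\,|x^{i}-y^{i}|$. I then split
\[
\frac{1}{(1+|y-x^{i}|)^{\alpha_{1}}(1+|y-y^{i}|)^{\alpha_{2}}}
=\frac{1}{(1+|y-x^{i}|)^{\tau}}\cdot\frac{1}{(1+|y-x^{i}|)^{\alpha_{1}-\tau}(1+|y-y^{i}|)^{\alpha_{2}}}.
\]
The first factor is bounded by $C|x^{i}-y^{i}|^{-\tau}$. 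Since $\tau\le \alpha_{1}$ the exponent $\alpha_{1}-\tau$ is nonnegative, so I may use the monotonicity $(1+|y-x^{i}|)^{-(\alpha_1-\tau)}\le (1+|y-y^{i}|)^{-(\alpha_1-\tau)}$ to consolidate the remaining factor into $(1+|y-y^{i}|)^{-(\alpha_{1}+\alpha_{2}-\tau)}$. This produces the second term on the right-hand side of the claimed bound.

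Next I carry out the symmetric argument in the case $|y-y^{i}|\ge |y-x^{i}|$, now using $\tau\le \alpha_{2}$ to redistribute exponents onto the $(1+|y-x^{i}|)$ factor; this produces the first term on the right-hand side. Since the two cases cover all $y\in\R^{N}$, taking the larger of the two bounds (or summing) yields the inequality.

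I do not expect any genuine obstacle: the argument is a textbook triangle-inequality interpolation, and the hypothesis $0<\tau\le\min\{\alpha_{1},\alpha_{2}\}$ is precisely what is needed so that all powers that appear after the redistribution remain nonnegative. The only mild point requiring care is the additive $1$ in the factors $(1+|\cdot|)$, which is handled by reducing to $|x^{i}-y^{i}|\ge 1$ at the outset and absorbing the complementary regime into the constant $C$.
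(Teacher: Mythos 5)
Your main argument — the case split according to which of $\lvert y-x^i\rvert$, $\lvert y-y^i\rvert$ is larger, followed by splitting off a $\tau$-power and using monotonicity to redistribute exponents — is exactly the standard proof of this estimate. The paper itself gives no proof; it cites \cite{WY10} for the lemma, and the argument there is the same triangle-inequality interpolation you describe.

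There is, however, an error in your preliminary reduction. The claim that when $\lvert x^i-y^i\rvert<1$ ``the right-hand side is bounded below by a positive constant'' is false: both terms
$(1+\lvert y-x^i\rvert)^{-(\alpha_1+\alpha_2-\tau)}$ and $(1+\lvert y-y^i\rvert)^{-(\alpha_1+\alpha_2-\tau)}$ tend to $0$ as $\lvert y\rvert\to\infty$ (note $\alpha_1+\alpha_2-\tau\ge \max\{\alpha_1,\alpha_2\}>0$), so the right-hand side is not uniformly bounded below in $y$, and ``$\text{LHS}\le 1\le C\cdot c$'' does not work. Fortunately the reduction is unnecessary: in the case $\lvert y-x^i\rvert\ge\lvert y-y^i\rvert$ the triangle inequality gives $\lvert y-x^i\rvert\ge\lvert x^i-y^i\rvert/2$ for \emph{any} $\lvert x^i-y^i\rvert>0$, and then
\[
(1+\lvert y-x^i\rvert)^{-\tau}\le \lvert y-x^i\rvert^{-\tau}\le 2^{\tau}\lvert x^i-y^i\rvert^{-\tau},
\]
so the ``additive $1$'' causes no trouble at all. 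The rest of your computation then goes through verbatim with $C=2^{\tau}$, no dichotomy on the size of $\lvert x^i-y^i\rvert$ required. I would simply delete the reduction and carry out the two-case argument directly.
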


\begin{Lem} \label{lemB2} 
	For any constant $\sigma>0$ with $\sigma\neq N-2$, there is a constant $C>0$ such that
	\begin{equation*}	
		\int_{\mathbb R^{N}} \frac{1}{\lvert y-z\rvert^{N-2}} \frac{1}{(1+\lvert z\rvert)^{2+\sigma}} dz\leq \frac{C}{(1+\lvert y\rvert)^{\min\{\sigma,N-2\}}}.
	\end{equation*}
\end{Lem}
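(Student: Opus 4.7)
The plan is to prove this standard convolution estimate via a three-region decomposition of $\mathbb{R}^N$, separating the regime $|y|$ bounded from $|y|$ large.

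First, when $|y| \leq 1$ the right-hand side $(1+|y|)^{-\min\{\sigma,N-2\}}$ is bounded below by a positive constant, so it suffices to show the integral is finite uniformly. This reduces to observing that the integrand has an integrable singularity at $z=y$ (since $|y-z|^{-(N-2)}$ is integrable in dimension $N$) and decays at infinity like $|z|^{-(N+\sigma)}$, yielding a finite bound depending only on $\sigma$ and $N$.

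For $|y|\geq 1$, I would split $\mathbb R^N= \Omega_1\cup\Omega_2\cup\Omega_3$ with
$\Omega_1=\{|z|\leq |y|/2\}$, $\Omega_2=\{|z-y|\leq |y|/2\}$, and $\Omega_3$ the complement. In $\Omega_1$ we have $|y-z|\sim |y|$, so the integral is controlled by $|y|^{-(N-2)}\int_{|z|\leq |y|/2}(1+|z|)^{-(2+\sigma)}dz$; the remaining integral behaves like $|y|^{N-2-\sigma}$ if $\sigma<N-2$ and is bounded if $\sigma>N-2$, producing a contribution of order $(1+|y|)^{-\min\{\sigma,N-2\}}$. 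In $\Omega_2$ we have $|z|\sim |y|$, so the weight is $\lesssim |y|^{-(2+\sigma)}$ and the integral of $|y-z|^{-(N-2)}$ over a ball of radius $|y|/2$ contributes $|y|^2$, giving a bound $|y|^{-\sigma}$. In $\Omega_3$, both $|z|\geq |y|/2$ and $|y-z|\geq |y|/2$; I would split further into the annulus $|y|/2\leq |z|\leq 2|y|$ (handled as in $\Omega_2$) and the far field $|z|\geq 2|y|$ where $|y-z|\geq |z|/2$, reducing to a radial integral $\int_{2|y|}^\infty r^{-1-\sigma}dr= C|y|^{-\sigma}$. Summing, all three pieces are bounded by $(1+|y|)^{-\min\{\sigma,N-2\}}$.

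The only delicate point is the $\Omega_1$ estimate: the hypothesis $\sigma \neq N-2$ is used precisely to rule out the borderline case where $\int_{|z|\leq |y|/2}(1+|z|)^{-N}dz$ would produce a logarithmic correction $\log|y|$ rather than a pure power. Otherwise the argument is a routine bookkeeping of the three regions, with no substantive analytic obstacle; this is why the result is simply cited from \cite{WY10} in the literature.
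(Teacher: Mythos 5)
Your proposal is correct. The paper itself does not prove Lemma~\ref{lemB2} but simply cites it from \cite{WY10}, so there is no in-paper argument to compare against; the three-region decomposition you carry out (near the origin, near $y$, and the far field) is the standard and essentially unique natural way to establish this Riesz-potential bound, and your bookkeeping in each region is accurate, including the identification of $\sigma = N-2$ as the borderline where $\Omega_1$ would pick up a $\log|y|$ loss and the use of $|y|\geq 1$ to pass between $|y|$ and $1+|y|$ and to absorb the $|y|^{-\sigma}$ contributions into $|y|^{-\min\{\sigma,N-2\}}$.
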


\begin{Lem}\label{lem2}
	Let $r\in [r_{0} \mu-1,r_{0}\mu +1],~\rho \in [\rho_{0} \mu-1,\rho_{0} \mu+1]$. Then  there exists $\theta>0$ small enough such that as $k\to+\infty$,
	\begin{equation*}
		K_1(\frac{y}{\mu})U^{2^*-1}_{r,\lambda}-\sum_{i=1}^{k}U^{2^*-1}_{x^{i},\lambda}=O\Big(\frac{1}{\mu^{\frac{m}{2}+\theta}}\Big)\sum_{i=1}^{k}\frac{1}{(1+\lvert y-x^{i}\rvert)^{\frac{N}{2}+2}}.
	\end{equation*}
\end{Lem}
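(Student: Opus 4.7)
The plan is to decompose the left-hand side into a ``potential'' error (reflecting the deviation of $K_1$ from unity) and an ``interaction'' error (reflecting the non-linearity applied to a superposition of bubbles):
\[
K_1(y/\mu)\,U^{2^{*}-1}_{r,\lambda}-\sum_{i=1}^{k}U^{2^{*}-1}_{x^{i},\lambda}
=\bigl[K_1(y/\mu)-1\bigr]U^{2^{*}-1}_{r,\lambda}
+\Bigl(U^{2^{*}-1}_{r,\lambda}-\sum_{i=1}^{k}U^{2^{*}-1}_{x^{i},\lambda}\Bigr).
\]
By the $\mathbb{Z}_{k}$-rotational symmetry of the configuration $\{x^{i}\}$, it suffices to establish the pointwise bound for $y\in\Omega_{1}$, on which $|y-x^{i}|\ge|y-x^{1}|$ for every $i\ge 2$, so $U_{x^{1},\lambda}$ is the dominant bubble.

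First I would treat the potential error. On the inner region $\{||y|-\mu r_{0}|\le\delta\mu\}$, assumption $(\mathbb{K})$ and the triangle inequality
\[
||y|-\mu r_{0}|\le ||y|-|x^{1}||+||x^{1}|-\mu r_{0}|\le 1+|y-x^{1}|
\]
give $|K_1(y/\mu)-1|\le C\mu^{-m}(1+|y-x^{1}|)^{m}$. Multiplying by $U^{2^{*}-1}_{x^{1},\lambda}\lesssim(1+|y-x^{1}|)^{-(N+2)}$ yields $C\mu^{-m}(1+|y-x^{1}|)^{m-N-2}$, and absorbing the factor $(1+|y-x^{1}|)^{m-N/2}\le\mu^{(m-N/2)^{+}}$ (trivial if $m\le N/2$, otherwise valid since $|y-x^{1}|\le\delta\mu+1$ on this region) converts the estimate into $C\mu^{-m/2-\theta}(1+|y-x^{1}|)^{-(N/2+2)}$ for any $\theta<(N-m)/2$, a strictly positive range since $m<N-2$. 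On the complementary outer region $|y-x^{1}|$ is automatically of order $\mu$, so the crude bound $|K_1(y/\mu)-1|\le C$ combined with $U^{2^{*}-1}_{x^{1},\lambda}\le C(1+|y-x^{1}|)^{-(N+2)}$ already yields the target. The remaining bubbles $U_{x^{i},\lambda}$ with $i\ge 2$ inside $U^{2^{*}-1}_{r,\lambda}$ are handled either analogously or by absorbing them into the interaction term below.

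Next I would bound the interaction error via the elementary inequality
\[
\Bigl|\bigl(\textstyle\sum_{i}a_{i}\bigr)^{p}-\sum_{i}a_{i}^{p}\Bigr|
\le C_{p}\sum_{i\ne j}a_{i}^{p-1}a_{j},\qquad p=2^{*}-1,\ a_{i}\ge 0,
\]
valid for every $p>1$. For $y\in\Omega_{1}$ the dominant cross terms take the form $U^{2^{*}-2}_{x^{1},\lambda}U_{x^{i},\lambda}$; applying Lemma \ref{lemB1} with a suitable exponent $\gamma\in(0,\min\{4,N-2\}]$ converts each into
\[
U^{2^{*}-2}_{x^{1},\lambda}U_{x^{i},\lambda}\le\frac{C}{|x^{i}-x^{1}|^{\gamma}}\Bigl(\frac{1}{(1+|y-x^{1}|)^{N+2-\gamma}}+\frac{1}{(1+|y-x^{i}|)^{N+2-\gamma}}\Bigr).
\]
Using $|x^{i}-x^{1}|\sim\mu(i-1)/k$ and summing $\sum_{i\ge 2}|x^{i}-x^{1}|^{-\gamma}\sim (k/\mu)^{\gamma}$, together with the scaling identity $k/\mu=\mu^{-m/(N-2)}$, produces a global prefactor $\mu^{-\gamma m/(N-2)}$. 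Choosing $\gamma$ slightly larger than $(N-2)/2$ (when admissible, i.e.\ in moderate dimensions) yields the factor $\mu^{-m/2-\theta}$ for a small $\theta>0$, while the condition $\gamma\le N/2$ ensures that $N+2-\gamma\ge N/2+2$ and hence the spatial decay matches the target.

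The main obstacle is to carry a single strictly positive $\theta$, independent of $k$, uniformly through both steps. The most delicate sub-cases are: (i) $m\in[N/2,N-2)$ in Step 1, where the Taylor-gain is only $\mu^{-(N-m)/2}$ rather than $\mu^{-m/2}$, still positive because $m<N-2$; and (ii) the dimension-dependent upper bound $\gamma\le\min\{4,N-2\}$ in Lemma \ref{lemB1}, which in higher dimensions needs to be supplemented by a refined shell decomposition of $y$ around $x^{1}$ (close-range vs.\ intermediate vs.\ far-range) to recover the combined decay $\mu^{-m/2-\theta}$. The final exponent $\theta$ is then the minimum of $(N-m)/2$ and the gain obtained in Step 2, both positive and independent of $k$.
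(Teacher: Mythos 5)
Your decomposition of the left-hand side into a potential error $[K_1(y/\mu)-1]\,U_{r,\lambda}^{2^*-1}$ plus an interaction error $U_{r,\lambda}^{2^*-1}-\sum_i U_{x^i,\lambda}^{2^*-1}$ is the right skeleton, and reducing to $y\in\Omega_1$ by symmetry (where $|y-x^i|\ge|y-x^1|$ and the target is $\ge(1+|y-x^1|)^{-(N/2+2)}$) is also correct. The potential-error estimate is essentially right, apart from a minor slip: on $\{||y|-\mu r_0|\le\delta\mu\}\cap\Omega_1$ you claimed $|y-x^1|\le\delta\mu+1$, which is false (take $y$ with $|y'|$ small and $|y''|\sim\mu$); what is true is $|y-x^1|=O(\mu)$, which still gives $(1+|y-x^1|)^{m-N/2}\le C\mu^{(m-N/2)^+}$ and the conclusion.

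The interaction step, however, has two genuine gaps. First, the ``elementary inequality'' $|(\sum_i a_i)^p-\sum_i a_i^p|\le C_p\sum_{i\ne j}a_i^{p-1}a_j$ is \emph{false} for $p>2$: taking $a_1=\dots=a_k=1$ gives $k^p-k$ on the left and $C_p k(k-1)$ on the right, which fails for large $k$. Since $p=2^*-1=\frac{N+2}{N-2}>2$ exactly when $N=5$, this step collapses in the lowest dimension of the theorem. The correct route is asymmetric: single out the dominant bubble $a_1=U_{x^1,\lambda}$ on $\Omega_1$ and use $(a_1+b)^p-a_1^p\le C(a_1^{p-1}b+b^p)$ with $b=\sum_{j\ge2}U_{x^j,\lambda}$, noting also $\sum_{j\ge2}a_j^p\le(\sum_{j\ge2}a_j)^p$. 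Second, you funnel the cross terms through Lemma \ref{lemB1}, whose hypothesis forces $\gamma\le\min\{4,N-2\}$; the gain after summing is then $\mu^{-\gamma m/(N-2)}$, and for $N\ge10$ one has $\frac{4m}{N-2}\le\frac{m}{2}$, so no choice of $\gamma\le4$ produces the required $\mu^{-m/2-\theta}$. You flag this and gesture at a ``refined shell decomposition,'' but that is not carried out, so the proof as written does not cover $N\ge10$. Lemma \ref{lemB1} is in fact unnecessary here: for $y\in\Omega_1$ and $j\ge2$ one has both $|y-x^j|\ge|y-x^1|$ and $|y-x^j|\ge\frac12|x^1-x^j|$, hence for any $\gamma\in(1,N-2]$
\[
\frac{1}{(1+|y-x^j|)^{N-2}}\le\frac{C}{|x^1-x^j|^{\gamma}}\,\frac{1}{(1+|y-x^1|)^{N-2-\gamma}},
\]
so $U_{x^1,\lambda}^{2^*-2}U_{x^j,\lambda}\lesssim|x^1-x^j|^{-\gamma}(1+|y-x^1|)^{-(N+2-\gamma)}$ with no ``$\min\{4,\cdot\}$'' obstruction. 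Choosing $\gamma$ slightly above $(N-2)/2$ and below $N/2$ (a nonempty window for every $N\ge5$) then yields $\sum_{j\ge2}|x^1-x^j|^{-\gamma}\lesssim\mu^{-\gamma m/(N-2)}=O(\mu^{-m/2-\theta})$ together with the required decay $N+2-\gamma\ge N/2+2$. The term $(\sum_{j\ge2}U_{x^j,\lambda})^{2^*-1}$ is treated analogously using \eqref{tau1a}. Since the paper defers this lemma to \cite{WY10} without proof I cannot compare routes literally, but as written your argument only closes for $6\le N\le9$.
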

From \cite{WY10}, we also obtain that
	for $\tau_{1}=\frac{N-2-m}{N-2}$ and $\tau>1$, there is some $C > 0$ independent of $k$ such that
	\begin{equation} \label{key3'}
		\sum_{i=2}^{k}\frac{1}{\lvert x^{i}-x^{1}\rvert^{\tau}} +\sum_{i=2}^{k}\frac{1}{\lvert y^{i}-y^{1}\rvert^{\tau}}\leq C\left(\frac{k}{\mu}\right)^\tau\leq    C\mu^{-\frac{m\tau}{N-2}}.
	\end{equation}
	\begin{equation} \label{key3}
		\sum_{i=2}^{k}\frac{1}{\lvert x^{i}-x^{1}\rvert^{\tau_{1}}}\leq C~\text{and}~\sum_{i=1}^{k}\frac{1}{(1+\lvert y-x^{i}\rvert)^{\tau_{1}}}\leq C.
	\end{equation}
	Moreover,
	\begin{equation} \label{key3.}
		\sum_{i=2}^{k}\frac{1}{\lvert y^{i}-y^{1}\rvert^{\tau_{1}}}\leq C~\text{and}~\sum_{i=1}^{k}\frac{1}{(1+\lvert y-y^{i}\rvert)^{\tau_{1}}}\leq C.
	\end{equation}
	
\smallskip

We also use the following important basic estimates in this article.
By H\"older's inequality,  for $\alpha\geq \tau_1$, $\beta_1>1$, there holds
\begin{equation}\label{tau1a}
	\begin{aligned}
		\left(\sum_{i=i_0}^{k}\frac{1}{(1+\lvert y-x^{i}\rvert)^{\alpha}}\right)^{\beta_1}  =& \left(\sum_{i=i_0}^{k}\frac{1}{(1+\lvert y-x^{i}\rvert)^{\alpha-\frac{\beta_1-1}{\beta_1}\tau_1}}\frac{1}{(1+\lvert y-x^{i}\rvert)^{ \frac{\beta_1-1}{\beta_1}\tau_1}}\right)^{\beta_1}\\
		\leq & \sum_{i=i_0}^{k}\frac{1}{(1+\lvert y-x^{i}\rvert)^{\alpha\beta_1- (\beta_1-1) \tau_1}}\left(\sum_{i=i_0}^{k}\frac{1}{(1+\lvert y-x^{i}\rvert)^{ \tau_1}}\right)^{\beta_1-1}\\
		\leq &C\sum_{i=i_0}^{k}\frac{1}{(1+\lvert y-x^{i}\rvert)^{\alpha\beta_1- (\beta_1-1) \tau_1}},
	\end{aligned}
\end{equation}
where $i_0=1$ or $2$.
For $\alpha >0$,
define 
\begin{equation}\label{varpir}
	\varpi_{r,\alpha}(y):= \sum_{j=1}^k\frac{1}{ (1 + \lvert y -x^j \rvert)^{\alpha}}.
\end{equation}

With this notation, the inequality \eqref{tau1a} implies  
\begin{equation}\label{varpi2}
	\varpi_{r,\alpha}^{\beta_1} \leq \varpi_{r,\alpha\beta_1-(\beta_1-1)\tau_1} \quad \text{for } \alpha\geq \tau_1,~ \beta_1 >1.
\end{equation}
Note that $\varpi_{r,\alpha}$ is decreasing with respect to $\alpha$.
Moreover,
since $y\in \R^N\setminus P_1$, there holds   
\begin{equation}\label{varpi1}
	\varpi_{r,\alpha}\leq \sigma_k^{\alpha'-\alpha} \varpi_{r,\alpha'}, \quad\text{for } 0<\alpha'<\alpha.
\end{equation}

We define  
\begin{equation}\label{varpirho}
	\varpi_{\rho,\alpha}(y) := \sum_{j=1}^k \frac{1}{(1 + |y - y^j|)^\alpha},  
\end{equation}
and  
\begin{equation}\label{varpi1.}
	\varpi_{\alpha}(y) := \varpi_{r,\alpha}(y) + \varpi_{\rho,\alpha}(y).
\end{equation}
The functions $\varpi_{\rho,\alpha}$ and $\varpi_{\alpha}$ inherit the same properties as $\varpi_{r,\alpha}$—namely, \eqref{varpi2} and \eqref{varpi1}—in $\mathbb{R}^N \setminus Q_1$ and $\mathbb{R}^N \setminus (P_1 \cup Q_1)$, respectively.

\begin{Lem}\label{lemB1.}
	There holds
	\begin{equation}\label{10'}
		\begin{aligned}
			&-\Delta \varpi_{r,N-2} \geq C \sigma_k^{1-\frac{4\tau_1}{N-2}}
			\varpi_{r,N-2}^{2^*-1}, &\text{ in }\mathbb R^{N}\setminus P_{1},\\
			&-\Delta \varpi_{\rho,N-2} \geq C \sigma_k^{1-\frac{4\tau_1}{N-2}}
			\varpi_{r,N-2}^{2^*-1}, &\text{ in }\mathbb R^{N}\setminus Q_{1},\\
			&-\Delta  \varpi_{N-2}
			\geq  C \sigma_k^{1-\frac{4\tau_1}{N-2}}\varpi_{N-2}^{2^{*}-1}
			,  &\text{ in }\mathbb R^{N}\setminus (P_{1}\cup Q_1).
		\end{aligned}	
	\end{equation}
	For $\alpha\in[\frac{N}{2}-1+\tau_1, N-2)$,
	\begin{equation}\label{10}
		\begin{aligned}
			-\Delta  \varpi_{r,\alpha}
			\geq& C  \sigma_k^{4-\frac{4\alpha}{N-2}}\varpi_{r,N-2}^{2^{*}-2}\varpi_{r,\alpha}+ \frac12 \alpha(N-2-\alpha) \varpi_{r,\alpha+2} 
			, &\text{ in }\mathbb R^{N}\setminus P_{1},\\
			-\Delta  \varpi_{\rho,\alpha}
			\geq& C  \sigma_k^{4-\frac{4\alpha}{N-2}}\varpi_{\rho,N-2}^{2^{*}-2}\varpi_{\rho,\alpha}+ \frac12 \alpha(N-2-\alpha) \varpi_{\rho,\alpha+2} 
			, &\text{ in }\mathbb R^{N}\setminus Q_{1}.
		\end{aligned}	
	\end{equation}

\end{Lem}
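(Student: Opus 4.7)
The plan is to prove Lemma \ref{lemB1.} by direct Laplacian computation on the single building block
$(1+|y-x|)^{-\alpha}$. Treating this as a radial function yields
\[
-\Delta(1+|y-x|)^{-\alpha}
=\alpha(1+|y-x|)^{-\alpha-2}\Bigl[N-2-\alpha+\tfrac{N-1}{|y-x|}\Bigr],
\]
which is nonnegative for $0<\alpha\leq N-2$ and $|y-x|>0$. Summing over the $k$ centers gives the identity
$-\Delta\varpi_{r,\alpha}=\alpha(N-2-\alpha)\varpi_{r,\alpha+2}+\alpha(N-1)\sum_{j}|y-x^j|^{-1}(1+|y-x^j|)^{-\alpha-2}$,
with an analogous identity for $\varpi_{\rho,\alpha}$. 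In $\R^N\setminus P_1$ we have $|y-x^j|\geq\sigma_k\geq 1$, which lets us replace $|y-x^j|^{-1}$ by a constant multiple of $(1+|y-x^j|)^{-1}$ when needed.

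To obtain \eqref{10'} I set $\alpha=N-2$. The first bracketed factor vanishes and only $(N-1)/|y-x^j|$ survives, so the bound $(1+|y-x^j|)\leq 2|y-x^j|$ gives $-\Delta\varpi_{r,N-2}\geq c\,\varpi_{r,N+1}$. To rewrite the right-hand side as $\sigma_k^{1-\frac{4\tau_1}{N-2}}\varpi_{r,N-2}^{2^*-1}$, I invoke the power inequality \eqref{tau1a} (equivalently \eqref{varpi2}) with $\beta_1=2^*-1$ to obtain $\varpi_{r,N-2}^{2^*-1}\leq\varpi_{r,N+2-\frac{4\tau_1}{N-2}}$, and then use the monotone decay \eqref{varpi1} to lower the exponent from $N+2-\frac{4\tau_1}{N-2}$ to $N+1$ at the cost of precisely the factor $\sigma_k^{\frac{4\tau_1}{N-2}-1}$. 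The estimate for $\varpi_{\rho,N-2}$ is identical by symmetry, and the joint estimate for $\varpi_{N-2}$ follows by addition together with the convex bound $(a+b)^{2^*-1}\leq 2^{2^*-2}(a^{2^*-1}+b^{2^*-1})$.

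For \eqref{10} I discard the positive $|y-x^j|^{-1}$ contribution and retain $-\Delta\varpi_{r,\alpha}\geq\alpha(N-2-\alpha)\varpi_{r,\alpha+2}$, then split this as two equal halves: one half matches the final term of the RHS of \eqref{10}, and the other half must absorb $C\sigma_k^{4-\frac{4\alpha}{N-2}}\varpi_{r,N-2}^{2^*-2}\varpi_{r,\alpha}$. Since $2^*-2=\frac{4}{N-2}\in(0,1)$, the subadditivity of $t\mapsto t^{2^*-2}$ yields $\varpi_{r,N-2}^{2^*-2}\leq\varpi_{r,4}$; expanding $\varpi_{r,4}\varpi_{r,\alpha}$ as a double sum, applying Lemma \ref{lemB1} with a suitable threshold $\tau\geq\tau_1$, and invoking \eqref{key3}/\eqref{key3'} collapses the double sum into a single $C\varpi_{r,\alpha+4-\tau}$; a final comparison via \eqref{varpi1} gives $\varpi_{r,\alpha+4-\tau}\leq\sigma_k^{\tau-2}\varpi_{r,\alpha+2}$. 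The hypothesis $\alpha\geq\tfrac{N}{2}-1+\tau_1$ is precisely the threshold at which the accumulated $\sigma_k$ exponent $2+\tau-\frac{4\alpha}{N-2}$ becomes nonpositive for an admissible $\tau$, closing the estimate; the corresponding bound on $\varpi_{\rho,\alpha}$ follows verbatim.

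The main technical obstacle is the exponent bookkeeping in this last step, namely choosing $\tau\in[\tau_1,\min(4,\alpha)]$ so that both the summability $\sum_{j\neq i}|x^j-x^i|^{-\tau}\leq C$ (or $\leq C(k/\mu)^{\tau}$ for $\tau>1$) and the exponent inequality $2+\tau-\frac{4\alpha}{N-2}\leq 0$ hold simultaneously for every $\alpha$ in the stated range. In the tightest $(N,\alpha,\tau_1)$-regimes one must also use the gradient-type contribution $\sum_j|y-x^j|^{-1}(1+|y-x^j|)^{-\alpha-2}$ that was discarded above, which under $|y-x^j|\geq\sigma_k$ provides an additional $\varpi_{r,\alpha+3}$ term and enlarges the admissible $\tau$-window enough to close the chain.
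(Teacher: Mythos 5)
Your proof of \eqref{10'} is essentially the paper's: compute $-\Delta(1+|y-x|)^{-\alpha}$, keep the $(N-1)/|y-x|$ term at $\alpha=N-2$ to get $\varpi_{r,N+1}$, then combine \eqref{varpi2} and \eqref{varpi1} to pass from $\varpi_{r,N-2}^{2^*-1}$ to $\sigma_k^{\frac{4\tau_1}{N-2}-1}\varpi_{r,N+1}$. That part is fine.

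For \eqref{10}, however, there are genuine gaps. First, the claim ``$2^*-2=\frac{4}{N-2}\in(0,1)$'' is false for $N=5$, where $2^*-2=\frac{4}{3}>1$. In that case the map $t\mapsto t^{4/3}$ is \emph{super}additive, so the inequality reverses: $\varpi_{r,3}^{4/3}\geq\varpi_{r,4}$, not $\leq$. Your subadditivity step therefore fails precisely in the lowest dimension the lemma is meant to cover (and is only borderline equality at $N=6$). Second, even in dimensions $N\geq 7$ where subadditivity holds, the exponent bookkeeping does not close with $\tau=\tau_1$: at the endpoint $\alpha=\frac{N}{2}-1+\tau_1$ you get $2+\tau_1-\frac{4\alpha}{N-2}=\tau_1\bigl(1-\frac{4}{N-2}\bigr)$, which is strictly \emph{positive} for $N>6$, contradicting your claim that the hypothesis on $\alpha$ is ``precisely the threshold'' making this nonpositive. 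Any $\tau$ satisfying $\tau_1\leq\tau\leq\frac{4\alpha}{N-2}-2$ would require $\frac{4\alpha}{N-2}-2\geq\tau_1$, i.e.\ $\alpha\geq\frac{(N-2)(2+\tau_1)}{4}$, which is strictly stronger than $\alpha\geq\frac{N-2}{2}+\tau_1$ when $N>6$. Your suggested remedies (take $\tau>1$ and use the extra $(k/\mu)^\tau$ decay from \eqref{key3'}, or harvest the discarded $|y-x^j|^{-1}$ term) might close the gap, but as written they are not worked out, and they introduce $\mu$-powers that need a separate argument to convert back into $\sigma_k$-powers.

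The paper avoids all of this by never invoking subadditivity: it first uses \eqref{varpi1} to write $\varpi_{r,N-2}^{2^*-2}\varpi_{r,\alpha}\leq\sigma_k^{\frac{4\alpha}{N-2}-4}\varpi_{r,\alpha}^{2^*-1}$, then applies \eqref{varpi2} to $\varpi_{r,\alpha}^{2^*-1}\leq C\varpi_{r,\alpha+\frac{4(\alpha-\tau_1)}{N-2}}$, and finally observes that the hypothesis $\alpha\geq\frac{N}{2}-1+\tau_1$ is exactly equivalent to $\frac{4(\alpha-\tau_1)}{N-2}\geq 2$, so the exponent can be dropped to $\alpha+2$ by monotonicity with no further $\sigma_k$ loss. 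That chain works uniformly for all $N\geq5$ and all $\alpha$ in the stated range, and is the route you should take to repair \eqref{10}.
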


\begin{proof}
	For $\alpha\in (0, N-2]$, $y\in\R^N\setminus\{0\}$, we have 
	\begin{equation}\label{Delta}
		-\Delta  (1 + \lvert y \rvert)^{-\alpha}  = \alpha (1 + \lvert y \rvert)^{-\alpha - 2} \left( \frac{N - 1}{\lvert y \rvert} + N - 2- \alpha  \right).
	\end{equation}
	Therefore,
	\[
	-\Delta \varpi_{r,N-2} \geq (N - 1)(N-2)
	\varpi_{r,N+1}.
	\]
	On the other hand, by \eqref{varpi2} and \eqref{varpi1}, 
	\[
	\varpi_{r,N-2}^{2^*-1}
	\leq C \varpi_{r,N+2-\frac{4\tau_1}{N-2}}\leq  C
	\sigma_k^{\frac{4\tau_1}{N-2}-1}\varpi_{r,N+1}.
	\]
	Hence, we obtain the first inequality of \eqref{10'}.
	
	Now  let $\alpha\in [\frac{N}{2}-1+\tau_1, N-2)$.
	Then by \eqref{Delta},
	\[
	-\Delta \varpi_{r,\alpha}\geq \alpha(N-2-\alpha) \varpi_{r,\alpha+2}.
	\]
	By \eqref{varpi2} and \eqref{varpi1} again and since $\frac{4(\alpha-\tau_1)}{N-2}\geq 2$,
	\[
	\varpi_{r,N-2}^{2^*-2} \varpi_{r,\alpha} \leq \sigma_k^{\frac{4\alpha}{N-2}-4} \varpi_{r,\alpha}^{2^*-1}
	\leq C \sigma_k^{\frac{4\alpha}{N-2}-4} \varpi_{r,\alpha+\frac{4(\alpha-\tau_1)}{N-2}}\leq  C
	\sigma_k^{\frac{4\alpha}{N-2}-4}\varpi_{r,\alpha+2}.
	\]
	Therefore, we get the first inequality of \eqref{10}. As our arguments only depends on the properties \eqref{varpi2} and \eqref{varpi1}, other inequalities in \eqref{10'}, \eqref{10} follow in a same way.
\end{proof}

The following inequality will be used.
\begin{Lem}\label{lema6}
	Let  $m_0> 2$. Then there is $C>0$ such that for $a, b\in\R$,
	\[
	\left| |a+b|^{m_0-2}(a+b) -|a|^{m_0-2}a -(m_0-1)|a|^{m_0-2}b \right|\leq C\begin{cases} 
		|b|^{m_0-1} &\text{if } 2< m_0 <3,\\
		|a|^{m_0-3} b^2 + |b|^{m_0-1} &\text{if } m_0\geq 3.
	\end{cases}
	\]
\end{Lem}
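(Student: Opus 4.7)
Define $f(x) = |x|^{m_0-2}x$ so that $f'(x) = (m_0-1)|x|^{m_0-2}$ (as a continuous function, with the convention $|0|^{m_0-2}=0$, which is consistent because $m_0>2$). The quantity to estimate is
\[
E(a,b) := f(a+b) - f(a) - f'(a)b = \int_0^1 \bigl[f'(a+tb) - f'(a)\bigr] b \, dt.
\]
So the plan reduces to controlling the difference $|a+tb|^{m_0-2} - |a|^{m_0-2}$ in two regimes of $m_0$.

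For the case $2 < m_0 < 3$, I would invoke the elementary Hölder-type inequality
\[
\bigl||x|^{\alpha} - |y|^{\alpha}\bigr| \le |x-y|^{\alpha}, \qquad \alpha \in (0,1),
\]
applied with $\alpha = m_0 - 2 \in (0,1)$, $x = a+tb$, $y = a$. This yields $|f'(a+tb)-f'(a)| \le (m_0-1)|tb|^{m_0-2}$, and substituting into the integral representation gives $|E(a,b)| \le C|b|^{m_0-1}$ directly.

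For the case $m_0 \ge 3$, I would split according to the relative sizes of $a$ and $b$. If $|b| \le |a|/2$, then $|a+tb|$ is comparable to $|a|$ uniformly in $t\in[0,1]$, and $f'$ is $C^1$ away from the origin with $|f''(\xi)| \le C|\xi|^{m_0-3}$; a second-order Taylor expansion therefore gives $|E(a,b)| \le C|a|^{m_0-3} b^2$. If instead $|b| > |a|/2$, then $|a|, |a+b| \le 3|b|$, and each of the three terms in $E(a,b)$ is bounded by $C|b|^{m_0-1}$ by brute force, yielding $|E(a,b)| \le C|b|^{m_0-1}$. Combining the two sub-cases gives the stated bound $C(|a|^{m_0-3}b^2 + |b|^{m_0-1})$.

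The only mildly delicate point is the $|b| \le |a|/2$ sub-case when $m_0 \ge 3$: one must confirm that $|f''(\xi)| \le C|\xi|^{m_0-3}$ holds with a uniform constant on the segment joining $a$ and $a+b$, which is fine since $a \ne 0$ there (otherwise $b=0$ and there is nothing to prove). No step should present a serious obstacle; the argument is a routine Taylor/Hölder split.
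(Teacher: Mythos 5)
The paper states Lemma~\ref{lema6} in the appendix without giving a proof, so there is no in-paper argument to compare against. Your proof is correct: the integral remainder $E(a,b)=\int_0^1[f'(a+tb)-f'(a)]b\,dt$ with $f'(x)=(m_0-1)|x|^{m_0-2}$ is the right starting point, the subadditivity bound $||x|^\alpha-|y|^\alpha|\le|x-y|^\alpha$ for $\alpha=m_0-2\in(0,1)$ yields exactly $|b|^{m_0-1}$ in the regime $2<m_0<3$, and the dichotomy $|b|\le|a|/2$ versus $|b|>|a|/2$ in the regime $m_0\ge3$ (second-order Taylor on a segment bounded away from the origin in the first sub-case, brute-force triangle bounds in the second) delivers the stated $|a|^{m_0-3}b^2+|b|^{m_0-1}$. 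The caveat you flag about the segment $[a,a+b]$ avoiding $0$ when $|b|\le|a|/2$ and $a\ne0$ is the only point needing care, and you have handled it; the degenerate case $a=0$ is vacuous there since then $b=0$.
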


\noindent\textbf{Acknowledgments}:
This research was supported by the National Natural Science Foundation of China (Grant Nos. 12271539, 12371107).

\end{document}